\documentclass[12pt]{amsart}
\usepackage{amsmath,amsfonts,amssymb,amsthm,amsbsy,amscd,latexsym,cite}
\usepackage[cp1251]{inputenc}
\usepackage[colorlinks,breaklinks,linkcolor=blue,citecolor=blue, menucolor=blue,pagecolor=blue]{hyperref}

\paperheight = 297 true mm
\paperwidth = 210 true mm
\textheight=247mm
\textwidth=160mm
\hoffset=-1in
\voffset=-1in
\oddsidemargin=25mm
\evensidemargin=25mm
\topmargin=20mm
\headheight=5mm
\headsep=5mm

\brokenpenalty=0
\clubpenalty=0
\widowpenalty=0
\righthyphenmin=2
\hyphenpenalty=0
\finalhyphendemerits=0
\multlinegap=\parindent

\begin{document}

\title[Certain residual properties of~HNN-extensions]{Certain residual properties of~HNN-extensions with~central associated subgroups}

\author{E.~V.~Sokolov}
\address{Ivanovo State University, Russia}
\email{ev-sokolov@yandex.ru}

\begin{abstract}
Suppose that $G$ is a~group, $H$~and~$K$ are proper isomorphic central sub\-groups of~$G$, and~$\mathfrak{G}$ is an~HNN-ex\-ten\-sion of~$G$ with~the~associated subgroups~$H$ and~$K$. We prove necessary and~sufficient conditions for~$\mathfrak{G}$ to~be residually a~$\mathcal{C}$\nobreakdash-group, where $\mathcal{C}$ is a~class of~groups closed under taking subgroups, extensions, homomorphic images, and~Cartesian products of~the~form $\prod_{y \in Y}X_{y}$, where $X, Y \in \mathcal{C}$ and~$X_{y}$ is an~isomorphic copy of~$X$ for~each $y \in Y$.
\end{abstract}

\keywords{Residual finiteness, residual $p$\nobreakdash-finiteness, residual solvability, root-class residu\-ality, HNN-extension}

\maketitle

\newtheorem{etheorem}{Theorem}
\newtheorem{ecorollary}{Corollary}
\newtheorem{eproposition}{Proposition}[section]

\vspace{-7pt}

\section{Introduction. Statement of~results}\label{esec01}

Let $\mathcal{C}$ be a~class of~groups. Recall that a~group~$X$ is said to~be \emph{residually a~$\mathcal{C}$\nobreakdash-group} if, for~any non-triv\-i\-al element $x \in X$, there is a~homomorphism~$\sigma$ of~$X$ onto~a~group from~$\mathcal{C}$ such that $x\sigma \ne 1$.

The~main question in~the~study of~the~$\mathcal{C}$\nobreakdash-residu\-al\-ity of~HNN-ex\-ten\-sions is whether an~HNN-ex\-ten\-sion of~a~residually $\mathcal{C}$\nobreakdash-group is again residually a~$\mathcal{C}$\nobreakdash-group. All known answers to~this question are obtained under various restrictions imposed on~the~base group, the~associated subgroups, and~/~or~the~isomorphism between them (the~terminology for~HNN-ex\-ten\-sions used here and~below follows~\cite{LyndonSchupp1977}). One such restriction is the~centrality of~the~associated subgroups in~the~base group; the~$\mathcal{C}$\nobreakdash-residu\-al\-ity of~HNN-ex\-ten\-sions of~this type (including the~case of~an~abelian base group) is studied in~\cite{RaptisVarsos1987, AndreadakisRaptisVarsos1988, RaptisVarsos1989, RaptisVarsos1991, KimTang1999, WongGan1999, Moldavanskii2002, Moldavanskii2003, WongWong2005, Goltsov2015, SokolovTumanova2017, SokolovTumanova2019}.

In~\cite{AndreadakisRaptisVarsos1988, RaptisVarsos1989, RaptisVarsos1991}, S.~Andreadakis, E.~Raptis, and~D.~Varsos give criteria for~the~residual finiteness and~the~residual nilpotence of~an~HNN-ex\-ten\-sion of~a~finitely generated abelian group and~prove that such an~extension is residually solvable. D.~I.~Moldavanskii~\cite{Moldavanskii2002, Moldavanskii2003} significantly strengthens some of~these results by~generalizing them to~the~case of~HNN-ex\-ten\-sion with~central associated subgroups. He proposes an~original approach to~the~study of~the~$\mathcal{C}$\nobreakdash-residu\-al\-ity of~such HNN-ex\-ten\-sions, which he call \emph{the~method of~descent and~ascent of~compatible subgroups}. In~\cite{Moldavanskii2002, Moldavanskii2003}, this method is applied under the~assumption that $\mathcal{C}$ is the~class of~all finite groups or~all finite $p$\nobreakdash-groups, where $p$ is a~prime number. The~aim of~this paper is to~generalize the~results obtained in~\cite{Moldavanskii2002, Moldavanskii2003} to~the~case when $\mathcal{C}$ is an~arbitrary root class of~groups closed under taking quotient groups.

The~notion of~a~root class was introduced by~K.~Gruenberg~\cite{Gruenberg1957}, and~its equivalent definitions are given in~\cite{Sokolov2015}. In~accordance with~one of~them, a~class of~groups~$\mathcal{C}$ containing at~least one non-triv\-i\-al group is called \emph{root} if it is closed under taking subgroups, extensions, and~Cartesian products of~the~form $\prod_{y \in Y}X_{y}$, where $X, Y \in \mathcal{C}$ and~$X_{y}$ is an~isomorphic copy of~$X$ for~each $y \in Y$. The~examples of~root classes are the~above-mentioned classes of~all finite groups and~all finite $p$\nobreakdash-groups, as~well as~the~classes of~periodic $\pi$\nobreakdash-groups of~finite exponent (where $\pi$ is a~non-empty set of~primes), all solvable groups, and~all tor\-sion-free groups. We note also that the~intersection of~any number of~root classes is again a~root class~\cite{Sokolov2015}. 

The~notion of~a~root class turns~out to~be very useful in~studying the~residual properties of~free constructions of~groups allowing one to~prove many statements at~once using the~same argument. The~papers~\cite{Gruenberg1957, AzarovTieudjo2002} are the~starting point for~these studies, the~latest results obtained in~this area can be found in~\cite{Tumanova2019, SokolovTumanova2019, SokolovTumanova2020SMJ, SokolovTumanova2020IVM, SokolovTumanova2020LJM, Tumanova2020, Sokolov2021JA, Sokolov2021SMJ1, Sokolov2021SMJ2}. The~root-class residuality of~HNN-ex\-ten\-sions is studied in~\cite{Tieudjo2010, Tumanova2014, Goltsov2015, Tumanova2017, SokolovTumanova2017, SokolovTumanova2019, Tumanova2020}. These papers deal mainly with~the~cases when the~associated subgroups coincide or~intersect trivially.

Let us call a~sequence $(X, Y, Z, \psi)$ an~\emph{HNN-tuple} if $X$ is a~group, $Y$ and~$Z$ are isomorphic subgroups of~$X$, and~$\psi\colon Y \to Z$ is an~isomorphism. If $(X, Y, Z, \psi)$ is an~HNN-tuple, then by~$\mathrm{HNN}(X, Y, Z, \psi)$ we denote the~HNN-ex\-ten\-sion $\langle X, t;\ t^{-1}Yt = Z, \psi \rangle$. Recall that $\mathrm{HNN}(X, Y, Z, \psi)$ is the~group whose generators are the~generators of~$X$ and~the~symbol~$t$, and~whose defining relations are the~relations of~$X$ and~all possible relations of~the~form $t^{-1}yt = y\psi$, where~$y$ and~$y\psi$ are words in~the~generators of~$X$ defining an~element $y \in Y$ and~its image under~$\psi$.

Throughout this section, we assume that $(G, H, K, \varphi)$ is an~HNN-tuple, $H$~and~$K$ \underline{lie in~the~center} of~$G$, and~$\mathfrak{G} = \mathrm{HNN}(G, H, K, \varphi)$. We put $K_{0} = G$, $H_{1} = H$, $K_{1} = K$, and~(if $H_{i}$ and~$K_{i}$ are already defined) $H_{i+1} = H_{i} \cap K_{i}$, $K_{i+1} = H_{i+1}\varphi$. To~simplify the~notation, the~restriction of~$\varphi$ to~$H_{i}$ ($i \geqslant 1$) or~some other subgroup is denoted below by~the~same symbol~$\varphi$.

Since, for~any $i \geqslant 0$, the~sequence $(K_{i}, H_{i+1}, K_{i+1}, \varphi)$ is an~HNN-tuple, the~group\linebreak $\mathfrak{K}_{i} = \mathrm{HNN}(K_{i}, H_{i+1}, K_{i+1}, \varphi)$ is defined. Obviously, if $H_{n} = K_{n}$ for~some $n \geqslant 1$, then $H_{n+1} = K_{n} = K_{n+1}$, and~therefore $\mathfrak{K}_{n}$ is a~split extension of~$K_{n}$ by~the~infinite cyclic group~$\langle t \rangle$. It~is also easy to~see that the~restriction of~$\varphi$ to~$H_{n}$ turns~out to~be an~automorphism of~this subgroup, and~$\mathfrak{K}_{n}$ is isomorphic to~the~subgroup $E = \operatorname{sgp}\{H_{n}, t\}$ of~$G$, which is a~split extension of~$H_{n}$ by~$\langle t \rangle$.

The~method of~descent and~ascent of~compatible subgroups essentially consists in~proving that, under certain conditions, for~each $i \geqslant 0$, the~$\mathcal{C}$\nobreakdash-residu\-al\-ity of~$G$ is equivalent to~the~$\mathcal{C}$\nobreakdash-residu\-al\-ity of~$\mathfrak{K}_{i}$. If $H_{n} = K_{n}$ for~some $n \geqslant 1$, this allows us to~reduce the~question of~the~$\mathcal{C}$\nobreakdash-residu\-al\-ity of~$G$ to~the~much simpler problem of~finding conditions for~the~split extension~$\mathfrak{K}_{n}$ to~be residually a~$\mathcal{C}$\nobreakdash-group. Two criteria for~the~root-class residuality of~split extensions are given at~the~end of~Section~\ref{esec05}.

We note that the~equality $H_{n} = K_{n}$ may not hold for~any~$n$. To~guarantee its fulfillment, the~theorems given below impose a~weaker condition on~$G$: $H_{n} = H_{n+1}$ for~some~$n$. Corollaries~\ref{ec01}\nobreakdash--\ref{ec04} describe a~number of~situations in~which the~last relation certainly takes place.

Throughout the~paper, if $\mathcal{C}$ is a~class of~groups and~$X$ is a~group, then $\mathcal{C}^{*}(X)$ denotes the~family of~normal subgroups of~$X$ such that $Y \in \mathcal{C}^{*}(X)$ whenever $X/Y \in \mathcal{C}$. If~$\mathcal{C}$~consists only of~periodic groups, then we denote by~$\pi(\mathcal{C})$ the~set of~all the~prime numbers that divide the~orders of~the~elements of~all possible $\mathcal{C}$\nobreakdash-groups. Recall that if $\pi$ is a~set of~primes, then a~\emph{$\pi$\nobreakdash-num\-ber} is an~integer all of~whose prime divisors belong to~$\pi$, and~a~\emph{$\pi$\nobreakdash-group} is a~periodic group in~which the~order of~each element is a~$\pi$\nobreakdash-num\-ber.

\begin{etheorem}\label{et01}
Suppose that $\mathcal{C}$ is a~root class of~groups closed under taking quotient groups, $G$ is residually a~$\mathcal{C}$\nobreakdash-group, and~there exists a~subgroup $Q \in \mathcal{C}^{*}(G)$ satisfying at~least one of~the~following conditions: 

$(\alpha)$\hspace{1ex}$H \cap Q = 1 = K \cap Q$,

$(\beta)$\hspace{1ex}$Q \leqslant H \cap K$ and~$Q\varphi = Q$.

\textup{I.\phantom{I}}\hspace{1ex}If $\mathcal{C}$ contains non-pe\-ri\-od\-ic groups, then $\mathfrak{G}$ is residually a~$\mathcal{C}$\nobreakdash-group.

\textup{II.}\hspace{1ex}If $\mathcal{C}$ consists only of~periodic groups, $H \ne G \ne K$, and~$H_{n} = H_{n+1}$ for~some $n \geqslant 1$, then $\mathfrak{G}$ is residually a~$\mathcal{C}$\nobreakdash-group if and~only~if

\textup{1)}\hspace{1ex}$H_{n} = K_{n}$\textup{;}

\textup{2)}\hspace{1ex}the subgroup $E = \operatorname{sgp}\{H_{n}, t\}$ is residually a~$\mathcal{C}$\nobreakdash-group.
\end{etheorem}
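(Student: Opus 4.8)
The plan is to prove the biconditional of Part~II by reducing, through the method of descent and ascent, the $\mathcal{C}$\nobreakdash-residuality of $\mathfrak{G}=\mathfrak{K}_{0}$ to that of the terminal group $\mathfrak{K}_{n}$, and then analyzing $\mathfrak{K}_{n}$ directly. First I record what the stabilization hypothesis buys: since $H_{n+1}=H_{n}\cap K_{n}$, the equality $H_{n}=H_{n+1}$ means $H_{n}\subseteq K_{n}$, whence $K_{n+1}=H_{n+1}\varphi=H_{n}\varphi=K_{n}$ and, inductively, $H_{m}=H_{n}$, $K_{m}=K_{n}$ for all $m\geqslant n$. Thus $\mathfrak{K}_{n}=\mathrm{HNN}(K_{n},H_{n},K_{n},\varphi)$ is an ascending HNN\nobreakdash-extension with $t^{-1}H_{n}t=K_{n}$ and $H_{n}\leqslant K_{n}$, and, as $K_{n}\leqslant K$ is central in $G$, the base $K_{n}$ is abelian. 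I also note that $\operatorname{sgp}\{H_{n},t\}$ contains $t^{-1}H_{n}t=K_{n}$, so $E=\operatorname{sgp}\{H_{n},t\}=\operatorname{sgp}\{K_{n},t\}$ as subgroups of $\mathfrak{G}$; by Britton's lemma this subgroup is isomorphic to $\mathfrak{K}_{n}$, and when $H_{n}=K_{n}$ it is the split extension identified in the introduction.

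For the \emph{only if} direction I would argue as follows. Iterating the fact that $\mathfrak{K}_{i+1}\cong\operatorname{sgp}\{K_{i+1},t\}$ embeds in $\mathfrak{K}_{i}$, one sees that $\mathfrak{G}$ residually $\mathcal{C}$ forces every $\mathfrak{K}_{i}$—in particular $E=\mathfrak{K}_{n}$—to be residually $\mathcal{C}$, which gives condition~2) for free. It then remains to derive condition~1), i.e.\ $H_{n}=K_{n}$. Here I use that $Q$ descends: $Q_{n}=Q\cap K_{n}$ lies in $\mathcal{C}^{*}(K_{n})$ and again satisfies $(\alpha)$ or $(\beta)$ for $\mathfrak{K}_{n}$. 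Under $(\alpha)$ one has $Q\cap K_{n}=1$, so $K_{n}\cong K_{n}Q/Q\leqslant G/Q$ lies in $\mathcal{C}$; under $(\beta)$ one checks $Q\leqslant H_{n}$ and $Q\varphi=Q$, so $Q$ is normal in $\mathfrak{K}_{n}$ and passage to $\mathfrak{K}_{n}/Q$ produces an isomorphism $\varphi\colon H_{n}/Q\to K_{n}/Q$ of a subgroup onto a $\mathcal{C}$\nobreakdash-group. In either case one obtains an isomorphism of $H_{n}$ (respectively its image) onto the whole of a $\mathcal{C}$\nobreakdash-group $K_{n}$ (respectively $K_{n}/Q$) with $H_{n}\subseteq K_{n}$, and in any homomorphism of $\mathfrak{K}_{n}$ onto a periodic $\mathcal{C}$\nobreakdash-group conjugation by the image of $t$ carries the image of $H_{n}$ isomorphically onto the image of $K_{n}\supseteq H_{n}$. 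The goal is to turn residual $\mathcal{C}$\nobreakdash-ness together with periodicity into a contradiction with $H_{n}\subsetneq K_{n}$, forcing $[K_{n}:H_{n}]=1$.

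For the \emph{if} direction, assume $H_{n}=K_{n}$ and that $E$ is residually $\mathcal{C}$. Then $\mathfrak{K}_{n}\cong E$ is residually $\mathcal{C}$, and I apply the ascent part of the method $n$ times: the subgroup $Q$ restricts to a witness $Q\cap K_{i}\in\mathcal{C}^{*}(K_{i})$ satisfying $(\alpha)$ or $(\beta)$ at each level $i<n$, so residual $\mathcal{C}$\nobreakdash-ness lifts from $\mathfrak{K}_{i+1}$ to $\mathfrak{K}_{i}$ and ultimately to $\mathfrak{K}_{0}=\mathfrak{G}$. If convenient, condition~2) itself can be treated by the criteria for split extensions given at the end of Section~\ref{esec05}, since under condition~1) the group $E$ is a split extension of the abelian group $H_{n}=K_{n}$ by $\langle t\rangle$.

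The main obstacle is the necessity of condition~1) in the periodic case. One cannot argue inside $\mathfrak{K}_{n}$ alone: a proper ascending HNN\nobreakdash-extension of an abelian group can perfectly well be residually $\mathcal{C}$—for instance $\langle a,t;\ t^{-1}a^{d}t=a\rangle$ is residually finite for every $d$—so the properness $H\ne G\ne K$ of the \emph{original} associated subgroups, together with the condition on $Q$, must be used in an essential way to exclude such benign ascending groups. The delicate point is that a periodic class $\mathcal{C}$ may contain infinite, non-co-Hopfian groups, so one cannot merely invoke the fact that a finite group has no proper subgroup isomorphic to itself; instead the relation $t^{-1}H_{n}t=K_{n}$ must be shown to force, in every separating $\mathcal{C}$\nobreakdash-quotient, an infinite strictly ascending tower of conjugate (hence isomorphic) copies of the image of $H_{n}$—a configuration that periodicity renders incompatible with the survival of a fixed non-trivial element unless the index $[K_{n}:K_{n}]$ collapses to~$1$.
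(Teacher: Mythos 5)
Your plan correctly identifies the architecture of the argument and, to your credit, you pinpoint exactly where the difficulty lies; but the proposal does not close that difficulty, so what you have is a gap rather than a proof. The necessity of condition 1) is left as a stated ``goal'': you observe (correctly) that one cannot work inside $\mathfrak{K}_{n}$ alone, since a proper ascending HNN-extension of an abelian group can be residually finite, but the replacement mechanism you gesture at --- forcing ``an infinite strictly ascending tower of conjugate copies'' in every separating quotient --- is never constructed, and it is not how the obstruction actually materializes. The paper's route is different: one first shows that $\mathfrak{G}_{n-1}=\mathrm{HNN}(G,H_{n},K_{n},\varphi)$, whose base group is still all of $G$, is residually a $\mathcal{C}$\nobreakdash-group (Proposition~\ref{ep601}, which itself rests on descending $(H,K)$\nobreakdash-fil\-tra\-tions of compatible subgroups via Propositions~\ref{ep502} and~\ref{ep402}); then, if $H_{n}\ne K_{n}$, one takes $k\in K_{n}\setminus H_{n}$ and $g\in G\setminus K_{n}$ (possible because $K_{n}\leqslant K\ne G$) and notes that $x=[t^{-1}kt,g]$ has reduced length~$4$, hence $x\ne 1$, while in any quotient of $\mathfrak{G}_{n-1}$ lying in the periodic class $\mathcal{C}$ one has $t^{-1}\equiv t^{m}$ for some $m>0$, so that $t^{-1}kt\equiv k\varphi^{-m}\in H_{n}$ is central and $x$ maps to~$1$ (Proposition~\ref{ep503}). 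Nothing in your proposal produces this (or any) contradiction, and the ascending-tower idea gives no traction precisely because the class $\mathcal{C}$ may contain infinite non-co-Hopfian groups, as you yourself note.

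The sufficiency direction is likewise only asserted. Saying that ``$Q\cap K_{i}$ satisfies $(\alpha)$ or $(\beta)$ at each level, so residual $\mathcal{C}$\nobreakdash-ness lifts from $\mathfrak{K}_{i+1}$ to $\mathfrak{K}_{i}$'' skips the entire content of the ascent: what must be carried up is a strong $(H_{i+1},K_{i+1})$\nobreakdash-fil\-tra\-tion of compatible subgroups, and each step requires $K_{i}$ to be $\mathcal{C}$\nobreakdash-reg\-u\-lar with respect to $P_{i+1}=H_{i+1}K_{i+1}$ together with the $\mathcal{C}$\nobreakdash-sep\-a\-ra\-bil\-ity of (the $\pi(\mathcal{C})^{\prime}$\nobreakdash-iso\-la\-tor of) $P_{i+1}$ in $K_{i}$ (Propositions~\ref{ep403} and~\ref{ep404}). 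In the paper the hypothesis on $Q$ is used exactly to verify these conditions: via Proposition~\ref{ep602} it yields $K_{i}/P_{i+1}\in\mathcal{C}$, whence $\mathcal{C}^{*}(P_{i+1})\subseteq\mathcal{C}^{*}(K_{i})$, and the regularity and separability follow; after that everything is delegated to Theorem~\ref{et03}. None of this appears in your sketch. Finally, Statement~I is not addressed at all (in the paper it is deduced from Theorem~\ref{et04}, whose proof again needs the ascent machinery plus Proposition~\ref{ep505}). So while the skeleton is right, every load-bearing step is missing.
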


\enlargethispage{8pt}

\begin{ecorollary}\label{ec01}
If $\mathcal{C}$ is a~root class of~groups closed under taking quotient groups, $G$ is residually a~$\mathcal{C}$\nobreakdash-group, $H$~and~$K$ are finite, then $H_{n} = K_{n}$ for~some $n \geqslant 1$ and~the~following statements hold.

\textup{I.\phantom{I}}\hspace{1ex}If $\mathcal{C}$ contains non-pe\-ri\-od\-ic groups, then $\mathfrak{G}$ is residually a~$\mathcal{C}$\nobreakdash-group.

\textup{II.}\hspace{1ex}If $\mathcal{C}$ consists only of~periodic groups, then $\mathfrak{G}$ is residually a~$\mathcal{C}$\nobreakdash-group if and~only if the~order of~the~automorphism~$\varphi$ of~$H_{n}$ is a~$\pi(\mathcal{C})$\nobreakdash-num\-ber.
\end{ecorollary}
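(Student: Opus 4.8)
The plan is to derive the corollary from Theorem~\ref{et01}.

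\emph{First} I would show that the chain $H_{1} \geqslant H_{2} \geqslant \dots$ stabilizes and hence that $H_{n} = K_{n}$ for some $n$. Since $H_{i+1} = H_{i} \cap K_{i} \leqslant H_{i}$, this is a descending chain of subgroups of the finite group $H_{1} = H$, so there is an~$n$ with $H_{n} = H_{n+1} = H_{n} \cap K_{n}$; that is, $H_{n} \leqslant K_{n}$. On~the~other hand, an~easy induction gives $K_{i} = H_{i}\varphi$ for~every $i \geqslant 1$ (indeed $K_{1} = K = H\varphi$ and $K_{i+1} = H_{i+1}\varphi$ by~definition), whence $|K_{i}| = |H_{i}|$ because $\varphi$ is injective. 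Thus $H_{n}$ and~$K_{n}$ are finite subgroups of~equal order with~$H_{n} \leqslant K_{n}$, so $H_{n} = K_{n}$, which proves the~first assertion.

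\emph{Second} I would construct a~subgroup $Q \in \mathcal{C}^{*}(G)$ satisfying condition~$(\alpha)$ of~Theorem~\ref{et01}. Because $H$ and~$K$ are finite, the~set $S = (H \cup K) \setminus \{1\}$ is finite; for~each $x \in S$ choose, using the~$\mathcal{C}$\nobreakdash-residuality of~$G$, a~subgroup $N_{x} \in \mathcal{C}^{*}(G)$ with~$x \notin N_{x}$, and~put $Q = \bigcap_{x \in S} N_{x}$. Since $G/Q$ embeds into the~finite product $\prod_{x \in S} G/N_{x}$, which lies in~$\mathcal{C}$ by~closure under extensions and~subgroups, we have $Q \in \mathcal{C}^{*}(G)$, and~$Q \cap (H \cup K) = \{1\}$ gives $H \cap Q = 1 = K \cap Q$. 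Now Theorem~\ref{et01}.I yields Part~I at~once. For~Part~II, the~assumptions $H \ne G \ne K$ (the~associated subgroups are proper) and~$H_{n} = H_{n+1}$, together with~the~just-proved equality $H_{n} = K_{n}$, let me invoke Theorem~\ref{et01}.II: the~extension $\mathfrak{G}$ is residually a~$\mathcal{C}$\nobreakdash-group if~and~only~if $E = \operatorname{sgp}\{H_{n}, t\}$ is.

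\emph{Finally}, it~remains to~show that the~split extension $E = H_{n} \rtimes \langle t \rangle$, in~which $t$ acts on~$H_{n}$ as~the~automorphism $\varphi$ of~order~$d$, is residually a~$\mathcal{C}$\nobreakdash-group precisely when $d$ is a~$\pi(\mathcal{C})$\nobreakdash-number. Writing $\pi = \pi(\mathcal{C})$, I~first record that $H_{n}$ is a~$\pi$\nobreakdash-group: any prime dividing the~order of~a~finite-order element of~$G$ must lie in~$\pi$, for~otherwise a~suitable power of~that element would map to~a~nontrivial element of~$q$\nobreakdash-power order in~some $\mathcal{C}$\nobreakdash-group, contradicting that $\mathcal{C}$\nobreakdash-groups are $\pi$\nobreakdash-groups. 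The~same separation argument, applied to~the~finitely many elements of~$H_{n}$, embeds $H_{n}$ into a~finite product of~$\mathcal{C}$\nobreakdash-groups, so~$H_{n} \in \mathcal{C}$; likewise every cyclic $\pi$\nobreakdash-group lies in~$\mathcal{C}$ (any $p \in \pi$ gives $\mathbb{Z}/p\mathbb{Z} \leqslant C$ for~some $C \in \mathcal{C}$, and~higher prime powers and~products follow from~closure under extensions and~products). For~the~``if'' direction, when $d$ is a~$\pi$\nobreakdash-number I~separate a~nontrivial $h t^{k}$ by~mapping onto $E/\langle t^{m} \rangle \cong H_{n} \rtimes \mathbb{Z}/m\mathbb{Z}$, choosing $m$ to~be a~$\pi$\nobreakdash-number divisible by~$d$ (so~that $\langle t^{m} \rangle$ is normal) and~with~$m \nmid k$ when $k \ne 0$; this quotient is an~extension of~$H_{n} \in \mathcal{C}$ by~$\mathbb{Z}/m\mathbb{Z} \in \mathcal{C}$, hence lies in~$\mathcal{C}$, and~the~image of~$h t^{k}$ is nontrivial.

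For~the~``only~if'' direction --- which I~expect to~be the~main obstacle --- suppose some prime $q \mid d$ lies outside~$\pi$ and~set $\psi = \varphi^{d/q}$, an~automorphism of~$H_{n}$ of~order~$q$. In~any homomorphism $\sigma \colon E \to C \in \mathcal{C}$ the~image of~$t$ has finite order~$m$, necessarily a~$\pi$\nobreakdash-number, so~the~automorphism induced on~$\sigma(H_{n})$ by~conjugation by~$t\sigma$ has order dividing $\gcd(d, m)$, which is coprime to~$q$ and~therefore divides $d/q$; consequently $\psi$ induces the~identity, i.e.\ $\sigma\bigl((h\psi)h^{-1}\bigr) = 1$ for~every $h \in H_{n}$. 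Since $\psi \ne \mathrm{id}$, some element $(h\psi)h^{-1}$ is nontrivial yet lies in~the~kernel of~every $\mathcal{C}$\nobreakdash-homomorphism, so~$E$ is not residually a~$\mathcal{C}$\nobreakdash-group. The~delicate points throughout are that $\mathcal{C}$ may be a~proper subclass of~all finite $\pi$\nobreakdash-groups --- which is why I~must verify that the~specific quotients $H_{n} \rtimes \mathbb{Z}/m\mathbb{Z}$ genuinely belong to~$\mathcal{C}$ --- and~the~order bookkeeping for~the~induced automorphism in~the~``only~if'' direction.
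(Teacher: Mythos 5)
Your proposal is essentially correct and follows the same skeleton as the paper's proof: the chain $H_{1}\geqslant H_{2}\geqslant\dots$ stabilizes by finiteness, $H_{n}\leqslant K_{n}$ plus equality of (finite) orders gives $H_{n}=K_{n}$, and the subgroup $Q$ with $H\cap Q=1=K\cap Q$ is produced exactly as in the paper by separating the finite set $(H\cup K)\setminus\{1\}$ (the paper cites Proposition~\ref{ep301} for this). The one genuine divergence is your last step: where the paper simply invokes Proposition~\ref{ep508} (its criterion for the $\mathcal{C}$\nobreakdash-residuality of a split extension of an abelian residually $\mathcal{C}$\nobreakdash-group by $\langle t\rangle$, itself resting on Propositions~\ref{ep305} and~\ref{ep507}), you reprove that criterion from scratch in the finite case. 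Your direct argument is sound: the ``if'' direction via the quotients $H_{n}\rtimes\mathbb{Z}/m\mathbb{Z}$ with $m$ a large $\pi(\mathcal{C})$\nobreakdash-number divisible by $d$, and the ``only if'' direction via the order of the induced automorphism dividing $\gcd(d,m)$ and hence $d/q$, is precisely the content of the cited propositions specialized to finite $H_{n}$; what you lose in brevity you gain in self-containedness, and your verification that cyclic $\pi(\mathcal{C})$\nobreakdash-groups and $H_{n}$ itself lie in $\mathcal{C}$ correctly addresses the point that $\mathcal{C}$ need not contain all finite $\pi(\mathcal{C})$\nobreakdash-groups.

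One small gap: you assert parenthetically that $H\ne G\ne K$, but the corollary does not assume the associated subgroups are proper, and Theorem~\ref{et01}.II cannot be applied when $H=G$. Since $H\cong K$ are finite, $H=G$ forces $G$ finite and then $K=G$ as well, so the only exceptional case is $H=G=K$, where $\mathfrak{G}=E$ and the claim reduces exactly to the split-extension criterion you prove in your final step. The paper disposes of this case in one sentence; you should add the corresponding remark rather than assume properness.
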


\begin{ecorollary}\label{ec02}
Suppose that $\mathcal{C}$ is a~root class of~groups closed under taking quotient groups, $H$~and~$K$ have finite index in~$G$, and~$H \ne G \ne K$.

\textup{I.\phantom{I}}\hspace{1ex}If $\mathcal{C}$ contains non-pe\-ri\-od\-ic groups, then $\mathfrak{G}$ is residually a~$\mathcal{C}$\nobreakdash-group if and~only if $G$ is residually a~$\mathcal{C}$\nobreakdash-group and~there exists a~subgroup $Q \in \mathcal{C}^{*}(G)$ satisfying the~conditions $Q \leqslant H \cap K$ and~$Q\varphi = Q$.

\textup{II.}\hspace{1ex}If $\mathcal{C}$ consists only of~finite\kern-23.5pt\rule[-1.92pt]{23.5pt}{.48pt} groups, then $\mathfrak{G}$ is residually a~$\mathcal{C}$\nobreakdash-group if and~only~if

\textup{1)}\hspace{1ex}$G/H \in \mathcal{C}$ and~$G/K \in \mathcal{C}$\textup{;}

\textup{2)}\hspace{1ex}$H_{n} = K_{n}$ for~some $n \geqslant 1$\textup{;}

\textup{3)}\hspace{1ex}the~subgroup $E = \operatorname{sgp}\{H_{n}, t\}$ is residually a~$\mathcal{C}$\nobreakdash-group.
\end{ecorollary}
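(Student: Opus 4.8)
The plan is to derive both parts from Theorem~\ref{et01}, the whole difficulty being concentrated in one existence statement. I would first record the bookkeeping that finiteness of $[G:H]$ and $[G:K]$ provides: every $H_i$ and $K_i$ then has finite index in $G$, and since $\varphi$ restricts to an isomorphism $H_i\to K_i$ taking $H_{i+1}$ onto $K_{i+1}$, one has $[H_i:H_{i+1}]=[K_i:K_{i+1}]$, whence $[H:H_i]=[K:K_i]$ and the ratio $[G:H_i]:[G:K_i]$ equals $[G:H]:[G:K]$ for every $i$. If moreover $G/H,G/K\in\mathcal{C}$, then $G/H_i,G/K_i\in\mathcal{C}$ for all $i$: inductively $G/H_{i+1}=G/(H_i\cap K_i)$ embeds in $G/H_i\times G/K_i$, and $G/K_{i+1}$ is an extension of $G/K$ by $K/K_{i+1}\cong H/H_{i+1}\hookrightarrow G/H_{i+1}$, so both lie in $\mathcal{C}$ by closure under subgroups, extensions and products. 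I also note that condition~$(\alpha)$ is degenerate here --- $Q\cap H=1$ with $H$ of finite index forces $Q$ finite, which is the situation of Corollary~\ref{ec01} --- so the subgroup to be produced or checked is always one satisfying~$(\beta)$.

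For sufficiency in Part~II I would assume (1)--(3), fix $n$ with $H_n=K_n$, and set $Q=H_n$. Then $H_{n+1}=H_n\cap K_n=H_n$, so the hypothesis ``$H_n=H_{n+1}$'' of Theorem~\ref{et01} holds; by the remark above $G/H_n\in\mathcal{C}$, so $Q\in\mathcal{C}^{*}(G)$, while $Q\leqslant H\cap K$ and $Q\varphi=K_n=H_n=Q$ give~$(\beta)$. Condition~(3) makes $H_n$, a subgroup of $E$, residually a $\mathcal{C}$-group, and since $H_n$ is central in $G$ with $G/H_n\in\mathcal{C}$ it follows that $G$ is residually a $\mathcal{C}$-group (detect a nontrivial $g$ either in $G/H_n$ or, when $g\in H_n$, in a quotient $G/\ker\psi\in\mathcal{C}$ built from a $\mathcal{C}$-quotient $\psi$ of the central subgroup $H_n$). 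All hypotheses of Theorem~\ref{et01}.II are thus met, and its conclusion together with (2) and (3) yields that $\mathfrak{G}$ is residually a $\mathcal{C}$-group. Sufficiency in Part~I is immediate from Theorem~\ref{et01}.I.

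The content of the corollary lies in necessity, which in both parts I would reduce to the single claim that \emph{residual $\mathcal{C}$-ness of $\mathfrak{G}$ yields a $\varphi$-invariant $Q\in\mathcal{C}^{*}(G)$ with $Q\leqslant H\cap K$}. Granting this, $G$ is residually a $\mathcal{C}$-group as a subgroup of $\mathfrak{G}$, and Part~I is finished. In Part~II the group $G/Q$ is finite, so $Q$ has finite index; since $Q\leqslant H\cap K$, both $G/H$ and $G/K$ are quotients of $G/Q$ and hence lie in $\mathcal{C}$, which is~(1). Because $\varphi$ is an isomorphism fixing $Q$ one gets $[H:Q]=[K:Q]$ and therefore $[G:H]=[G:K]$; an easy induction gives $Q\leqslant H_i\cap K_i$ for all $i$, so the descending chain $\{H_i\}$ is bounded below by the finite-index subgroup $Q$ and stabilizes, and the equality of indices upgrades $H_n=H_{n+1}$ to $H_n=K_n$, which is~(2). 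Finally $E=\operatorname{sgp}\{H_n,t\}$ is a subgroup of $\mathfrak{G}$ and so is residually a $\mathcal{C}$-group, giving~(3).

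The hard step --- the existence of the compatible subgroup $Q$ --- is the ascent half of the method of descent and ascent, and is where I expect the main obstacle. It has two faces. First one must exclude non-stabilizing chains such as that of $\langle a,t;\,t^{-1}a^{2}t=a^{3}\rangle$, where $H_i=2\cdot 3^{i-1}\mathbb{Z}$ and $K_i=3^{i}\mathbb{Z}$ never meet; the resolution is that residual finiteness already forbids this, and I would extract stabilization from the finite order $m$ of the image of $t$ in a finite $\mathcal{C}$-quotient $\rho$, using that the relation $t^{-1}ht=h\varphi$ forces $\rho(h)=\rho(h\varphi^{m})$ on the finite-index domain of $\varphi^{m}$ and that finiteness of $[G:H]$ keeps the relevant intersections of bounded index. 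The same finite-order mechanism is what forces $G/H\in\mathcal{C}$: it is already visible in $\langle a,t;\,t^{-1}a^{2}t=a^{2}\rangle$, which is not residually a $3$-group because in any finite $3$-group $\rho(a)\in\langle\rho(a)^{2}\rangle$ commutes with $\rho(t)$, collapsing $[a,t]$. Second, in Part~I the quotient $G/Q$ may be infinite, so $Q$ must be built as an infinite $\varphi$-invariant intersection of conjugate kernels; here I would invoke the defining closure of a root class under Cartesian products $\prod_{y\in Y}X_{y}$ to certify $G/Q\in\mathcal{C}$. Carrying out this construction and checking that the resulting $Q$ genuinely satisfies~$(\beta)$ is the principal technical burden.
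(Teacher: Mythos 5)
Your reduction of both parts to Theorem~\ref{et01} matches the paper's strategy, and the pieces you do carry out are sound: the sufficiency argument for Part~II (taking $Q=H_{n}$, checking $G/H_{n}\in\mathcal{C}$ by the induction on $G/H_{i}$, $G/K_{i}$, and recovering the $\mathcal{C}$\nobreakdash-residuality of $G$ from the central extension $1\to H_{n}\to G\to G/H_{n}\to 1$) is essentially the paper's, and your index-counting derivation of $H_{n}=K_{n}$ in the necessity of Part~II is a harmless elementary variant of the paper's appeal to Theorem~\ref{et01}.II.

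The genuine gap is the one you yourself flag: the entire necessity direction of both parts rests on the claim that $\mathcal{C}$\nobreakdash-residuality of $\mathfrak{G}$ produces a $\varphi$\nobreakdash-invariant $Q\in\mathcal{C}^{*}(G)$ with $Q\leqslant H\cap K$, and you do not prove it. Your sketch (extracting information from the finite order of the image of $t$ in one finite quotient, or building $Q$ as an infinite intersection certified by Cartesian-product closure) does not converge to the paper's argument and leaves the key difficulty untouched. The paper's route (Proposition~\ref{ep506}) is: first show that the family $\mathcal{C}_{\cap}^{*}(G,H,K,\varphi)=\{U\cap G\mid U\in\mathcal{C}^{*}(\mathfrak{G})\}$ is an $(H,K)$\nobreakdash-filtration (Proposition~\ref{ep502}.2); this is the step that actually uses the structure of $\mathfrak{G}$, via the commutator $g=[t^{-1}g_{1}t,g_{2}]$ with $g_{1}\notin H$, $g_{2}\notin K$, which has length $4$ and hence is nontrivial by Britton's lemma, while centrality of $K$ forces it into any $U$ modulo which $g_{1}$ falls into $H$. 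Then one picks, for each of the finitely many nontrivial coset representatives $s$ of $H\cap K$ in $G$, a subgroup $Q_{s}$ from this family with $s\notin(H\cap K)Q_{s}$, and sets $Q=\bigcap_{s}Q_{s}$: this finite intersection lies in $H\cap K$, and $\varphi$\nobreakdash-invariance comes for free from the $(H,K,\varphi)$\nobreakdash-compatibility of each $U\cap G$ (conjugation by $t$ inside the normal subgroup $U$), since then $Q\varphi=(Q\cap H)\varphi=Q\cap K=Q$. Note in particular that your worry about Part~I requiring an infinite intersection and the Cartesian-product axiom is misplaced: because $H$ and $K$ have finite index, $H\cap K$ does too, and the same finite intersection works whether or not $\mathcal{C}$ consists of periodic groups. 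Without this filtration-plus-finite-intersection argument (or a genuine substitute), your proof of necessity is incomplete.
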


Let $\pi$ be a~set of~primes. Following~\cite{Sokolov2014}, we call an~abelian group~$A$ \emph{$\pi$\nobreakdash-bound\-ed} if, for~any quotient group~$B$ of~$A$ and~for any $p \in \pi$, the~$p$\nobreakdash-pri\-ma\-ry component of~$B$ is finite. A~nilpotent (solvable) group is said to~be \emph{$\pi$\nobreakdash-bound\-ed} if it has a~finite central (respectively, subnormal) series with~abelian $\pi$\nobreakdash-bound\-ed factors. It is easy to~see that, for~any set~$\pi$ of~prime numbers, a~finitely generated nilpotent group is $\pi$\nobreakdash-bound\-ed nilpotent and~a~polycyclic group is $\pi$\nobreakdash-bound\-ed solvable. It is also known (see Proposition~\ref{ep607} below) that if a~$\pi$\nobreakdash-bound\-ed solvable group is abelian, then it belongs to~the~class of~$\pi$\nobreakdash-bound\-ed abelian groups. Therefore, we can say that an~abelian group is $\pi$\nobreakdash-bound\-ed without specifying the~class of~$\pi$\nobreakdash-bound\-ed groups (abelian, nilpotent, or~solvable) that we mean.{\parfillskip=0pt\par}

Throughout the~paper, we denote by~$\pi^{\prime}$ the~set of~all primes that do not belong to~the~set~$\pi$. Recall that a~subgroup~$Y$ of~a~group~$X$ is said to~be \emph{$\pi^{\prime}$\nobreakdash-iso\-lat\-ed} in~this group if, for~any $x \in X$ and~for any $q \in \pi^{\prime}$, it follows from~the~inclusion $x^{q} \in Y$ that $x \in Y$. Obviously, if $\pi$ contains all primes, then any subgroup turns~out to~be $\pi^{\prime}$\nobreakdash-iso\-lat\-ed. 

It is easy to~see that the~intersection of~any number of~$\pi^{\prime}$\nobreakdash-iso\-lat\-ed subgroups is again a~$\pi^{\prime}$\nobreakdash-iso\-lat\-ed subgroup. Therefore, for~any subgroup $Y \leqslant X$, there exists the~smallest $\pi^{\prime}$\nobreakdash-iso\-lat\-ed subgroup containing it. We call this subgroup the~\emph{$\pi^{\prime}$\nobreakdash-iso\-la\-tor} of~$Y$ in~$X$ and~denote it by~$\mathcal{I}_{\pi^{\prime}}(X, Y)$.

Let $\mathcal{C}$ be a~class of~groups. Recall that a~subgroup~$Y$ is said to~be \emph{$\mathcal{C}$\nobreakdash-sep\-a\-ra\-ble} in~a~group~$X$ if, for~any $x \in X \setminus Y$, there exists a~homomorphism~$\sigma$ of~$X$ onto~a~group from~$\mathcal{C}$ such that $x\sigma \notin Y\sigma$~\cite{Malcev1958}. Obviously, the~$\mathcal{C}$\nobreakdash-residu\-al\-ity of~$X$ is equivalent to~the~$\mathcal{C}$\nobreakdash-sep\-a\-ra\-bil\-ity of~its trivial subgroup. It is also known (see Proposition~\ref{ep603} below) that if $\mathcal{C}$ consists only of~periodic groups, then any $\mathcal{C}$\nobreakdash-sep\-a\-ra\-ble subgroup of~$X$ is $\pi(\mathcal{C})^{\prime}$\nobreakdash-iso\-lat\-ed in~this group. Thus, for~such a~class of~groups~$\mathcal{C}$, the~main problem in~the~study of~$\mathcal{C}$\nobreakdash-sep\-a\-ra\-bil\-ity is the~search for~conditions under which a~$\pi(\mathcal{C})^{\prime}$\nobreakdash-iso\-lat\-ed subgroup turns~out to~be $\mathcal{C}$\nobreakdash-sep\-a\-ra\-ble.

\begin{etheorem}\label{et02}
Suppose that $\mathcal{C}$ is a~root class of~groups consisting only of~periodic groups and~closed under taking quotient groups,\kern-1pt{} $G$\kern-1.5pt{} is residually a~$\mathcal{C}$\nobreakdash-group,\kern-1pt{} $H$\kern-2.5pt{}~and~$K$\kern-2.5pt{} are $\pi(\mathcal{C})$\nobreakdash-bound\-ed, and~$H \ne G \ne K$. Suppose also that $H_{n} = H_{n+1}$ for~some $n \geqslant 1$ and,~for~any $i \in \{0,\, 1,\, \ldots,\, n-1\}$ and~$N \in \mathcal{C}^{*}(H_{i+1}K_{i+1})$, the~subgroup $\mathcal{I}_{\pi(\mathcal{C})^{\prime}}(K_{i}, N)$ is $\mathcal{C}$\nobreakdash-sep\-a\-ra\-ble in~$K_{i}$. Then $\mathfrak{G}$ is residually a~$\mathcal{C}$\nobreakdash-group if and~only~if

\textup{1)}\hspace{1ex}$H_{n} = K_{n}$\textup{;}

\textup{2)}\hspace{1ex}the~subgroup $E = \operatorname{sgp}\{H_{n}, t\}$ is residually a~$\mathcal{C}$\nobreakdash-group\textup{;}

\textup{3)}\hspace{1ex}$H$~and~$K$ are $\pi(\mathcal{C})^{\prime}$\nobreakdash-iso\-lat\-ed in~$G$.
\end{etheorem}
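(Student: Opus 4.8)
The plan is to reduce the $\mathcal{C}$-residuality of $\mathfrak{G}=\mathfrak{K}_{0}$ to that of the terminal split extension $\mathfrak{K}_{n}$ by running the descent--ascent method along the chain $\mathfrak{K}_{0},\mathfrak{K}_{1},\ldots,\mathfrak{K}_{n}$. The technical core is a single-step equivalence: under the standing hypotheses (so that, for each $N\in\mathcal{C}^{*}(H_{i+1}K_{i+1})$, the isolator $\mathcal{I}_{\pi(\mathcal{C})^{\prime}}(K_{i},N)$ is $\mathcal{C}$-separable in $K_{i}$), the group $\mathfrak{K}_{i}$ is residually a $\mathcal{C}$-group if and only if $\mathfrak{K}_{i+1}$ is residually a $\mathcal{C}$-group and both $H_{i+1}$ and $K_{i+1}$ are $\pi(\mathcal{C})^{\prime}$-isolated in $K_{i}$. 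Granting this reduction for $i=0,\ldots,n-1$, the theorem follows by iteration once the two endpoints are understood: that $\mathfrak{K}_{n}$ coincides with the split extension $E$ precisely when $H_{n}=K_{n}$, and that the accumulated isolation conditions collapse to condition~3.

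For necessity, assume $\mathfrak{G}$ is residually a $\mathcal{C}$-group. Since $\mathcal{C}$ is a root class (hence closed under subgroups) and $E=\operatorname{sgp}\{H_{n},t\}$ is a subgroup of $\mathfrak{G}$, residual $\mathcal{C}$-ness passes to $E$, which is condition~2. Applying the reduction at level $i=0$ yields condition~3 (that $H=H_{1}$ and $K=K_{1}$ are $\pi(\mathcal{C})^{\prime}$-isolated in $G=K_{0}$) together with the residual $\mathcal{C}$-ness of $\mathfrak{K}_{1}$; these isolation requirements are the expected residues of $\mathcal{C}$-separability of the associated subgroups (cf.\ Proposition~\ref{ep603}). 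Iterating down to level $n-1$ leaves $\mathfrak{K}_{n}$ residually a $\mathcal{C}$-group and shows $H_{i+1},K_{i+1}$ isolated in $K_{i}$ for every $i<n$, whence $H_{n}$ is $\pi(\mathcal{C})^{\prime}$-isolated in $K_{n}$. To obtain condition~1 I would exploit periodicity: in any homomorphism $\sigma$ of $\mathfrak{K}_{n}=\langle K_{n},t;\ t^{-1}H_{n}t=K_{n}\rangle$ onto a $\mathcal{C}$-group the image of $t$ has finite order $m$, so the chain $t^{-j}H_{n}t^{j}$ ($j\geqslant0$) is increasing and $m$-periodic modulo $\ker\sigma$, hence constant, forcing $H_{n}\sigma=K_{n}\sigma$ and thus $K_{n}\subseteq H_{n}\ker\sigma$ for every such $\sigma$. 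Combining this with the $\mathcal{C}$-separability of $H_{n}$ supplied by the level-$(n-1)$ hypothesis together with the isolation just derived gives $K_{n}\subseteq\bigcap_{\sigma}H_{n}\ker\sigma=H_{n}$, i.e.\ condition~1.

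For sufficiency, assume conditions~1--3. First I would propagate isolation along the chain: since $H_{i+1}=H_{i}\cap K_{i}$ is the intersection of the isolated subgroups $H_{i},K_{i}$ with $K_{i}$, and $K_{i+1}=H_{i+1}\varphi$ is the image of an isolated subgroup under the isomorphism $\varphi\colon H_{i}\to K_{i}$, an induction starting from condition~3 shows that $H_{i+1}$ and $K_{i+1}$ are $\pi(\mathcal{C})^{\prime}$-isolated in $K_{i}$ for all $i\in\{0,\ldots,n-1\}$. By condition~1 the tuple $(K_{n},H_{n},K_{n},\varphi)$ satisfies $H_{n}=K_{n}$, so $\mathfrak{K}_{n}$ is the split extension isomorphic to $E$ and is residually a $\mathcal{C}$-group by condition~2. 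Feeding the isolation conditions into the ascent half of the reduction and climbing from $\mathfrak{K}_{n}$ back to $\mathfrak{K}_{0}=\mathfrak{G}$ then yields the residual $\mathcal{C}$-ness of $\mathfrak{G}$.

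The main obstacle is the single-step reduction lemma itself, and in particular its ascent half, where one must build a homomorphism of $\mathfrak{K}_{i}$ onto a $\mathcal{C}$-group separating a prescribed nontrivial element. This forces one to extend a suitable $\mathcal{C}$-quotient of $K_{i}$ across the stable letter $t$, i.e.\ to choose it compatibly with the identification $t^{-1}H_{i+1}t=K_{i+1}$; the hypothesis that each isolator $\mathcal{I}_{\pi(\mathcal{C})^{\prime}}(K_{i},N)$ is $\mathcal{C}$-separable is exactly what makes such compatible quotients available, while the $\pi(\mathcal{C})$-boundedness of the abelian subgroups $H$ and $K$ guarantees that they possess enough $\mathcal{C}$-quotients for the construction to close up. The restriction $H\neq G\neq K$ is used throughout to keep the associated subgroups proper, so that Britton's lemma applies and the reduction remains non-degenerate.
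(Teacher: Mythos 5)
Your outline correctly identifies the overall shape of the argument --- descend and ascend along the chain $\mathfrak{K}_{0},\ldots,\mathfrak{K}_{n}$ and reduce everything to the split extension $E$ --- but there is a genuine gap: the ``single-step equivalence'' on which everything rests is never proved, and as you have formulated it it is not what the method actually delivers. The descent half (Propositions~\ref{ep402} and~\ref{ep502}) does give one implication, but the ascent half cannot be iterated as a single uniform ``iff'' at each level: what transfers upward is the property of being a \emph{strong} $(H_{i+1},K_{i+1})$-filtration for the graded family $\mathcal{C}_{r}^{*}(K_{i},H_{i+1},K_{i+1},\varphi)$, and the reserve $r$ drops by one at each step (Proposition~\ref{ep404}). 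This is why one must start at level $n$, where Proposition~\ref{ep305} shows $\mathcal{C}_{\cap}^{*}=\mathcal{C}_{n}^{*}$ for the abelian tuple $(K_{n},H_{n+1},K_{n+1},\varphi)$, and land at level $0$ inside $\mathcal{C}_{0}^{*}\subseteq\mathcal{C}_{\cap}^{*}$; there is no statement of the form ``$\mathcal{C}_{\cap}^{*}$-filtration at level $i+1$ implies $\mathcal{C}_{\cap}^{*}$-filtration at level $i$''. The whole apparatus of admissible numbers and generalized direct products over cycles (Section~\ref{esec02}) and of canonical ascends and $\mathcal{C}$-regularity (Section~\ref{esec04}) exists precisely to make this one step work, and your proposal replaces it with the remark that the hypotheses ``are exactly what makes such compatible quotients available''. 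That is the assertion to be proved, not a proof of it.

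Two further points. First, the paper's actual proof of Theorem~\ref{et02} is a short reduction to Theorem~\ref{et03}: the only new content is that, for $M\in\mathcal{C}^{*}(P_{i+1})$, the isolator $\mathfrak{I}=\mathcal{I}_{\pi(\mathcal{C})^{\prime}}(K_{i},M)$ is central, meets $P_{i+1}$ in exactly $M$, and --- this is where $\pi(\mathcal{C})$-boundedness enters, via Propositions~\ref{ep606} and~\ref{ep607} --- the quotient $P_{i+1}/M$ is \emph{finite}, so that a single subgroup from $\mathcal{C}^{*}(K_{i}/\mathfrak{I})$ can avoid all nontrivial elements of $P_{i+1}\mathfrak{I}/\mathfrak{I}$ and witness the $\mathcal{C}$-regularity of $K_{i}$ with respect to $P_{i+1}$. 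Your reading of boundedness as guaranteeing that $H$ and $K$ ``possess enough $\mathcal{C}$-quotients'' misses its actual role. Second, your derivation of condition~1 is incomplete: showing $K_{n}\sigma=H_{n}\sigma$ for every homomorphism $\sigma$ of $\mathfrak{K}_{n}$ onto a $\mathcal{C}$-group yields $K_{n}=H_{n}$ only if $H_{n}$ is $\mathcal{C}$-separable in $\mathfrak{K}_{n}$, which is not among your hypotheses ($H_{n}$ need not belong to $\mathcal{C}^{*}(P_{n})$, so the separability assumption on isolators does not apply to it). The paper instead applies Proposition~\ref{ep503} to $\mathfrak{G}_{n-1}=\mathrm{HNN}(G,H_{n},K_{n},\varphi)$, where $K_{n}\ne G$ allows one to form the length-four commutator $[t^{-1}kt,g]$ and invoke Britton's lemma.
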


\begin{ecorollary}\label{ec03}
Suppose that $\mathcal{C}$ is a~root class of~groups consisting only of~periodic groups and~closed under taking quotient groups, $G$ is $\pi(\mathcal{C})$\nobreakdash-bound\-ed nilpotent, $H \ne G \ne K$. Suppose also that there exists a~number $m \geqslant 0$ such that $H_{m+1}$ and~$K_{m+1}$ are finitely generated or~$\pi^{\prime}$\nobreakdash-iso\-lat\-ed in~$K_{m}$ for~some finite subset~$\pi$ of~$\pi(\mathcal{C})$. Then $\mathfrak{G}$ is residually a~$\mathcal{C}$\nobreakdash-group if and~only~if

\textup{1)}\hspace{1ex}$H_{n} = K_{n}$ for~some $n \geqslant 1$\textup{;}

\textup{2)}\hspace{1ex}the~subgroup $E = \operatorname{sgp}\{H_{n}, t\}$ is residually a~$\mathcal{C}$\nobreakdash-group\textup{;}

\textup{3)}\hspace{1ex}$G$ has no $\pi(\mathcal{C})^{\prime}$\nobreakdash-tor\-sion, $H$~and~$K$ are $\pi(\mathcal{C})^{\prime}$\nobreakdash-iso\-lat\-ed in~$G$.
\end{ecorollary}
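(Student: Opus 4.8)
The plan is to obtain the corollary from Theorem~\ref{et02}, using the hypothesis that $G$ is $\pi(\mathcal{C})$-bounded nilpotent to supply the standing assumptions of that theorem and to sharpen its conclusion~3). I would first isolate three facts about an arbitrary $\pi(\mathcal{C})$-bounded nilpotent group $N$: \emph{(i)} every subgroup of $N$ is again $\pi(\mathcal{C})$-bounded nilpotent, so that $H$, $K$ and all the $H_{i}$, $K_{i}$ are $\pi(\mathcal{C})$-bounded; \emph{(ii)} $N$ is residually a $\mathcal{C}$-group if and only if it has no $\pi(\mathcal{C})^{\prime}$-torsion; and \emph{(iii)} if $N$ has no $\pi(\mathcal{C})^{\prime}$-torsion, then every $\pi(\mathcal{C})^{\prime}$-isolated subgroup of $N$ is $\mathcal{C}$-separable. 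Fact~\emph{(iii)} is what discharges the separability hypothesis of Theorem~\ref{et02}: each $\mathcal{I}_{\pi(\mathcal{C})^{\prime}}(K_{i},N)$ is $\pi(\mathcal{C})^{\prime}$-isolated in $K_{i}$ by construction, and $K_{i}\le G$ inherits the absence of $\pi(\mathcal{C})^{\prime}$-torsion, so the isolator is automatically $\mathcal{C}$-separable (the converse, that $\mathcal{C}$-separability forces $\pi(\mathcal{C})^{\prime}$-isolation, is Proposition~\ref{ep603}).

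Granting \emph{(i)}--\emph{(iii)}, the sufficiency of 1)--3) is immediate. From 1) we get $H_{n}=K_{n}$, hence $H_{n+1}=H_{n}\cap K_{n}=H_{n}$, which is the stabilization $H_{n}=H_{n+1}$ demanded by Theorem~\ref{et02}. The absence of $\pi(\mathcal{C})^{\prime}$-torsion in $G$ from~3) together with~\emph{(ii)} makes $G$ residually a $\mathcal{C}$-group, \emph{(i)} makes $H$, $K$ $\pi(\mathcal{C})$-bounded, and \emph{(iii)} verifies the isolator-separability assumption. As the numbered conditions of Theorem~\ref{et02} coincide with 1)--3), its backward implication gives that $\mathfrak{G}$ is residually a $\mathcal{C}$-group.

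For necessity, suppose $\mathfrak{G}$ is residually a $\mathcal{C}$-group. Then its subgroup $G$ is residually a $\mathcal{C}$-group, so by~\emph{(ii)} it has no $\pi(\mathcal{C})^{\prime}$-torsion, which is the first half of~3). To obtain the remaining assertions I intend to apply Theorem~\ref{et02}, and for that I must exhibit an $n$ with $H_{n}=H_{n+1}$. Here the finite-generation-or-isolation hypothesis is used. Since $\varphi$ is realized in $\mathfrak{G}$ by conjugation by $t$, one has $K_{i}=t^{-1}H_{i}t$, so the subgroups satisfy $H_{i+1}=H_{i}\cap t^{-1}H_{i}t$ and form a descending chain that from index $m+1$ onwards lies in the $\pi(\mathcal{C})$-bounded abelian group $D=H_{m+1}K_{m+1}$; the condition $H_{n}=H_{n+1}$ is precisely the stabilization of this chain.

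The stabilization is the principal obstacle, and it genuinely requires residual $\mathcal{C}$-ness of $\mathfrak{G}$ rather than the finiteness hypotheses alone: the tuple $G=\langle a\rangle$, $H=\langle a^{2}\rangle$, $K=\langle a^{4}\rangle$ meets every other hypothesis yet produces the strictly descending chain $H_{i}=\langle a^{2^{i}}\rangle$, the reason being that for it $\mathfrak{G}$ is not even residually finite, so that necessity holds vacuously. My plan is therefore: first check that finite generation, respectively $\pi^{\prime}$-isolation, propagates down the chain, so that each $H_{i+1}$ remains finitely generated, respectively $\pi^{\prime}$-isolated in $K_{i}$, for all $i\ge m$; then observe that the torsion-free ranks of the $H_{i}$ form a non-increasing sequence of non-negative integers and so stabilize, after which every quotient $H_{i}/H_{i+1}$ is periodic, and $\pi(\mathcal{C})$-boundedness together with the finiteness or isolation confines these quotients to finite groups of bounded order; and finally argue that an infinite strict descent of the $H_{i}$ would, through the descent-and-ascent machinery behind Theorems~\ref{et01}--\ref{et02}, yield a nontrivial element of $\mathfrak{G}$ separated by no $\mathcal{C}$-quotient, contradicting residuality. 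Once $H_{n}=H_{n+1}$ is secured, Theorem~\ref{et02} applies and its forward implication delivers 1), 2), and the $\pi(\mathcal{C})^{\prime}$-isolation of $H$ and $K$, which combined with the absence of torsion completes~3).
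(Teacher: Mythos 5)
Your overall route coincides with the paper's: reduce to Theorem~\ref{et03}/Theorem~\ref{et02} via the fact that $\pi(\mathcal{C})^{\prime}$-isolated subgroups of $\pi(\mathcal{C})$-bounded nilpotent groups are $\mathcal{C}$-separable (Proposition~\ref{ep611}), use that fact with the trivial subgroup to convert ``no $\pi(\mathcal{C})^{\prime}$-torsion'' into ``$G$ is residually a $\mathcal{C}$-group'' for the sufficiency direction, and use residuality of $\mathfrak{G}$ to force the stabilization $H_{n}=H_{n+1}$ in the necessity direction. The verification of the separability hypothesis and the handling of sufficiency are sound and match the paper.

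The gap is in the stabilization step of the necessity direction, which the paper isolates as Proposition~\ref{ep608}. Your outline correctly gets as far as the paper does through the intermediate stage: ranks stabilize at some level $l$, so $K_{l}/H_{l+1}$ and $K_{l}/K_{l+1}$ are periodic and then finite (finitely generated case, or $\pi^{\prime}$-isolation plus $\pi(\mathcal{C})$-boundedness with $\pi$ finite). But your concluding move --- ``an infinite strict descent would yield a nontrivial element separated by no $\mathcal{C}$-quotient'' --- is not an argument, and the preceding observation that the quotients $H_{i}/H_{i+1}$ are finite of bounded order cannot by itself give stabilization: your own example $H_{i}=\langle a^{2^{i}}\rangle$ has all successive quotients of order $2$ and never stabilizes. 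What actually closes the argument in the paper is Proposition~\ref{ep506}: since $H_{l+1}$ and $K_{l+1}$ have finite index in $K_{l}$ and $\mathfrak{K}_{l}=\mathrm{HNN}(K_{l},H_{l+1},K_{l+1},\varphi)$ is residually a $\mathcal{C}$-group (Proposition~\ref{ep601}), there is a subgroup $Q\in\mathcal{C}^{*}(K_{l})$ with $Q\leqslant H_{l+1}\cap K_{l+1}$ and $Q\varphi=Q$; by Proposition~\ref{ep602} this $\varphi$-invariant $Q$ lies in every $H_{i}$ with $i\geqslant l+1$, and since $K_{l}/Q$ is finite the chain is trapped between $K_{l}$ and $Q$ and must stabilize. (The degenerate cases $H_{l+1}=K_{l}$ or $K_{l+1}=K_{l}$ are handled separately via Proposition~\ref{ep503}.) Without producing this compatible finite-index subgroup $Q$ --- which is precisely where the residuality of $\mathfrak{G}$ enters quantitatively rather than rhetorically --- your necessity argument does not go through.
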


\begin{ecorollary}\label{ec04}
Suppose that $\mathcal{C}$ is a~root class of~groups consisting only of~periodic groups and~closed under taking quotient groups, $\pi(\mathcal{C})$ contains all primes, $G$ is $\pi(\mathcal{C})$\nobreakdash-bound\-ed solvable, and~$H \ne G \ne K$. Suppose also that there exists a~number $m \geqslant 0$ such that $H_{m+1}$ and~$K_{m+1}$ are finitely generated or~$\pi^{\prime}$\nobreakdash-iso\-lat\-ed in~$K_{m}$ for~some finite set of~primes~$\pi$. Then $\mathfrak{G}$ is residually a~$\mathcal{C}$\nobreakdash-group if and~only if $H_{n} = K_{n}$ for~some $n \geqslant 1$.
\end{ecorollary}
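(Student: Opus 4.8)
The plan is to obtain the statement from Theorem~\ref{et02} by checking its hypotheses and then collapsing its three equivalent conditions down to the single relation $H_n = K_n$. The collapse is driven entirely by the assumption that $\pi(\mathcal{C})$ contains all primes, so that $\pi(\mathcal{C})^{\prime} = \varnothing$. Consequently every subgroup of every group is $\pi(\mathcal{C})^{\prime}$-isolated, and $\mathcal{I}_{\pi(\mathcal{C})^{\prime}}(K_i, N) = N$ for each $i$ and each $N$. In particular, condition~3 of Theorem~\ref{et02}, demanding that $H$ and $K$ be $\pi(\mathcal{C})^{\prime}$-isolated in $G$, holds vacuously, and the separability assumption of that theorem reduces to requiring that each $N \in \mathcal{C}^{*}(H_{i+1}K_{i+1})$ itself be $\mathcal{C}$-separable in $K_i$ (note $H_{i+1}K_{i+1} \leqslant K_i$, so this is meaningful).

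Two consequences of $\pi(\mathcal{C})$ being the set of all primes, combined with the closure of $\mathcal{C}$ under subgroups, extensions, and quotients, underlie the rest. First, $\mathcal{C}$ contains every finite solvable group: each composition factor $\mathbb{Z}/p$ lies in $\mathcal{C}$ because $p \in \pi(\mathcal{C})$, and closure under extensions builds up the whole group along a composition series. Second, it then follows from the structural properties of $\pi(\mathcal{C})$-bounded solvable groups that any such group is residually a $\mathcal{C}$-group and that a normal subgroup with $\mathcal{C}$-quotient is $\mathcal{C}$-separable in any $\pi(\mathcal{C})$-bounded solvable overgroup. Applying the first fact to $G$ gives that $G$ is residually a $\mathcal{C}$-group; since $H$ and $K$ are central, hence abelian, subgroups of the $\pi(\mathcal{C})$-bounded solvable group $G$, they are $\pi(\mathcal{C})$-bounded, and each $K_i$ is $\pi(\mathcal{C})$-bounded solvable, so the reduced separability hypothesis holds. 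For condition~2, once $H_n = K_n$ the subgroup $E = \operatorname{sgp}\{H_n, t\}$ is a split extension of the $\pi(\mathcal{C})$-bounded solvable group $H_n$ by $\langle t\rangle \cong \mathbb{Z}$; such an extension is again $\pi(\mathcal{C})$-bounded solvable, and therefore residually a $\mathcal{C}$-group, so condition~2 is automatic as well.

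To invoke Theorem~\ref{et02} one still needs an index $n$ with $H_n = H_{n+1}$, and producing it is where the hypothesis on $m$ enters and where I expect the main difficulty to lie. The point is that in a $\pi(\mathcal{C})$-bounded solvable group the assumption that $H_{m+1}$ and $K_{m+1}$ are finitely generated, or $\pi^{\prime}$-isolated in $K_m$ for a finite set of primes $\pi$, forces the descending chain $H_1 \geqslant H_2 \geqslant \cdots$ to stabilize. Establishing this — showing that the finiteness/isolation property is inherited along the chain and yields the required descending chain condition on the subgroups $H_i$ — together with the verification of the separability hypothesis above constitutes the technical heart of the argument; this is the one genuine complication beyond what is needed for the nilpotent Corollary~\ref{ec03}, and it is absorbed entirely into the structural theory of $\pi(\mathcal{C})$-bounded solvable groups.

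With stabilization in hand, Theorem~\ref{et02} says that $\mathfrak{G}$ is residually a $\mathcal{C}$-group if and only if $H_n = K_n$ for this particular $n$, and it remains only to match this with the stated ``$H_j = K_j$ for some $j$''. This is immediate once one notes that $H_n = H_{n+1}$ already forces both chains to be constant from index $n$ on: indeed $K_{n+1} = H_{n+1}\varphi = H_n\varphi = K_n$, whence $H_{n+2} = H_{n+1} \cap K_{n+1} = H_n \cap K_n = H_{n+1}$, and inductively $H_i = H_n$ and $K_i = K_n$ for all $i \geqslant n$. Therefore $H_j = K_j$ for some $j$ if and only if $H_n = K_n$, and combining this equivalence with the preceding reductions yields the corollary.
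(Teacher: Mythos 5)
Your overall reduction to Theorem~\ref{et02} matches the paper's, and the way you collapse conditions 2) and 3) of that theorem is essentially right: condition 3) is vacuous because $\pi(\mathcal{C})^{\prime} = \varnothing$, and condition 2) holds because $E$ is $\pi(\mathcal{C})$\nobreakdash-bounded solvable and hence residually a~$\mathcal{C}$\nobreakdash-group. The final bookkeeping identifying ``$H_{j} = K_{j}$ for some~$j$'' with ``$H_{n} = K_{n}$ for the stabilizing~$n$'' is also correct. But the separability and residuality facts you attribute to ``the structural theory of $\pi(\mathcal{C})$\nobreakdash-bounded solvable groups'' are not free: the paper derives them from Mal'cev's theorem on the finite separability of arbitrary subgroups of such groups together with the inclusion of all finite solvable groups in~$\mathcal{C}$ (Propositions~\ref{ep609} and~\ref{ep610}); you assert them without any argument.

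The genuine gap is your treatment of the stabilization $H_{n} = H_{n+1}$. You claim that the hypothesis on~$m$ (finite generation or $\pi^{\prime}$\nobreakdash-isolation of $H_{m+1}$ and~$K_{m+1}$) by itself ``forces the descending chain $H_{1} \geqslant H_{2} \geqslant \cdots$ to stabilize''. That is false. Take $G = \mathbb{Z}$, $H = 2\mathbb{Z}$, $K = 3\mathbb{Z}$, $2\varphi = 3$: then $G$ is polycyclic, hence $\pi(\mathcal{C})$\nobreakdash-bounded solvable, and every $H_{i}$, $K_{i}$ is finitely generated, yet $H_{i} = 2 \cdot 3^{\,i-1}\mathbb{Z}$ descends strictly forever (this is the non-residually-finite group $\mathrm{BS}(2,3)$). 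Stabilization is not a structural property of~$G$; it is a consequence of $\mathfrak{G}$ being residually a~$\mathcal{C}$\nobreakdash-group, which is exactly what Proposition~\ref{ep608} proves: the residuality of~$\mathfrak{G}$ is passed down to the HNN-extensions $\mathfrak{G}_{l}$ and~$\mathfrak{K}_{l}$ via Proposition~\ref{ep601}, and Propositions~\ref{ep503} and~\ref{ep506} then yield a $\varphi$\nobreakdash-invariant subgroup $Q \leqslant H_{l+1} \cap K_{l+1}$ of finite index in~$K_{l}$, which pins the chain. So in the necessity direction you must first invoke the residual $\mathcal{C}$\nobreakdash-ness of~$\mathfrak{G}$ to obtain $H_{n} = H_{n+1}$ before Theorem~\ref{et02} can be applied (in the sufficiency direction $H_{n} = K_{n}$ supplies it for free). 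As written, your proposal replaces the one nontrivial ingredient of the necessity proof by a false lemma.
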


Theorems~\ref{et01} and~\ref{et02} formulated above are in~fact corollaries of~Theorems~\ref{et03} and~\ref{et04}, which are given below and~use the~notion of~regularity of~a~group with~respect to~a~subgroup.{\parfillskip=0pt\par}

Suppose that $\mathcal{C}$ is a~class of~groups, $X$ is a~group, and~$Y$ is a~subgroup of~$X$. We say that $X$ is \emph{$\mathcal{C}$\nobreakdash-reg\-u\-lar} with~respect to~$Y$ if, for~any subgroup $M \in \mathcal{C}^{*}(Y)$, there exists a~subgroup $N \in \mathcal{C}^{*}(X)$ such that $M = N \cap Y$. The~notion of~regularity generalizes the~classical notion of~a~\emph{potent element}~\cite{Allenby1981}: if $\mathcal{F}$ is the~class of~all finite groups, then an~element $x \in X$ is potent if and~only if $X$ is $\mathcal{F}$\nobreakdash-reg\-u\-lar with~respect to~the~cyclic subgroup $\langle x \rangle$.

\begin{etheorem}\label{et03}
Suppose that $\mathcal{C}$ is a~root class of~groups consisting only of~periodic groups and~closed under taking quotient groups, $G$ is residually a~$\mathcal{C}$\nobreakdash-group, $H \ne G \ne K$,\linebreak and~$H_{n} = H_{n+1}$ for~some $n \geqslant 1$. Suppose also that, for~any $i \in \{0,\, 1,\, \ldots,\, n-1\}$, $K_{i}$ is $\mathcal{C}$\nobreakdash-reg\-u\-lar with~respect to~$H_{i+1}K_{i+1}$ and~$\mathcal{I}_{\pi(\mathcal{C})^{\prime}}(K_{i}, H_{i+1}K_{i+1})$ is $\mathcal{C}$\nobreakdash-sep\-a\-ra\-ble in~$K_{i}$. Then $\mathfrak{G}$ is residually a~$\mathcal{C}$\nobreakdash-group if and~only~if

\textup{1)}\hspace{1ex}$H_{n} = K_{n}$\textup{;}

\textup{2)}\hspace{1ex}the~subgroup $E = \operatorname{sgp}\{H_{n}, t\}$ is residually a~$\mathcal{C}$\nobreakdash-group\textup{;}

\textup{3)}\hspace{1ex}$H$~and~$K$ are $\pi(\mathcal{C})^{\prime}$\nobreakdash-iso\-lat\-ed in~$G$.
\end{etheorem}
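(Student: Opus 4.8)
Theorem~\ref{et03} is a criterion, so I would split the argument into necessity (the forward direction: $\mathfrak{G}$ residually $\mathcal{C}$ implies conditions 1)--3)) and sufficiency (the harder converse). The key tool is the method of descent and ascent of compatible subgroups described in the introduction, which establishes, under the stated regularity and separability hypotheses, that for each $i \in \{0, 1, \ldots, n-1\}$ the $\mathcal{C}$-residuality of $\mathfrak{K}_{i} = \mathrm{HNN}(K_{i}, H_{i+1}, K_{i+1}, \varphi)$ is equivalent to that of $\mathfrak{K}_{i+1}$. Since $\mathfrak{K}_0 = \mathfrak{G}$ (as $K_0 = G$, $H_1 = H$, $K_1 = K$), iterating this equivalence reduces the residuality of $\mathfrak{G}$ to that of $\mathfrak{K}_n$. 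The crux of the whole theorem is thus proving a single descent/ascent step, and then chaining $n$ copies of it.

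\textbf{Necessity.} The plan is to assume $\mathfrak{G}$ is residually a $\mathcal{C}$-group and extract the three conditions. For condition 3), I would use the fact (Proposition~\ref{ep603}) that $\mathcal{C}$-separable subgroups of a group with $\mathcal{C}$ consisting of periodic groups are $\pi(\mathcal{C})^{\prime}$-isolated; since the associated subgroups $H$ and $K$ must be separable in the $\mathcal{C}$-residual $\mathfrak{G}$ (a standard necessary condition for HNN-extensions), their $\pi(\mathcal{C})^{\prime}$-isolatedness in $G$ follows. For condition 1), the equality $H_n = K_n$ should follow from the descent equivalence together with the observation already recorded in the excerpt: when $\mathfrak{K}_n$ is $\mathcal{C}$-residual, the stable subgroup forces $H_n = K_n$ (otherwise $\varphi$ restricted to $H_n$ would fail to close up into an automorphism and one could produce a nontrivial element killed by every $\mathcal{C}$-quotient). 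Condition 2) is then immediate once $H_n = K_n$, because the excerpt shows $\mathfrak{K}_n \cong E = \operatorname{sgp}\{H_n, t\}$, and $E$ is a subgroup of the $\mathcal{C}$-residual $\mathfrak{G}$, hence itself $\mathcal{C}$-residual (using closure of $\mathcal{C}$ under subgroups).

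\textbf{Sufficiency.} This is where the real work lies. Assuming 1)--3), I would first invoke 1) and 2) to conclude that $\mathfrak{K}_n \cong E$ is residually a $\mathcal{C}$-group. Then I would run the ascent: using the regularity of each $K_i$ with respect to $H_{i+1}K_{i+1}$ and the $\mathcal{C}$-separability of $\mathcal{I}_{\pi(\mathcal{C})^{\prime}}(K_i, H_{i+1}K_{i+1})$ in $K_i$, I would prove the lemma that $\mathcal{C}$-residuality of $\mathfrak{K}_{i+1}$ lifts to $\mathfrak{K}_i$. The mechanism of each step is to take a nontrivial element $g \in \mathfrak{K}_i$ and, after putting it in a reduced HNN-normal form, separate it by a homomorphism onto a $\mathcal{C}$-group: elements of syllable length zero are handled by $\mathcal{C}$-residuality of $K_i$, while elements of positive length require constructing a $\mathcal{C}$-quotient of $\mathfrak{K}_i$ in which the associated subgroups are mapped compatibly and the stable letter acts faithfully enough to detect the element. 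Isolatedness (condition 3) propagated down through the $K_i$) guarantees the associated subgroups remain isolated at each level, which is exactly what makes the separation possible for a periodic class $\mathcal{C}$.

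\textbf{The main obstacle.} I expect the hardest part to be the single ascent step, specifically the compatibility of the separating homomorphisms on the two associated subgroups $H_{i+1}$ and $K_{i+1}$ inside $K_i$. To build a $\mathcal{C}$-quotient of the HNN-extension $\mathfrak{K}_i$ one needs a normal subgroup $N \in \mathcal{C}^{*}(K_i)$ that behaves well with respect to $\varphi$, and the role of the regularity hypothesis ($K_i$ is $\mathcal{C}$-regular with respect to $H_{i+1}K_{i+1}$) is precisely to extend a suitable $M \in \mathcal{C}^{*}(H_{i+1}K_{i+1})$ to such an $N$. Marrying this extension with the $\pi(\mathcal{C})^{\prime}$-isolator separability — so that the induced map on the quotient still has isomorphic, compatibly identified images of $H_{i+1}$ and $K_{i+1}$ — is the delicate technical heart of the proof, and I would expect to isolate it as a standalone lemma about a single HNN-tuple before deploying it $n$ times.
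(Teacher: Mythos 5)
Your skeleton is the right one, and your necessity half essentially reproduces the paper's: condition~2 from closure of $\mathcal{C}$ under subgroups, condition~3 from the $(H,K)$-filtration property of $\mathcal{C}_{\cap}^{*}(G,H,K,\varphi)$ (Proposition~\ref{ep502}) followed by Proposition~\ref{ep603}, and condition~1 from the length-four commutator trick (the paper packages it as Proposition~\ref{ep503}, applied to $\mathrm{HNN}(G,H_{n},K_{n},\varphi)$, whose residuality comes from the descent Propositions~\ref{ep601} and~\ref{ep402}). The sufficiency half, however, has a genuine gap, and it sits exactly at the point you defer as ``the delicate technical heart.''

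The problem is twofold. First, your inductive invariant is too weak. You propose to prove, as a standalone lemma, that $\mathcal{C}$-residuality of $\mathfrak{K}_{i+1}$ lifts to $\mathfrak{K}_{i}$, and then chain $n$ copies. But for a periodic class $\mathcal{C}$, knowing that $\mathfrak{K}_{i+1}$ is residually a $\mathcal{C}$-group only tells you (via Proposition~\ref{ep502}) that the family $\mathcal{C}_{\cap}^{*}(K_{i+1}, H_{i+2}, K_{i+2}, \varphi)$ is a filtration; it does not tell you that its members extend to normal subgroups of $\mathfrak{K}_{i+1}$ \emph{in a way that survives further ascent}. The paper's actual invariant is that the reserve-indexed family $\mathcal{C}_{r}^{*}$ is a \emph{strong} $(H,K)$-filtration, with the reserve $r$ decreasing by one at each ascent step (Propositions~\ref{ep403} and~\ref{ep404}); the chain starts at level $n$ with reserve $n$ (where Proposition~\ref{ep305} shows $\mathcal{C}_{\cap}^{*} = \mathcal{C}_{n}^{*}$ because the base is abelian and the associated subgroups coincide) and ends at level $0$ with reserve $0$. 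At an intermediate level there is no way to recover the reserve from mere residuality of $\mathfrak{K}_{i}$, so the level-by-level restart you describe cannot be carried out with the available tools. Second, and relatedly, your sketch never says how the stable letter is handled. Since $\mathcal{C}$ consists of periodic groups, any homomorphism of $\mathfrak{K}_{i}$ onto a $\mathcal{C}$-group sends $t$ to an element of finite $\pi(\mathcal{C})$-order; one must therefore produce, for each compatible $N$, a quotient $K_{i}/N$ on which the induced isomorphism extends to a situation where a finite-order ``rotation'' exists and the base still embeds. This is the content of the admissibility machinery of Section~\ref{esec02} (the cyclic generalized direct product of Proposition~\ref{ep204}, and the descent of admissibility in Proposition~\ref{ep202}, which is precisely what consumes one unit of reserve per level). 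Regularity and isolator-separability alone, which is all your lemma invokes, do not produce this finite-order compatibility; without it the ``$\mathcal{C}$-quotient of $\mathfrak{K}_{i}$ in which the stable letter acts faithfully enough'' does not exist in general. So the missing idea is the $\mathcal{C}_{r}^{*}$ bookkeeping together with the admissible-number construction, and the step that would fail is the single-level lifting lemma as you state it.
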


\begin{etheorem}\label{et04}
Suppose that $\mathcal{C}$ is a~root class of~groups containing non-pe\-ri\-od\-ic groups and~closed under taking quotient groups, $G$ is residually a~$\mathcal{C}$\nobreakdash-group, and,~for~some $n \geqslant 0$, there exists a~subgroup $Q \in \mathcal{C}^{*}(K_{n})$ satisfying at~least one of~the~following conditions:

$(\alpha)$\hspace{1ex}$H_{n+1} \cap Q = 1 = K_{n+1} \cap Q$,

$(\beta)$\hspace{1ex}$Q \leqslant H_{n+1} \cap K_{n+1}$ and~$Q\varphi = Q$.

\noindent
Suppose also that, for~any $i \in \{0,\, 1,\, \ldots,\, n-1\}$, $K_{i}$ is $\mathcal{C}$\nobreakdash-reg\-u\-lar with~respect to~$H_{i+1}K_{i+1}$ and~$H_{i+1}K_{i+1}$ is $\mathcal{C}$\nobreakdash-sep\-a\-ra\-ble in~$K_{i}$. Then $\mathfrak{G}$ is residually a~$\mathcal{C}$\nobreakdash-group.
\end{etheorem}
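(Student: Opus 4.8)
The plan is to prove the statement by ascending the chain $\mathfrak{K}_{n}, \mathfrak{K}_{n-1}, \ldots, \mathfrak{K}_{0} = \mathfrak{G}$, showing that $\mathfrak{K}_{i}$ is residually a~$\mathcal{C}$\nobreakdash-group for each $i$ from $n$ down to~$0$. The structural backbone is the observation that, for every $i$, the subgroup $\operatorname{sgp}\{K_{i+1}, t\}$ of~$\mathfrak{K}_{i}$ is canonically isomorphic to~$\mathfrak{K}_{i+1}$: since $H_{i+2} = H_{i+1} \cap K_{i+1}$ and~$K_{i+2} = H_{i+2}\varphi$, the defining relation $t^{-1}H_{i+1}t = K_{i+1}$ of~$\mathfrak{K}_{i}$ restricts to $t^{-1}H_{i+2}t = K_{i+2}$, and a Britton-normal-form argument shows that a reduced word over~$K_{i+1}$ is pinched in~$\mathfrak{K}_{i}$ exactly when it is pinched in~$\mathfrak{K}_{i+1}$, giving an embedding $\mathfrak{K}_{i+1} \hookrightarrow \mathfrak{K}_{i}$. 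Thus the residual $\mathcal{C}$\nobreakdash-ness of the smaller extension $\mathfrak{K}_{i+1}$ will feed the argument for~$\mathfrak{K}_{i}$. Note that each $K_{i}$ lies in the residually $\mathcal{C}$\nobreakdash-group $G$ and hence is itself residually a~$\mathcal{C}$\nobreakdash-group, that $H_{i+1}, K_{i+1} \leqslant Z(G)$ are central in~$K_{i}$, and that $\mathcal{C}$, containing non-periodic groups and being closed under subgroups, contains an infinite cyclic group, so $\langle t\rangle \cong \mathbb{Z} \in \mathcal{C}$.

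For the base of the induction I would establish that $\mathfrak{K}_{n}$ is residually a~$\mathcal{C}$\nobreakdash-group using $Q \in \mathcal{C}^{*}(K_{n})$, relying on the core reduction lemma of the earlier sections: an HNN-extension of a~$\mathcal{C}$\nobreakdash-group with $\varphi$\nobreakdash-compatible central associated subgroups is residually a~$\mathcal{C}$\nobreakdash-group. In case~$(\beta)$, $Q \leqslant H_{n+1} \cap K_{n+1}$ is central and satisfies $Q\varphi = Q$, so $t$ normalizes~$Q$ and hence $Q \triangleleft \mathfrak{K}_{n}$ with $\mathfrak{K}_{n}/Q \cong \mathrm{HNN}(K_{n}/Q,\, H_{n+1}/Q,\, K_{n+1}/Q,\, \bar\varphi)$ over the $\mathcal{C}$\nobreakdash-base $K_{n}/Q$; this separates every $g \notin Q$. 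For $g \in Q$ one chooses, using residual $\mathcal{C}$\nobreakdash-ness of~$Q$, a subgroup $Q_{1} \in \mathcal{C}^{*}(Q)$ with $g \notin Q_{1}$ and sets $Q_{0} = \bigcap_{k \in \mathbb{Z}} Q_{1}\varphi^{k}$. Here $\varphi|_{Q}$ is an automorphism, so $Q_{0}\varphi = Q_{0}$, while $Q/Q_{0}$ embeds into $\prod_{k \in \mathbb{Z}} Q/Q_{1}\varphi^{k}$, a Cartesian product of isomorphic copies of the $\mathcal{C}$\nobreakdash-group $Q/Q_{1}$ indexed by $\mathbb{Z} \in \mathcal{C}$; the root-class Cartesian-product axiom puts this product in~$\mathcal{C}$, so $K_{n}/Q_{0} \in \mathcal{C}$, and $\mathfrak{K}_{n}/Q_{0} = \mathrm{HNN}(K_{n}/Q_{0}, \ldots)$ separates~$g$. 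Case~$(\alpha)$, where $H_{n+1} \cap Q = 1 = K_{n+1} \cap Q$, is handled by the dual compatible-quotient argument, $Q$ again being $\varphi$\nobreakdash-compatible so that passage to the HNN-extension over $K_{n}/Q$ faithfully records the associated subgroups.

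For the inductive ascent, assume $\mathfrak{K}_{i+1}$ is residually a~$\mathcal{C}$\nobreakdash-group and take $1 \neq g \in \mathfrak{K}_{i}$. If $g$ has non-zero $t$\nobreakdash-exponent it survives under $\mathfrak{K}_{i} \to \langle t\rangle \cong \mathbb{Z} \in \mathcal{C}$, so assume its $t$\nobreakdash-exponent is~$0$. The heart of the step is to manufacture a $\varphi$\nobreakdash-compatible $N \in \mathcal{C}^{*}(K_{i})$: starting from a suitable $M \in \mathcal{C}^{*}(H_{i+1}K_{i+1})$, the hypothesis that $K_{i}$ is $\mathcal{C}$\nobreakdash-regular with respect to $H_{i+1}K_{i+1}$ yields $N \in \mathcal{C}^{*}(K_{i})$ with $N \cap (H_{i+1}K_{i+1}) = M$, which can be arranged so that $(N \cap H_{i+1})\varphi = N \cap K_{i+1}$. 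Compatibility then lets the HNN structure descend to $\mathrm{HNN}(K_{i}/N,\, \overline{H_{i+1}},\, \overline{K_{i+1}},\, \bar\varphi)$ over the $\mathcal{C}$\nobreakdash-base $K_{i}/N$, which is residually a~$\mathcal{C}$\nobreakdash-group by the core reduction lemma. The two remaining hypotheses keep $g$ alive: $\mathcal{C}$\nobreakdash-separability of $H_{i+1}K_{i+1}$ in~$K_{i}$ detects the base part of~$g$ and guides the choice of~$M$, while the induction hypothesis on $\mathfrak{K}_{i+1} = \operatorname{sgp}\{K_{i+1}, t\}$ controls the genuinely HNN part through the corresponding compatible quotient at level~$i+1$. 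Running the ascent from $i = n-1$ down to $i = 0$ yields that $\mathfrak{K}_{0} = \mathfrak{G}$ is residually a~$\mathcal{C}$\nobreakdash-group.

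The main obstacle is precisely the ascent step: one must select a single $N \in \mathcal{C}^{*}(K_{i})$ that is simultaneously $\varphi$\nobreakdash-compatible (so that the quotient remains a genuine HNN-extension rather than collapsing) and fine enough to keep $g$ non-trivial, which forces a reconciliation of the regularity condition---used to lift $\mathcal{C}$\nobreakdash-congruences from the central subgroup $H_{i+1}K_{i+1}$ up to~$K_{i}$---with the separability condition and the inductive residual $\mathcal{C}$\nobreakdash-ness of~$\mathfrak{K}_{i+1}$. The centrality of~$H$ and~$K$ is what makes $\varphi$ act as an isomorphism of central subgroups and thereby makes compatible descent attainable; the delicate bookkeeping is to verify that the compatible $\mathcal{C}$\nobreakdash-subgroups of successive~$K_{i}$ separate every reduced word, which is where the closure of~$\mathcal{C}$ under subgroups, extensions, quotients, and the relevant Cartesian products enters in full.
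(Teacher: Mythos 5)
Your proposal rests on a structural claim that is false: the natural map $\mathfrak{K}_{i+1}=\mathrm{HNN}(K_{i+1},H_{i+2},K_{i+2},\varphi)\to\mathfrak{K}_{i}$ is \emph{not} injective, so $\operatorname{sgp}\{K_{i+1},t\}\leqslant\mathfrak{K}_{i}$ is not isomorphic to~$\mathfrak{K}_{i+1}$. The pinch conditions do not match: in~$\mathfrak{K}_{i}$ every subword $txt^{-1}$ with $x\in K_{i+1}$ is a pinch (because $K_{i+1}$ is the \emph{entire} second associated subgroup of~$\mathfrak{K}_{i}$), whereas in~$\mathfrak{K}_{i+1}$ it is a pinch only when $x\in K_{i+2}$. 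Concretely, take $x\in K_{i+1}\setminus K_{i+2}$ and $z\in K_{i+1}\setminus H_{i+2}$; then $w=(txt^{-1})z(tx^{-1}t^{-1})z^{-1}$ equals $[x\varphi^{-1},z]=1$ in~$\mathfrak{K}_{i}$ by centrality of~$H_{i+1}$, yet it is a reduced word of length~$4$ in~$\mathfrak{K}_{i+1}$ and hence non-trivial there by Britton's lemma. (E.g.\ $G=\langle a\rangle\times\langle b\rangle\cong\mathbb{Z}^{2}$, $H=\langle a,b^{2}\rangle$, $K=\langle a^{2},b\rangle$, $a\varphi=a^{2}$, $b^{2}\varphi=b$, $i=0$, $x=a^{2}$, $z=b$.) This is exactly why the paper never uses such an embedding: its Proposition~\ref{ep401} works with the non-injective homomorphism $\lambda\colon\mathfrak{X}\to\mathfrak{G}$ and its image, and the inductive currency is not the residual $\mathcal{C}$-ness of the groups~$\mathfrak{K}_{i}$ but the strong $(H_{i+1},K_{i+1})$-filtration property of the families $\mathcal{C}^{*}(K_{i},H_{i+1},K_{i+1},\varphi)$ of compatible subgroups, transferred from level $i+1$ to level~$i$ by Propositions~\ref{ep403} and~\ref{ep404} and converted into residual $\mathcal{C}$-ness of~$\mathfrak{G}$ only at the very end via Proposition~\ref{ep502}.

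Independently of this, your inductive step is not actually carried out. The assertion that the regularity hypothesis yields an $N\in\mathcal{C}^{*}(K_{i})$ with $N\cap H_{i+1}K_{i+1}=M$ ``which can be arranged so that $(N\cap H_{i+1})\varphi=N\cap K_{i+1}$'' is precisely the canonical-ascend construction (Proposition~\ref{ep403}: one must start from a compatible $R\leqslant K_{i+1}$, verify $H_{i+1}\cap QR=Q$ and $K_{i+1}\cap QR=R$ for $Q=R\varphi^{-1}$, and intersect the lifted subgroup with a second one controlling $G/X$), and the claim that separability plus the induction hypothesis ``keep $g$ alive'' compresses the entire case analysis of Proposition~\ref{ep404} into a sentence. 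Since you yourself identify this reconciliation as ``the main obstacle,'' the proposal as written states the difficulty rather than resolving it; combined with the false embedding on which the transfer of information between levels is supposed to travel, the argument does not go through.
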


The~above theorems and~corollaries generalize the~main results from~\cite{AndreadakisRaptisVarsos1988, Goltsov2015, SokolovTumanova2017} (as~for~\cite{AndreadakisRaptisVarsos1988}, in~the~part concerning non-as\-cend\-ing HNN-ex\-ten\-sions), as~well as~Theorem~1.1 from~\cite{RaptisVarsos1989} and~Theorem~3 from~\cite{SokolovTumanova2019}. However, the~results mentioned are easier to~formulate due to~the~additional restrictions imposed on~the~base group and~the~associated subgroups. The~proofs of~Theorems~\ref{et01}\nobreakdash--\ref{et04} and~Corollaries~\ref{ec01}\nobreakdash--\ref{ec04} are given in~Sections~\ref{esec02}\nobreakdash--\ref{esec06}.

\section{Generalized direct products of~groups}\label{esec02}

Let $\Gamma = (V, E)$ be a~non-empty connected undirected graph with~a~vertex set~$V$ and~an~edge set~$E$. It is assumed that $\Gamma$ is not necessarily finite, but has no multiple edges and~loops. Let us assign to~each vertex $v \in V$ a~group~$G_{v}$ and~to each edge $e = \{v, w\} \in E$ a~group~$H_{e}$ and~injective homomorphisms $\varphi_{e, v}\colon H_{e} \to G_{v}$, $\varphi_{e, w}\colon H_{e} \to G_{w}$. As~a~result, we get a~\emph{graph of~groups~$\mathcal{G}(\Gamma)$}. We call the~groups~$G_{v}$ ($v \in V$), $H_{e}$ ($e \in E$), the~subgroups~$H_{e}\varphi_{e, v}$, and~the~homomorphisms~$\varphi_{e, v}$ ($e \in E$, $v \in e$) the~\emph{vertex} and~\emph{edge groups}, the~\emph{edge subgroups}, and~the~\emph{edge homomorphisms}, respectively.

Consider the~group
\begin{multline*}
\mathrm{GDP}(\mathcal{G}(\Gamma)) = \big\langle G_{\lambda}\ (\lambda \in V);\ [G_{\mu}, G_{\eta}] = 1\ (\mu, \eta \in V,\ \mu \ne \eta),\\ H_{e}\varphi_{e, v} = H_{e}\varphi_{e, w}\ (e = \{v, w\} \in E) \big\rangle,
\end{multline*}
whose generators are the~generators of~$G_{\lambda}$ ($\lambda \in V$), and~whose defining relations are the~relations of~$G_{\lambda}$ ($\lambda \in V$) and~all possible relations of~the~form 
\[
[g_{\mu}, g_{\eta}] = 1\ (\mu, \eta \in V,\ \mu \ne \eta),\quad h_{e}\varphi_{e, v} = h_{e}\varphi_{e, w}\ (e = \{v, w\} \in E,\ h_{e} \in H_{e}),
\]
where $g_{\mu}$ and~$g_{\eta}$ are arbitrary words in~the~generators of~$G_{\mu}$ and~$G_{\eta}$, respectively, $h_{e}\varphi_{e, v}$ and~$h_{e}\varphi_{e, w}$ are some words in~the~generators of~$G_{v}$ and~$G_{w}$ that define (in~these groups) the~images of~$h_{e}$ under $\varphi_{e, v}$ and~$\varphi_{e, w}$. Following~\cite{SokolovTumanova2019}, we call the~group $\mathrm{GDP}(\mathcal{G}(\Gamma))$ \emph{the~generalized direct product associated with~the~graph of~groups~$\mathcal{G}(\Gamma)$}~if
\smallskip

$(i)\phantom{i}$\hspace{1ex}for~any $v \in V$, the~identity mapping of~the~generators of~$G_{v}$ to~$\mathrm{GDP}(\mathcal{G}(\Gamma))$ can be extended to~an~injective homomorphism, and~therefore all the~groups~$G_{v}$ ($v \in V$) can be considered subgroups of~$\mathrm{GDP}(\mathcal{G}(\Gamma))$;
\smallskip

$(ii)$\hspace{1ex}for~any $e\kern-1pt{} =\kern-1.5pt{} \{v, w\}\kern-1.5pt{} \in\kern-1.5pt{} E$, the~equalities $H_{e}\varphi_{e, v}\kern-1pt{} =\kern-1pt{} G_{v} \cap G_{w}\kern-1pt{} =\kern-1.5pt{} H_{e}\varphi_{e, w}$ hold in~$\mathrm{GDP}(\mathcal{G}(\Gamma))$.%
\smallskip

We say that the~generalized direct product associated with~$\mathcal{G}(\Gamma)$ \emph{exists} if $\mathrm{GDP}(\mathcal{G}(\Gamma))$ satisfies~$(i)$ and~$(ii)$. Some conditions for~the~existence of~generalized direct products are found in~\cite{SokolovTumanova2019}. In~particular, the~following proposition is proved.

\begin{eproposition}\label{ep201}
\textup{\cite[Theorem~1]{SokolovTumanova2019}}
If $\Gamma$ is a~tree and,~for~any $e \in E$ and~$v \in e$,~the~subgroup\kern-1pt{} $H_{e}\varphi_{e, v}$\kern-1pt{} lies\kern-.5pt{} in\kern-.5pt{}~the\kern-.5pt{}~center\kern-.5pt{} of~$G_{v}$\kern-.5pt{},\kern-1.5pt{} then\kern-.5pt{} the\kern-.5pt{}~generalized\kern-.5pt{} direct\kern-.5pt{} product\kern-.5pt{} associated\kern-.5pt{}~with\kern-.5pt{}~$\mathcal{G}(\Gamma)$ exists.
\end{eproposition}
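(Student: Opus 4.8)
The plan is to prove Proposition~\ref{ep201} by induction on the structure of the tree~$\Gamma$, building the generalized direct product one vertex at a time and showing that each enlargement is an ordinary central amalgamated free product. The key observation is that when an edge subgroup lies in the center of its vertex group, the amalgamation of two central subgroups produces a genuine amalgam in which both factors embed and intersect precisely in the amalgamated subgroup. So the central hypothesis lets me replace the potentially mysterious relations $H_e\varphi_{e,v}=H_e\varphi_{e,w}$ by the well-understood universal property of a free product with amalgamation.

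First I would treat the finite case by induction on the number of vertices. For a single vertex the statement is trivial, so suppose $\Gamma$ has $n\geqslant 2$ vertices and pick a leaf~$w$ with its unique incident edge $e=\{v,w\}$. Removing~$w$ (and~$e$) leaves a subtree $\Gamma'$, and by the inductive hypothesis the group $P'=\mathrm{GDP}(\mathcal{G}(\Gamma'))$ exists, so each $G_u$ ($u\in V\setminus\{w\}$) embeds in~$P'$ and the relevant edge-subgroup intersections hold. I would then verify that $\mathrm{GDP}(\mathcal{G}(\Gamma))$ is exactly the amalgamated free product of $P'$ and $G_w$, amalgamating $H_e\varphi_{e,v}$ (sitting inside $G_v\leqslant P'$) with $H_e\varphi_{e,w}$ (sitting inside $G_w$) along the isomorphism $\varphi_{e,v}^{-1}\varphi_{e,w}$. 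Here I must check that the defining presentation of $\mathrm{GDP}(\mathcal{G}(\Gamma))$ coincides with that of this amalgam: the commuting relations $[G_u,G_w]=1$ for the vertices $u\neq v$ adjacent-or-not to~$w$ must be derivable, and this is where centrality does the work, since $H_e\varphi_{e,w}$ is central in $G_w$ and the image of $G_u\cap G_w$ inside the amalgam is forced to lie in the amalgamated subgroup.

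By the standard theory of amalgamated free products, the factors $P'$ and $G_w$ embed and meet exactly in the amalgamated subgroup, which gives condition~$(i)$ for all vertices and condition~$(ii)$ for the new edge~$e$; the intersection conditions for the old edges are inherited from~$P'$ together with the normal-form theorem, which controls $G_u\cap G_w$ for $u\neq v$. Passing from the finite case to an arbitrary (possibly infinite) tree is then a direct-limit argument: $\Gamma$ is the directed union of its finite subtrees, the corresponding generalized direct products form a direct system of injections, and $\mathrm{GDP}(\mathcal{G}(\Gamma))$ is their direct limit, so both~$(i)$ and~$(ii)$ survive the colimit because each element and each intersection already lives in some finite subtree.

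The main obstacle I expect is not the amalgamation step itself but the careful bookkeeping at condition~$(ii)$ for the \emph{old} edges after adjoining the leaf: I must ensure that attaching $G_w$ does not enlarge any previously computed intersection $G_a\varphi\cap G_b\varphi$, i.e.\ that the embedding $P'\hookrightarrow\mathrm{GDP}(\mathcal{G}(\Gamma))$ is genuinely intersection-preserving on the image of $P'$. This follows from the normal-form theorem for the amalgam, but verifying that centrality of every edge subgroup is exactly what forces all the cross-commutators $[G_\mu,G_\eta]=1$ (including those between $G_w$ and vertices that are not adjacent to it) to hold in the amalgam—rather than merely being imposed as extra relations that might collapse the factors—is the delicate point that the central hypothesis is designed to handle.
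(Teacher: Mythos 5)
First, a remark on the comparison itself: the paper does not prove Proposition~\ref{ep201} at all — it is quoted as Theorem~1 of~\cite{SokolovTumanova2019} — so there is no in-paper argument to measure your proposal against, and it must be judged on its own merits. Judged so, it has a genuine gap at its central step. You identify $\mathrm{GDP}(\mathcal{G}(\Gamma))$ with the amalgamated free product of $P'=\mathrm{GDP}(\mathcal{G}(\Gamma'))$ and the leaf group $G_w$ over the edge subgroup, and then hope that the commutation relations $[G_u,G_w]=1$ become derivable there. They do not. Already $[G_v,G_w]=1$ fails in $P'\ast_{H_e}G_w$: if $g_v\in G_v\setminus H_e\varphi_{e,v}$ and $g_w\in G_w\setminus H_e\varphi_{e,w}$, then $g_v^{\vphantom{1}}g_w^{\vphantom{1}}g_v^{-1}g_w^{-1}$ is a reduced word of length four and hence non-trivial by the normal form theorem for amalgams; centrality of the edge subgroup inside $G_w$ says nothing about elements outside it. The simplest instance shows the scale of the discrepancy: for a single edge with trivial edge group the generalized direct product is $G_v\times G_w$, whereas your amalgam is $G_v\ast G_w$. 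So $\mathrm{GDP}(\mathcal{G}(\Gamma))$ is in general a proper quotient of the amalgamated free product, and every subsequent appeal to ``the standard theory of amalgamated free products'' is an appeal to the wrong object.

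The construction that actually carries the leaf step is a central product, not an amalgam. Put $A=H_e\varphi_{e,v}\leqslant G_v\leqslant P'$ and $B=H_e\varphi_{e,w}\leqslant G_w$, and note that $A$ is central in all of $P'$ (it is central in $G_v$ and commutes with every other vertex group because $G_v$ does). Then form $(P'\times G_w)/N$ with $N=\{(h\varphi_{e,v},\,(h\varphi_{e,w})^{-1})\mid h\in H_e\}$: centrality is exactly what makes $N$ normal in the direct product, injectivity of the edge homomorphisms gives $N\cap P'=1=N\cap G_w$ and hence condition $(i)$, and a short computation in $P'\times G_w$ gives $P'\cap G_w=A=B$ in the quotient, which is condition $(ii)$ for the new edge; the old intersections are preserved because $P'$ embeds. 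Your outer skeleton — induction over finite subtrees by deleting a leaf, followed by a direct limit over all finite subtrees — is sound and can be kept; it is only the identification of the one-step extension that must be replaced.
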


The~main aim of~this section is to~find conditions for~the~existence of~certain generalized direct products associated with~simple cycles. 

Let $(X, Y, Z, \psi)$ be an~HNN-tuple, and~let $\Gamma$ be the~simple cycle of~length $n \geqslant 3$ with~the~vertex set $V = \mathbb{Z}_{n}$ and~the~edge set $E = \big\{\{i-1,\ i\} \mid i \in \mathbb{Z}_{n}\big\}$. Suppose also that, for~any $i \in \mathbb{Z}_{n}$, $X_{i}$ is an~isomorphic copy of~$X$ and~$\sigma_{i}\colon X \to X_{i}$ is an~isomorphism. We associate a~vertex $i \in \mathbb{Z}_{n}$ with~the~group~$X_{i}$, an~edge $e = \{i-1,\ i\} \in E$ with~the~group~$Y$ and~the~homomorphisms $\varphi_{e, i-1} = \sigma_{i-1}\vert_{Y}$, $\varphi_{e, i} = \psi\sigma_{i}\vert_{Z}$ (here and~below, all indices are considered modulo~$n$), and~denote the~resulting graph of~groups by~$\mathcal{G}_{n}(X, Y, Z, \psi)$.

Let us call a~number $n \geqslant 3$ \emph{admissible for~an~HNN-tuple $(X, Y, Z, \psi)$ with~a~reserve~$r$} ($0 \leqslant r \leqslant n-2$) if the~following conditions hold:
\smallskip

$(i)^{\prime}\phantom{i}$\hspace{1ex}for~any $i \in \mathbb{Z}_{n}$, the~identity mapping of~the~generators of~$X_{i}$ to~$\mathrm{GDP}(\mathcal{G}_{n}(X, Y, Z, \psi))$ can be extended to~an~injective homomorphism;
\smallskip

$(ii)^{\prime}$\hspace{1ex}for~any $s \in \{0,\, 1,\, \ldots,\, r\}$, $q \in \mathbb{Z}_{n}$, $x_{q} \in X_{q}$, $x_{q+1} \in X_{q+1}$, \ldots, $x_{q+s+1} \in X_{q+s+1}$, it follows from~the~equality $x_{q}x_{q+1}\ldots x_{q+s+1} = 1$ that $x_{q} \in Y\sigma_{q}$ and~$x_{q+s+1} \in Z\sigma_{q+s+1}$.
\smallskip

We note that if a~number~$n$ is admissible for~a~tuple $(X, Y, Z, \psi)$ with~a~reserve $r \in\nolinebreak \{0,\, \ldots,\, n-2\}$, then the~generalized direct product associated with~$\mathcal{G}_{n}(X, Y, Z, \psi)$ exists. Indeed, for~any $e = \{i-1,\ i\} \in E$, if $x \in X_{i-1} \cap X_{i}$, $x_{i-1} = x$, and~$x_{i} = x^{-1}$, then $x_{i-1}x_{i} = 1$ and,~by~the~admissibility,
$$
x_{i-1} \in Y\sigma_{i-1} = Y\varphi_{e, i-1},\quad
x_{i} \in Z\sigma_{i} = Y\varphi_{e, i}.
$$
Thus, $X_{i-1} \cap X_{i} \leqslant Y\varphi_{e, i-1} \cap Y\varphi_{e, i}$ and~therefore $Y\varphi_{e, i-1} = X_{i-1} \cap X_{i} = Y\varphi_{e, i}$.

\begin{eproposition}\label{ep202}
Suppose that $(G, H, K, \varphi)$ is an~HNN-tuple, $H$~and~$K$ lie in~the~center of~$G$, $L = H \cap K$, and~$M = L\varphi$. Then a~number $n \geqslant 3$ is admissible for~$(G, H, K, \varphi)$ with~a~reserve~$r \leqslant n-3$ if and~only if it is admissible for~$(K, L, M, \varphi)$ with~the~reserve~$r+1$.
\end{eproposition}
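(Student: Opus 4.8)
The plan is to realize both generalized direct products as central quotients of direct products and to link them through the core generated by the copies of~$K$. Write $\mathfrak{P} = \mathrm{GDP}(\mathcal{G}_n(G, H, K, \varphi))$ with vertex groups $G_i$ (via $\sigma_i\colon G \to G_i$) and $\mathfrak{P}' = \mathrm{GDP}(\mathcal{G}_n(K, L, M, \varphi))$ with vertex groups $K^{(i)}$ (via $\rho_i\colon K \to K^{(i)}$). Since $H$ and $K$ lie in the center of~$G$, every defining relation of~$\mathfrak{P}$ other than the commutator relations is central in the direct product $\prod_{i \in \mathbb{Z}_n} G_i$; hence $\mathfrak{P} = \big(\prod_i G_i\big)/N$, where $N$ is the central subgroup generated by the edge identifications, and likewise $\mathfrak{P}' = \big(\prod_i K^{(i)}\big)/N'$. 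The key computation is that the edge relation of~$\mathfrak{P}$ reads $K\sigma_i = H\sigma_{i-1}$, so that $K\sigma_i \leqslant G_{i-1} \cap G_i$ and $K\sigma_i \cap K\sigma_{i+1} = L\sigma_i = M\sigma_{i+1}$; this is exactly what makes $L$ and~$M$ the associated subgroups one level down.

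Next I would record the correspondence between $\mathfrak{P}'$ and the subgroup $W = \operatorname{sgp}\{K\sigma_i \mid i \in \mathbb{Z}_n\} \leqslant \mathfrak{P}$. The coordinatewise inclusions $K^{(i)} \to G_i$ induce a homomorphism $\theta\colon \mathfrak{P}' \to \mathfrak{P}$ with image~$W$, since every relation of~$\mathfrak{P}'$ holds among the $K\sigma_i$ (using $l\sigma_i = l\varphi\sigma_{i+1}$ for $l \in L \subseteq H$). A short calculation shows that an element of~$N$ all of whose coordinates lie in~$K$ must arise from parameters in $H \cap K = L$, whence $N \cap \prod_i K^{(i)} = \theta(N')$ and~$\theta$ is injective, so $W \cong \mathfrak{P}'$ \emph{independently of admissibility}. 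Putting $C_i = K\sigma_i \cdot H\sigma_i = (KH)\sigma_i$, I then obtain the exact sequence $1 \to W \to \mathfrak{P} \to \prod_i (G_i/C_i) \to 1$, which will be the main tool for the converse direction.

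For the forward implication (admissibility of $(G, H, K, \varphi)$ with reserve~$r$ gives that of $(K, L, M, \varphi)$ with reserve~$r+1$) I would argue directly in~$\mathfrak{P}$. Condition $(i)^{\prime}$ for the $K$-cycle follows because $K^{(i)} \xrightarrow{\theta} K\sigma_i \hookrightarrow \mathfrak{P}$ is injective once~$G_i$ embeds. For $(ii)^{\prime}$, push a relation $x_q \cdots x_{q+s+1} = 1$ ($x_j \in K^{(j)}$, $s \leqslant r+1$) into~$\mathfrak{P}$ via~$\theta$; since each $K\sigma_j \leqslant G_{j-1} \cap G_j$, every factor may be read inside~$G_j$, and merging the factors at one end of the arc with a neighbour collapses the arc by one vertex, turning the relation into a $(G, H, K, \varphi)$-relation of parameter $s-1 \leqslant r$ (two symmetric placements: merging on the right controls $x_q$, on the left controls $x_{q+s+1}$). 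Admissibility of the $G$-cycle then forces the surviving endpoints into $H\sigma_q$ and $K\sigma_{q+s}$; intersecting with $K\sigma_q$ and $K\sigma_{q+s+1}$ and pulling back through the injective~$\theta$ yields $x_q \in L\rho_q$ and $x_{q+s+1} \in M\rho_{q+s+1}$. The degenerate case $s = 0$ is settled inside the single group~$G_q$ using only~$(i)^{\prime}$.

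The converse is the main obstacle, because a $(G, H, K, \varphi)$-relation involves arbitrary elements of the fat vertex groups~$G_j$ rather than elements of the~$K\sigma_j$. Here I would use the exact sequence: projecting a relation $g_q \cdots g_{q+s+1} = 1$ to $\prod_i (G_i/C_i)$ forces each $g_j \in C_j$, so the whole relation already lives in $W \cong \mathfrak{P}'$. Splitting $g_j = a_j b_j$ with $a_j \in K\sigma_j$, $b_j \in K\sigma_{j+1}$ and regrouping the (commuting) factors by copy produces a $(K, L, M, \varphi)$-relation spanning the copies $q, \ldots, q+s+2$, i.e.\ of parameter $s+1 \leqslant r+1$; this \emph{one extra copy} is precisely the source of the reserve shift. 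Admissibility of the $K$-cycle then places $a_q \in L\rho_q$ and $b_{q+s+1} \in M\rho_{q+s+2}$, and the identities $L\sigma_i = M\sigma_{i+1}$ and $L = H \cap K$ translate these into $g_q \in H\sigma_q$ and $g_{q+s+1} \in K\sigma_{q+s+1}$, which is $(ii)^{\prime}$ for the $G$-cycle; condition~$(i)^{\prime}$ reduces to $C_i \cap N = 1$, i.e.\ to the level-$0$ intersection $K\sigma_i \cap K\sigma_{i+1} = L\sigma_i$ inside~$\mathfrak{P}'$. The delicate points throughout are the injectivity of~$\theta$ and the bookkeeping guaranteeing that the arcs never wrap around the whole cycle, which is where the hypothesis $r \leqslant n-3$ (equivalently $r+1 \leqslant n-2$) is used.
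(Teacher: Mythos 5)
Your argument is correct in substance, and its combinatorial core coincides with the paper's: to pass from the $G$\nobreakdash-cycle down to the $K$\nobreakdash-cycle you merge the two last factors $x_{q+s},x_{q+s+1}$ into one element of $G_{q+s}$ via $x_{q+s+1}\varphi^{-1}\in H\leqslant G$, shortening the arc by one; to go back up you split each $g_j\in C_j=(KH)\sigma_j$ as $k_jh_j$ and re-read $h_j$ inside $K\sigma_{j+1}$, lengthening the arc by one. These are exactly the paper's two computations and the same source of the shift of the reserve. Where you genuinely differ is the scaffolding. The paper embeds $\mathrm{GDP}(\mathcal{G}_n(G,H,K,\varphi))$ into a generalized direct product over a \emph{star graph} whose hub is $\mathrm{GDP}(\mathcal{G}_n(K,L,M,\varphi))$ and whose leaves are the $G_i$, amalgamated along $K\sigma_i\cdot H\sigma_i$ by isomorphisms $kh\mapsto k(h\varphi_{i+1})$, and then invokes Proposition~\ref{ep201} to obtain simultaneously the embeddings $G_i\hookrightarrow\mathfrak{P}$ (condition $(i)'$) and the membership $g_j\in K\sigma_jH\sigma_j$. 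You instead present both GDPs as central quotients of direct products and analyse the relation subgroup $N$ by hand; your exact sequence $1\to W\to\mathfrak{P}\to\prod_i(G_i/C_i)\to 1$ delivers $g_j\in C_j$ cleanly, and your computation that an element of $N$ with all coordinates in $K$ must have all its parameters in $H\cap K=L$ does prove $N\cap\prod_iK^{(i)}=\theta(N')$ and hence the unconditional injectivity of $\theta$. This is more self-contained (Proposition~\ref{ep201} is not needed), at the price of redoing explicitly what the tree-product theorem gives for free.

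The one step you must expand is $(i)'$ in the ascent. Reducing $G_i\cap N=1$ to $C_i\cap N=1$ is fine, since $N\leqslant\prod_jC_j$; but $C_i\cap N=1$ is \emph{not} simply ``the level-$0$ intersection inside $\mathfrak{P}'$''. A general element of $N$ is $\prod_j\nu_j(h_j)$, where $\nu_j(h)$ carries $h$ at coordinate $j-1$ and $(h\varphi)^{-1}$ at coordinate $j$; if it is supported at the single coordinate $i$, its value there is $h_{i+1}(h_i\varphi)^{-1}$ with $h_{i+1}\in H\sigma_i$ \emph{not} a priori in $K\sigma_i$, so your identity $N\cap\prod_iK^{(i)}=\theta(N')$ does not apply to it directly. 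One has to multiply first by $\nu_{i+1}(h_{i+1})^{-1}$ to obtain an element of $N\cap\prod_iK^{(i)}$, read off from it a length-two relation $\big((h_i\varphi)^{-1}\rho_i\big)\big((h_{i+1}\varphi)\rho_{i+1}\big)=1$ in $\mathfrak{P}'$, and then invoke admissibility of the $K$\nobreakdash-cycle --- both its condition $(i)'$ and the $s=0$ instance of its condition $(ii)'$ --- to force $h_i\varphi,\ h_{i+1}\in L$ and finally $h_{i+1}=h_i\varphi$. So $(i)'$ for the $G$\nobreakdash-cycle genuinely consumes the admissibility hypothesis on the $K$\nobreakdash-cycle; as written, your sentence makes it look like an automatic consequence of the quotient description, which it is not. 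With that verification inserted, the proof goes through.
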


\begin{proof}
For each $i \in \mathbb{Z}_{n}$, let $G_{i}$ denote an~isomorphic copy of~$G$, and~let $\sigma_{i}\colon G \to G_{i}$ be an~isomorphism. We put
$$
H_{i} = H\sigma_{i},\quad K_{i} = K\sigma_{i},\quad L_{i} = L\sigma_{i},\quad M_{i} = M\sigma_{i},\quad \varphi_{i} = (\sigma_{i-1}\vert_{H_{i-1}})^{-1}\varphi\sigma_{i}\vert_{K}
$$
(in~Section~\ref{esec01}, the~symbols $H_{i}$ and~$K_{i}$ correspond to~other subgroups; the~notation just introduced is valid only throughout this proof). Then $\varphi_{i}$ is an~isomorphism of~$H_{i-1}$ onto~$K_{i}$, and~its restriction to~$L_{i-1}$ is an~isomorphism of~$L_{i-1}$ and~$M_{i}$. If $\mathrm{GDP}(\mathcal{G}_{n}(G, H, K, \varphi))$ satisfies $(i)^{\prime}$, then it can be considered containing~$H_{i}$ and~$K_{i}$ ($i \in \mathbb{Z}_{n}$), and~it follows from~the~relations $h\sigma_{i-1} = h\varphi\sigma_{i}$ ($i \in \mathbb{Z}_{n}$, $h \in H$) that the~equalities $h\varphi_{i} = h$ ($i \in \mathbb{Z}_{n}$, $h \in H_{i-1}$) hold in~$\mathrm{GDP}(\mathcal{G}_{n}(G, H, K, \varphi))$. Similarly, if $\mathrm{GDP}(\mathcal{G}_{n}(K, L, M, \varphi))$ satisfies $(i)^{\prime}$, then the~equalities $h\varphi_{i} = h$ ($i \in \mathbb{Z}_{n}$, $h \in L_{i-1}$) hold in~it.
\smallskip

\textit{Sufficiency}. We fix a~number $i \in \mathbb{Z}_{n}$ and~define a~mapping~$\theta_{i}$ of~the~subgroup $K_{i}H_{i} \leqslant G_{i}$ to~the~subgroup $K_{i}K_{i+1} \leqslant \mathrm{GDP}(\mathcal{G}_{n}(K, L, M, \varphi))$ as~follows: if $h \in H_{i}$ and~$k \in K_{i}$, then $(kh)\theta_{i} = k(h\varphi_{i+1})$. Let us show that this mapping is well defined and~is a~subgroup isomorphism extending the~identity mapping of~$K_{i}$.

{\parfillskip=0pt
If $h_{1}^{\vphantom{1}}, h_{2}^{\vphantom{1}} \in H_{i}^{\vphantom{1}}$, $k_{1}^{\vphantom{1}}, k_{2}^{\vphantom{1}} \in K_{i}^{\vphantom{1}}$, and~$k_{1}^{\vphantom{1}}h_{1}^{\vphantom{1}} = k_{2}^{\vphantom{1}}h_{2}^{\vphantom{1}}$, then $k_{2}^{-1}k_{1}^{\vphantom{1}} = h_{2}^{\vphantom{1}}h_{1}^{-1} \in H_{i}^{\vphantom{1}} \cap K_{i}^{\vphantom{1}} = L_{i}^{\vphantom{1}}$ and~therefore the~equality $(h_{2}^{\vphantom{1}}h_{1}^{-1})\varphi_{i+1}^{\vphantom{1}} = h_{2}^{\vphantom{1}}h_{1}^{-1}$ holds in~$\mathrm{GDP}(\mathcal{G}_{n}(K, L, M, \varphi))$. Hence,
\begin{multline*}
(k_{1}^{\vphantom{1}}h_{1}^{\vphantom{1}})\theta_{i}^{\vphantom{1}} = k_{1}^{\vphantom{1}}(h_{1}^{\vphantom{1}}\varphi_{i+1}^{\vphantom{1}}) = k_{2}^{\vphantom{1}}(k_{2}^{-1}k_{1}^{\vphantom{1}})((h_{1}^{\vphantom{1}}h_{2}^{-1})h_{2}^{\vphantom{1}})\varphi_{i+1}^{\vphantom{1}} \\
= k_{2}^{\vphantom{1}}(k_{2}^{-1}k_{1}^{\vphantom{1}}h_{1}^{\vphantom{1}}h_{2}^{-1})(h_{2}^{\vphantom{1}}\varphi_{i+1}^{\vphantom{1}}) = k_{2}^{\vphantom{1}}(h_{2}^{\vphantom{1}}\varphi_{i+1}^{\vphantom{1}}) = (k_{2}^{\vphantom{1}}h_{2}^{\vphantom{1}})\theta_{i}^{\vphantom{1}}.
\end{multline*}

Suppose that $h \in H_{i}$, $k \in K_{i}$, and~$1 = (kh)\theta_{i} = k(h\varphi_{i+1})$. Since $k \in K_{i}$, $h\varphi_{i+1} \in K_{i+1}$, and~$n$ is admissible for~$(K, L, M, \varphi)$, then $k \in L_{i}$ and~$h\varphi_{i+1} \in M_{i+1}$. Hence, $h \in L_{i}$ and~$1 = k(h\varphi_{i+1}) = kh$ because the~equality $h\varphi_{i+1} = h$ holds in~$\mathrm{GDP}(\mathcal{G}_{n}(K, L, M, \varphi))$.

Thus, $\theta_{i}$ is well defined and~injective, it is clear that it is homomorphic and~surjective.

}Let $\Delta$ be the~star graph with~the~vertex set $\{v_{i}\ (i \in \mathbb{Z}_{n}),\ w\}$. We associate its central vertex~$w$ with~the~group $\mathrm{GDP}(\mathcal{G}_{n}(K, L, M, \varphi))$, the~leaf~$v_{i}$ ($i \in \mathbb{Z}_{n}$) with~the~group~$G_{i}$, the~edge~$\{w, v_{i}\}$ ($i \in \mathbb{Z}_{n}$) with~the~subgroup~$K_{i}H_{i}$ of~$G_{i}$ and~the~homomorphisms, one of~which is the~identity mapping and~the~other coincides with~$\theta_{i}$. Let us denote the~resulting graph by~$\mathcal{G}(\Delta)$.

It follows from~the~definitions of~$\mathcal{G}(\Delta)$, $\theta_{i}$ ($i \in \mathbb{Z}_{n}$), and~$\varphi_{i}$ ($i \in \mathbb{Z}_{n}$) that, for~all $i \in \mathbb{Z}_{n}$ and~$h \in H$, the~equalities $h\sigma_{i-1} = (h\sigma_{i-1})\theta_{i-1} = (h\sigma_{i-1})\varphi_{i} = h\varphi \sigma_{i}$ hold in~$\mathrm{GDP}(\mathcal{G}(\Delta))$. Therefore, the~identity mapping of~the~generators of~$\mathrm{GDP}(\mathcal{G}_{n}(G, H, K, \varphi))$ to~$\mathrm{GDP}(\mathcal{G}(\Delta))$ defines a~homomorphism, which we denote by~$\lambda$.

Let, for~any $i \in \mathbb{Z}_{n}$, $\alpha_{i}\colon G_{i} \to \mathrm{GDP}(\mathcal{G}(\Delta))$ and~$\beta_{i}\colon G_{i} \to \mathrm{GDP}(\mathcal{G}_{n}(G, H, K, \varphi))$ be the~homomorphisms defined by~the~identity mappings of~the~generators of~$G_{i}$. Since the~diagram
\[
\begin{CD}
G_{i} @>{\beta_{i}}>>  \mathrm{GDP}(\mathcal{G}_{n}(G, H, K, \varphi))\\
@|                     @VV{\lambda}V\\
G_{i} @>{\alpha_{i}}>> \mathrm{GDP}(\mathcal{G}(\Delta))
\end{CD}
\]
is commutative and~$\alpha_{i}$ is injective by~Proposition~\ref{ep201}, then $\beta_{i}$ is also injective and~therefore $\mathrm{GDP}(\mathcal{G}_{n}(G, H, K, \varphi))$ satisfies~$(i)^{\prime}$. Let us verify that this group also satisfies~$(ii)^{\prime}$.

Suppose that numbers $q \in \mathbb{Z}_{n}$, $s \in \{0,\, 1,\, \ldots,\, r\}$ and~elements
$$
g_{q} \in G_{q},\ g_{q+1} \in G_{q+1},\ \ldots,\ g_{q+s+1} \in G_{q+s+1}
$$
are such that $g_{q}g_{q+1}\ldots g_{q+s+1} = 1$ in~$\mathrm{GDP}(\mathcal{G}_{n}(G, H, K, \varphi))$. Then this equality also holds in~$\mathrm{GDP}(\mathcal{G}(\Delta))$ by~the~definition of~$\lambda$.

Let $j \in \{0,\, 1,\, \ldots,\, s+1\}$, and~let $\mathcal{G}_{q+j}$ be the~graph of~groups obtained from~$\mathcal{G}(\Delta)$ by~deleting the~vertex~$v_{q+j}$ and~the~edge~$\{v_{q+j}, w\}$. Then $\mathrm{GDP}(\mathcal{G}(\Delta))$ is the~generalized direct product~$\mathfrak{P}_{j}$ of~the~groups $\mathrm{GDP}(\mathcal{G}_{q+j})$ and~$G_{q+j}$ with~the~amalgamated subgroups~$K_{q+j}K_{q+j+1}$ and~$K_{q+j}H_{q+j}$. It follows from~the~relation $r \leqslant n-3$ that $s+2 < n$ and~therefore
\begin{align*}
g_{q+j-1}^{-1}g_{q+j-2}^{-1}\ldots g_{q}^{-1}g_{q+s+1}^{-1}\ldots g_{q+j+2}^{-1}g_{q+j+1}^{-1} &\in \mathrm{GDP}(\mathcal{G}_{q+j}^{\vphantom{1}}),\\
g_{q+j}^{\phantom{-1}} = g_{q+j-1}^{-1}g_{q+j-2}^{-1}\ldots g_{q}^{-1}g_{q+s+1}^{-1}\ldots g_{q+j+2}^{-1}g_{q+j+1}^{-1} &\in \mathrm{GDP}(\mathcal{G}_{q+j}^{\vphantom{1}}) \cap G_{q+j}^{\vphantom{1}}.
\end{align*}
Since $\mathfrak{P}_{j}$ satisfies~$(ii)$ by~Proposition~\ref{ep201}, we have $g_{q+j} \in K_{q+j}H_{q+j}$.

For each $j \in \{0,\, 1,\, \ldots,\, s+1\}$, let us write the~element~$g_{q+j}$ in~the~form $g_{q+j} =\nolinebreak k_{q+j}h_{q+j}$, where $h_{q+j} \in H_{q+j}$, $k_{q+j} \in K_{q+j}$. Since $\mathrm{GDP}(\mathcal{G}_{n}(G, H, K, \varphi))$ satisfies $(i)^{\prime}$, then the~relations $h_{q+j} = h_{q+j}\varphi_{q+j+1} \in K_{q+j+1}$ ($0 \leqslant j \leqslant s+1$) hold in~it and~therefore 
\[
k_{q} \in K_{q},\ h_{q}k_{q+1} \in K_{q+1},\ \ldots,\ h_{q+s}k_{q+s+1} \in K_{q+s+1},\ h_{q+s+1} \in K_{q+s+2}.
\]

As~noted above, the~equality $g_{q}g_{q+1}\ldots g_{q+s+1} = 1$ holds in~$\mathrm{GDP}(\mathcal{G}(\Delta))$. We rewrite it in~the~form
\[
k_{q}(h_{q}k_{q+1})\ldots (h_{q+s}k_{q+s+1})h_{q+s+1} = 1.
\]

By~Proposition~\ref{ep201}, $\mathrm{GDP}(\mathcal{G}_{n}(K, L, M, \varphi))$ is embedded into~$\mathrm{GDP}(\mathcal{G}(\Delta))$ by~the~identity mapping of~the~generators. Therefore, the~last relation holds in~$\mathrm{GDP}(\mathcal{G}_{n}(K, L, M, \varphi))$, and~since $n$ is admissible for~$(K, L, M, \varphi)$ with~the~reserve~$r+1$, then $h_{q+s+1}^{\vphantom{1}} \in M_{q+s+2}^{\vphantom{1}}$ and~$k_{q}^{\vphantom{1}} \in L_{q}^{\vphantom{1}}$. Hence, $h_{q+s+1}^{\vphantom{1}}\varphi_{q+s+2}^{-1} \in L_{q+s+1}^{\vphantom{1}} \leqslant K_{q+s+1}^{\vphantom{1}}$~and
\[
g_{q}^{\vphantom{1}} = k_{q}^{\vphantom{1}}h_{q}^{\vphantom{1}} \in H_{q}^{\vphantom{1}},\quad 
g_{q+s+1}^{\vphantom{1}} = k_{q+s+1}^{\vphantom{1}}h_{q+s+1}^{\vphantom{1}} = k_{q+s+1}^{\vphantom{1}}(h_{q+s+1}^{\vphantom{1}}\varphi_{q+s+2}^{-1}) \in K_{q+s+1}^{\vphantom{1}},
\]
as~required.
\smallskip

\textit{Necessity.} It is easy to~see that the~mapping of~words acting on~the~generators of the~groups~$K_{i}$ ($i \in \mathbb{Z}_{n}$) as~the~natural embeddings $\iota_{i}\colon K_{i} \to G_{i}$ ($i \in \mathbb{Z}_{n}$) takes all the~defining relations of~$\mathrm{GDP}(\mathcal{G}_{n}(K, L, M, \varphi))$ to~the~equalities valid in~$\mathrm{GDP}(\mathcal{G}_{n}(G, H, K, \varphi))$ and~therefore defines a~homomorphism~$\mu$ from~the~first group to~the~second.

Let $\beta_{i}\colon G_{i} \to \mathrm{GDP}(\mathcal{G}_{n}(G, H, K, \varphi))$ and~$\gamma_{i}\colon K_{i} \to \mathrm{GDP}(\mathcal{G}_{n}(K, L, M, \varphi))$ be the~homomorphisms defined by~the~identity mappings of~the~generators of~$G_{i}$ and~$K_{i}$. Then, for~any $i \in \mathbb{Z}_{n}$, the~diagram
\[
\begin{CD}
K_{i} @>{\gamma_{i}}>> \mathrm{GDP}(\mathcal{G}_{n}(K, L, M, \varphi))\\
@V{\iota_{i}}VV        @VV{\mu}V\\
G_{i} @>{\beta_{i}}>>  \mathrm{GDP}(\mathcal{G}_{n}(G, H, K, \varphi))
\end{CD}
\]
is commutative, and~since the~homomorphisms~$\beta_{i}$ ($i \in \mathbb{Z}_{n}$) are injective, the~homomorphisms~$\gamma_{i}$ ($i \in \mathbb{Z}_{n}$) are also injective. Thus, the~group $\mathrm{GDP}(\mathcal{G}_{n}(K, L, M, \varphi))$ satisfies~$(i)^{\prime}$.

Suppose that numbers $q \in \mathbb{Z}_{n}$, $s \in \{0,\, 1,\, \ldots,\, r+1\}$ and~elements
$$
k_{q} \in K_{q},\ k_{q+1} \in K_{q+1},\ \ldots,\ k_{q+s+1} \in K_{q+s+1}
$$
are such that $k_{q}k_{q+1}\ldots k_{q+s+1} = 1$. It follows from~the~definition of~$\mu$ that the~last equality also holds in~$\mathrm{GDP}(\mathcal{G}_{n}(G, H, K, \varphi))$. Since $n$ is admissible for~$(G, H, K, \varphi)$ with~the~reserve~$r$~and
\[
k_{q}^{\vphantom{1}} \in G_{q}^{\vphantom{1}},\ \ldots,\ k_{q+s-1}^{\vphantom{1}} \in G_{q+s-1}^{\vphantom{1}},\ k_{q+s}^{\vphantom{1}}k_{q+s+1}^{\vphantom{1}} = k_{q+s}^{\vphantom{1}}(k_{q+s+1}^{\vphantom{1}}\varphi_{q+s+1}^{-1}) \in G_{q+s}^{\vphantom{1}},
\]
then $k_{q}^{\vphantom{1}}\kern-2pt{} \in\kern-2pt{} H_{q}^{\vphantom{1}}$ and~$k_{q+s}^{\vphantom{1}}(k_{q+s+1}^{\vphantom{1}}\varphi_{q+s+1}^{-1})\kern-2pt{} \in\kern-2pt{} K_{q+s}^{\vphantom{1}}$.\kern-3pt{} The~first relation means that $k_{q}^{\vphantom{1}}\kern-2pt{} \in\kern-2pt{} H_{q}^{\vphantom{1}} \cap\nolinebreak K_{q}^{\vphantom{1}}\kern-1.5pt{} =\nolinebreak\kern-1.5pt{} L_{q}^{\vphantom{1}}$, the~second one implies that $k_{q+s+1}^{\vphantom{1}}\varphi_{q+s+1}^{-1} \in K_{q+s}^{\vphantom{1}}$ and~therefore
$$
k_{q+s+1}^{\vphantom{1}}\varphi_{q+s+1}^{-1} \in H_{q+s}^{\vphantom{1}} \cap K_{q+s}^{\vphantom{1}} = L_{q+s}^{\vphantom{1}}.
$$
Hence, $k_{q+s+1}^{\vphantom{1}} \in M_{q+s+1}^{\vphantom{1}}$, as~required.
\end{proof}

\begin{eproposition}\label{ep203}
Suppose that $(G, H, K, \varphi)$ is an~HNN-tuple, $G$ is an~abelian group, and~$H = G = K$. If the~order~$q$ of~the~automorphism~$\varphi$ is finite and~divides a~number~$n \geqslant\nolinebreak 3$, then $n$ is admissible for~$(G, H, K, \varphi)$ with~any reserve~$r \in \{0,\, 1,\, \ldots,\, n-2\}$.\parfillskip=0pt
\end{eproposition}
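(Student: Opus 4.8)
The plan is to use that, because $G$ is abelian and $H = G = K$, the group $\mathrm{GDP}(\mathcal{G}_{n}(G, H, K, \varphi))$ is abelian. Indeed, imposing on the generators the relations of each copy~$X_{i}$ ($i \in \mathbb{Z}_{n}$) together with the commutativity relations $[X_{\mu}, X_{\eta}] = 1$ yields the direct product $P = \prod_{i \in \mathbb{Z}_{n}} X_{i}$, and the edge relations $h\sigma_{i-1} = h\varphi\sigma_{i}$ then present the quotient $P/R$, where $R$ is the subgroup of~$P$ generated by all the elements $(h\sigma_{i-1})(h\varphi\sigma_{i})^{-1}$ ($i \in \mathbb{Z}_{n}$, $h \in G$). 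Thus it suffices to verify $(i)^{\prime}$ and~$(ii)^{\prime}$ for~$P/R$. Condition $(ii)^{\prime}$ holds trivially and for every reserve $r \leqslant n-2$: since $Y = H = G$ and $Z = K = G$, the subgroups $Y\sigma_{i}$ and~$Z\sigma_{i}$ both coincide with~$X_{i}$ for each~$i$, so the required inclusions of~$(ii)^{\prime}$ (namely $x_{p} \in Y\sigma_{p}$ and $x_{p+s+1} \in Z\sigma_{p+s+1}$) hold automatically, independently of any relation.

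To prove $(i)^{\prime}$, I would exhibit a retraction of~$P/R$ onto a single copy of~$G$. Identifying each $X_{j}$ with~$G$ via~$\sigma_{j}$, define a homomorphism $f \colon P \to G$ into the abelian group~$G$ by sending an element whose $j$\nobreakdash-th coordinate is $g_{j}\sigma_{j}$ ($g_{j} \in G$) to $\prod_{j=0}^{n-1} g_{j}\varphi^{-j}$, the product being taken over the representatives $0, 1, \ldots, n-1$ of~$\mathbb{Z}_{n}$. For an edge $\{i-1, i\}$ with $1 \leqslant i \leqslant n-1$ one computes directly that $f\big((h\sigma_{i-1})(h\varphi\sigma_{i})^{-1}\big) = (h\varphi^{-(i-1)})(h\varphi^{-(i-1)})^{-1} = 1$, so $f$ kills the corresponding generators of~$R$. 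Granting $f(R) = 1$, the map~$f$ factors through a homomorphism $\bar{f} \colon P/R \to G$, and the composite of $X_{i} \hookrightarrow P \to P/R$ with~$\bar{f}$ sends $g\sigma_{i}$ to $g\varphi^{-i}$; since $\varphi^{-i}$ is an automorphism of~$G$, this composite is injective, and therefore so is the embedding $X_{i} \to P/R$, which is exactly~$(i)^{\prime}$.

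The one delicate point — and the only place where the hypothesis $q \mid n$ is used — is the well-definedness of~$f$ on the quotient at the closing edge $\{n-1, 0\}$, that is, the generator $(h\sigma_{n-1})(h\varphi\sigma_{0})^{-1}$ obtained for $i = 0$ by reading the index $i-1$ as~$n-1$. Here $f$ evaluates to $(h\varphi^{-(n-1)})(h\varphi)^{-1}$, which equals~$1$ for all $h \in G$ if and only if $h\varphi^{-(n-1)} = h\varphi$ for all~$h$, i.e.\ if and only if $\varphi^{n} = \mathrm{id}_{G}$. Since $q$ is the order of~$\varphi$ and $q \mid n$, we have $\varphi^{n} = \mathrm{id}_{G}$, so this last family of generators is annihilated as well and $f(R) = 1$ follows. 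All the other edges cancel formally, regardless of~$\varphi$; it is precisely the consistency needed to close up the cycle that forces $\varphi^{n} = \mathrm{id}_{G}$, and this is what the divisibility $q \mid n$ supplies. I expect this wrap-around check to be the main (indeed essentially the only) obstacle, the remainder of the argument being the formal reduction to~$P/R$ and the bookkeeping of exponents of~$\varphi$.
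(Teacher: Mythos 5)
Your proof is correct, and it isolates exactly the right two points: condition $(ii)^{\prime}$ is vacuous because $Y\sigma_{i} = X_{i} = Z\sigma_{i}$, and condition $(i)^{\prime}$ reduces to the existence of a homomorphism of $\mathrm{GDP}(\mathcal{G}_{n}(G, H, K, \varphi))$ onto~$G$ restricting to $g\sigma_{i} \mapsto g\varphi^{-i}$ on each~$X_{i}$, whose well-definedness at the closing edge of the cycle is precisely where $q \mid n$ enters via $\varphi^{n} = \mathrm{id}_{G}$. The paper's proof is built around the same twisting map $g\sigma_{i} \mapsto g\varphi^{-i}$ and the same wrap-around consistency check, but it packages the construction differently: it forms a star graph~$\Delta$ whose central vertex carries~$G$ and whose edge into the leaf~$G_{i}$ is the isomorphism $\varphi^{i}\sigma_{i}$ (well defined modulo~$n$ because $q \mid n$), and then invokes Proposition~\ref{ep201} for trees to conclude that each~$G_{i}$ embeds into $\mathrm{GDP}(\mathcal{G}(\Delta))$, whence injectivity of $G_{i} \to \mathrm{GDP}(\mathcal{G}_{n}(G, H, K, \varphi))$ follows from a commutative diagram. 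You instead exploit the hypothesis that $G$ is abelian to present $\mathrm{GDP}(\mathcal{G}_{n}(G, H, K, \varphi))$ explicitly as $P/R$ with $P = \prod_{i \in \mathbb{Z}_{n}} X_{i}$ and verify by hand that the product map $(g_{j}\sigma_{j})_{j} \mapsto \prod_{j} g_{j}\varphi^{-j}$ annihilates~$R$; this makes the argument self-contained and avoids Proposition~\ref{ep201} entirely, at the cost of being tied to the abelian case (which is all that is needed here, since $H = G = K$ forces nothing more general). Both routes are sound; the paper's is shorter given that Proposition~\ref{ep201} is already available, while yours makes the role of the divisibility hypothesis maximally transparent.
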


\begin{proof}
For each $i \in \mathbb{Z}_{n}$, let again $G_{i}$ denote an~isomorphic copy of~$G$, and~let $\sigma_{i}\colon G \to G_{i}$ be an~isomorphism. Consider the~star graph~$\Delta$ with~the~vertex set $\{v_{i}\ (i \in \mathbb{Z}_{n}),\ w\}$ and~associate its central vertex~$w$ with~the~group~$G$, the~leaf~$v_{i}$ ($i \in \mathbb{Z}_{n}$) with~the~group~$G_{i}$, and~the~edge~$\{w, v_{i}\}$ ($i \in \mathbb{Z}_{n}$) with~the~group~$G$ and~the~homomorphisms, one of~which is the~identity mapping and~the~other coincides with~the~isomorphism~$\varphi^{i}\sigma_{i}$. Since $q \mid n$, it follows from~the~relation $x \equiv y \pmod n$ that $\varphi^{x} = \varphi^{y}$. Therefore, the~notation~$\varphi^{i}$ is correct.

We denote the~constructed graph of~groups by~$\mathcal{G}(\Delta)$. It follows from~its definition that, for~any $i \in \mathbb{Z}_{n}$ and~$g^{\prime} \in G$, the~equality $g^{\prime} = g^{\prime} \varphi^{i}\sigma_{i}$ holds in~$\mathrm{GDP}(\mathcal{G}(\Delta))$ and~therefore $g\sigma_{i-1} = g\varphi^{-(i-1)} = g\varphi \sigma_{i}$ for~all $i \in \mathbb{Z}_{n}$, $g \in G$. Hence, the~identity mapping of~the~generators of~$\mathrm{GDP}(\mathcal{G}_{n}(G, H, K, \varphi))$ to~$\mathrm{GDP}(\mathcal{G}(\Delta))$ can be extended to~a~homomorphism
$$
\lambda\colon \mathrm{GDP}(\mathcal{G}_{n}(G, H, K, \varphi)) \to \mathrm{GDP}(\mathcal{G}(\Delta)).
$$

Let $\alpha_{i}\colon G_{i} \to \mathrm{GDP}(\mathcal{G}(\Delta))$ and~$\beta_{i}\colon G_{i} \to \mathrm{GDP}(\mathcal{G}_{n}(G, H, K, \varphi))$ be the~homomorphisms defined by~the~identity mappings of~the~generators of~$G_{i}$ ($i \in \mathbb{Z}_{n}$). Then, for~any $i \in \mathbb{Z}_{n}$, the~diagram
\[
\begin{CD}
G_{i} @>{\beta_{i}}>>  \mathrm{GDP}(\mathcal{G}_{n}(G, H, K, \varphi))\\
@|                     @VV{\lambda}V\\
G_{i} @>{\alpha_{i}}>> \mathrm{GDP}(\mathcal{G}(\Delta)) 
\end{CD}
\]
is commutative. Since the~homomorphisms~$\alpha_{i}$ ($i \in \mathbb{Z}_{n}$) are injective by~Proposition~\ref{ep201}, then the~homomorphisms~$\beta_{i}$ ($i \in \mathbb{Z}_{n}$) are also injective. Therefore, $\mathrm{GDP}(\mathcal{G}_{n}(G, H, K, \varphi))$ satisfies $(i)^{\prime}$. It follows from~the~equalities $H = G = K$ that $(ii)^{\prime}$ is satisfied trivially.
\end{proof}

\begin{eproposition}\label{ep204}
\mbox{}\kern-2pt{}Suppose that a~number $n\kern-2pt{} \geqslant\kern-2pt{} 3$ is admissible for~an~HNN-tuple $(G\kern-.5pt{},\kern-.5pt{} H\kern-1pt{},\kern-.5pt{} K\kern-1pt{},\kern-.5pt{} \varphi)$ with~a~reserve~$r \in \{0,\, 1,\, \ldots,\, n-2\}$ and~$C_{n}$ is a~cyclic group of~order~$n$ with~a~generator~$c$. Suppose also that $\mathcal{C}$ is a~class of~groups closed under taking quotient groups and~extensions. If $G \in \mathcal{C}$ and~$C_{n} \in \mathcal{C}$, then there exists a~homomorphism of~the~group $\mathfrak{G} = \mathrm{HNN}(G, H, K, \varphi)$ onto~a~group from~$\mathcal{C}$ acting injectively on~$G$.
\end{eproposition}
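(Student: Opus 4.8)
The plan is to realize $\mathfrak{G}$ as a homomorphic image of a semidirect product built from the generalized direct product $P=\mathrm{GDP}(\mathcal{G}_n(G,H,K,\varphi))$, and to arrange that this image lies in $\mathcal{C}$ and receives $G$ faithfully. Write $G_i=G\sigma_i$ ($i\in\mathbb{Z}_n$) for the copies of $G$, so that the defining relations of $P$ are the relations of the $G_i$, the commuting relations $[G_i,G_j]=1$ ($i\ne j$), and the edge relations $h\sigma_{i-1}=(h\varphi)\sigma_i$ ($i\in\mathbb{Z}_n$, $h\in H$); by condition $(i)^{\prime}$ of admissibility each $G_i$ embeds in $P$, and I identify $G_i\leqslant P$ accordingly. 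First I would record that $P\in\mathcal{C}$. The presentation above shows that the identity map on generators extends to a surjection of the direct product $\prod_{i\in\mathbb{Z}_n}G_i$ onto $P$ (the direct product is the free product of the $G_i$ modulo exactly the commuting relations, and $P$ imposes in addition the edge relations). Since $G\in\mathcal{C}$ and $\mathcal{C}$ is closed under extensions, the finite direct power $\prod_{i\in\mathbb{Z}_n}G_i\cong G^{\,n}$ lies in $\mathcal{C}$ by an obvious induction on $n$; and since $\mathcal{C}$ is closed under quotients, $P\in\mathcal{C}$.

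Next I would construct the shift automorphism. Define $\tau\colon P\to P$ on generators by $g\sigma_i\mapsto g\sigma_{i-1}$. This respects every defining relation — relations inside a single copy and commuting relations map to relations of the same type, while the edge relation indexed by $i$ maps to the one indexed by $i-1$ — so $\tau$ is an endomorphism; the opposite shift is its inverse, whence $\tau\in\mathrm{Aut}(P)$ and $\tau^{\,n}=1$. Letting $C_n=\langle c\rangle$ act on $P$ so that $c^{-1}pc=p\tau$ for $p\in P$, I form $\mathfrak{P}=P\rtimes C_n$. As this is an extension $1\to P\to\mathfrak{P}\to C_n\to 1$ with $P\in\mathcal{C}$ and $C_n\in\mathcal{C}$, closure of $\mathcal{C}$ under extensions gives $\mathfrak{P}\in\mathcal{C}$.

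Finally I would define $\rho\colon\mathfrak{G}\to\mathfrak{P}$ by $g\mapsto g\sigma_0$ ($g\in G$) and $t\mapsto c$, and verify it is the required map. The relations of $G$ go to relations of $G_0\leqslant P$, so the only relation needing attention is $t^{-1}ht=h\varphi$ ($h\in H$); here $c^{-1}(h\sigma_0)c=(h\sigma_0)\tau=h\sigma_{n-1}$, which equals $(h\varphi)\sigma_0$ by the edge relation indexed by $0$, so $\rho$ is a homomorphism. It is injective on $G$ because $\rho|_G$ is $\sigma_0$ followed by the embedding $G_0\hookrightarrow P$. For surjectivity, $\rho(\mathfrak{G})$ contains $G_0$ and $c$; conjugating $G_0$ by the powers of $c$ (which act as the shift $\tau$) produces every $G_i$, so $\rho(\mathfrak{G})\supseteq\langle G_i:i\in\mathbb{Z}_n\rangle=P$ and $\rho(\mathfrak{G})\supseteq C_n$, giving $\rho(\mathfrak{G})=\mathfrak{P}$.

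I expect the genuine obstacle to be organizational rather than computational. Because $\mathcal{C}$ is assumed closed only under quotients and extensions — and \emph{not} under subgroups — it does not suffice to embed $\mathfrak{G}$ into a $\mathcal{C}$-group; one must produce an \emph{onto} homomorphism. The semidirect-product construction is what makes this feasible, and the conjugation-by-$c$ argument recovering all the $G_i$ is exactly the step forcing $\rho$ to be surjective; identifying $P$ as a quotient of $G^{\,n}$ is the other place where extension/quotient closure substitutes for the missing subgroup closure. The verifications that $\tau$ is well defined and that the single HNN relation holds are routine once the indices are set up consistently.
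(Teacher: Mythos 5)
Your proof is correct and follows essentially the same route as the paper's: both realize $P=\mathrm{GDP}(\mathcal{G}_{n}(G,H,K,\varphi))$ as a quotient of $G^{\,n}$ to get $P\in\mathcal{C}$, extend the shift of copies to an order-dividing-$n$ automorphism, form the split extension by $C_{n}$, and map $t\mapsto c$, with surjectivity from $c^{i}G_{0}c^{-i}=G_{i}$ and injectivity on $G$ from condition $(i)^{\prime}$ of admissibility. No gaps.
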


\begin{proof}
Let $G_{i}$ and~$\sigma_{i}$ ($i \in \mathbb{Z}_{n}$) be defined as~above. Then the~generalized direct product $P = \mathrm{GDP}(\mathcal{G}_{n}(G, H, K, \varphi))$ has the~representation
\[
P = \big\langle G_{i}\ (i \in \mathbb{Z}_{n});\ [G_{i}, G_{j}] = 1\ (i, j \in \mathbb{Z}_{n},\ i \ne j),\ H\sigma_{i-1} = K\sigma_{i}\ (i \in \mathbb{Z}_{n}) \big\rangle,
\]
which shows that the~map extending the~isomorphisms $\sigma_{i}^{-1}\sigma_{i-1}^{\vphantom{1}}\colon G_{i}^{\vphantom{1}} \to G_{i-1}^{\vphantom{1}}$ defines an~automorphism~$\alpha$ of~this group. Obviously, the~order of~this automorphism divides~$n$, and~therefore there exists a~split extension~$Q$ of~$P$ by~$C_{n}$ such that $\hat c\vert_{P} = \alpha$ (here and~below, $\hat x$ denotes the~inner automorphism defined by~an~element~$x$). Since $G \in \mathcal{C}$, then $P$ is a~quotient group of~the~direct product of~$n$ $\mathcal{C}$\nobreakdash-groups. Hence, $P \in \mathcal{C}$ and~$Q \in \mathcal{C}$ because $\mathcal{C}$ is closed under taking quotient groups and~extensions.

It is easy to~see that the~map $\rho\colon \mathfrak{G} \to Q$ extending the~homomorphism $\sigma_{0}\colon G \to G_{0}$ and~mapping~$t$ to~$c$ takes all the~defining relations of~$\mathfrak{G}$ to~the~equalities valid in~$Q$ and~therefore is a~homomorphism. Since $c^{i}G_{0}c^{-i} = G_{i}$ ($i \in \mathbb{Z}_{n}$), this homomorphism is surjective. It remains to~note that, because $n$ is admissible, $\rho$ is injective on~$G$ and~thus is the~required mapping.
\end{proof}

\section{Compatible subgroups}\label{esec03}

Let $(G, H, K, \varphi)$ be an~HNN-tuple, and~let $\mathfrak{G} = \mathrm{HNN}(G, H, K, \varphi)$. Recall that a~subgroup $N \leqslant G$ is said to~be \emph{$(H, K, \varphi)$-com\-pat\-i\-ble} if ($N \cap H)\varphi = N \cap K$.

It is easy to~verify that if a~subgroup~$N$ is normal in~$G$ and~is $(H, K, \varphi)$-com\-pat\-i\-ble, then the~mapping $\varphi_{N}\colon HN/N \to KN/N$ taking a~coset~$hN$ ($h \in H$) to~$(h\varphi)N$ is well defined and~is a~subgroup isomorphism. Therefore, the~sequence
$$
(G/N, HN/N, KN/N, \varphi_{N})
$$
turns~out to~be an~HNN-tuple. It is also easy to~see that the~map
\[
\rho_{N}\colon \mathfrak{G} \to \mathrm{HNN}(G/N, HN/N, KN/N, \varphi_{N})
\]
extending the~natural homomorphism $G \to G/N$ and~taking~$t$ to~$t$ is a~surjective homomorphism, and~its kernel coincides with~the~normal closure of~$N$ in~$\mathfrak{G}$.

For~every class of~groups~$\mathcal{C}$, consider three families of~subgroups. Namely, suppose that%
\smallskip

--\hspace{1ex}$\mathcal{C}^{*}(G, H, K, \varphi)$ is the~family of~all $(H, K, \varphi)$-com\-pat\-i\-ble subgroups from~$\mathcal{C}^{*}(G)$;
\smallskip

--\hspace{1ex}$\mathcal{C}_{r}^{*}(G, H, K, \varphi)$ ($r \geqslant 0$) is the~subset of~$\mathcal{C}^{*}(G, H, K, \varphi)$ defined as~follows: a~subgroup $N \in \mathcal{C}^{*}(G, H, K, \varphi)$ belongs to~$\mathcal{C}_{r}^{*}(G, H, K, \varphi)$ if and~only if there exists a~number $n \geqslant \max\{3,\ r+2\}$ such that $\mathcal{C}$ contains a~cyclic group of~order~$n$ and~$n$ is admissible for~$(G/N, HN/N, KN/N, \varphi_{N})$ with~the~reserve~$r$;
\smallskip

--\hspace{1ex}$\mathcal{C}_{\cap}^{*}(G, H, K, \varphi) = \{U \cap G \mid U \in \mathcal{C}^{*}(\mathfrak{G})\}$.

\begin{eproposition}\label{ep301}
If $\mathcal{C}$ is a~class of~groups closed under taking subgroups and~direct~products of~a~finite number of~factors, and~$X$ is a~group, then the~following statements~hold.{\parfillskip=0pt\par}

\textup{1.}\hspace{1ex}The intersection of~a~finite number of~subgroups from~the~family $\mathcal{C}^{*}(X)$ again belongs to~this family~\textup{\cite[Proposition~2]{SokolovTumanova2020IVM};}

\textup{2.}\hspace{1ex}If $X$ is residually a~$\mathcal{C}$\nobreakdash-group and~$S$ is a~finite subset of~non-triv\-i\-al elements of~$X$, then there exists a~subgroup $Y \in \mathcal{C}^{*}(X)$ such that $Y \cap S = \varnothing$~\textup{\cite[Proposition~2]{Tumanova2020}}.
\end{eproposition}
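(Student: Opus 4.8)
The plan is to obtain both statements from the behaviour of the quotients $X/Y$ under a diagonal map into a finite direct product; neither part needs the full strength of a root class, only the two closure properties stated in the hypothesis (together, for~the~second part, with~the~definition of~being residually a~$\mathcal{C}$\nobreakdash-group).

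For~statement~1, I~would take arbitrary subgroups $Y_{1}, \ldots, Y_{m} \in \mathcal{C}^{*}(X)$ and~consider the~homomorphism $\delta\colon X \to \prod_{j=1}^{m} X/Y_{j}$ sending an~element~$x$ to~the~tuple $(xY_{1}, \ldots, xY_{m})$. Its kernel is exactly $Y = Y_{1} \cap \cdots \cap Y_{m}$, so $X/Y$ is isomorphic to~the~image $X\delta$, a~subgroup of~the~direct product $\prod_{j=1}^{m} X/Y_{j}$. Each factor $X/Y_{j}$ lies in~$\mathcal{C}$ by~the~definition of~$\mathcal{C}^{*}(X)$, so this product belongs to~$\mathcal{C}$ because $\mathcal{C}$ is closed under taking direct products of~a~finite number of~factors; and~since $\mathcal{C}$ is closed under taking subgroups, the~subgroup $X\delta \cong X/Y$ also lies in~$\mathcal{C}$. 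Hence $Y \in \mathcal{C}^{*}(X)$, as~required.

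For~statement~2, write $S = \{s_{1}, \ldots, s_{k}\}$. Since $X$ is residually a~$\mathcal{C}$\nobreakdash-group, for~each~$s_{i}$ there is a~homomorphism~$\sigma_{i}$ of~$X$ onto~a~$\mathcal{C}$\nobreakdash-group with~$s_{i}\sigma_{i} \ne 1$; putting $Y_{i} = \ker\sigma_{i}$, we get $X/Y_{i} \cong X\sigma_{i} \in \mathcal{C}$, so $Y_{i} \in \mathcal{C}^{*}(X)$ while $s_{i} \notin Y_{i}$. I~would then set $Y = Y_{1} \cap \cdots \cap Y_{k}$. By~statement~1 we have $Y \in \mathcal{C}^{*}(X)$, and~since $Y \leqslant Y_{i}$ and~$s_{i} \notin Y_{i}$ for~every~$i$, no~element of~$S$ lies in~$Y$, that~is, $Y \cap S = \varnothing$.

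Neither step presents a~genuine obstacle: the~whole content is the~observation that the~diagonal map turns the~intersection of~the~$Y_{j}$ into a~single kernel, after which the~two closure hypotheses on~$\mathcal{C}$ do~all the~work. The~only point requiring a~moment's care is that in~statement~1 the~image $X\delta$ is merely a~subgroup of~the~product (not the~whole product), which is precisely why closure under subgroups, and~not only under finite direct products, is needed there; in~statement~2 the~maps~$\sigma_{i}$ are already onto $\mathcal{C}$\nobreakdash-groups, so subgroup closure is not invoked directly and~the~second part reduces cleanly to~the~first.
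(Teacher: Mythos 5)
Your argument is correct and is exactly the standard one: the paper itself gives no proof of Proposition~\ref{ep301}, merely citing \cite[Proposition~2]{SokolovTumanova2020IVM} and \cite[Proposition~2]{Tumanova2020}, and the diagonal-embedding argument for part~1 together with the reduction of part~2 to part~1 is precisely what those references contain. Nothing to add.
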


\begin{eproposition}\label{ep302}
If $\mathcal{C}$ is a~class of~groups closed under taking subgroups and~direct products of~a~finite number of~factors, and~$(G, H, K, \varphi)$ is an~HNN-tuple, then the~families $\mathcal{C}^{*}(G, H, K, \varphi)$ and~$\mathcal{C}_{\cap}^{*}(G, H, K, \varphi)$ are closed under taking the~intersections of~a~finite number of~subgroups.
\end{eproposition}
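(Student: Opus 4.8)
The plan is to reduce everything to Proposition~\ref{ep301}(1), which already guarantees that $\mathcal{C}^{*}(X)$ is closed under finite intersections for any group~$X$. Since the intersection of any finite collection is obtained by iterating pairwise intersections, it suffices to treat the intersection of two members of each family.

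For $\mathcal{C}^{*}(G, H, K, \varphi)$, I would take $N_{1}, N_{2} \in \mathcal{C}^{*}(G, H, K, \varphi)$ and set $N = N_{1} \cap N_{2}$. Applying Proposition~\ref{ep301}(1) with~$X = G$ immediately gives $N \in \mathcal{C}^{*}(G)$, so the only thing left is to verify that $N$ is $(H, K, \varphi)$\nobreakdash-com\-pat\-i\-ble, that is, $(N \cap H)\varphi = N \cap K$. The key observation is that $\varphi\colon H \to K$ is a bijection and therefore preserves intersections of subgroups of~$H$; combining this with $N \cap H = (N_{1} \cap H) \cap (N_{2} \cap H)$ and the compatibility of~$N_{1}$ and~$N_{2}$ yields $(N \cap H)\varphi = ((N_{1} \cap H)\varphi) \cap ((N_{2} \cap H)\varphi) = (N_{1} \cap K) \cap (N_{2} \cap K) = N \cap K$, as required.

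For $\mathcal{C}_{\cap}^{*}(G, H, K, \varphi)$, recall that its members are exactly the subgroups of the form $U \cap G$ with $U \in \mathcal{C}^{*}(\mathfrak{G})$. Given two such subgroups $U_{1} \cap G$ and $U_{2} \cap G$, I would use the set-theoretic identity $(U_{1} \cap G) \cap (U_{2} \cap G) = (U_{1} \cap U_{2}) \cap G$ and then apply Proposition~\ref{ep301}(1) with~$X = \mathfrak{G}$ to conclude that $U_{1} \cap U_{2} \in \mathcal{C}^{*}(\mathfrak{G})$; hence the intersection again has the prescribed form and lies in $\mathcal{C}_{\cap}^{*}(G, H, K, \varphi)$.

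I expect no serious obstacle here: both assertions are short consequences of Proposition~\ref{ep301}(1), and the only point requiring any argument is the compatibility check for~$N$, which rests solely on the injectivity of~$\varphi$ and thus on the elementary fact that an injective map commutes with the formation of intersections.
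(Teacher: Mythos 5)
Your proposal is correct and follows essentially the same route as the paper: reduce to pairwise intersections, invoke Proposition~\ref{ep301}(1) for membership in $\mathcal{C}^{*}(G)$ (respectively $\mathcal{C}^{*}(\mathfrak{G})$), and use the injectivity of~$\varphi$ to distribute it over the intersection for the compatibility check. No discrepancies with the paper's argument.
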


\begin{proof}
An~obvious induction allows us to~consider the~intersection of~only two subgroups.

{\parfillskip=0pt
If $N_{1}, N_{2} \in \mathcal{C}^{*}(G, H, K, \varphi)$ and~$N = N_{1} \cap N_{2}$, then $N \in \mathcal{C}^{*}(G)$ by~Proposition~\ref{ep301}~and
\begin{multline*}
(N \cap H)\varphi = ((N_{1} \cap H) \cap (N_{2} \cap H))\varphi = (N_{1} \cap H)\varphi \cap (N_{2} \cap H)\varphi \\
= (N_{1} \cap K) \cap (N_{2} \cap K) = N \cap K
\end{multline*}
}because $\varphi$ is injective. Therefore, $N \in \mathcal{C}^{*}(G, H, K, \varphi)$.

Suppose that $N_{1}, N_{2} \in \mathcal{C}_{\cap}^{*}(G, H, K, \varphi)$, $\mathfrak{G} = \mathrm{HNN}(G, H, K, \varphi)$, and~$U_{1}, U_{2} \in \mathcal{C}^{*}(\mathfrak{G})$ are subgroups such that $N_{1} = U_{1} \cap G$, $N_{2} = U_{2} \cap G$. If $N = N_{1} \cap N_{2}$ and~$U = U_{1} \cap U_{2}$, then $N = U \cap G$ and,~again by~Proposition~\ref{ep301}, $U \in \mathcal{C}^{*}(\mathfrak{G})$. Thus, $N \in \mathcal{C}_{\cap}^{*}(G, H, K, \varphi)$.
\end{proof}

\begin{eproposition}\label{ep303}
\textup{\cite[Theorem~3]{SokolovTumanova2019}}
Suppose that $\mathcal{C}$ is a~root class of~groups containing non-pe\-ri\-od\-ic groups and~closed under taking quotient groups, $(G, H, K, \varphi)$ is an~HNN-tuple, and~$\mathfrak{G} = \mathrm{HNN}(G, H, K, \varphi)$. Suppose also that $G$ is residually a~$\mathcal{C}$\nobreakdash-group, $H$~and~$K$ lie in~the~center of~$G$, and~there exists a~homomorphism~$\rho$ of~$G$ onto~a~group from~$\mathcal{C}$ acting injectively on~$H$ and~$K$. Then $\rho$ can be extended to~a~homomorphism of~$\mathfrak{G}$ onto~a~group from~$\mathcal{C}$ and~$\mathfrak{G}$ is residually a~$\mathcal{C}$\nobreakdash-group.
\end{eproposition}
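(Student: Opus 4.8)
The plan is to prove the two assertions in turn: first I extend $\rho$, and then I deduce residuality of $\mathfrak{G}$ from the extendability of a suitable family of homomorphisms. Put $\bar{G} = G\rho$ and $N = \ker\rho$. Since $\rho$ is injective on $H$ and $K$, we have $N \cap H = 1 = N \cap K$, so $(N \cap H)\varphi = N \cap K$ and $N$ is $(H, K, \varphi)$-com\-pat\-i\-ble. By the construction of~$\rho_{N}$ in Section~\ref{esec03}, $\rho$ then induces a surjection $\rho_{*}\colon \mathfrak{G} \to \bar{\mathfrak{G}}$, where $\bar{\mathfrak{G}} = \mathrm{HNN}(\bar{G}, H\rho, K\rho, \bar{\varphi})$ and $\bar{\varphi}\colon H\rho \to K\rho$ is the isomorphism induced by~$\varphi$; note that $H\rho$ and~$K\rho$ are central in~$\bar{G} \in \mathcal{C}$. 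It therefore suffices to map $\bar{\mathfrak{G}}$ onto a~$\mathcal{C}$\nobreakdash-group injectively on~$\bar{G}$ and compose with~$\rho_{*}$.

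To build such a~$\mathcal{C}$\nobreakdash-group I would exploit that $\mathcal{C}$ contains a~non-pe\-ri\-od\-ic group and hence, being a~root class closed under quotients, contains $\mathbb{Z}$ and every finite cyclic group. I attach a~copy~$\bar{G}_{i}$ of~$\bar{G}$ to each vertex of the doubly infinite path with vertex set~$\mathbb{Z}$ and glue $H\rho$ in~$\bar{G}_{i}$ to~$K\rho$ in~$\bar{G}_{i+1}$ along~$\bar{\varphi}$. Since this graph is a~tree and the edge subgroups are central, the associated generalized direct product~$P$ exists by~Proposition~\ref{ep201}. Moreover $P$ is a~quotient of $\bigoplus_{i \in \mathbb{Z}}\bar{G}_{i}$, which embeds in $\prod_{i \in \mathbb{Z}}\bar{G}_{i} \in \mathcal{C}$ (the Cartesian-product axiom with index group $\mathbb{Z} \in \mathcal{C}$), so $P \in \mathcal{C}$ by closure under subgroups and quotients. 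The shift of the copies is an~automorphism of~$P$ of infinite order, and the split extension $Q = P \rtimes \langle t \rangle$ with $\langle t \rangle \cong \mathbb{Z}$ again lies in~$\mathcal{C}$. Sending $\bar{G}$ to~$\bar{G}_{0}$ and $t$ to the shift generator realizes the defining relations of~$\bar{\mathfrak{G}}$ in~$Q$, is injective on $\bar{G}_{0} = \bar{G}$ by condition~$(i)$ for~$P$, and is surjective; composing with~$\rho_{*}$ yields the required extension of~$\rho$. (The finite-cycle alternative through Propositions~\ref{ep202}--\ref{ep204} also works, but it forces $\bar{\varphi}$ to have finite order on a~stable subgroup, so the infinite path is cleaner in the non-pe\-ri\-od\-ic case.)

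For residuality, let $1 \ne g \in \mathfrak{G}$. If $g \in G$, I choose by residual $\mathcal{C}$\nobreakdash-ness a~homomorphism $\sigma\colon G \to C \in \mathcal{C}$ with $g\sigma \ne 1$ and set $N_{0} = \ker\sigma \cap N$; by Proposition~\ref{ep301} one has $N_{0} \in \mathcal{C}^{*}(G)$, while $N_{0} \cap H = 1 = N_{0} \cap K$, so $G \to G/N_{0}$ is injective on~$H$ and~$K$ with image in~$\mathcal{C}$. Applying the extension statement just proved to this map produces a~homomorphism of~$\mathfrak{G}$ onto a~$\mathcal{C}$\nobreakdash-group that restricts to $G \to G/N_{0}$ and hence sends $g$ to $gN_{0} \ne 1$. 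If $g$ has positive syllable length, I first dispose of the case of nonzero $t$\nobreakdash-ex\-po\-nent sum by composing the construction above with the projection $Q \to \langle t \rangle \cong \mathbb{Z} \in \mathcal{C}$, which records that exponent sum. In the remaining case I would pass to a~compatible $N \in \mathcal{C}^{*}(G)$ (furnished by residual $\mathcal{C}$\nobreakdash-ness of~$G$ and the bookkeeping of Proposition~\ref{ep302}) chosen so that the image of~$g$ stays reduced in $\mathrm{HNN}(G/N, HN/N, KN/N, \varphi_{N})$, and then feed this quotient tuple into the line construction.

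The construction of the target $\mathcal{C}$\nobreakdash-group is painless here precisely because the underlying graph is a~tree, so Proposition~\ref{ep201} applies without any order hypothesis on~$\bar{\varphi}$. \emph{The genuine obstacle is the last step of the residuality argument for reduced words of length~$\geqslant 1$ and zero exponent sum.} Such a~word maps into the base~$P$ of~$Q$, and the single shift-ex\-ten\-sion need not detect it: gluing of the central subgroups can make distant syllables collapse (this is exactly why $\mathfrak{G}$ itself is generally \emph{not} a~$\mathcal{C}$\nobreakdash-group even when $\bar{\mathfrak{G}} \to Q$ is injective on~$\bar{G}$). Detecting every such element therefore requires choosing the compatible subgroup~$N$ so as to separate each non-pinching syllable~$g_{j}$ from both $H$ and~$K$ while keeping $N \cap H = N \cap K = 1$, i.e. a~$\mathcal{C}$\nobreakdash-sep\-a\-ra\-bil\-ity property of the central subgroups, and then possibly iterating the construction to kill residual collapse. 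Extracting this separation from the residual $\mathcal{C}$\nobreakdash-ness of~$G$ and the centrality of~$H$ and~$K$ is where the real work lies.
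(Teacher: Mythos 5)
Your first half --- the extension of~$\rho$ --- is sound: the doubly infinite path of copies of~$\bar{G}$ glued along $H\rho \to K\rho$ is a tree with central edge subgroups, so Proposition~\ref{ep201} applies; $\mathbb{Z} \in \mathcal{C}$ because $\mathcal{C}$ contains a non-periodic group and is closed under taking subgroups; and the split extension $Q = P \rtimes \langle t \rangle$ lies in~$\mathcal{C}$ and receives $\mathfrak{G}$ injectively on~$G\rho$. (Note that the paper does not prove this proposition at all --- it quotes it from the reference --- but your construction is exactly the infinite-path analogue of Proposition~\ref{ep204}, and the machinery needed to finish is all present elsewhere in the paper.)

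The second half has a genuine gap, which you honestly flag: for a reduced word~$g$ of length $\geqslant 1$ with zero exponent sum you never produce a detecting homomorphism. The missing idea is not a more refined separation of syllables inside~$Q$ but a \emph{free-kernel} argument, the one the paper uses at the end of the proof of Proposition~\ref{ep502}. Two ingredients finish the job. First, the separability you call ``the real work'' is in fact the easy Proposition~\ref{ep504} applied with $Q = \ker\rho$ (which satisfies $Q \cap H = 1 = Q \cap K$): for each syllable $g_{i} \notin H$ (resp.\ $\notin K$) there is $M_{i} \in \mathcal{C}^{*}(G)$ with $g_{i} \notin HM_{i}$ (resp.\ $KM_{i}$); putting $N^{\prime} = \ker\rho \cap \bigcap_{i}M_{i}$ gives, by Proposition~\ref{ep301}, a subgroup of~$\mathcal{C}^{*}(G)$ with $N^{\prime} \cap H = 1 = N^{\prime} \cap K$ (hence compatible) such that $g\rho_{N^{\prime}}$ is still reduced of length $\geqslant 1$ in $\mathfrak{G}_{N^{\prime}} = \mathrm{HNN}(G/N^{\prime}, HN^{\prime}/N^{\prime}, KN^{\prime}/N^{\prime}, \varphi_{N^{\prime}})$ and so is non-trivial by Britton's lemma. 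Second, apply your own line construction to the \emph{identity} map of~$G/N^{\prime}$: it yields a homomorphism of~$\mathfrak{G}_{N^{\prime}}$ onto a $\mathcal{C}$\nobreakdash-group that is injective on the whole base group $G/N^{\prime}$. Its kernel meets $G/N^{\prime}$, and hence every conjugate of~$G/N^{\prime}$, trivially, so it is free by the subgroup theorem for HNN-groups~\cite{Cohen1974}; therefore $\mathfrak{G}_{N^{\prime}}$ is an extension of a free group by a $\mathcal{C}$\nobreakdash-group and is residually a $\mathcal{C}$\nobreakdash-group by Proposition~\ref{ep501}, which detects~$g$. In other words, the ``residual collapse'' of distant syllables in~$Q$ never has to be killed: it is absorbed into a free kernel, and free groups are themselves residually $\mathcal{C}$\nobreakdash-groups. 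Without this step your plan of ``iterating the construction'' does not terminate, since every single shift extension collapses commutators of distant conjugates of the base.
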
\pagebreak

\begin{eproposition}\label{ep304}
Let $\mathcal{C}$ be a~class of~groups, and~let $(G, H, K, \varphi)$ be an~HNN-tuple.

\textup{1.}\hspace{1ex}If $\mathcal{C}$ is closed under taking subgroups, then $\mathcal{C}_{\cap}^{*}(G, H, K, \varphi) \subseteq \mathcal{C}^{*}(G, H, K, \varphi)$.

\textup{2.}\hspace{1ex}If $\mathcal{C}$ is a~root class of~groups containing non-pe\-ri\-od\-ic groups and~closed under taking quotient groups, $H$~and~$K$ lie in~the~center of~$G$, then $\mathcal{C}^{*}(G, H, K, \varphi) \subseteq \mathcal{C}_{\cap}^{*}(G, H, K, \varphi)$.{\parfillskip=0pt\par}

\textup{3.}\hspace{1ex}If $\mathcal{C}$ is closed under taking quotient groups and~extensions, then $\mathcal{C}_{r}^{*}(G, H, K, \varphi) \subseteq \mathcal{C}_{\cap}^{*}(G, H, K, \varphi)$ for~any $r \geqslant 0$.
\end{eproposition}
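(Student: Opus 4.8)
The plan is to dispatch statement~1 by a~direct computation exploiting that the~stable letter~$t$ realizes~$\varphi$ by~conjugation, and~to treat statements~2 and~3 by~a~single common construction that differs only in~which earlier result supplies a~key homomorphism. For~statement~1, given $U \in \mathcal{C}^{*}(\mathfrak{G})$, I would put $N = U \cap G$. Then $N$ is normal in~$G$ and~$G/N \cong GU/U \leqslant \mathfrak{G}/U$, so $G/N \in \mathcal{C}$ because $\mathcal{C}$ is closed under taking subgroups; hence $N \in \mathcal{C}^{*}(G)$. It~remains to~verify that $(N \cap H)\varphi = N \cap K$. If $h \in N \cap H$, then $h\varphi = t^{-1}ht \in U$ by~the~normality of~$U$ in~$\mathfrak{G}$, while $h\varphi \in K \leqslant G$; thus $h\varphi \in U \cap G \cap K = N \cap K$. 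Conversely, if $k \in N \cap K$, we write $k = h\varphi$ with $h \in H$ and~observe that $h = tkt^{-1} \in U \cap G \cap H = N \cap H$. Therefore $N$ is $(H, K, \varphi)$-compatible and~lies in~$\mathcal{C}^{*}(G, H, K, \varphi)$.

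For~statements~2 and~3, let $N$ belong to~$\mathcal{C}^{*}(G, H, K, \varphi)$ (respectively, to~$\mathcal{C}_{r}^{*}(G, H, K, \varphi)$). Since $N$ is normal and~$(H, K, \varphi)$-compatible, the~quotient tuple $(G/N, HN/N, KN/N, \varphi_{N})$ and~the~surjection $\rho_{N}\colon \mathfrak{G} \to \mathrm{HNN}(G/N, HN/N, KN/N, \varphi_{N})$ extending $G \to G/N$ and~fixing~$t$ are available. The~core observation is the~following: if we can produce a~homomorphism~$\theta$ of~$\mathrm{HNN}(G/N, HN/N, KN/N, \varphi_{N})$ onto~a~group $D \in \mathcal{C}$ acting injectively on~the~base subgroup~$G/N$, then the~composite $\rho_{N}\theta\colon \mathfrak{G} \to D$ has a~kernel $U \in \mathcal{C}^{*}(\mathfrak{G})$; and~since the~restriction of~$\rho_{N}$ to~$G$ is the~natural projection onto~$G/N$ (with~kernel~$N$) followed by~the~injective map~$\theta|_{G/N}$, we obtain $U \cap G = N$, whence $N \in \mathcal{C}_{\cap}^{*}(G, H, K, \varphi)$.

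It~remains to~manufacture such a~$\theta$, and~here the~two statements diverge. For~statement~2, the~group $G/N \in \mathcal{C}$ is trivially residually a~$\mathcal{C}$-group, the~subgroups $HN/N$ and~$KN/N$ are central in~$G/N$ (because $H$ and~$K$ are central in~$G$), and~the~identity map of~$G/N$ is a~homomorphism onto~a~$\mathcal{C}$-group that is injective on~these subgroups; Proposition~\ref{ep303} then extends it to~a~homomorphism~$\theta$ that is still injective on~$G/N$. For~statement~3, the~definition of~$\mathcal{C}_{r}^{*}(G, H, K, \varphi)$ furnishes a~number $n \geqslant \max\{3,\ r+2\}$ with~$C_{n} \in \mathcal{C}$ such that $n$ is admissible for~the~quotient tuple with~the~reserve~$r$; as~$G/N \in \mathcal{C}$ and~$\mathcal{C}$ is closed under taking quotient groups and~extensions, Proposition~\ref{ep204} supplies directly a~homomorphism~$\theta$ acting injectively on~$G/N$.

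The~substantive content of~statements~2 and~3 is thus imported from~Propositions~\ref{ep303} and~\ref{ep204}; the~only delicate point is ensuring that $\theta|_{G/N}$ is injective on~\emph{all} of~$G/N$, so that $U \cap G$ reduces to~exactly~$N$ and~not to~a~larger subgroup. In~statement~2 this forces us to~apply Proposition~\ref{ep303} to~the~identity map (injective everywhere) rather than to~an~arbitrary separating homomorphism, relying on~the~fact that the~extension it~provides keeps the~base restriction injective. In~statement~1 the~single non-routine step is the~backward inclusion $N \cap K \subseteq (N \cap H)\varphi$, which uses that $\varphi$ maps~$H$ onto~$K$ together with~the~normality of~$U$ in~$\mathfrak{G}$.
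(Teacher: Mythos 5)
Your proposal is correct and follows essentially the same route as the paper: statement~1 via the normality of~$U$ in~$\mathfrak{G}$ and conjugation by~$t$ (the paper carries out the same computation at the level of subgroups rather than elements), and statements~2 and~3 via the common scheme of extending a homomorphism of~$\mathfrak{G}_{N} = \mathrm{HNN}(G/N, HN/N, KN/N, \varphi_{N})$ that is injective on~$G/N$, supplied by Proposition~\ref{ep303} applied to the identity map in case~2 and by Proposition~\ref{ep204} in case~3, and then pulling its kernel back along~$\rho_{N}$. Your emphasis on the injectivity of the restriction to all of~$G/N$ is exactly the point the paper uses when it notes $U_{N} \cap G/N = 1$.
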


\begin{proof}
1.\hspace{1ex}Suppose that $\mathfrak{G} = \mathrm{HNN}(G, H, K, \varphi)$, $N \in \mathcal{C}_{\cap}^{*}(G, H, K, \varphi)$, and~$U \in \mathcal{C}^{*}(\mathfrak{G})$ is a~subgroup such that $N = U \cap G$. Then
$$
G/N = G/(U \cap G) \cong GU/U \leqslant \mathfrak{G}/U
$$
{\parfillskip=0pt
and~$G/N \in \mathcal{C}$ because $\mathcal{C}$ is closed under taking subgroups. Since $U$ is normal in~$\mathfrak{G}$, then
\[
(U \cap H)\varphi = t^{-1}(U \cap H)t \leqslant t^{-1}Ut \cap t^{-1}Ht = U \cap H\varphi = U \cap K
\]}
and~similarly $(U \cap K)\varphi^{-1} \leqslant U \cap H$. Hence,
\[
(N \cap H)\varphi = (U \cap G \cap H)\varphi = (U \cap H)\varphi = U \cap K = U \cap G \cap K = N \cap K
\]
and~therefore $N \in \mathcal{C}^{*}(G, H, K, \varphi)$.
\smallskip

2.\hspace{1ex}Let $N \in \mathcal{C}^{*}(G, H, K, \varphi)$. Then the~HNN-tuple $(G/N, HN/N, KN/N, \varphi_{N})$, the~group $\mathfrak{G}_{N} = \mathrm{HNN}(G/N, HN/N, KN/N, \varphi_{N})$, and~the~homomorphism $\rho_{N}\colon \mathfrak{G} \to \mathfrak{G}_{N}$ are defined. It follows from~the~definition of~the~family $\mathcal{C}^{*}(G, H, K, \varphi)$ that $G/N \in \mathcal{C}$. Hence, by~Proposition~\ref{ep303}, there exists a~homomorphism~$\sigma$ of~$\mathfrak{G}_{N}$ onto~a~group from~$\mathcal{C}$ extending the~identity mapping of~$G/N$. Let $U_{N} = \ker\sigma$, and~let $U$ be the~preimage of~$U_{N}$ under~$\rho_{N}$. Then $U_{N} \in \mathcal{C}^{*}(\mathfrak{G}_{N})$, $U_{N} \cap G/N = 1$, and~$U \in \mathcal{C}^{*}(\mathfrak{G})$. Since $\rho_{N}$ extends the~natural homomorphism $G \to G/N$, then $\ker\rho_{N} \cap G = N$ and~$U \cap G \leqslant \ker\rho_{N}$. Therefore, $U \cap G = N$ and~$N \in \mathcal{C}_{\cap}^{*}(G, H, K, \varphi)$.
\smallskip

3.\hspace{1ex}If $N \in \mathcal{C}_{r}^{*}(G, H, K, \varphi)$, then, by~Proposition~\ref{ep204}, there exists a~homomorphism of the~group $\mathrm{HNN}(G/N, HN/N, KN/N, \varphi_{N})$ onto~a~group from~$\mathcal{C}$ acting injectively on~$G/N$. Hence, we can use exactly the~same argument as~in~the~proof of~Statement~2.
\end{proof}

\begin{eproposition}\label{ep305}
Let $\mathcal{C}$ be a~root class of~groups consisting only of~periodic groups and~closed under taking quotient groups. If $(G, H, K, \varphi)$ is an~HNN-tuple, $G$ is an~abelian group, $H = G = K$, and~$N$ is a~subgroup of~$G$, then the~following statements are equivalent.

\textup{1.}\hspace{1ex}$N \in \mathcal{C}_{\cap}^{*}(G, H, K, \varphi)$.

\textup{2.}\hspace{1ex}$N \in \mathcal{C}_{r}^{*}(G, H, K, \varphi)$ for~any $r \geqslant 0$.

\textup{3.}\hspace{1ex}$N\varphi = N$, $G/N \in \mathcal{C}$, and~the~order of~the~automorphism~$\varphi_{N}$ of~$G/N$ induced by~the~automorphism~$\varphi$ is finite and~is a~$\pi(\mathcal{C})$\nobreakdash-num\-ber.
\end{eproposition}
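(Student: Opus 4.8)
The plan is to first record two reductions that follow from the hypothesis $H=G=K$. First, a subgroup $N \leqslant G$ is $(H,K,\varphi)$-compatible precisely when $N\varphi = N$ (compatibility reads $(N\cap H)\varphi = N\cap K$, which here is $N\varphi = N$), so that $\mathcal{C}^{*}(G,H,K,\varphi)$ consists of those $N$ with $G/N \in \mathcal{C}$ and $N\varphi = N$. Second, $\mathfrak{G}$ is the split extension of $G$ by the infinite cyclic group $\langle t\rangle$ with $t$ acting as $\varphi$, i.e. $t^{-1}gt = g\varphi$ for $g \in G$. With these remarks I would establish the equivalence through the cycle $1 \Rightarrow 3 \Rightarrow 2 \Rightarrow 1$.

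For $1 \Rightarrow 3$ --- the step I expect to carry the real content --- suppose $N = U \cap G$ with $U \in \mathcal{C}^{*}(\mathfrak{G})$. Statement~1 of Proposition~\ref{ep304} (applicable since a root class is closed under subgroups) gives $N \in \mathcal{C}^{*}(G,H,K,\varphi)$, whence $G/N \in \mathcal{C}$ and, by the compatibility remark, $N\varphi = N$; in particular $\varphi_{N}$ is a well-defined automorphism of $G/N$. It remains to bound its order. Put $Q = \mathfrak{G}/U \in \mathcal{C}$ and let $\bar t$ be the image of $t$. Since $\mathcal{C}$ consists of periodic groups, $\bar t$ has finite order $m$, and $m$ is a $\pi(\mathcal{C})$-number because it is the order of an element of the $\mathcal{C}$-group $Q$. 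Under the isomorphism $G/N \cong GU/U$ the automorphism $\varphi_{N}$ corresponds to conjugation by $\bar t$ on the normal subgroup $GU/U$ of $Q$; as $\bar t^{\,m} = 1$, this conjugation has order dividing $m$, so the order of $\varphi_{N}$ is finite and divides the $\pi(\mathcal{C})$-number $m$, hence is itself a $\pi(\mathcal{C})$-number. This yields Statement~3.

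For $3 \Rightarrow 2$, write $q$ for the (finite, $\pi(\mathcal{C})$-number) order of $\varphi_{N}$; here $N \in \mathcal{C}^{*}(G,H,K,\varphi)$ already follows from $N\varphi = N$ and $G/N \in \mathcal{C}$. Fixing $r \geqslant 0$, I would choose $n$ to be a multiple of $q$ that is a $\pi(\mathcal{C})$-number with $n \geqslant \max\{3,\, r+2\}$ (take $n = q\,p^{j}$ for a prime $p \in \pi(\mathcal{C})$ and $j$ large; note $\pi(\mathcal{C}) \neq \varnothing$ since $\mathcal{C}$ contains a non-trivial periodic group). A root class of periodic groups closed under quotient groups contains a cyclic group of every order that is a $\pi(\mathcal{C})$-number --- each $p \in \pi(\mathcal{C})$ yields $C_{p}$ as a subgroup of some $\mathcal{C}$-group, and closure under extensions and finite direct products produces all such cyclic groups --- so $C_{n} \in \mathcal{C}$. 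Applying Proposition~\ref{ep203} to the tuple $(G/N, G/N, G/N, \varphi_{N})$ (abelian, both associated subgroups equal to $G/N$, and $q \mid n$) shows that $n$ is admissible for it with reserve $r \leqslant n-2$. Every requirement in the definition of $\mathcal{C}_{r}^{*}(G,H,K,\varphi)$ is then met, so $N \in \mathcal{C}_{r}^{*}(G,H,K,\varphi)$ for all $r$.

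Finally $2 \Rightarrow 1$ is immediate: taking $r = 0$, Statement~3 of Proposition~\ref{ep304} (a root class is closed under extensions, and $\mathcal{C}$ is closed under quotient groups) gives $\mathcal{C}_{0}^{*}(G,H,K,\varphi) \subseteq \mathcal{C}_{\cap}^{*}(G,H,K,\varphi)$, so $N \in \mathcal{C}_{\cap}^{*}(G,H,K,\varphi)$. The main obstacle is the order bound in $1 \Rightarrow 3$: everything hinges on recognizing $\mathfrak{G}$ as a semidirect product, which turns $\varphi_{N}$ into an inner automorphism of the periodic quotient $Q$ and thereby forces its order to be a finite $\pi(\mathcal{C})$-number.
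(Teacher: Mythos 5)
Your proposal is correct and follows essentially the same route as the paper: $1\Rightarrow 3$ via Proposition~\ref{ep304} plus the observation that $\varphi_{N}$ is realized by conjugation by the image of~$t$ in the periodic quotient $\mathfrak{G}/U$ (the paper phrases this with the embedding $\gamma\colon G/N\to\mathfrak{G}/U$ and the identity $(gN)\varphi_{N}^{n}=(gN)\gamma(\widehat{t^{n}U})\gamma^{-1}$, which is the same computation), $3\Rightarrow 2$ via Proposition~\ref{ep203} with a $\pi(\mathcal{C})$-number multiple of the order of $\varphi_{N}$, and $2\Rightarrow 1$ via Proposition~\ref{ep304}. No gaps.
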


\begin{proof}
\mbox{$1 \Rightarrow 3$}.\hspace{1ex}By~Proposition~\ref{ep304}, $N \in \mathcal{C}^{*}(G, H, K, \varphi)$. Hence, $G/N \in \mathcal{C}$, and~since $H = G = K$, then
$$
N\varphi = (N \cap H)\varphi = N \cap K = N.
$$

Let $\mathfrak{G} = \mathrm{HNN}(G, H, K, \varphi)$. Because $N \in \mathcal{C}_{\cap}^{*}(G, H, K, \varphi)$, there exists a~subgroup $U \in \mathcal{C}^{*}(\mathfrak{G})$ such that $N = U \cap G$. Since $\mathfrak{G}/U \in \mathcal{C}$ and~$\mathcal{C}$ consists only of~periodic groups, then the~order~$n$ of~$t$ modulo~$U$ is finite and~is a~$\pi(\mathcal{C})$\nobreakdash-num\-ber.

It follows from~the~equality $N = U \cap G$ that the~mapping $\gamma\colon G/N \to \mathfrak{G}/U$ taking a~coset~$gN$ to~$gU$ is well defined and~is an~injective homomorphism. For~any $g \in G$,~we~have
\[
(gN)\varphi_{N}^{n} = (g\varphi^{n})N = (gN)\gamma (\widehat{t^{n}U})\gamma^{-1} = gN.
\]

Hence, the~order of~$\varphi_{N}$ divides~$n$ and~therefore is a~$\pi(\mathcal{C})$\nobreakdash-num\-ber.
\pagebreak

\mbox{$3 \Rightarrow 2$}.\hspace{1ex}We fix a~number $r \geqslant 0$ and~choose a~$\pi(\mathcal{C})$\nobreakdash-num\-ber $n > r+3$ to~be a~multiple of~the~order of~$\varphi_{N}$ (it is possible to~find a~$\pi(\mathcal{C})$\nobreakdash-num\-ber greater than $r+3$ because $\mathcal{C}$ contains non-triv\-i\-al groups and~therefore $\pi(\mathcal{C}) \ne \varnothing)$. Since $H = G = K$, then
$$
(N \cap H)\varphi = N\varphi = N = N \cap K
$$
and~hence $N \in \mathcal{C}^{*}(G, H, K, \varphi)$. Therefore, the~tuple $(G/N, HN/N, KN/N, \varphi_{N})$ is defined and,~by~Proposition~\ref{ep203}, $n$ is admissible for~this tuple with~the~reserve~$r$. It remains to~note that $\mathcal{C}$ contains a~cyclic group of~order~$n$: this fact follows from~the~definition of~$\pi(\mathcal{C})$ and~the~assumption that $\mathcal{C}$ is closed under taking subgroups and~extensions.
\smallskip

\mbox{$2 \Rightarrow 1$}.\hspace{1ex}This implication is a~consequence of~Proposition~\ref{ep304}.
\end{proof}

\section{Descent and~ascent of~compatible subgroups}\label{esec04}

The~proofs of~the~propositions in~this section follow the~ideas of~\cite{Moldavanskii2002, Moldavanskii2003} and,~in some places, repeat the~arguments given in~the~above papers almost word for~word.

\begin{eproposition}\label{ep401}
Suppose that $\mathcal{C}$ is a~class of~groups closed under taking subgroups, $(G, H, K, \varphi)$ is an~HNN-tuple, $L = H \cap K$, $M = L\varphi$, and~$X$ is a~subgroup of~$G$ containing~$L$ and~$M$. Suppose also that $N$ is a~subgroup of~$G$ and~$R = N \cap X$. Then the~following statements hold.

\textup{1.}\hspace{1ex}If $N \in \mathcal{C}^{*}(G, H, K, \varphi)$, then $R \in \mathcal{C}^{*}(X, L, M, \varphi)$.

\textup{2.}\hspace{1ex}If $N \in \mathcal{C}_{\cap}^{*}(G, H, K, \varphi)$, then $R \in \mathcal{C}_{\cap}^{*}(X, L, M, \varphi)$.
\end{eproposition}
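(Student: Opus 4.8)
The plan is to treat the two statements in turn, noting first that the hypotheses $L, M \leqslant X$ guarantee that $(X, L, M, \varphi)$, with $\varphi$ here meaning the restriction $\varphi|_{L}$ (an isomorphism of $L$ onto $M = L\varphi$), is itself an HNN-tuple, so that the families $\mathcal{C}^{*}(X, L, M, \varphi)$ and $\mathcal{C}_{\cap}^{*}(X, L, M, \varphi)$ and the group $\mathfrak{X} = \mathrm{HNN}(X, L, M, \varphi)$ are all defined.

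For Statement~1, I would first check that $R \in \mathcal{C}^{*}(X)$. Since $N$ is normal in $G$, the subgroup $R = N \cap X$ is normal in $X$, and $X/R = X/(N \cap X) \cong XN/N \leqslant G/N$; as $G/N \in \mathcal{C}$ and $\mathcal{C}$ is closed under taking subgroups, this gives $X/R \in \mathcal{C}$. It then remains to verify the compatibility relation $(R \cap L)\varphi = R \cap M$. Because $L$ and $M$ lie in $X$, one has $R \cap L = N \cap L$ and $R \cap M = N \cap M$, so the task reduces to proving $(N \cap L)\varphi = N \cap M$. I would establish this by a direct element chase exploiting the three facts $L = H \cap K \leqslant H$, $M = L\varphi \leqslant K$, and the $(H, K, \varphi)$-com\-pat\-i\-bil\-i\-ty $(N \cap H)\varphi = N \cap K$ of $N$: for $\subseteq$, an element $x \in N \cap L$ lies in $N \cap H$, whence $x\varphi \in N \cap K$, and also $x\varphi \in L\varphi = M$, so $x\varphi \in N \cap M$; for $\supseteq$, an element $y \in N \cap M \subseteq N \cap K = (N \cap H)\varphi$ equals $x\varphi$ for a unique $x \in N \cap H$, while $y \in M = L\varphi$ forces this same $x$ to lie in $L$ by injectivity of $\varphi$, giving $x \in N \cap L$.

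For Statement~2, the idea is to transport a witnessing subgroup from $\mathfrak{G}$ back to $\mathfrak{X}$ through the canonical map between the two HNN-extensions. Since the defining relations $t^{-1}lt = l\varphi$ ($l \in L$) of $\mathfrak{X}$ all hold in $\mathfrak{G} = \mathrm{HNN}(G, H, K, \varphi)$, where even $t^{-1}ht = h\varphi$ holds for every $h \in H \supseteq L$, the assignment $x \mapsto x$ ($x \in X$), $t \mapsto t$ extends to a homomorphism $\iota\colon \mathfrak{X} \to \mathfrak{G}$ whose restriction to the base group $X$ is the inclusion $X \hookrightarrow G \hookrightarrow \mathfrak{G}$. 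Given $U \in \mathcal{C}^{*}(\mathfrak{G})$ with $N = U \cap G$, which exists because $N \in \mathcal{C}_{\cap}^{*}(G, H, K, \varphi)$, I would set $V = \iota^{-1}(U)$. Then $V$ is normal in $\mathfrak{X}$, and $\iota$ induces an embedding $\mathfrak{X}/V \hookrightarrow \mathfrak{G}/U \in \mathcal{C}$, so $V \in \mathcal{C}^{*}(\mathfrak{X})$ by closure under subgroups. Finally $V \cap X = \{x \in X \mid \iota(x) \in U\} = X \cap U = X \cap (G \cap U) = X \cap N = R$, which shows $R = V \cap X \in \mathcal{C}_{\cap}^{*}(X, L, M, \varphi)$.

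The routine parts are the quotient embeddings and the bookkeeping of intersections; the one step that genuinely uses the structure of the situation is the compatibility identity $(N \cap L)\varphi = N \cap M$ in Statement~1. The care needed there is that $K$ is not contained in $H$, so one cannot simply push intersections through $\varphi$ by a formal rule; the argument instead leans on the injectivity of $\varphi$ together with the relations $L \leqslant H$ and $M \leqslant K$. I would regard getting this element chase exactly right, rather than the homomorphism construction of Statement~2, as the main obstacle.
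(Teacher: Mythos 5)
Your proposal is correct and follows essentially the same route as the paper: the quotient embedding $X/(N\cap X)\cong XN/N\leqslant G/N$ for the first claim, and pulling back $U\in\mathcal{C}^{*}(\mathfrak{G})$ along the canonical homomorphism $\mathrm{HNN}(X,L,M,\varphi)\to\mathfrak{G}$ for the second. The compatibility identity you verify by an element chase is obtained in the paper in one line from $((N\cap H)\cap L)\varphi=(N\cap H)\varphi\cap L\varphi$ (valid since $\varphi$ is injective and both sets lie in its domain $H$), but the content is the same.
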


\begin{proof}
1.\hspace{1ex}We have $X/R \in \mathcal{C}$ because
$$
X/R = X/(N \cap X) \cong XN/N \leqslant G/N \in \mathcal{C}
$$
and~$\mathcal{C}$ is closed under taking subgroups. Since $\varphi$ is injective and~$N$ is $(H, K, \varphi)$-com\-pat\-i\-ble, then
$$
((N \cap H) \cap L)\varphi = (N \cap K) \cap M.
$$
It follows from~this equality and~the~relation $L \cup M \leqslant X$ that
\begin{multline*}
(R \cap L)\varphi = ((N \cap X) \cap L)\varphi = (N \cap L)\varphi = ((N \cap H) \cap L)\varphi \\
= (N \cap K) \cap M = N \cap M = (N \cap X) \cap M = R \cap M.
\end{multline*}

Thus, $R \in \mathcal{C}^{*}(X, L, M, \varphi)$.
\smallskip

2.\hspace{1ex}If $\mathfrak{G} = \mathrm{HNN}(G, H, K, \varphi)$ and~$\mathfrak{X} = \mathrm{HNN}(X, L, M, \varphi)$, then the~map $\lambda\colon \mathfrak{X} \to \mathfrak{G}$ taking the~generators of~$\mathfrak{X}$ to~the~corresponding elements of~$\mathfrak{G}$ defines a~homomorphism, which acts identically on~$X$. Let $U \in \mathcal{C}^{*}(\mathfrak{G})$ be a~subgroup such that $N = U \cap G$, and~suppose that $\tilde{\mathfrak{X}} = \mathfrak{X}\lambda$, $\tilde V = U \cap \tilde{\mathfrak{X}}$, $V$ is the~full preimage of~$\tilde V$ under~$\lambda$. Then $\tilde{\mathfrak{X}}/\tilde V$ and~$\mathfrak{X}/V$ are isomorphic groups, which belong to~$\mathcal{C}$ because
$$
\tilde{\mathfrak{X}}/\tilde V = \tilde{\mathfrak{X}}/(U \cap \tilde{\mathfrak{X}}) \cong \tilde{\mathfrak{X}}U/U \leqslant \mathfrak{G}/U \in \mathcal{C}
$$
and~$\mathcal{C}$ is closed under taking subgroups. Since $\lambda$ acts identically on~$X$, it follows from the~equalities
\[
\tilde V \cap X = U \cap \tilde{\mathfrak{X}} \cap X = U \cap X = U \cap G \cap X = N \cap X = R
\]
that $V \cap X = R$. Therefore, $R \in \mathcal{C}_{\cap}^{*}(X, L, M, \varphi)$.
\end{proof}

Let $X$ be a~group, and~let $Y$ and~$Z$ be its subgroups. Recall that a~family~$\Omega$ of~normal subgroups of~$X$ is said to~be a~\emph{filtration} if $\bigcap_{N \in \Omega}N = 1$. A~filtration $\Omega$ is called

--\hspace{1ex}a~\emph{$Y$\nobreakdash-fil\-tra\-tion} if $\bigcap_{N \in \Omega}YN = 1$;

--\hspace{1ex}a~\emph{$(Y, Z)$\nobreakdash-fil\-tra\-tion} if it is a~$Y$\nobreakdash-fil\-tra\-tion and~a~$Z$\nobreakdash-fil\-tra\-tion.

\begin{eproposition}\label{ep402}
Suppose that $\mathcal{C}$ is a~class of~groups closed under taking subgroups, $(G, H, K, \varphi)$ is an~HNN-tuple, $L = H \cap K$, $M = L\varphi$, and~$X$ is a~subgroup of~$G$ containing~$L$ and~$M$. Then the~following statements hold.

\textup{1.}\hspace{1ex}If $\mathcal{C}_{\cap}^{*}(G, H, K, \varphi)$ is a~filtration, then $\mathcal{C}_{\cap}^{*}(X, L, M, \varphi)$ is also a~filtration.

\textup{2.}\hspace{1ex}Let $X$ coincide with~either $G$ or~$K$. If $\mathcal{C}_{\cap}^{*}(G, H, K, \varphi)$ is an~$(H, K)$\nobreakdash-fil\-tra\-tion, then $\mathcal{C}_{\cap}^{*}(X, L, M, \varphi)$ is an~$(L, M)$\nobreakdash-fil\-tra\-tion.
\end{eproposition}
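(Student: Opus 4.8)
The plan is to deduce both statements from Proposition~\ref{ep401}, Statement~2, which guarantees that intersecting a subgroup of the family $\mathcal{C}_{\cap}^{*}(G, H, K, \varphi)$ with~$X$ produces a subgroup of $\mathcal{C}_{\cap}^{*}(X, L, M, \varphi)$. Writing $\Omega = \mathcal{C}_{\cap}^{*}(G, H, K, \varphi)$ and $\Omega_{X} = \mathcal{C}_{\cap}^{*}(X, L, M, \varphi)$, this says exactly that the subfamily $\Omega_{X}^{\prime} = \{N \cap X \mid N \in \Omega\}$ is contained in~$\Omega_{X}$. Consequently, any intersection taken over the full family $\Omega_{X}$ is contained in the corresponding intersection taken over the smaller family $\Omega_{X}^{\prime}$, and the whole argument reduces to estimating intersections of the form $\bigcap_{N \in \Omega}(\cdots)$ in which the dots involve $N \cap X$. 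The one elementary fact I would use repeatedly is that intersection distributes over a fixed subgroup: $\bigcap_{N}(A_{N} \cap X) = \big(\bigcap_{N} A_{N}\big) \cap X$.

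For Statement~1, assuming $\bigcap_{N \in \Omega} N = 1$, the inclusion $\Omega_{X}^{\prime} \subseteq \Omega_{X}$ gives
\[
\bigcap_{R \in \Omega_{X}} R \leqslant \bigcap_{N \in \Omega}(N \cap X) = \Big(\bigcap_{N \in \Omega} N\Big) \cap X = 1 \cap X = 1,
\]
so $\Omega_{X}$ is a filtration, the only input beyond Proposition~\ref{ep401} being the triviality of $\bigcap_{N \in \Omega} N$.

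For Statement~2, I would treat the two required filtration conditions in parallel, using the inclusions $L \leqslant H$ and $M = L\varphi \leqslant K$ together with $L, M \leqslant X$. For each $N \in \Omega$ the subgroups $L$ and $N \cap X$ both lie in the subgroup $HN$ as well as in~$X$, whence $L(N \cap X) \leqslant HN \cap X$; similarly $M$ and $N \cap X$ lie in $KN$ and in~$X$, so $M(N \cap X) \leqslant KN \cap X$. Invoking $\Omega_{X}^{\prime} \subseteq \Omega_{X}$ and the distributivity noted above, I then obtain
\[
\bigcap_{R \in \Omega_{X}} LR \leqslant \bigcap_{N \in \Omega} L(N \cap X) \leqslant \Big(\bigcap_{N \in \Omega} HN\Big) \cap X = 1,
\]
and symmetrically $\bigcap_{R \in \Omega_{X}} MR \leqslant \big(\bigcap_{N \in \Omega} KN\big) \cap X = 1$, the two right-hand sides vanishing precisely because $\Omega$ is assumed to be an $(H, K)$\nobreakdash-filtration. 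Thus $\Omega_{X}$ is an $(L, M)$\nobreakdash-filtration.

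Notably, this argument invokes neither the centrality of $H$ and~$K$ nor the compatibility relation beyond what is already packaged into Proposition~\ref{ep401}, and the restriction $X \in \{G, K\}$ is not actually needed for it, since the same estimates hold for any subgroup~$X$ containing $L$ and~$M$. The only point demanding care is the verification of the two set-product containments $L(N \cap X) \leqslant HN \cap X$ and $M(N \cap X) \leqslant KN \cap X$, which is routine once one checks each factor against its ambient subgroup. There is therefore no genuine obstacle here: the entire substance of the proposition is carried by Proposition~\ref{ep401}, and what remains is bookkeeping with intersections.
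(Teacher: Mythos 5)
Your argument for Statement~1 is correct and coincides with the paper's. Statement~2, however, has a genuine gap. A preliminary point: although the paper's definition of a $Y$\nobreakdash-filtration literally reads $\bigcap_{N \in \Omega}YN = 1$, every $YN$ contains $Y$, so the condition actually used throughout the paper (see the proofs of Propositions~\ref{ep502} and~\ref{ep506}) is $\bigcap_{N \in \Omega}YN = Y$. Your final equalities ``$\bigl(\bigcap_{N}HN\bigr) \cap X = 1$'' and ``$\bigl(\bigcap_{N}KN\bigr) \cap X = 1$'' should therefore read $H \cap X$ and $K \cap X$. With that correction, your estimate for the $L$\nobreakdash-part gives $\bigcap_{R}LR \leqslant H \cap X$, which equals $L$ when $X = K$ but equals $H$ when $X = G$; to reach $L$ in the case $X = G$ one must use both ambient subgroups at once, via $L(N \cap X) \leqslant NL = N(H \cap K) \leqslant NH \cap NK$, which is exactly how the paper concludes $\bigcap_{N}NL \leqslant H \cap K = L$.

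The more serious failure is in the $M$\nobreakdash-part. Your bound only yields $\bigcap_{R}MR \leqslant K \cap X = K$, and $K$ is in general far larger than $M = L\varphi$; there is no subgroup $Y$ with $\bigcap_{N}YN = Y$ and $Y \cap X = M$ available a priori, so no estimate of this shape can close the argument. This is precisely where the hypothesis $X \in \{G, K\}$ --- which you assert is unnecessary --- is actually used. The paper takes $x \in X \setminus M$, handles $x \notin K$ by the $K$\nobreakdash-filtration property (since $M(N \cap X) \leqslant KN$), and for $x \in K \setminus M$ sets $y = x\varphi^{-1} \in H \setminus L$ and transports the separation of $y$ back to $x$: when $X = G$ through the normality of a suitable $U$ in $\mathrm{HNN}(G, L, M, \varphi)$, which gives $t^{-1}LUt = MU$; when $X = K$ through the compatibility relation $(M(N \cap K))\varphi^{-1} = L(N \cap H) \leqslant KN$. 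That step --- pulling the element back through $\varphi^{-1}$ and invoking compatibility or the HNN structure --- is the real content of Statement~2 and is entirely absent from your proof.
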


\begin{proof}
{\parfillskip=0pt
By~Proposition~\ref{ep401}, if $N \in \mathcal{C}_{\cap}^{*}(G, H, K, \varphi)$, then $N \cap X \in \mathcal{C}_{\cap}^{*}(X, L, M, \varphi)$. Hence,
\[
\bigcap_{R \in \mathcal{C}_{\cap}^{*}(X, L, M, \varphi)}\kern-20pt{}R\kern10pt{} \leqslant \bigcap_{N \in \mathcal{C}_{\cap}^{*}(G, H, K, \varphi)}\kern-20pt{}(N \cap X),
\quad
\bigcap_{R \in \mathcal{C}_{\cap}^{*}(X, L, M, \varphi)}\kern-20pt{}RL\kern2pt{} \leqslant \bigcap_{N \in \mathcal{C}_{\cap}^{*}(G, H, K, \varphi)}\kern-20pt{}(N \cap X)L.
\]

}If $\mathcal{C}_{\cap}^{*}(G, H, K, \varphi)$ is a~filtration, then 
\[
1 = \bigcap_{N \in \mathcal{C}_{\cap}^{*}(G, H, K, \varphi)}\kern-20pt{}N\kern10pt{} = \bigcap_{N \in \mathcal{C}_{\cap}^{*}(G, H, K, \varphi)}\kern-20pt{}(N \cap X)
\]
and~therefore $\mathcal{C}_{\cap}^{*}(X, L, M, \varphi)$ is also a~filtration.

Let $\mathcal{C}_{\cap}^{*}(G, H, K, \varphi)$ be an~$(H, K)$\nobreakdash-fil\-tra\-tion. Then
\begin{multline*}
L \leqslant \bigcap_{R \in \mathcal{C}_{\cap}^{*}(X, L, M, \varphi)}\kern-20pt{}RL\kern2pt{} \leqslant \bigcap_{N \in \mathcal{C}_{\cap}^{*}(G, H, K, \varphi)}\kern-20pt{}NL\kern2pt{} = \bigcap_{N \in \mathcal{C}_{\cap}^{*}(G, H, K, \varphi)}\kern-20pt{}N(H \cap K) \\
\leqslant \bigg(\bigcap_{N \in \mathcal{C}_{\cap}^{*}(G, H, K, \varphi)}\kern-20pt{}NH\kern2pt{}\bigg) \cap \bigg(\bigcap_{N \in \mathcal{C}_{\cap}^{*}(G, H, K, \varphi)}\kern-20pt{}NK\kern2pt{}\bigg) = H \cap K = L
\end{multline*}
and~therefore $\mathcal{C}_{\cap}^{*}(X, L, M, \varphi)$ is an~$L$\nobreakdash-fil\-tra\-tion. To~prove that this family is an~$M$\nobreakdash-fil\-tra\-tion, we take an~arbitrary element $x \in X \setminus M$ and~indicate a~subgroup $R \in \mathcal{C}_{\cap}^{*}(X, L, M, \varphi)$ satisfying the~condition $x \notin MR$.

If $x \notin K$, then, because $\mathcal{C}_{\cap}^{*}(G, H, K, \varphi)$ is a~$K$\nobreakdash-fil\-tra\-tion, there exists a~subgroup $N \in\nolinebreak \mathcal{C}_{\cap}^{*}(G, H, K, \varphi)$ such that $x \notin KN$. It follows from~Proposition~\ref{ep401} and~the~inclusion $M(N \cap X) \leqslant KN$ that $N \cap X \in \mathcal{C}_{\cap}^{*}(X, L, M, \varphi)$ and~$x \notin M(N \cap X)$. Hence, $N \cap X$ is the~required subgroup.

If $x \in K$, we put $y = x\varphi^{-1}$. Since $x \notin M$, then $y \in H \setminus L$ and~therefore $y \notin K$. Let us consider two cases.
\smallskip

\textit{Case~1}. $X = G$.
\smallskip

Let $\mathfrak{X} = \mathrm{HNN}(G, L, M, \varphi)$. As~proved above, $\mathcal{C}_{\cap}^{*}(G, L, M, \varphi)$ is an~$L$\nobreakdash-fil\-tra\-tion. Therefore, $y \notin LR$ for~some subgroup $R \in \mathcal{C}_{\cap}^{*}(G, L, M, \varphi)$. By~the~definition of~the~family $\mathcal{C}_{\cap}^{*}(G, L, M, \varphi)$, there exists a~subgroup $U \in \mathcal{C}^{*}(\mathfrak{X})$ such that $R = U \cap G$. If $y \in LU$ and~$y = hu$ for~some $h \in L$, $u \in U$, then $u = h^{-1}y \in U \cap G = R$ and~we get the~inclusion $y \in LR$, which contradicts the~choice of~$R$. Therefore, $y \notin LU$~and
$$
x = y\varphi = t^{-1}yt \notin t^{-1}LUt.
$$
Since $U$ is normal in~$\mathfrak{X}$, then $t^{-1}LUt\kern-1pt{} =\kern-1pt{} MU$. Hence, $x\kern-1pt{} \notin\kern-1pt{} MU$ and~$x\kern-1pt{} \notin\kern-1pt{} MR$ because~$R\kern-1pt{} \leqslant\nolinebreak\kern-1pt{} U$. Thus, $R$ is the~required subgroup.
\smallskip

\textit{Case~2}. $X = K$.
\smallskip

Since $\mathcal{C}_{\cap}^{*}(G, H, K, \varphi)$ is a~$K$\nobreakdash-fil\-tra\-tion, there exists a~subgroup $N \in \mathcal{C}_{\cap}^{*}(G, H, K, \varphi)$ such that $y \notin KN$. By~Proposition~\ref{ep304}, $N$ is $(H, K, \varphi)$-com\-pat\-i\-ble and~therefore
$$
(M(N \cap K))\varphi^{-1} = L(N \cap H) \leqslant KN.
$$
Hence, $y \notin (M(N \cap K))\varphi^{-1}$ and~$x = y\varphi \notin M(N \cap K)$. It remains to~note that, by~Proposition~\ref{ep401}, $N \cap K \in \mathcal{C}_{\cap}^{*}(K, L, M, \varphi)$ and~thus $N \cap K$ is the~required subgroup.
\end{proof}

Let $(G, H, K, \varphi)$ be an~HNN-tuple. Following~\cite{Moldavanskii2002}, we say that a~subgroup $N \leqslant G$ is an~\emph{ascend} of~a~subgroup $R \leqslant K$ if $K \cap N = R$.

Let $H$ and~$K$ lie in~the~center of~$G$. We call an~ascend $N \leqslant G$ of~a~subgroup $R \leqslant K$ \emph{canonical} if $H \cap N = R\varphi^{-1}$ and~$HK \cap N = (R\varphi^{-1})R$. It should be noted that a~canonical ascend $N$ is always an~$(H, K, \varphi)$-com\-pat\-i\-ble subgroup because $(H \cap N)\varphi = R = K \cap N$.{\parfillskip=0pt{}\par}

\begin{eproposition}\label{ep403}
If $\mathcal{C}$ is a~class of~groups, $(G, H, K, \varphi)$ is an~HNN-tuple, $H$~and~$K$ lie in~the~center of~$G$, $L = H \cap K$, and~$M = L\varphi$, then the~following statements hold.

\textup{1.}\hspace{1ex}Suppose that $\mathcal{C}$ is closed under taking subgroups, quotient groups, and~extensions, $G$ is $\mathcal{C}$\nobreakdash-reg\-u\-lar with~respect to~the~subgroup $P = HK$, and~$R \in \mathcal{C}^{*}(K, L, M, \varphi)$. Then there exists a~canonical ascend $N \in \mathcal{C}^{*}(G, H, K, \varphi)$ of~$R$. Moreover, if $X$ is a~normal subgroup of~$G$ such that $P \leqslant X$ and~$G/X$ is residually a~$\mathcal{C}$\nobreakdash-group, then, for~any finite set $S \subseteq G \setminus X$, there exists a~canonical ascend $N \in \mathcal{C}^{*}(G, H, K, \varphi)$ of~$R$ satisfying the~condition $S \cap XN = \varnothing$.

\textup{2.}\hspace{1ex}If $R \in \mathcal{C}_{r+1}^{*}(K, L, M, \varphi)$ for~some $r \geqslant 0$ and~$N \in \mathcal{C}^{*}(G, H, K, \varphi)$ is a~canonical ascend of~$R$, then $N \in \mathcal{C}_{r}^{*}(G, H, K, \varphi)$.
\end{eproposition}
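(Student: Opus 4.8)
The plan is to handle Statement~1 by building an explicit normal subgroup of $P = HK$ and pulling it back to $G$ via $\mathcal{C}$\nobreakdash-regularity. Put $R' = R\varphi^{-1} \leqslant H$ and $R'' = R'R \leqslant P$. First I would check that $R'' \in \mathcal{C}^{*}(P)$, and then invoke the $\mathcal{C}$\nobreakdash-regularity of $G$ with respect to $P$ to obtain a subgroup $N \in \mathcal{C}^{*}(G)$ with $N \cap P = R''$. The whole point is that the intersections of $R''$ with $H$ and $K$ recover exactly $R'$ and $R$, so that $N \cap H = R'$, $N \cap K = R$, and $N \cap P = R''$; by definition this makes $N$ a canonical ascend of $R$, and since canonical ascends are $(H, K, \varphi)$\nobreakdash-compatible, we get $N \in \mathcal{C}^{*}(G, H, K, \varphi)$, as required.

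The two facts behind this construction both rest on the compatibility $(R \cap L)\varphi = R \cap M$. I would first show $R \cap L \leqslant R'$ and $R' \cap L \leqslant R$: if $x \in R \cap L$ then $x\varphi \in M$ and $x\varphi \in (R \cap L)\varphi = R \cap M \leqslant R$, so $x \in R\varphi^{-1} = R'$; conversely, if $y \in R' \cap L$ then $y\varphi \in R \cap M = (R \cap L)\varphi$, and injectivity of $\varphi$ forces $y \in R \cap L \leqslant R$. A direct computation then gives $R'' \cap H = R'(R \cap L) = R'$ and $R'' \cap K = (R' \cap L)R = R$. To see that $P/R'' \in \mathcal{C}$, I would use the series $R'' \leqslant HR \leqslant P$: since $\varphi$ carries $R'$ onto $R$, it induces an isomorphism $H/R' \cong K/R \in \mathcal{C}$, whence $HR/R'' = HR''/R'' \cong H/R' \in \mathcal{C}$; and $P/HR = HK/HR \cong K/(K \cap HR) = K/LR$, a quotient of $K/R$, so it too lies in $\mathcal{C}$. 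As $\mathcal{C}$ is closed under extensions, $P/R'' \in \mathcal{C}$. This bookkeeping is the main obstacle of the proof, since every step hinges on using the compatibility of $R$ to pin down $R \cap L$ and $R' \cap L$.

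For the refinement, suppose $X \trianglelefteq G$ with $P \leqslant X$ and $G/X$ residually a $\mathcal{C}$\nobreakdash-group, and let $S \subseteq G \setminus X$ be finite. Each $s \in S$ has non-trivial image in $G/X$, so Proposition~\ref{ep301}, statement~2, supplies a subgroup of $\mathcal{C}^{*}(G/X)$ missing all these images; pulling it back yields $Y \in \mathcal{C}^{*}(G)$ with $X \leqslant Y$ and $S \cap Y = \varnothing$. I would then replace $N$ by $N \cap Y$, which lies in $\mathcal{C}^{*}(G)$ by Proposition~\ref{ep301}, statement~1. Because $P \leqslant X \leqslant Y$, the equalities $N \cap H = R'$, $N \cap K = R$, $N \cap P = R''$ are inherited by $N \cap Y$, so it is again a canonical ascend of $R$ in $\mathcal{C}^{*}(G, H, K, \varphi)$; and $X(N \cap Y) \leqslant Y$, so $S \cap X(N \cap Y) = \varnothing$.

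For Statement~2 the idea is to transport admissibility through the quotient $G/N$ and apply Proposition~\ref{ep202}. The subgroups $\bar H = HN/N$ and $\bar K = KN/N$ lie in the center of $\bar G = G/N$, so Proposition~\ref{ep202} applies to $(\bar G, \bar H, \bar K, \varphi_{N})$. Writing $\bar L = \bar H \cap \bar K$ and $\bar M = \bar L\varphi_{N}$, I would identify the HNN-tuple $(\bar K, \bar L, \bar M, \varphi_{N})$ with $(K/R, LR/R, MR/R, \varphi_{R})$ through the isomorphism $kR \mapsto kN$: the canonical-ascend relations give $K \cap N = R$ and $K \cap HN = LR$, matching $\bar K \cong K/R$ and $\bar L \cong LR/R$, while $\varphi_{N}$ and $\varphi_{R}$ are seen to agree under this identification. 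Given $R \in \mathcal{C}_{r+1}^{*}(K, L, M, \varphi)$, choose $n \geqslant \max\{3,\ r+3\}$ with $\mathcal{C}$ containing a cyclic group of order $n$ and $n$ admissible for $(K/R, LR/R, MR/R, \varphi_{R})$ with reserve $r+1$; by the identification this is admissibility for $(\bar K, \bar L, \bar M, \varphi_{N})$ with reserve $r+1$, and since $r \leqslant n-3$, Proposition~\ref{ep202} converts it into admissibility of $n$ for $(\bar G, \bar H, \bar K, \varphi_{N})$ with reserve $r$. As $n \geqslant \max\{3,\ r+2\}$, this gives $N \in \mathcal{C}_{r}^{*}(G, H, K, \varphi)$. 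Here the delicate point is the precise identification of the induced tuple on $\bar K$ so that Proposition~\ref{ep202} can be invoked with exactly the right reserve.
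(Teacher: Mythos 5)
Your proposal is correct and follows essentially the same route as the paper's proof: in Statement~1 your $R'=R\varphi^{-1}$ and $R''=R'R$ are the paper's $Q$ and $QR$, the key identities $R''\cap H=R'$, $R''\cap K=R$ and $P/R''\in\mathcal{C}$ are established by the same compatibility bookkeeping and the same series $R''\leqslant HR\leqslant P$, and the refinement is obtained by intersecting with a pullback from $\mathcal{C}^{*}(G/X)$ exactly as in the paper. Statement~2 likewise matches the paper: the identification $K/R\cong KN/N$, the verification $HN/N\cap KN/N=LN/N$ via $HK\cap N=R'R$, and the application of Proposition~\ref{ep202} to drop the reserve from $r+1$ to $r$ are all the same.
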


\begin{proof}
1.\hspace{1ex}If $X = G$ and~$S = \varnothing$, then $P \leqslant X$, $G/X$ is residually a~$\mathcal{C}$\nobreakdash-group, and~$S \subseteq G \setminus X$. Therefore, it suffices to~prove only the~second part of~Statement~1. We put $Q = R\varphi^{-1}$ and~show that $H \cap QR = Q$, $K \cap QR = R$.

Since $L, Q \leqslant H$, $\varphi$ is injective, and~$R$ is $(L, M, \varphi)$-com\-pat\-i\-ble, then
$$
L \cap Q = M\varphi^{-1} \cap R\varphi^{-1} = (M \cap R)\varphi^{-1} = L \cap R.
$$
If $g \in H \cap QR$ and~$g = xy$ for~some $x \in Q$, $y \in R$, then $g, x \in H$,
$$
y \in H \cap R = H \cap K \cap R = L \cap R = L \cap Q,
$$
and~therefore $g \in Q$. Hence, $H \cap QR \subseteq Q$. The~relation $K \cap QR \subseteq R$ is verified in~the~same way. Since the~opposite inclusions are obvious, the~required equalities are proved.

The~quotient group~$P/QR$ is an~extension of~$HR/QR$ by~$P/HR$. We have
\begin{gather*}
HR/QR \cong H/Q(R \cap H) \cong (H/Q)/(Q(R \cap H)/Q),\\
P/HR = HK/HR \cong K/R(H \cap K) \cong (K/R)/(R(H \cap K)/R).
\end{gather*}

Since $R \in \mathcal{C}^{*}(K, L, M, \varphi)$, then $K/R \in \mathcal{C}$ and~therefore $H/Q \in \mathcal{C}$. Hence, $P/QR \in \mathcal{C}$ because $\mathcal{C}$ is closed under taking quotient groups and~extensions. Thus, using the~$\mathcal{C}$\nobreakdash-reg\-u\-larity of~$G$ with~respect to~$P$, we can find a~subgroup $U \in \mathcal{C}^{*}(G)$ such that $U \cap P = QR$.

By~the~hypothesis of~the~proposition, $G/X$ is residually a~$\mathcal{C}$\nobreakdash-group and~$\mathcal{C}$ is closed under taking subgroups and~extensions. Hence, $\mathcal{C}$ is also closed under taking direct products of~a~finite number of~factors and,~by~Proposition~\ref{ep301}, there exists a~subgroup $V/X \in\nolinebreak \mathcal{C}^{*}(G/X)$ such that $SX/X \cap V/X = \varnothing$. Then $V \in \mathcal{C}^{*}(G)$ and~$S \cap V = \varnothing$.

Let $N = U \cap V$. Then $S \cap XN \subseteq S \cap XV = S \cap V = \varnothing$ and,~again by~Proposition~\ref{ep301}, $N \in \mathcal{C}^{*}(G)$. Because $P \leqslant X \leqslant V$, we have
\begin{gather*}
P \cap N = P \cap U \cap V = P \cap U = QR,\\
H \cap N = H \cap P \cap N = H \cap QR = Q,\\
K \cap N = K \cap P \cap N = K \cap QR = R.
\end{gather*}

Since $Q\varphi = R$, it follows from~these equalities that $N$ is an~$(H, K, \varphi)$-com\-pat\-i\-ble subgroup, which is the~required canonical ascend of~$R$.
\smallskip

2.\hspace{1ex}The~relation $R \in \mathcal{C}_{r+1}^{*}(K, L, M, \varphi)$ means that there exists a~number $n \geqslant r+3$ such that a~cyclic group~$C_{n}$ of~order~$n$ belongs to~$\mathcal{C}$ and~$n$ is admissible \pagebreak for~the~HNN-tuple $(K/R, LR/R, MR/R, \varphi_{R})$ with~the~reserve~$r+1$. Since $K \cap N = R$, the~map of~$K/R$ into~$G/N$ taking a~coset~$kR$ ($k \in K$) to~$kN$ is an~isomorphism of~$K/R$ onto~the~subgroup~$KN/N$ of~$G/N$. Under this isomorphism, the~subgroups~$LR/R$ and~$MR/R$ are mapped onto~the~subgroups~$LN/N$ and~$MN/N$, and~the~isomorphism~$\varphi_{R}$ corresponds to~the~isomorphism~$\varphi_{N}$. Therefore, $n$ is admissible with~the~reserve~$r+1$ for~the~HNN-tuple $(KN/N, LN/N, MN/N, \varphi_{N})$. Let us show that $HN/N \cap KN/N = LN/N$.

If $gN \in HN/N \cap KN/N$, then $g \in HN \cap KN$ and~$g = hx = ky$ for~some $h \in H$, $k \in K$, and~$x, y \in N$. Hence, $h^{-1}k = xy^{-1} \in HK \cap N$. Since $N$ is a~canonical ascend of~$R$, then $HK \cap N = QR$ (where, as~above, $Q = R\varphi^{-1}$) and~therefore $h^{-1}k = h_{1}k_{1}$ for~some $h_{1} \in Q$, $k_{1} \in R$. Thus, $hh_{1}^{\vphantom{1}} = kk_{1}^{-1} \in H \cap K = L$ and~$h \in h_{1}^{-1}L$. Since $h_{1} \in Q \leqslant N$, then $g = hx \in LN$ and~$gN \in LN/N$. Therefore, $HN/N \cap KN/N \subseteq LN/N$ and,~because the~opposite inclusion is obvious, the~required equality is proved.

Thus, $HN/N \cap KN/N = LN/N$ and~$(LN/N)\varphi_{N} = MN/N$ by~the~definition of~$\varphi_{N}$. It~follows from~these equalities and~Proposition~\ref{ep202} that $n$ is admissible for~the~HNN-tuple $(G/N, HN/N, KN/N, \varphi_{N})$ with~the~reserve~$r$. Since $C_{n} \in \mathcal{C}$, we have $N \in \mathcal{C}_{r}^{*}(G, H, K, \varphi)$, as~required.
\end{proof}

Let $X$ be a~group, and~let $Y$, $Z$ be its subgroups. We call a~family~$\Omega$ of~normal subgroups of~$X$ a~\emph{strong $(Y, Z)$-fil\-tra\-tion} if, for~any finite subset~$S$ of~$X$, there exists a~subgroup $N \in \Omega$ such that, for~each $x \in S$, the~following statements hold:
\smallskip

1)\hspace{1ex}if $x \ne 1$, then $x \notin N$;

2)\hspace{1ex}if $x \notin Y$, then $x \notin YN$;

3)\hspace{1ex}if $x \notin Z$, then $x \notin ZN$.
\smallskip

It is easy to~see that any strong $(Y, Z)$-fil\-tra\-tion is a~$(Y, Z)$-fil\-tra\-tion and,~if $\Omega$ is closed under taking finite intersections of~subgroups, then any $(Y, Z)$-fil\-tra\-tion is a~strong $(Y, Z)$-fil\-tra\-tion.

\begin{eproposition}\label{ep404}
Suppose that $\mathcal{C}$ is a~class of~groups closed under taking subgroups, quotient groups, and~extensions, $(G, H, K, \varphi)$ is an~HNN-tuple, $H$~and~$K$ lie in~the~center of~$G$ and~are $\pi^{\prime}$\nobreakdash-iso\-lat\-ed in~this group for~some set of~primes~$\pi$, $L = H \cap K$, $M = L\varphi$, and~$G$ is $\mathcal{C}$\nobreakdash-reg\-u\-lar with~respect to~the~subgroup $P = HK$. Suppose also that there exists a~normal subgroup~$X$ of~$G$ such that $P \leqslant X$, $X/P$ is a~periodic $\pi^{\prime}$\nobreakdash-group, and~$G/X$ is residually a~$\mathcal{C}$\nobreakdash-group. Then the~following statements hold. 

\textup{1.}\hspace{1ex}If $\mathcal{C}^{*}(K, L, M, \varphi)$ is a~strong $(L, M)$-fil\-tra\-tion, then $\mathcal{C}^{*}(G, H, K, \varphi)$ is a~strong $(H, K)$-fil\-tra\-tion.

\textup{2.}\hspace{1ex}If, for~some $r \geqslant 0$, $\mathcal{C}_{r+1}^{*}(K, L, M, \varphi)$ is a~strong $(L, M)$-fil\-tra\-tion, then $\mathcal{C}_{r}^{*}(G, H, K, \varphi)$ is a~strong $(H, K)$-fil\-tra\-tion.
\end{eproposition}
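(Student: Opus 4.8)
The plan is to prove both statements at once, since they differ only in the ascent family used. The engine is Proposition~\ref{ep403}: given a subgroup $R \leqslant K$ in the appropriate family, it produces a canonical ascend $N \leqslant G$ (with $K \cap N = R$, $H \cap N = R\varphi^{-1}$, and $HK \cap N = (R\varphi^{-1})R$), and, since $G/X$ is residually a~$\mathcal{C}$-group, we may additionally require $S_{0} \cap XN = \varnothing$ for any prescribed finite set $S_{0} \subseteq G \setminus X$. Starting from an arbitrary finite set $S \subseteq G$, I would split it as $S = S_{0} \cup S_{1}$ with $S_{0} = S \setminus X$ and~$S_{1} = S \cap X$, manufacture a suitable finite set $S_{K} \subseteq K$ out of~$S_{1}$, feed $S_{K}$ to the strong $(L,M)$-filtration hypothesis on~$K$ to obtain one subgroup~$R$ handling all of~$S_{K}$, and then ascend~$R$ to a single~$N$ that simultaneously handles~$S_{0}$ (via $S_{0} \cap XN = \varnothing$) and~$S_{1}$ (via the canonical-ascend identities). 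For $g \in S_{0}$ everything is immediate, because $H, K \leqslant P \leqslant X$ give $HN, KN, N \leqslant XN$, so $g \notin XN$ yields all three defining conditions of a strong $(H,K)$-filtration at once.

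The core of the argument is the passage from conditions on $g \in S_{1} = S \cap X$ to conditions on elements of~$K$. Here I would exploit that $X/P$ is a periodic $\pi^{\prime}$-group: for each $g \in S_{1}$ there is a $\pi^{\prime}$-number $m = m_{g}$ with $g^{m} \in P = HK$, and since $P$ is abelian and central I can write $g^{m} = a_{g}b_{g}$ with $a_{g} \in H$, $b_{g} \in K$. Because $H$ and~$K$ are $\pi^{\prime}$-isolated and $m$ is a $\pi^{\prime}$-number, membership of~$g$ in~$H$ (resp. $K$) is equivalent to that of~$g^{m}$, and a short computation in the abelian group~$P$ then gives $g \in H \Leftrightarrow b_{g} \in L$ and $g \in K \Leftrightarrow a_{g} \in L$. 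The key trick for the separation conditions is to raise a hypothetical relation to the power~$m$: if $g \in HN$, write $g = hn$ with $h \in H$, $n \in N$; then $n^{m} = h^{-m}g^{m} \in P \cap N = (R\varphi^{-1})R$, and unwinding this in~$P$ forces $b_{g} \in LR$. Symmetrically, $g \in KN$ forces $a_{g} \in L\,R\varphi^{-1}$, that is, $a_{g}\varphi \in MR$. I would therefore put into~$S_{K}$ the elements $b_{g}$ and $a_{g}\varphi$ for every $g \in S_{1}$, together with~$g$ itself whenever $g \in L$ (this $S_{K}$ is finite). Then conditions 2) and 3) for~$g$ follow from conditions 2) and 3) for $b_{g}$ and $a_{g}\varphi$ relative to~$R$: if $g \notin H$ then $b_{g} \notin L$, hence $b_{g} \notin LR$, hence $g \notin HN$; and dually for $g \notin K$. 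One checks these are insensitive to the non-unique choice of the decomposition $g^{m} = a_{g}b_{g}$, since altering it multiplies $a_{g}, b_{g}$ by elements of~$L$.

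For condition 1) I would observe that it is automatic from 2) and 3) except when $g \in L = H \cap K$: if $g \notin H$ then $g \notin HN \supseteq N$, and if $g \notin K$ then $g \notin KN \supseteq N$. In the remaining case $g \in L \setminus \{1\}$ one has $m_{g} = 1$, and since $g \in N$ would give $g \in N \cap K = R$, the inclusion $g \in S_{K}$ together with condition 1) for~$g$ relative to~$R$ yields $g \notin R$ and so $g \notin N$. This completes the verification that~$N$ is a strong $(H,K)$-filtration member for~$S$. Finally, for Statement~1 the subgroup~$R$ is taken in $\mathcal{C}^{*}(K,L,M,\varphi)$ and the ascend lands in $\mathcal{C}^{*}(G,H,K,\varphi)$ by Proposition~\ref{ep403}.1; for Statement~2 one takes $R \in \mathcal{C}_{r+1}^{*}(K,L,M,\varphi)$, ascends by the same Proposition~\ref{ep403}.1 (legitimate since $\mathcal{C}_{r+1}^{*}(K,L,M,\varphi) \subseteq \mathcal{C}^{*}(K,L,M,\varphi)$), and invokes Proposition~\ref{ep403}.2 to conclude $N \in \mathcal{C}_{r}^{*}(G,H,K,\varphi)$.

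The main obstacle, and the step I would be most careful about, is precisely this reduction of the in-$X$ conditions to conditions in~$K$: the only leverage on~$N$ for elements of~$X$ is through the canonical-ascend identities $K \cap N = R$, $H \cap N = R\varphi^{-1}$, $HK \cap N = (R\varphi^{-1})R$ — we have essentially no control over $N \cap X$ beyond its intersection with~$P$ — so every statement about~$g$ must be pushed down to $g^{m_{g}} \in P$ by the $\pi^{\prime}$-isolation and then compared with $(R\varphi^{-1})R$ inside the abelian group~$P$. The delicate points are getting the powering argument to land cleanly inside $P \cap N$ (raising $g = hn$ directly to the power~$m_{g}$, rather than introducing a separate exponent for~$n$) and checking that the three translated conditions are independent of the choice of decomposition $g^{m_{g}} = a_{g}b_{g}$.
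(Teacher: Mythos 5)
Your proposal is correct and follows essentially the same route as the paper's proof: translate the conditions for elements of $X$ down to conditions on finitely many elements of $K$ via $g^{m}\in P=HK$ and the $\pi^{\prime}$\nobreakdash-iso\-lat\-ed\-ness of $H$ and $K$, pick one $R$ from the strong $(L,M)$\nobreakdash-fil\-tra\-tion handling all of them, ascend canonically by Proposition~\ref{ep403} while avoiding $XN$ for the elements outside $X$, and unwind membership in $HN$ or $KN$ through $P\cap N=(R\varphi^{-1})R$. The only cosmetic difference is that you merge the paper's separate cases $g\in P$ and $g\in X\setminus P$ into a single powering computation on $g=hn$.
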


\begin{proof}
We prove Statements~1 and~2 simultaneously. Suppose that $S$ is a~finite subset of~$G$ and~$g \in S$. Then:
\smallskip

1)\hspace{1ex}if $g \in K \setminus H$, then $g \notin L$;

2)\hspace{1ex}if $g \in H \setminus K$, then $g\varphi \notin M$;

3)\hspace{1ex}if $g \in P \setminus (H \cup K)$ and~$g = hk$, where $h \in H$, $k \in K$, then $k \notin L$, $h\varphi \notin M$;

4)\hspace{1ex}if $g \in X \setminus P$, $q$ is the~order of~$g$ modulo~$P$, and~$g^{q} = hk$, where $h \in H$, $k \in K$, then $k \notin L$, $h\varphi \notin M$.
\smallskip

Indeed, if $g \in K \setminus H$, then $g \notin H \cap K = L$; if $g \in H \setminus K$, then $g \notin H \cap K = L$ and~therefore $g\varphi \notin M$. If $g \in P \setminus (H \cup K)$ and~$g = hk$ for~some $h \in H$, $k \in K$, then $h \notin K$, $k \notin H$, and,~as~above, $k \notin L$, $h\varphi \notin M$. Suppose that $g \in X \setminus P$, $q$ is the~order of~$g$ modulo~$P$, and~$g^{q} = hk$, where $h \in H$, $k \in K$. Since $g \notin H \cup K$, $H$~and~$K$ are $\pi^{\prime}$\nobreakdash-iso\-lat\-ed in~$G$, and,~by~the~hypothesis of~the~proposition, $q$ is a~$\pi^{\prime}$\nobreakdash-num\-ber, then $hk \notin H \cup K$. Hence, using the~above argument, we get $k \notin L$, $h\varphi \notin M$.

Thus, because the~family $\mathcal{C}^{*}(K, L, M, \varphi)$ or~$\mathcal{C}_{r+1}^{*}(K, L, M, \varphi)$ is a~strong $(L, M)$-fil\-tra\-tion, we can choose a~subgroup~$R$ from~this family so that, for~each $g \in S$, the~following conditions hold: 
\smallskip

1)\hspace{1ex}if $g \in L$ and~$g \ne 1$, then $g \notin R$;

2)\hspace{1ex}if $g \in K \setminus H$, then $g \notin LR$;

3)\hspace{1ex}if $g \in H \setminus K$, then $g\varphi \notin MR$;

4)\hspace{1ex}if $g \in P \setminus (H \cup K)$ and~$g = hk$, where $h \in H$, $k \in K$, then $k \notin LR$, $h\varphi \notin MR$;

5)\hspace{1ex}if $g \in X \setminus P$, $q$ is the~order of~$g$ modulo~$P$, and~$g^{q} = hk$, where $h \in H$, $k \in K$, then $k \notin LR$, $h\varphi \notin MR$.
\smallskip

It is sufficient for~us that, for~each $g \in S$, the~fourth and~fifth conditions hold for~some fixed choice of~$h$ and~$k$. However, if $h_{1}k_{1} = h_{2}k_{2}$ for~some $h_{1}, h_{2} \in H$, $k_{1}, k_{2} \in K$, then $h_{1}^{-1}h_{2}^{\vphantom{1}} = k_{1}^{\vphantom{1}}k_{2}^{-1} \in L$. Therefore, if $k_{1} \notin LR$ and~$h_{1}\varphi \notin MR$, then $k_{2} \notin LR$ and~$h_{2}\varphi \notin MR$.

Let $S_{1} = S \setminus X$, and~let $Q = R\varphi^{-1}$. By~Proposition~\ref{ep403}, there exists a~canonical ascend $N \in \mathcal{C}^{*}(G, H, K, \varphi)$ of~$R$ such that $S_{1} \cap XN = \varnothing$ and,~if $R \in \mathcal{C}_{r+1}^{*}(K, L, M, \varphi)$, then $N \in \mathcal{C}_{r}^{*}(G, H, K, \varphi)$. To~complete the~proof, it suffices to~show that, for~each $g \in S$, the~following statements hold:
\smallskip

1)\hspace{1ex}if $g \ne 1$, then $g \notin N$;

2)\hspace{1ex}if $g \notin H$, then $g \notin HN$;

3)\hspace{1ex}if $g \notin K$, then $g \notin KN$.
\smallskip

Let us consider several cases.
\smallskip

\textit{Case~1:} $g \in L$ and~$g \ne 1$.
\smallskip

Since $R = N \cap K$, $g \in K$, and~$g \notin R$ by~the~choice of~$R$, then $g \notin N$.
\smallskip

\textit{Case~2:} $g \in K \setminus H$.
\smallskip

If $g \in HN$ and~$g = hx$ for~some $h \in H$, $x \in N$, then $x = h^{-1}g \in N \cap P$. Since $N$ is a~canonical ascend of~$R$, then $N \cap P = QR$ and~$x = h_{1}k$ for~some $h_{1} \in Q$, $k \in R$. Hence, $gk^{-1} = hh_{1} \in H \cap K = L$, $g \in LR$, and~we get a~contradiction with~the~choice of~$R$.
\smallskip

\textit{Case~3:} $g \in H \setminus K$.
\smallskip

As~in~the~previous case, if $g \in KN$ and~$g = kx$ for~some $k \in K$, $x \in N$, then $x = k^{-1}g \in N \cap P = RQ$ and~therefore $x = k_{1}h$ for~some $h \in Q$, $k_{1} \in R$. Hence, $gh^{-1} = kk_{1} \in L$, $g = (kk_{1})h \in LQ$, $g\varphi \in MR$, and~we again get a~contradiction with~the~choice of~$R$.
\smallskip

\textit{Case~4:} $g \in P \setminus (H \cup K)$.
\smallskip

Let $g = hk$, where $h \in H$, $k \in K$. If $g \in HN$ and~$g = h_{1}x$ for~some $h_{1} \in H$, $x \in N$, then it follows from~the~equality $hk = h_{1}x$ that $x \in P$ and~therefore $x \in N \cap P = QR$. Hence, $x = h_{2}k_{2}$ for~some $h_{2} \in Q$, $k_{2} \in R$, $hk = h_{1}h_{2}k_{2}$, and~$kk_{2}^{-1} = h^{-1}h_{1}h_{2} \in L$. Thus, we get the~relations $k = (h^{-1}h_{1}h_{2})k_{2} \in LR$, which contradict the~choice of~$R$.

Similarly, if $g \in KN$ and~$g = k_{1}x$ for~some $k_{1} \in K$, $x \in N$, then again $x \in N \cap P = QR$ and~$x = h_{2}k_{2}$ for~some $h_{2} \in Q$, $k_{2} \in R$. Hence, $hk = k_{1}h_{2}k_{2}$ and,~since $[H, K] = 1$, then $hh_{2}^{-1} = k^{-1}k_{1}k_{2} \in L$, $h = (k^{-1}k_{1}k_{2})h_{2} \in LQ$, and~$h\varphi \in MR$. The~last inclusion again contradicts the~choice of~$R$.
\smallskip

\textit{Case~5:} $g \in X \setminus P$.
\smallskip

As~already proved above, it follows from~the~relation $g \in X \setminus P$ that $g^{q} \notin H \cup K$, where $q$ is the~order of~$g$ modulo~$P$. Hence, using the~same argument as~in~Case~4, we get $g^{q} \notin HN \cup KN$. Then $g \notin HN \cup KN$, as~required.
\smallskip

\textit{Case~6:} $g \notin X$.
\smallskip

Since $g \notin X$, then $g \in S_{1}$, $g \notin XN$ by~the~choice of~$N$, and~$g \notin HN \cup KN$ because $H \cup K \subseteq X$.
\end{proof}

\section{Necessary and~sufficient conditions for~the~$\mathcal{C}$-residuality of~HNN-extensions}\label{esec05}

Throughout this section, it is assumed that $(G, H, K, \varphi)$ is an~HNN-tuple and~$\mathfrak{G} = \mathrm{HNN}(G, H, K, \varphi)$. Let us recall that any element $g \in \mathfrak{G}$ can be written in~a~\emph{reduced form}:
$$
g = g_{0}t^{\varepsilon_1}g_{1}\ldots t^{\varepsilon_n}g_{n},
$$
where $g_{i} \in G$, $\varepsilon_{i} = \pm 1$, and~if $-\varepsilon_{i} = 1 = \varepsilon_{i+1}$, then $g_{i} \notin H$, if $\varepsilon_{i} = 1 = -\varepsilon_{i+1}$, then $g_{i} \notin K$. Britton's lemma (see, e.~g., \cite[Ch.~IV, Section~2]{LyndonSchupp1977}) says that an~element of~$\mathfrak{G}$ is non-triv\-i\-al if it has a~reduced form containing at~least one letter~$t$ or~$t^{-1}$. As~a~consequence, all the~reduced forms of~an~element $g \in \mathfrak{G}$ have the~same number of~occurrences of~the~letters~$t$ and~$t^{-1}$. This number is called the~\emph{length} of~$g$ and~is denoted in~this paper by~$|g|$.

\begin{eproposition}\label{ep501}
If $\mathcal{C}$ is a~root class of~groups, then the~following statements hold.

\textup{1.}\hspace{1ex}Every free group is residually a~$\mathcal{C}$\nobreakdash-group~\textup{\cite[Theorem~1]{AzarovTieudjo2002}}.

\textup{2.}\hspace{1ex}Any extension of~a~residually $\mathcal{C}$\nobreakdash-group by~a~$\mathcal{C}$\nobreakdash-group is residually a~$\mathcal{C}$\nobreakdash-group~\textup{\cite[Lemma~1.5]{Gruenberg1957}}.
\end{eproposition}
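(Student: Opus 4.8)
The plan is to treat the two statements separately, since each is a classical fact that exploits a different closure property of a root class, and neither is proved here (they are quoted from \cite{AzarovTieudjo2002} and \cite{Gruenberg1957}).

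For Statement~2 I would follow Gruenberg's argument. Let $1 \to A \to X \xrightarrow{\pi} B \to 1$ be an extension with $A$ residually a $\mathcal{C}$-group and $B \in \mathcal{C}$, and fix $1 \ne x \in X$. If $x \notin A$, then $\pi$ is itself a homomorphism onto the $\mathcal{C}$-group $B$ with $x\pi \ne 1$, so I may assume $x \in A$. Choose $\alpha \colon A \to C$ with $C \in \mathcal{C}$ and $x\alpha \ne 1$, put $N = \ker\alpha \trianglelefteq A$, and let $T$ be a transversal of $A$ in $X$. The key observation is that, because $N$ is normal in $A$ and $A$ is normal in $X$, each conjugate $tNt^{-1}$ ($t \in T$) is again normal in $A$ and depends only on the coset $tA$; hence the normal core $N_{*} = \bigcap_{t \in T}tNt^{-1}$ is normal in $X$, and the diagonal map embeds $A/N_{*}$ into the Cartesian product $\prod_{b \in B}C_{b}$, where each $C_{b} \cong A/N \cong C$. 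Here the index set is $B \in \mathcal{C}$ and the factors are isomorphic copies of $C \in \mathcal{C}$, so this product lies in $\mathcal{C}$ by the Cartesian-product axiom; closure under subgroups gives $A/N_{*} \in \mathcal{C}$, and closure under extensions gives $X/N_{*} \in \mathcal{C}$. Since $N_{*} \leqslant N$ and $x \notin N$, the projection $X \to X/N_{*}$ is the required homomorphism.

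For Statement~1 I would first reduce to the finitely generated case: any nontrivial $w$ in a free group involves only finitely many free generators, and the retraction killing the remaining generators carries $w$ to a nontrivial element of a free group of finite rank, so it suffices to treat $F_{k}$ with $k < \infty$. Next I would extract a cyclic group from $\mathcal{C}$: since $\mathcal{C}$ contains a nontrivial group, hence a nontrivial cyclic subgroup, closure under subgroups forces either $\mathbb{Z} \in \mathcal{C}$ or $\mathbb{Z}/p\mathbb{Z} \in \mathcal{C}$ for some prime $p$. In the first case I would invoke Magnus's theorem that $F_{k}$ is residually a finitely generated torsion-free nilpotent group; every such group is poly-$\mathbb{Z}$, i.e. an iterated extension of copies of $\mathbb{Z}$, and hence lies in $\mathcal{C}$ by repeated use of the extension axiom. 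In the second case I would instead use the residual $p$-finiteness of $F_{k}$: every finite $p$-group is an iterated extension of copies of $\mathbb{Z}/p\mathbb{Z}$ and therefore lies in $\mathcal{C}$. Either way $F_{k}$ is approximated by $\mathcal{C}$-groups.

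The main obstacle lies in Statement~1 and is external to the root-class formalism: the facts that free groups are residually finitely generated torsion-free nilpotent and residually $p$-finite constitute Magnus's theorem, and without them the argument has no concrete family of nilpotent or $p$-groups to map into; the closure properties only convert those target groups into members of $\mathcal{C}$. In Statement~2 the only delicate point is the bookkeeping around the normal core: one must check that conjugation by $A$ fixes each $tNt^{-1}$, so that the distinct conjugates are indexed by $B$ rather than by all of $X$. This is exactly what guarantees that the index set of the Cartesian product is a $\mathcal{C}$-group, and it is also the place where the possibly infinite unrestricted product --- permitted precisely by the root-class axiom --- is genuinely needed.
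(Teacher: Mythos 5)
Your proposal is correct: the paper itself offers no proof of this proposition, only the citations to \cite{AzarovTieudjo2002} and \cite{Gruenberg1957}, and your two arguments are faithful reconstructions of the classical proofs in those sources (Gruenberg's normal-core argument with the Cartesian product indexed by $B$, and the reduction of free groups to finitely generated torsion-free nilpotent or finite $p$-group quotients via Magnus, followed by the poly-$\mathbb{Z}$ resp.\ poly-$(\mathbb{Z}/p\mathbb{Z})$ decomposition). Both the normality bookkeeping for $N_{*}$ and the case split on whether $\mathcal{C}$ contains $\mathbb{Z}$ or some $\mathbb{Z}/p\mathbb{Z}$ are handled correctly, so there is nothing to add.
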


The~following proposition is a~generalization of~Theorem~4.2 from~\cite{BaumslagBTretkoff1978} and~is partially proved in~\cite{Tumanova2014}.

\begin{eproposition}\label{ep502}
Let $\mathcal{C}$ be a~class of~groups.

\textup{1.}\hspace{1ex}If $\mathfrak{G}$ is residually a~$\mathcal{C}$\nobreakdash-group, then $\mathcal{C}_{\cap}^{*}(G, H, K, \varphi)$ is a~filtration.

\textup{2.}\hspace{1ex}If $\mathfrak{G}$ is residually a~$\mathcal{C}$\nobreakdash-group, $H$~and~$K$ are proper central subgroups of~$G$, then $\mathcal{C}_{\cap}^{*}(G, H, K, \varphi)$ is an~$(H, K)$\nobreakdash-fil\-tra\-tion.

\textup{3.}\hspace{1ex}If $\mathcal{C}$ is a~root class of~groups and~$\mathcal{C}_{\cap}^{*}(G, H, K, \varphi)$ is an~$(H, K)$\nobreakdash-fil\-tra\-tion, then $\mathfrak{G}$ is residually a~$\mathcal{C}$\nobreakdash-group.
\end{eproposition}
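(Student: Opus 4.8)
The three parts separate cleanly, and I would treat them in turn. For \textbf{Part 1} the argument is direct from residual $\mathcal{C}$-ness and needs neither centrality nor properness (matching its hypotheses): given $1 \ne g \in G$, pick a homomorphism $\sigma\colon \mathfrak{G}\to Q\in\mathcal{C}$ with $g\sigma\ne 1$ and set $U=\ker\sigma\in\mathcal{C}^{*}(\mathfrak{G})$. Then $g\notin U$, so $g\notin U\cap G$, and since $U\cap G\in\mathcal{C}_{\cap}^{*}(G,H,K,\varphi)$, the element $g$ escapes a member of the family; as $g$ was arbitrary, $\bigcap_{N}N=1$.

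For \textbf{Part 2} the task is to show $\bigcap_{N}HN=H$ and $\bigcap_{N}KN=K$ (this is what being an $(H,K)$-filtration adds to the filtration of Part~1, as the usage in Proposition~\ref{ep402} makes clear). Fix $g\in G\setminus H$; I want one $\mathcal{C}$-quotient of $\mathfrak{G}$ in which the image of $g$ avoids that of $H$. The naive witness $t^{-1}gtg^{-1}$, though nontrivial by Britton's lemma, fails, since $\overline{g}\in\overline{H}$ is compatible with $\overline{t}^{-1}\overline{g}\,\overline{t}\ne\overline{g}$. The fix — the heart of the argument, using both hypotheses — is to test membership by a commutator. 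Since $K\ne G$ (properness), choose $s\in G\setminus K$ and consider $w=[\,t^{-1}gt,\ s\,]=t^{-1}g^{-1}t\,s^{-1}\,t^{-1}g\,t\,s$. A syllable count shows the three candidate pinches $t^{-1}g^{-1}t$, $t\,s^{-1}t^{-1}$, $t^{-1}gt$ are blocked by $g\notin H$, $s\notin K$, $g\notin H$ respectively, so $w$ is reduced of length $4$ and $w\ne 1$; residual $\mathcal{C}$-ness gives $\sigma$ with $w\sigma\ne 1$. Yet if $\overline{g}=\overline{h}$ with $h\in H$, then $\overline{t^{-1}gt}=\overline{t^{-1}ht}=\overline{h\varphi}\in\overline{K}$, and since $K$ is \emph{central}, $\overline{K}$ is central in $\overline{G}\ni\overline{s}$, whence $w\sigma=1$ — a contradiction. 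Thus $\overline{g}\notin\overline{H}$, i.e. $g\notin HU\supseteq H(U\cap G)$, so $N=U\cap G$ is the required member. The equality $\bigcap_{N}KN=K$ follows symmetrically, using $H\ne G$ to pick $s\in G\setminus H$, the element $[\,tgt^{-1},s\,]$, and the centrality of $H$.

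For \textbf{Part 3} (the converse, where $\mathcal{C}$ must be a root class) I would separate an arbitrary $1\ne w\in\mathfrak{G}$ by cases on $|w|$. If $|w|=0$ then $w\in G\setminus\{1\}$ and the underlying filtration supplies $U\in\mathcal{C}^{*}(\mathfrak{G})$ with $w\notin U$, so $\mathfrak{G}/U\in\mathcal{C}$ separates $w$. If $|w|=n_{0}\ge 1$, write $w$ in reduced form; the finitely many $g_{i}$ that block pinches satisfy $g_{i}\notin H$ (resp. $g_{i}\notin K$), so by the $(H,K)$-filtration together with closure of $\mathcal{C}_{\cap}^{*}(G,H,K,\varphi)$ under finite intersections (Proposition~\ref{ep302}) and its compatibility $\mathcal{C}_{\cap}^{*}\subseteq\mathcal{C}^{*}(G,H,K,\varphi)$ (Proposition~\ref{ep304}), there is a single compatible $N$ with $g_{i}\notin HN$ (resp. $g_{i}\notin KN$) for all relevant $i$. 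Then $\rho_{N}(w)$ stays reduced of length $n_{0}$ in $\mathfrak{G}_{N}=\mathrm{HNN}(G/N,HN/N,KN/N,\varphi_{N})$, hence is nontrivial by Britton. It remains to push $\rho_{N}(w)$ into a genuine $\mathcal{C}$-group: since $G/N\in\mathcal{C}$, I would invoke the admissible-cycle method of Section~\ref{esec02}, mapping $\mathfrak{G}_{N}$ onto a split extension of a generalized direct product by a cyclic group of $\mathcal{C}$ (Proposition~\ref{ep204}) whose length lies in $\mathcal{C}$ and exceeds $n_{0}$, i.e. is admissible with reserve at least $n_{0}$, so that a reduced word of length $n_{0}$ survives.

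The main obstacle is twofold. In Part~2 it is producing the right witness — an element whose nontriviality in a quotient genuinely certifies $\overline{g}\notin\overline{H}$ — and the commutator $[\,t^{-1}gt,s\,]$ with $s\notin K$ works precisely because the centrality of $K$ collapses it whenever $g$ enters $H$; this is where properness ($K\ne G$) and centrality are jointly indispensable. In Part~3 the obstacle is the final step: manufacturing a $\mathcal{C}$-quotient that detects a reduced word of prescribed length rather than merely a nontrivial base element, which is exactly what the generalized-direct-product constructions of Section~\ref{esec02} and the root-class closure under extensions (to realize long enough cyclic groups in $\mathcal{C}$) are designed to overcome.
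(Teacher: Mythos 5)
Parts~1 and~2 of your proposal are correct and coincide with the paper's argument: Part~1 is the same one-line computation, and in Part~2 you use exactly the paper's witness $[t^{-1}gt,s]$ with $s\in G\setminus K$ (the paper runs it as a proof by contradiction from $g_1\in\bigl(\bigcap_N HN\bigr)\setminus H$, you run it directly for a fixed $g\in G\setminus H$; the content is identical). The first half of Part~3 — choosing, via Proposition~\ref{ep302}, a single $N\in\mathcal{C}_{\cap}^{*}(G,H,K,\varphi)$ so that $\rho_N(w)$ stays reduced of length $n_0\geqslant 1$ in $\mathfrak{G}_N$ and hence is nontrivial by Britton's lemma — also matches the paper.

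The final step of Part~3 is a genuine gap. To apply Proposition~\ref{ep204} to $\mathfrak{G}_N=\mathrm{HNN}(G/N,HN/N,KN/N,\varphi_N)$ you need two things that are simply not available here: some $n\geqslant 3$ admissible for the quotient tuple with a suitable reserve, and a cyclic group $C_n$ of that order lying in $\mathcal{C}$. Membership of $N$ in $\mathcal{C}_{\cap}^{*}(G,H,K,\varphi)$ does not provide any admissible cycle length — that is precisely what distinguishes the subfamily $\mathcal{C}_r^{*}$ from $\mathcal{C}_{\cap}^{*}$, and Proposition~\ref{ep304} only gives the inclusion $\mathcal{C}_r^{*}\subseteq\mathcal{C}_{\cap}^{*}$, never the converse except in the special abelian situation of Proposition~\ref{ep305}. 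Moreover a root class need not contain any nontrivial finite cyclic group at all (the class of torsion-free groups is a root class), so for such $\mathcal{C}$ your route cannot even start; and even where it applies, Proposition~\ref{ep204} only asserts injectivity on $G/N$, not survival of a reduced word of length $n_0$. The paper closes the argument quite differently, using the defining property of $\mathcal{C}_{\cap}^{*}$: pick $U\in\mathcal{C}^{*}(\mathfrak{G})$ with $N=U\cap G$, note that the induced surjection $\sigma\colon\mathfrak{G}_N\to\mathfrak{G}/U$ satisfies $\ker\sigma\cap G/N=1$, so $\ker\sigma$ is free by Cohen's theorem on subgroups of HNN-groups; thus $\mathfrak{G}_N$ is an extension of a free group by a $\mathcal{C}$-group and is residually a $\mathcal{C}$-group by Proposition~\ref{ep501}, which separates $\rho_N(w)$. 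You should replace your appeal to the admissible-cycle machinery by this free-by-$\mathcal{C}$ argument.
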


\textit{Proof.} 1.\hspace{1ex}Since $\mathfrak{G}$ is residually a~$\mathcal{C}$\nobreakdash-group, then
\[
1 = \bigcap_{U \in \mathcal{C}^{*}(\mathfrak{G})}\kern-5pt{}U = \bigcap_{U \in \mathcal{C}^{*}(\mathfrak{G})}\kern-5pt{}(U \cap G) = \bigcap_{N \in \mathcal{C}_{\cap}^{*}(G, H, K, \varphi)}\kern-20pt{}N,
\]
as~required.
\smallskip

2.\hspace{1ex}Suppose that $\mathcal{C}_{\cap}^{*}(G, H, K, \varphi)$ is not an~$H$\nobreakdash-fil\-tra\-tion~and
\[
g_{1} \in \bigg(\bigcap_{N \in \mathcal{C}_{\cap}^{*}(G, H, K, \varphi)}\kern-20pt{}HN\bigg) \setminus H.
\]
Because $K \ne G$, we can take an~element $g_{2} \in G \setminus K$ and~put $g = [t^{-1}g_{1}t, g_{2}]$. Then $|g| = 4$ and~therefore $g \ne 1$. Since $\mathfrak{G}$ is residually a~$\mathcal{C}$\nobreakdash-group, there exists a~subgroup $U \in \mathcal{C}^{*}(\mathfrak{G})$ such that $g \notin U$. By~the~choice of~$g_{1}$, the~inclusion $g_{1} \in H(U \cap G)$ holds, and~therefore $g_{1} \equiv h \pmod U$ for~some $h \in H$. Hence,
$$
g \equiv [t^{-1}ht, g_{2}] = [h\varphi, g_{2}] \pmod U.
$$
Since $h\varphi \in K$ and~$K$ is central in~$G$, then $[h\varphi, g_{2}] = 1$, and~we get a~contradiction with the~choice of~$U$. The~fact that $\mathcal{C}_{\cap}^{*}(G, H, K, \varphi)$ is a~$K$\nobreakdash-fil\-tra\-tion is proved in~a~similar~way.{\parfillskip=0pt\par}
\smallskip

3.\hspace{1ex}Let $g \in \mathfrak{G} \setminus \{1\}$. We show that there exists a~subgroup $N \in \mathcal{C}_{\cap}^{*}(G, H, K, \varphi)$ such that $g\rho_{N} \ne 1$.

If $g \in G$, then, because $\mathcal{C}_{\cap}^{*}(G, H, K, \varphi)$ is a~filtration, $g$ does not belong to~some subgroup $N \in \mathcal{C}_{\cap}^{*}(G, H, K, \varphi)$. Since $\rho_{N}$ extends the~natural homomorphism $G \to G/N$, then $g\rho_{N} \ne 1$ and~therefore $N$ is the~required subgroup.

Suppose that $g\kern-1.5pt{} \notin\kern-1.5pt{} G$ and~$g\kern-1.5pt{} =\kern-1.5pt{} g_{0}t^{\varepsilon_1}g_{1}\kern-1pt{}\ldots t^{\varepsilon_n}g_{n}$, where $g_{i}\kern-1.5pt{} \in\kern-1.5pt{} G$, $\varepsilon_{i}\kern-1.5pt{} =\kern-1.5pt{} \pm 1$, and~if $-\varepsilon_{i}\kern-1.5pt{} =\nolinebreak\kern-1.5pt{} 1\kern-1.5pt{} =\nolinebreak\kern-1.5pt{} \varepsilon_{i+1}$, then $g_{i} \notin H$, if $\varepsilon_{i} = 1 = -\varepsilon_{i+1}$, then $g_{i} \notin K$. It should be noted that, since $g \notin G$, then $n \geqslant 1$. For~each $i \in \{0,\, 1,\, \ldots,\, n\}$, we define a~subgroup $N_{i} \in \mathcal{C}_{\cap}^{*}(G, H, K, \varphi)$ as~follows.{\parfillskip=0pt\par}

If~$1 \leqslant i \leqslant n-1$ and~$-\varepsilon_{i} = 1 = \varepsilon_{i+1}$, then $g_{i} \notin H$ and,~since $\mathcal{C}_{\cap}^{*}(G, H, K, \varphi)$ is an~$(H, K)$-fil\-tra\-tion, we can find a~subgroup $N_{i} \in \mathcal{C}_{\cap}^{*}(G, H, K, \varphi)$ such that $g_{i} \notin HN_{i}$. Similarly, if $1 \leqslant i \leqslant n-1$ and~$\varepsilon_{i} = 1 = -\varepsilon_{i+1}$, we choose a~subgroup $N_{i} \in \mathcal{C}_{\cap}^{*}(G, H, K, \varphi)$ so that $g_{i} \notin KN_{i}$. For~all other $i \in \{0,\, 1,\, \ldots,\, n\}$, we put $N_{i} = G$. The~last choice is possible because $\mathcal{C}$ is root and~therefore the~trivial group belongs to~$\mathcal{C}$, $\mathfrak{G} \in \mathcal{C}^{*}(\mathfrak{G})$, and~$G \in \mathcal{C}_{\cap}^{*}(G, H, K, \varphi)$.

Let
$$
N = \bigcap_{i=0}^{n}N_{i}.
$$

Then $N \in \mathcal{C}_{\cap}^{*}(G, H, K, \varphi)$ by~Proposition~\ref{ep302} and,~for~any $i \in \{1,\, 2,\, \ldots,\, n-1\}$,\linebreak if $-\varepsilon_{i} = 1 = \varepsilon_{i+1}$, then $g_{i} \notin HN$, if $\varepsilon_{i} = 1 = -\varepsilon_{i+1}$, then $g_{i} \notin KN$. Hence,
$$
g\rho_{N} = (g_{0}N)t^{\varepsilon_1}(g_{1}N)\ldots t^{\varepsilon_n}(g_{n}N)
$$
is a~reduced form of~$g\rho_{N}$ of~length $n \geqslant 1$ and,~in~particular, $g\rho_{N} \ne 1$. Thus, $N$ is the~required subgroup.

Let us now show that $\mathfrak{G}_{N} = \mathrm{HNN}(G/N, HN/N, KN/N, \varphi_{N})$ is residually a~$\mathcal{C}$\nobreakdash-group and~therefore $\rho_{N}$ can be extended to~a~homomorphism of~$\mathfrak{G}$ onto~a~group from~$\mathcal{C}$ taking~$g$ to~a~non-triv\-i\-al element. Suppose that $U \in \mathcal{C}^{*}(\mathfrak{G})$ is a~subgroup satisfying the~equality $N = U \cap G$ and~$\sigma\colon \mathfrak{G}_{N} \to \mathfrak{G}/U$ is the~mapping defined by~the~rule: $(x\rho_{N})\sigma = xU$ ($x \in \mathfrak{G}$). Since the~kernel of~$\rho_{N}$ coincides with~the~normal closure of~$N$ in~$\mathfrak{G}$, it is contained in~$U$ and~therefore $\sigma$ is well defined. It is also easy to~see that $\sigma$ is a~surjective homomorphism. Since $G\rho_{N} = G/N$ and~$N = U \cap G$, then $\ker\sigma \cap G/N = 1$. Hence, $\ker\sigma$ is free (see, \mbox{e.\;g.,~\cite{Cohen1974}}) and~$\mathfrak{G}_{N}$ is an~extension of~the~free group~$\ker\sigma$ by~the~$\mathcal{C}$\nobreakdash-group~$\mathfrak{G}/U$. Such an~extension is residually a~$\mathcal{C}$\nobreakdash-group by~Proposition~\ref{ep501}.

\begin{eproposition}\label{ep503}
Suppose that $\mathcal{C}$ is a~class of~groups consisting only of~periodic groups, $H$ is a~central subgroup of~$G$, and~$H \leqslant K \ne G$. If $\mathfrak{G}$ is residually a~$\mathcal{C}$\nobreakdash-group, then $H = K$.
\end{eproposition}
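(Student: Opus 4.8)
The plan is to establish the nontrivial inclusion $K \leqslant H$ (the inclusion $H \leqslant K$ is assumed), and the guiding idea is to first deduce that $K$, like $H$, lies in the center of $G$, thereby reducing to a situation already governed by Proposition~\ref{ep502}. To begin, since $\mathfrak{G}$ is residually a $\mathcal{C}$\nobreakdash-group, Statement~1 of Proposition~\ref{ep502} gives that $\mathcal{C}_{\cap}^{*}(G,H,K,\varphi)$ is a filtration, so $\bigcap_{N}N = 1$ as $N$ runs over this family, each $N$ having the form $U \cap G$ with $U \in \mathcal{C}^{*}(\mathfrak{G})$. Fix such a $U$ and work in $Q = \mathfrak{G}/U \in \mathcal{C}$. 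The point where periodicity enters is that, since $\mathcal{C}$ consists only of periodic groups, the image $tU$ has finite order, say $m$. In $Q$ the defining relations give $(tU)^{-1}(HU/U)(tU) = KU/U$, and $H \leqslant K$ yields $HU/U \leqslant KU/U$. Writing $c$ for conjugation by $(tU)^{-1}$ and applying it repeatedly to $HU/U \leqslant c(HU/U) = KU/U$, I obtain an ascending chain $HU/U \leqslant c(HU/U) \leqslant c^{2}(HU/U) \leqslant \cdots \leqslant c^{m}(HU/U)$; since $(tU)^{m} = U$, the map $c^{m}$ is the identity, so the chain closes up and all of its terms coincide. In particular $HU/U = KU/U$.

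Next I use that $HU/U$ is central in $GU/U$, because images of central elements are central. The equality $KU/U = HU/U$ then shows that every coset $kU$ with $k \in K$ is central in $GU/U$. Hence, for all $k \in K$ and $g \in G$, the commutator $[k,g]$ lies in $U$; being an element of $G$, it lies in $N = U \cap G$. As $U$ was arbitrary and $\bigcap_{N}N = 1$, this forces $[k,g] = 1$ for all $k \in K$ and $g \in G$, i.e.\ $K$ is central in~$G$.

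Now $K$ is a proper central subgroup (and $H$ is proper central since $H \leqslant K \neq G$ gives $H \neq G$), so Statement~2 of Proposition~\ref{ep502} applies and shows that $\mathcal{C}_{\cap}^{*}(G,H,K,\varphi)$ is an $(H,K)$\nobreakdash-filtration; in particular $\bigcap_{N}HN = H$. On the other hand, the equality $HU/U = KU/U$ obtained above means $KU = HU$, whence $K = K \cap G \subseteq HU \cap G = H(U \cap G) = HN$ by the modular law (using $H \leqslant G$). Therefore $K \subseteq \bigcap_{N}HN = H$, and combined with $H \leqslant K$ this gives $H = K$, as required.

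The main obstacle is precisely this centrality upgrade. Without knowing that $K$ is central one cannot separate elements of $K$ from $H$ (Proposition~\ref{ep502} supplies the needed $(H,K)$\nobreakdash-filtration only under centrality of both subgroups), and indeed the inclusion $K \subseteq HN$ that holds for every $N$ is, on its own, not enough to conclude $K \subseteq H$. Thus the crux of the argument is the middle step: exploiting the finite order of $tU$ in each $\mathcal{C}$\nobreakdash-quotient to force $HU/U = KU/U$ everywhere, and reading off from this equality that $K$ must in fact lie in the center of $G$.
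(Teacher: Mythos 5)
Your proof is correct, but it is organized rather differently from the paper's. The paper argues by direct contradiction: it picks $k \in K \setminus H$ and $g \in G \setminus K$, forms the single element $x = [t^{-1}kt, g]$, which has length $4$ and is therefore non-trivial by Britton's lemma, and then shows that $x$ dies in every $\mathcal{C}$\nobreakdash-quotient, because $t^{-1} \equiv t^{m} \pmod U$ for some $m>0$ forces $t^{-1}kt \equiv k\varphi^{-m} \in H \pmod U$ (the inclusion $H \leqslant K$ is what allows $\varphi^{-1}$ to be iterated), after which centrality of $H$ kills the commutator. Your argument extracts the same phenomenon at the level of subgroups rather than elements: the finite order of $tU$ together with $HU/U \leqslant (tU)^{-1}(HU/U)(tU) = KU/U$ yields, via the ascending chain closing up, the equality $HU/U = KU/U$ in every quotient. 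You then spend this equality twice --- once to upgrade $K$ to a central subgroup of $G$, which is the genuinely new intermediate fact your route requires, and once more, after invoking Statement~2 of Proposition~\ref{ep502}, to place $K$ inside $\bigcap_{N}HN = H$. Since the proof of Proposition~\ref{ep502} itself rests on a length-$4$ commutator and Britton's lemma, the two arguments ultimately run on the same engine; yours is longer and routes through the filtration machinery, but it makes explicit the (true) byproduct that $K$ must be central, whereas the paper's is shorter and self-contained. All the individual steps you take --- the chain argument, the Dedekind identity $HU \cap G = H(U \cap G)$, and the applicability of Proposition~\ref{ep502} once both subgroups are known to be proper and central --- check out.
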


\begin{proof}
Suppose that $H \ne K$ and~$k \in K \setminus H$. Suppose also that $g \in G \setminus K$ and~$x = [t^{-1}kt, g]$. Then $|x| = 4$ and~therefore $x \ne 1$. Since $\mathfrak{G}$ is residually a~$\mathcal{C}$\nobreakdash-group, there exists a~subgroup $U \in \mathcal{C}^{*}(\mathfrak{G})$ such that $x \notin U$. Because $\mathcal{C}$ consists only of~periodic groups, $t$ has a~finite order modulo~$U$ and~therefore $t^{-1} \equiv t^{m} \pmod U$ for~some $m > 0$. It follows from~the~last relation and~the~inclusion $H \leqslant K$ that
$$
t^{-1}kt \equiv t^{m}kt^{-m} = k\varphi^{-m} \in H \pmod U.
$$
Since $H$ is central in~$G$, then $[k\varphi^{-m}, g] = 1$. Thus, $x \in U$, and~we get a~contradiction with~the~choice of~$U$.
\end{proof}

\begin{eproposition}\label{ep504}
Suppose that $\mathcal{C}$ is a~class of~groups closed under taking subgroups and~direct products of~a~finite number of~factors. Suppose also that $G$ is residually a~$\mathcal{C}$\nobreakdash-group and~there exists a~subgroup $Q \in \mathcal{C}^{*}(G)$ satisfying at~least one of~the~following conditions: 

$(\alpha)$\hspace{1ex}$H \cap Q = 1 = K \cap Q$,

$(\beta)$\hspace{1ex}$Q \leqslant H \cap K$ and~$Q\varphi = Q$.

\noindent
Then $H$ and~$K$ are $\mathcal{C}$\nobreakdash-sep\-a\-ra\-ble in~$G$.
\end{eproposition}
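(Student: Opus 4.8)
The plan is to work from the standard reformulation of separability: a subgroup $H$ is $\mathcal{C}$-separable in $G$ if and only if, for every $g \in G \setminus H$, there is a subgroup $N \in \mathcal{C}^{*}(G)$ with $g \notin HN$. Indeed, any homomorphism of $G$ onto a $\mathcal{C}$-group factors through the quotient by its kernel $N \in \mathcal{C}^{*}(G)$, and it separates $g$ from $H$ precisely when $g \notin HN$. I would therefore, for each $g \in G \setminus H$ (and symmetrically for $K$), exhibit such an $N$, treating the two hypotheses $(\alpha)$ and $(\beta)$ separately.

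Under $(\beta)$ this is immediate. Since $Q \leqslant H \cap K$, we have $HQ = H$ and $KQ = K$, so for the natural projection $\pi\colon G \to G/Q$ onto the $\mathcal{C}$-group $G/Q$ the inclusion $g\pi \in H\pi$ holds if and only if $g \in HQ = H$, and likewise for $K$. Thus $N = Q$ separates every $g$ from both $H$ and $K$; here neither the residual $\mathcal{C}$-property of $G$ nor the relation $Q\varphi = Q$ is used.

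Under $(\alpha)$ the argument is longer. Fix $g \in G \setminus H$. If $g \notin HQ$, then $N = Q$ works at once. Otherwise write $g = hq$ with $h \in H$ and $q \in Q$; because $H \cap Q = 1$ and $g \notin H$, the element $q$ is non-trivial. Since $G$ is residually a $\mathcal{C}$-group, Proposition~\ref{ep301} yields a subgroup $Y \in \mathcal{C}^{*}(G)$ with $q \notin Y$, and then $N = Q \cap Y \in \mathcal{C}^{*}(G)$, again by Proposition~\ref{ep301}. I claim $g \notin HN$: if $g = h'n$ with $h' \in H$ and $n \in N \leqslant Q$, then $(h')^{-1}h = nq^{-1} \in H \cap Q = 1$, so $h' = h$ and hence $q = n \in N \leqslant Y$, contradicting $q \notin Y$. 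The case of $K$ is identical, by the symmetry of $(\alpha)$ in $H$ and $K$.

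The crux is this last claim under $(\alpha)$, where the $H$-part and the $Q$-part of $g$ must be controlled at the same time. The hypothesis $H \cap Q = 1$ makes $\pi$ injective on $H$, which forces $h' = h$; residual $\mathcal{C}$-separability then supplies a quotient in which the surviving non-trivial factor $q$ is detected. Packaging the two quotients $G/Q$ and $G/Y$ as the single quotient $G/(Q \cap Y)$ through Proposition~\ref{ep301} — which relies on $\mathcal{C}$ being closed under subgroups and finite direct products — is what keeps the separating image inside $\mathcal{C}$.
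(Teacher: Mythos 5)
Your proposal is correct and follows essentially the same route as the paper: dispose of case $(\beta)$ (and of any $g \notin HQ$) by taking $N = Q$, and in case $(\alpha)$ write $g = hq$ with $q \ne 1$, intersect $Q$ with a subgroup from $\mathcal{C}^{*}(G)$ avoiding $q$ via Proposition~\ref{ep301}, and use $H \cap Q = 1$ to force the contradiction. The only cosmetic difference is that the paper merges the two hypotheses into a single case split ($Q \leqslant H$ or $g \notin HQ$ versus $H \cap Q = 1$ and $g \in HQ$) rather than treating $(\alpha)$ and $(\beta)$ separately.
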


\begin{proof}
We take an~element $g \in G \setminus H$ and~find a~subgroup $N \in \mathcal{C}^{*}(G)$ such that $g \notin HN$.

If $Q \leqslant H$ or~$g \notin HQ$, then $Q$ is the~required subgroup. Therefore, we can assume that $H \cap Q = 1$ and~$g = hx$ for~some $h \in H$, $x \in Q$. Since $g \notin H$, then $x \ne 1$ and,~because $G$ is residually a~$\mathcal{C}$\nobreakdash-group, there exists a~subgroup $M \in \mathcal{C}^{*}(G)$ that does not contain~$x$. We put $N = M \cap Q$. Then $N \in \mathcal{C}^{*}(G)$ by~Proposition~\ref{ep301}. If $g = h_{1}^{\vphantom{1}}x_{1}^{\vphantom{1}}$ for~some $h_{1}^{\vphantom{1}} \in H$, $x_{1}^{\vphantom{1}} \in N$, then $xx_{1}^{-1} = h^{-1}_{\vphantom{1}}h_{1}^{\vphantom{1}} \in H \cap Q = 1$. Hence, $x = x_{1}^{\vphantom{1}} \in N \leqslant M$, and~we get a~contradiction with~the~choice of~$M$. Therefore, $g \notin HN$ and~$N$ is the~required subgroup.

Thus, $H$ is $\mathcal{C}$\nobreakdash-sep\-a\-ra\-ble in~$G$. The~$\mathcal{C}$\nobreakdash-separability of~$K$ is proved in~a~similar way.
\end{proof}

\begin{eproposition}\label{ep505}
Suppose that $\mathcal{C}$ is a~root class of~groups containing infinite groups, $G$ is residually a~$\mathcal{C}$\nobreakdash-group, $H$~and~$K$ are central subgroups of~$G$. Suppose also that there exists a~subgroup $Q \in \mathcal{C}^{*}(G)$ satisfying at~least one of~the~following conditions:

$(\alpha)$\hspace{1ex}$H \cap Q = 1 = K \cap Q$,

$(\beta)$\hspace{1ex}$Q \leqslant H \cap K$ and~$Q\varphi = Q$.

\noindent
Then $\mathcal{C}^{*}(G, H, K, \varphi)$ is an~$(H, K)$-fil\-tra\-tion.
\end{eproposition}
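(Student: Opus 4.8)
The plan is to reduce the whole statement to a single claim: \emph{for every $N_{1} \in \mathcal{C}^{*}(G)$ there exists a subgroup $N \in \mathcal{C}^{*}(G, H, K, \varphi)$ with $N \leqslant N_{1}$}. Granting this, all three defining properties of an $(H,K)$-filtration follow at once. Since $\mathcal{C}$ is a root class, it is closed under taking subgroups and direct products of finitely many factors, so Proposition~\ref{ep504} applies and $H$ and $K$ are $\mathcal{C}$\nobreakdash-separable in $G$. Thus, given $g \in G \setminus H$, there is $N_{1} \in \mathcal{C}^{*}(G)$ with $g \notin HN_{1}$; choosing a compatible $N \leqslant N_{1}$ as in the claim we get $HN \leqslant HN_{1}$, whence $g \notin HN$, and this proves $\bigcap_{N}HN = H$ (the intersection over $\mathcal{C}^{*}(G,H,K,\varphi)$). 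The $K$-filtration property is symmetric, and the filtration property $\bigcap_{N}N = 1$ follows in the same way from the residual $\mathcal{C}$\nobreakdash-ness of $G$ (take $N_{1}$ with $g \notin N_{1}$ for each $g \neq 1$).

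Under condition~$(\alpha)$ the claim is immediate: put $N = N_{1} \cap Q$. Then $N \in \mathcal{C}^{*}(G)$ by Proposition~\ref{ep301}, $N \leqslant N_{1}$, and since $N \leqslant Q$ we have $N \cap H \leqslant Q \cap H = 1$ and $N \cap K \leqslant Q \cap K = 1$; hence $(N \cap H)\varphi = 1 = N \cap K$, so $N$ is $(H,K,\varphi)$-compatible.

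Under condition~$(\beta)$ lies the heart of the argument. Here $Q \leqslant H \cap K$ is central in $G$, and $Q\varphi = Q$ means $\varphi$ restricts to an automorphism of $Q$. I would set $N_{2} = N_{1} \cap Q \in \mathcal{C}^{*}(G)$ (Proposition~\ref{ep301}) and take $N = \bigcap_{i \in \mathbb{Z}}N_{2}\varphi^{i}$, the largest $\varphi$-invariant subgroup of $Q$ contained in $N_{2}$. As a subgroup of the central subgroup $Q$, $N$ is normal in $G$; it lies in $H \cap K$ and satisfies $N\varphi = N$, so $(N \cap H)\varphi = N\varphi = N = N \cap K$ and $N$ is compatible, with $N \leqslant N_{2} \leqslant N_{1}$. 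The only nontrivial point is that $N \in \mathcal{C}^{*}(G)$. For this, note that $N$ is the kernel of the diagonal map $Q \to \prod_{i \in \mathbb{Z}}Q/N_{2}\varphi^{i}$, so $Q/N$ embeds into this product; each factor $Q/N_{2}\varphi^{i}$ is the image of $Q/N_{2}$ under the automorphism induced by $\varphi^{i}$, hence an isomorphic copy of $Q/N_{2}$, and $Q/N_{2} \in \mathcal{C}$ as a subgroup of $G/N_{2} \in \mathcal{C}$. Since $\mathcal{C}$ contains an infinite group $Y$, the Cartesian power $\prod_{y \in Y}(Q/N_{2})$ belongs to $\mathcal{C}$, and fixing an injection $\mathbb{Z} \hookrightarrow Y$ realizes $\prod_{i \in \mathbb{Z}}Q/N_{2}\varphi^{i}$ as a subgroup of it; closure under subgroups then gives $Q/N \in \mathcal{C}$. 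Finally $G/N$ is an extension of $Q/N$ by $G/Q$, both in $\mathcal{C}$ (the latter because $Q \in \mathcal{C}^{*}(G)$), so $G/N \in \mathcal{C}$ by closure under extensions.

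The main obstacle is precisely this $(\beta)$-construction: when $\varphi|_{Q}$ has infinite order one cannot compatibilize $N_{2}$ by intersecting finitely many of its $\varphi$-translates, and passing to the full $\mathbb{Z}$-indexed intersection is exactly what forces the hypothesis that $\mathcal{C}$ contains infinite groups, in combination with the Cartesian-product closure of root classes.
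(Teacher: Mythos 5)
Your proposal is correct and follows essentially the same route as the paper: reduce to showing that every subgroup of $\mathcal{C}^{*}(G)$ contains a member of $\mathcal{C}^{*}(G,H,K,\varphi)$, invoke Proposition~\ref{ep504} for the $\mathcal{C}$\nobreakdash-separability of $H$ and $K$, take $N_{1}\cap Q$ in case~$(\alpha)$, and in case~$(\beta)$ pass to $\bigcap_{i\in\mathbb{Z}}(N_{1}\cap Q)\varphi^{i}$ and embed $Q/N$ into a countable Cartesian power of a $\mathcal{C}$\nobreakdash-group, using that $\mathcal{C}$ contains infinite groups. Your write-up even makes explicit the injection $\mathbb{Z}\hookrightarrow Y$ that the paper leaves implicit.
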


\begin{proof}
We only need to~show that every subgroup from~$\mathcal{C}^{*}(G)$ contains some subgroup from~$\mathcal{C}^{*}(G, H, K, \varphi)$. Since $G$ is residually a~$\mathcal{C}$\nobreakdash-group and~$H$, $K$ are $\mathcal{C}$\nobreakdash-sep\-a\-ra\-ble in~$G$ by~Proposition~\ref{ep504}, then it will follow from~this fact that $\mathcal{C}^{*}(G, H, K, \varphi)$ is an~$(H, K)$-fil\-tra\-tion.

So, let $L \in \mathcal{C}^{*}(G)$. By~Proposition~\ref{ep301}, the~subgroup $M = L \cap Q$ belongs to~$\mathcal{C}^{*}(G)$. If~$(\alpha)$ holds, then
\[
(H \cap M)\varphi = (H \cap L \cap Q)\varphi = 1 = K \cap L \cap Q = K \cap M
\]
and~therefore $M \in \mathcal{C}^{*}(G, H, K, \varphi)$. If $(\beta)$ holds, then the~subgroup
$$
N = \bigcap_{i \in \mathbb{Z}}M\varphi^{i}
$$
belongs to~the~same family.

Indeed, $N$ lies in~the~center of~$G$, is $\varphi$\nobreakdash-in\-var\-i\-ant, and~therefore is $(H, K, \varphi)$\nobreakdash-com\-pat\-i\-ble. Since $Q/M \leqslant G/M$ and~$\mathcal{C}$ is closed under taking subgroups, then $M \in \mathcal{C}^{*}(Q)$. Because $Q$ is $\varphi$\nobreakdash-in\-var\-i\-ant, the~restriction of~$\varphi$ to~this subgroup is its automorphism and~therefore $M\varphi^{i} \in \mathcal{C}^{*}(Q)$. Hence, $Q/N$ is embedded into~the~Cartesian product of~countably many isomorphic $\mathcal{C}$\nobreakdash-groups~$Q/M\varphi^{i}$. Since $\mathcal{C}$ is root and~contains infinite groups, then it also contains the~indicated Cartesian product, its subgroup~$Q/N$, and~the~group~$G/N$, which is an~extension of~$Q/N$ by~the~$\mathcal{C}$\nobreakdash-group~$G/Q$. Therefore, $N \in \mathcal{C}^{*}(G)$.
\end{proof}

\begin{eproposition}\label{ep506}
Suppose that $\mathcal{C}$ is a~class of~groups closed under taking subgroups and~direct products of~a~finite number of~factors, $H$~and~$K$ are proper central subgroups of~finite index of~$G$. If $\mathfrak{G}$ is residually a~$\mathcal{C}$\nobreakdash-group, then there exists a~subgroup $Q \in \mathcal{C}^{*}(G)$ such that $Q \leqslant H \cap K$ and~$Q\varphi = Q$.
\end{eproposition}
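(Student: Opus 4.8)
The plan is to produce the required subgroup as a member of the family $\mathcal{C}_{\cap}^{*}(G, H, K, \varphi)$ that happens to lie inside $L = H \cap K$. The crucial observation is that, by Proposition~\ref{ep304}, every subgroup $N$ of this family is $(H, K, \varphi)$\nobreakdash-compatible and belongs to $\mathcal{C}^{*}(G)$; compatibility gives $(N \cap H)\varphi = N \cap K$. Hence, once we know $N \leqslant L$, the equalities $N \cap H = N = N \cap K$ force $N\varphi = N$ automatically. Thus the whole problem reduces to exhibiting a single $N \in \mathcal{C}_{\cap}^{*}(G, H, K, \varphi)$ with $N \leqslant L$, after which we may simply take $Q = N$.

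To obtain such an $N$, I would first note that since $H$ and $K$ are central subgroups of finite index, $L = H \cap K$ also has finite index in $G$. Because $\mathfrak{G}$ is residually a~$\mathcal{C}$\nobreakdash-group and $H$,~$K$ are proper central subgroups, Proposition~\ref{ep502} shows that $\mathcal{C}_{\cap}^{*}(G, H, K, \varphi)$ is an~$(H, K)$\nobreakdash-filtration; in particular it is an~$H$\nobreakdash-filtration, so for every $g \notin H$ there exists a subgroup of the family not containing $g$ modulo~$H$.

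Next I would exploit the finiteness of the index of~$H$. Choosing representatives $g_{1}, \ldots, g_{p}$ of the finitely many non-trivial cosets of~$H$ in~$G$, the $H$\nobreakdash-filtration property yields, for each~$i$, a subgroup $N_{i} \in \mathcal{C}_{\cap}^{*}(G, H, K, \varphi)$ with $g_{i} \notin HN_{i}$. Setting $N' = \bigcap_{i} N_{i}$, which again lies in the family by Proposition~\ref{ep302}, I obtain $g_{i} \notin HN'$ for every~$i$; since no non-trivial coset of~$H$ then meets $HN'$, this forces $HN' = H$, that is, $N' \leqslant H$. Repeating the argument with the $K$\nobreakdash-filtration and the finite index of~$K$ produces a subgroup $N'' \in \mathcal{C}_{\cap}^{*}(G, H, K, \varphi)$ with $N'' \leqslant K$. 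Finally $N = N' \cap N''$ belongs to the family by Proposition~\ref{ep302} and satisfies $N \leqslant H \cap K = L$, completing the construction.

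The only genuinely delicate point is the passage from a filtration to one specific subgroup contained in~$L$: one must combine the separation guaranteed by the $(H, K)$\nobreakdash-filtration with the finiteness of the indices $[G:H]$ and $[G:K]$ and the closure of $\mathcal{C}_{\cap}^{*}(G, H, K, \varphi)$ under finite intersections (Proposition~\ref{ep302}), the finite index being precisely what lets a finite intersection collapse $HN'$ down to~$H$. Once this is in place, the invariance $Q\varphi = Q$ costs nothing, since it follows immediately from compatibility together with the inclusion $Q \leqslant H \cap K$.
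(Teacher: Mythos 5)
Your proposal is correct and follows essentially the same route as the paper: both use Proposition~\ref{ep502} to get the $(H,K)$\nobreakdash-filtration property, the finiteness of the indices to reduce to finitely many cosets, Proposition~\ref{ep302} to intersect the resulting finitely many members of $\mathcal{C}_{\cap}^{*}(G, H, K, \varphi)$, and Proposition~\ref{ep304} plus compatibility to conclude $Q \in \mathcal{C}^{*}(G)$ and $Q\varphi = Q$. The only cosmetic difference is that the paper separates the non-trivial cosets of $H \cap K$ in one step, whereas you handle the cosets of $H$ and of $K$ separately and then intersect the two resulting subgroups.
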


\begin{proof}
Because $H$ and~$K$ are of~finite index in~$G$, the~subgroup $H \cap K$ has the~same property. Let $1 = g_{1},\, g_{2},\, \ldots,\, g_{n}$ be a~complete set of~cosets representatives of~this subgroup in~$G$, and~let $S = \{g_{2},\, \ldots,\, g_{n}\}$. Since $\mathfrak{G}$ is residually a~$\mathcal{C}$\nobreakdash-group, then, by~Proposition~\ref{ep502}, $\mathcal{C}_{\cap}^{*}(G, H, K, \varphi)$ is an~$(H, K)$\nobreakdash-fil\-tra\-tion. Hence,
\[
H \cap K \leqslant \bigcap_{N \in \mathcal{C}_{\cap}^{*}(G, H, K, \varphi)}\kern-20pt{}(H \cap K)N \leqslant \bigg(\bigcap_{N \in \mathcal{C}_{\cap}^{*}(G, H, K, \varphi)}\kern-20pt{}HN\bigg) \cap \bigg(\bigcap_{N \in \mathcal{C}_{\cap}^{*}(G, H, K, \varphi)}\kern-20pt{}KN\bigg) = H \cap K
\]
and~therefore, for~each $s \in S$, there exists a~subgroup $Q_{s} \in \mathcal{C}_{\cap}^{*}(G, H, K, \varphi)$ such that $s \notin (H \cap K)Q_{s}$. Let us show that
$$
Q = \bigcap_{s \in S}Q_{s}
$$
is the~required subgroup.

Indeed, if $g \in G \setminus (H \cap K)$ and~$g = xs$ for~suitable $s \in S$, $x \in H \cap K$, then $x^{-1}g = s \notin (H \cap K)Q_{s}$ and~therefore $x^{-1}g \notin (H \cap K)Q$. Hence, $g \notin (H \cap K)Q$ and,~because $g$ is chosen arbitrarily, $(H \cap K)Q \leqslant H \cap K$. Thus, $Q \leqslant H \cap K$. It remains to~note that $Q \in \mathcal{C}_{\cap}^{*}(G, H, K, \varphi)$ by~Proposition~\ref{ep302} and~$\mathcal{C}_{\cap}^{*}(G, H, K, \varphi) \subseteq \mathcal{C}^{*}(G, H, K, \varphi)$ by~Proposition~\ref{ep304}. Therefore, $Q \in \mathcal{C}^{*}(G)$ and~$Q\varphi = (Q \cap H)\varphi = Q \cap K = Q$.
\end{proof}
\pagebreak

We conclude this section with~two criteria for~the~root-class residuality of~split extensions.

\begin{eproposition}\label{ep507}
Suppose that $\mathcal{C}$ is a~root class of~groups consisting only of~periodic groups and~closed under taking quotient groups. Suppose also that $G$ is an~abelian group, $H = G = K$, and~$\Omega$ is the~family of~subgroups of~$G$ defined as~follows: $N \in \Omega$ if and~only if $N \in \mathcal{C}^{*}(G)$, $N\varphi = N$, and~the~automorphism~$\varphi_{N}$ of~the~group~$G/N$ induced by~the~automorphism~$\varphi$ has a~finite order, which is a~$\pi(\mathcal{C})$\nobreakdash-num\-ber. Then $\mathfrak{G}$ is residually a~$\mathcal{C}$\nobreakdash-group if and~only if $\bigcap_{N \in \Omega}N = 1$.
\end{eproposition}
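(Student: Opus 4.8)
The plan is to reduce everything to the behaviour of the family $\mathcal{C}_{\cap}^{*}(G, H, K, \varphi)$. The key first step is to note that, under the present hypotheses ($G$ abelian, $H = G = K$, and $\mathcal{C}$ a root class of periodic groups closed under quotients), Proposition~\ref{ep305} applies, and its equivalence $1 \Leftrightarrow 3$ states precisely that a subgroup $N \leqslant G$ lies in $\mathcal{C}_{\cap}^{*}(G, H, K, \varphi)$ if and only if $N\varphi = N$, $G/N \in \mathcal{C}$, and the induced automorphism~$\varphi_{N}$ of~$G/N$ has finite order which is a $\pi(\mathcal{C})$-number. This is exactly the condition defining~$\Omega$, so $\Omega = \mathcal{C}_{\cap}^{*}(G, H, K, \varphi)$, and the assertion becomes: $\mathfrak{G}$ is residually a $\mathcal{C}$-group if and only if $\mathcal{C}_{\cap}^{*}(G, H, K, \varphi)$ is a filtration. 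The necessity direction is then immediate from Proposition~\ref{ep502}(1), which already gives that the $\mathcal{C}$-residuality of~$\mathfrak{G}$ forces $\mathcal{C}_{\cap}^{*}(G, H, K, \varphi)$ (hence $\Omega$) to have trivial intersection.

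For sufficiency I would argue directly rather than invoke Proposition~\ref{ep502}(3). The point is that, since $H = G = K$, the subgroup~$G$ is normal in~$\mathfrak{G}$ and $\mathfrak{G}$ is the split extension $G \rtimes \langle t \rangle$ with $t$ acting as~$\varphi$; consequently the $(H,K)$-filtration hypothesis of Proposition~\ref{ep502}(3) degenerates into a $G$-filtration condition, which fails for nontrivial~$G$ and cannot be used. I would therefore assume $\bigcap_{N \in \Omega}N = 1$ and separate an arbitrary nontrivial element, written as $gt^{m}$ with $g \in G$ and $m \in \mathbb{Z}$, into two cases.

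If $m = 0$, then $g \ne 1$, and since $\Omega = \mathcal{C}_{\cap}^{*}(G, H, K, \varphi)$ is a filtration there is $N \in \Omega$ with $g \notin N$; by the definition of $\mathcal{C}_{\cap}^{*}$ there is $U \in \mathcal{C}^{*}(\mathfrak{G})$ with $U \cap G = N$, whence $g \notin U$ and the quotient $\mathfrak{G}/U \in \mathcal{C}$ separates~$g$ from~$1$. If $m \ne 0$, the value of~$g$ is irrelevant: composing the projection $\mathfrak{G} \to \mathfrak{G}/G \cong \langle t \rangle \cong \mathbb{Z}$ with reduction modulo~$k$ yields a homomorphism onto the cyclic group $\mathbb{Z}/k$ sending $gt^{m}$ to the residue of~$m$. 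Taking $k = p^{e}$ with $p \in \pi(\mathcal{C})$ and $e$ large enough that $p^{e} \nmid m$ makes this image nontrivial, and $\mathbb{Z}/p^{e} \in \mathcal{C}$ because $\mathbb{Z}/p$ is a subgroup of some $\mathcal{C}$-group and $\mathcal{C}$, being a root class, is closed under subgroups and extensions. Hence every nontrivial element of~$\mathfrak{G}$ survives in a $\mathcal{C}$-quotient, so $\mathfrak{G}$ is residually a $\mathcal{C}$-group.

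The only steps requiring care are the two membership verifications and the cyclic case. Specifically, one must check that $\pi(\mathcal{C}) \ne \varnothing$ (true since $\mathcal{C}$ contains a nontrivial periodic group, which supplies an element, hence a subgroup, of prime order) and that $\mathbb{Z}/p^{e}$ genuinely lies in~$\mathcal{C}$ via the root-class axioms. I do not expect a serious obstacle: the substantive content is already packaged in Propositions~\ref{ep305} and~\ref{ep502}, and the real observation is conceptual, namely that the general machinery disposes of the elements of~$G$ but, because the case $H = G = K$ makes the $t$-length part vacuous, it must be supplemented by the elementary projection onto a finite cyclic $\pi(\mathcal{C})$-group to handle elements of nonzero $t$-length.
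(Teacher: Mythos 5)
Your proof is correct, and the reduction is the same as the paper's: Proposition~\ref{ep305} identifies $\Omega$ with $\mathcal{C}_{\cap}^{*}(G, H, K, \varphi)$, and necessity is Proposition~\ref{ep502}(1). Where you diverge is sufficiency, and your stated reason for the divergence is mistaken. You read the definition of a $Y$\nobreakdash-filtration as requiring $\bigcap_{N \in \Omega}YN = 1$ and concluded that for $Y = G$ this fails whenever $G \ne 1$, so that Proposition~\ref{ep502}(3) is unusable here. That ``$=1$'' is evidently a typo for ``$=Y$'': the paper's own computations (e.g.\ in the proofs of Propositions~\ref{ep402} and~\ref{ep506}, which conclude chains of inclusions with $\bigcap NH = H$) only make sense with the corrected reading, under which a $G$\nobreakdash-filtration condition is vacuous. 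Consequently, when $H = G = K$ every filtration is automatically an $(H,K)$\nobreakdash-filtration, and the paper's proof is exactly the two-line application of Proposition~\ref{ep502} that you rejected. Your replacement argument is nonetheless sound: since $\mathfrak{G} = G \rtimes \langle t \rangle$, you separate elements of $G$ using the filtration property together with the subgroups $U \in \mathcal{C}^{*}(\mathfrak{G})$ supplied by the definition of $\mathcal{C}_{\cap}^{*}$, and you separate elements $gt^{m}$ with $m \ne 0$ by projecting onto $\mathbb{Z}/p^{e}$ for $p \in \pi(\mathcal{C})$ and $p^{e} \nmid m$; the checks that $\pi(\mathcal{C}) \ne \varnothing$ and $\mathbb{Z}/p^{e} \in \mathcal{C}$ are handled properly. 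In effect you have re-proved the special case of Proposition~\ref{ep502}(3) you needed, by a more elementary route than the paper's general argument (which goes through reduced forms, Britton's lemma, and the freeness of the kernel $\ker\sigma$); what you lose is only economy, since the general machinery was already available.
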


\begin{proof}
Since $H = G = K$, then any filtration is an~$(H, K)$-fil\-tra\-tion. Therefore, by~Proposition~\ref{ep502}, $\mathfrak{G}$ is residually a~$\mathcal{C}$\nobreakdash-group if and~only if $\mathcal{C}_{\cap}^{*}(G, H, K, \varphi)$ is a~filtration. It remains to~note that, by~Proposition~\ref{ep305}, a~subgroup~$N$ belongs to~$\mathcal{C}_{\cap}^{*}(G, H, K, \varphi)$ if and~only if $N\varphi = N$, $G/N \in \mathcal{C}$, and~the~order of~$\varphi_{N}$ is finite and~is a~$\pi(\mathcal{C})$\nobreakdash-num\-ber.
\end{proof}

\begin{eproposition}\label{ep508}
Suppose that $\mathcal{C}$ is a~root class of~groups consisting only of~periodic groups and~closed under taking quotient groups, $G$ is an~abelian residually $\mathcal{C}$\nobreakdash-group, and~$H = G = K$. If the~order~$q$ of~the~automorphism~$\varphi$ is finite, then $\mathfrak{G}$ is residually a~$\mathcal{C}$\nobreakdash-group if and~only if $q$ is a~$\pi(\mathcal{C})$\nobreakdash-num\-ber.
\end{eproposition}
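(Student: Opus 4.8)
The plan is to reduce everything to Proposition~\ref{ep507}. Since $H = G = K$, that proposition applies verbatim and tells us that $\mathfrak{G}$ is residually a~$\mathcal{C}$\nobreakdash-group if and~only if $\bigcap_{N \in \Omega}N = 1$, where $\Omega$ is the~family of~all subgroups $N \leqslant G$ with~$N \in \mathcal{C}^{*}(G)$, $N\varphi = N$, and~$\varphi_{N}$ of~finite order equal to~a~$\pi(\mathcal{C})$\nobreakdash-num\-ber. Thus it suffices to~show that $\bigcap_{N \in \Omega}N = 1$ holds exactly when $q$ is a~$\pi(\mathcal{C})$\nobreakdash-num\-ber. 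If $G = 1$, then $q = 1$, the~group $\mathfrak{G} \cong \langle t \rangle$ is free and~hence residually a~$\mathcal{C}$\nobreakdash-group by~Proposition~\ref{ep501}, and~both conditions hold trivially; so below I assume $G \ne 1$.

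For~sufficiency, suppose $q$ is a~$\pi(\mathcal{C})$\nobreakdash-num\-ber and~take a~non-triv\-i\-al $g \in G$. Since $G$ is residually a~$\mathcal{C}$\nobreakdash-group, there is $M \in \mathcal{C}^{*}(G)$ with~$g \notin M$. First I would replace $M$ by~the~$\varphi$\nobreakdash-in\-var\-i\-ant subgroup $N = \bigcap_{i=0}^{q-1}M\varphi^{i}$; the~relation $\varphi^{q} = 1$ makes this intersection finite and~$\varphi$\nobreakdash-stable, so $N\varphi = N$, while $N \leqslant M$ gives $g \notin N$. Because $\varphi^{i}$ carries $M$ onto~$M\varphi^{i}$, each quotient $G/M\varphi^{i}$ is isomorphic to~$G/M \in \mathcal{C}$, and~$G/N$ embeds into~the~finite direct product $\prod_{i=0}^{q-1}G/M\varphi^{i}$; since a~root class is closed under~subgroups and~extensions, hence under~finite direct products, we get $G/N \in \mathcal{C}$, that~is, $N \in \mathcal{C}^{*}(G)$. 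Finally $\varphi^{q} = 1$ forces $\varphi_{N}^{q} = 1$, so the~order of~$\varphi_{N}$ divides~$q$ and~is therefore a~$\pi(\mathcal{C})$\nobreakdash-num\-ber. Hence $N \in \Omega$ and~$g \notin N$; as~$g$ was an~arbitrary non-triv\-i\-al element of~$G$, the~intersection $\bigcap_{N \in \Omega}N$ is trivial.

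For~necessity, suppose $\bigcap_{N \in \Omega}N = 1$. The~point I expect to~be the~crux is that bounding the~orders of~the~induced maps $\varphi_{N}$ one at~a~time is not enough, since each such order merely divides~$q$; one must use the~whole filtration simultaneously. The~key observation is that, for~$m \geqslant 1$, the~equality $\varphi_{N}^{m} = \mathrm{id}$ for~every $N \in \Omega$ holds if and~only if $g^{-1}(g\varphi^{m}) \in \bigcap_{N \in \Omega}N = 1$ for~all $g \in G$, that~is, if and~only if $\varphi^{m} = \mathrm{id}$. Consequently $q = \operatorname{lcm}_{N \in \Omega}\operatorname{ord}(\varphi_{N})$: the~right-hand side divides~$q$ because each $\operatorname{ord}(\varphi_{N}) \mid q$, and~the~displayed equivalence shows that no proper divisor of~$q$ can annihilate all the~$\varphi_{N}$ at~once. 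Now $\Omega \ne \varnothing$, for~otherwise the~empty intersection would be~$G \ne 1$, and~every $\operatorname{ord}(\varphi_{N})$ is a~$\pi(\mathcal{C})$\nobreakdash-num\-ber by~the~definition of~$\Omega$; hence their least common multiple~$q$ has all its prime divisors in~$\pi(\mathcal{C})$ and~is a~$\pi(\mathcal{C})$\nobreakdash-num\-ber, as~required.
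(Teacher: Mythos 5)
Your proof is correct. The sufficiency half is essentially the paper's own argument: pass to $N=\bigcap_{i=0}^{q-1}M\varphi^{i}$, check $N\in\Omega$, and invoke Proposition~\ref{ep507}. The necessity half, however, takes a genuinely different route. The paper does \emph{not} argue through the characterization $\bigcap_{N\in\Omega}N=1$; instead it chooses, for each $i\in\{1,\dots,q-1\}$, an element $g_{i}$ with $g_{i}\varphi^{i}\ne g_{i}$, uses Proposition~\ref{ep301} to find a single subgroup $U\in\mathcal{C}^{*}(\mathfrak{G})$ avoiding the finite set $S=\{g_{i}\varphi^{i}g_{i}^{-1}\}$, and then applies Proposition~\ref{ep305} to conclude that $N=U\cap G$ lies in $\mathcal{C}_{\cap}^{*}(G,H,K,\varphi)$ and that the induced automorphism $\varphi_{N}$ has order exactly $q$, which must therefore be a $\pi(\mathcal{C})$\nobreakdash-number. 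Your version instead aggregates over the whole family $\Omega$: from $\bigcap_{N\in\Omega}N=1$ you correctly deduce that $\varphi^{m}=\mathrm{id}$ iff $\varphi_{N}^{m}=\mathrm{id}$ for all $N\in\Omega$, hence $q=\operatorname{lcm}_{N\in\Omega}\operatorname{ord}(\varphi_{N})$, a least common multiple of $\pi(\mathcal{C})$\nobreakdash-numbers. The trade-off: the paper's argument produces one subgroup that witnesses the full order $q$ (at the cost of invoking the finite-intersection machinery of Proposition~\ref{ep301} and the structural Proposition~\ref{ep305}), while yours needs no single witness and leans only on Proposition~\ref{ep507}; your separate treatment of $G=1$ and the nonemptiness of $\Omega$ is a small but necessary piece of bookkeeping that the paper's route sidesteps.
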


\begin{proof}
\textit{Necessity.} Because $q$ is the~order of~$\varphi$, for~any $i \in \{1,\, \ldots,\, q-1\}$, there exists an~element $g_{i} \in G$ such that $g_{i}\varphi^{i} \ne g_{i}$. Let
$$
S = \big\{g_{i}^{\vphantom{1}}\varphi^{i}_{\vphantom{1}}g_{i}^{-1} \mid 1 \leqslant i \leqslant q-1\big\}.
$$

Since $\mathfrak{G}$ is residually a~$\mathcal{C}$\nobreakdash-group, then, by~Proposition~\ref{ep301}, there exists a~subgroup $U \in\nolinebreak \mathcal{C}^{*}(\mathfrak{G})$ such that $U \cap S = \varnothing$. If $N = U \cap G$, then $N \in \mathcal{C}^{*}_{\cap}(G, H, K, \varphi)$ and,~by~Proposition~\ref{ep305}, $N$ is $\varphi$\nobreakdash-in\-var\-i\-ant, the~automorphism~$\varphi_{N}$ of~$G/N$ induced by~$\varphi$ has a~finite order~$q_{N}$, and~this order is a~$\pi(\mathcal{C})$\nobreakdash-num\-ber. It remains to~note that $N \cap S = \varnothing$ and~therefore $q_{N} = q$.
\smallskip

\textit{Sufficiency.} Suppose that $\Omega$ is the~family of~subgroups defined in~Proposition~\ref{ep507}, $M \in \mathcal{C}^{*}(G)$,~and
$$
N = \bigcap_{i=0}^{q-1}M\varphi^{i}.
$$
Then $N \leqslant M$, $N\varphi = N$, and,~by~Proposition~\ref{ep301}, $N \in \mathcal{C}^{*}(G)$. The~order of~the~automorphism~$\varphi_{N}$ induced by~$\varphi$ divides~$q$ and~therefore is a~$\pi(\mathcal{C})$\nobreakdash-num\-ber. Hence, $N \in \Omega$.

Thus, any subgroup from~$\mathcal{C}^{*}(G)$ contains a~subgroup from~$\Omega$ and,~since $G$ is residually a~$\mathcal{C}$\nobreakdash-group, $\Omega$ is a~filtration. Therefore, $\mathfrak{G}$ is residually a~$\mathcal{C}$\nobreakdash-group by~Proposition~\ref{ep507}. 
\end{proof}

\section{Proofs of~Theorems~\ref{et01}--\ref{et04} and~Corollaries~\ref{ec01}--\ref{ec04}}\label{esec06}

Throughout this section, it is assumed that $(G, H, K, \varphi)$ is an~HNN-tuple, $H$~and~$K$ lie in~the~center of~$G$, and~$\mathfrak{G} = \mathrm{HNN}(G, H, K, \varphi)$. It is also assumed that
\[
K_{0} = G,\ H_{1} = H,\ K_{1} = K,\ H_{i+1} = H_{i} \cap K_{i},\ K_{i+1} = H_{i+1}\varphi,\ P_{i} = H_{i}K_{i}\ \;(i \geqslant 1).
\]

\begin{eproposition}\label{ep601}
Suppose that $\mathcal{C}$ is a~root class of~groups and~$H \ne G \ne K$. If $\mathfrak{G}$ is residually a~$\mathcal{C}$\nobreakdash-group, then, for~any $n \geqslant 1$, the~HNN-extensions $\mathfrak{G}_{n-1} = \mathrm{HNN}(G, H_{n}, K_{n}, \varphi)$ and~$\mathfrak{K}_{n-1} = \mathrm{HNN}(K_{n-1}, H_{n}, K_{n}, \varphi)$ are also residually $\mathcal{C}$\nobreakdash-groups.
\end{eproposition}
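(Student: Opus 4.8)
The plan is to convert the residual-$\mathcal{C}$ property of each HNN-extension into a statement about families of compatible subgroups being filtrations, using Proposition~\ref{ep502}, and then to manufacture these filtrations by iterating the descent of Proposition~\ref{ep402}. Note first that $(G, H_{n}, K_{n}, \varphi)$ and $(K_{n-1}, H_{n}, K_{n}, \varphi)$ are genuine HNN-tuples: indeed $H_{n} = H_{n-1} \cap K_{n-1} \leqslant K_{n-1} \leqslant G$, the subgroup $K_{n} = H_{n}\varphi$ satisfies $K_{n} \leqslant K_{n-1}$, and $\varphi$ restricts to an isomorphism $H_{n} \to K_{n}$. Since $\mathcal{C}$ is a root class, statement~3 of Proposition~\ref{ep502} applies to each of these tuples, so it suffices to prove that
\[
\mathcal{C}_{\cap}^{*}(G, H_{n}, K_{n}, \varphi) \quad\text{and}\quad \mathcal{C}_{\cap}^{*}(K_{n-1}, H_{n}, K_{n}, \varphi)
\]
are $(H_{n}, K_{n})$-filtrations for every $n \geqslant 1$.

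I would establish both claims simultaneously by induction on~$n$. For the base case $n = 1$ we have $K_{0} = G$, so both families coincide with $\mathcal{C}_{\cap}^{*}(G, H, K, \varphi)$, which is an $(H, K)$-filtration by statement~2 of Proposition~\ref{ep502}: the group $\mathfrak{G}$ is residually a $\mathcal{C}$-group by hypothesis, and $H$, $K$ are proper (because $H \ne G \ne K$) central subgroups of~$G$. For the inductive step I would recognise that moving from level~$n$ to level~$n+1$ is precisely the descent of statement~2 of Proposition~\ref{ep402}, applied with the two allowed choices of the intermediate subgroup. In the tuple $(G, H_{n}, K_{n}, \varphi)$ one has $H_{n} \cap K_{n} = H_{n+1}$ and $H_{n+1}\varphi = K_{n+1}$, so the choice $X = G$ yields that $\mathcal{C}_{\cap}^{*}(G, H_{n+1}, K_{n+1}, \varphi)$ is an $(H_{n+1}, K_{n+1})$-filtration. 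In the tuple $(K_{n-1}, H_{n}, K_{n}, \varphi)$ the third component is $K_{n}$, so the choice $X = K_{n}$ (the ``$K$'' of Proposition~\ref{ep402}) yields that $\mathcal{C}_{\cap}^{*}(K_{n}, H_{n+1}, K_{n+1}, \varphi)$ is an $(H_{n+1}, K_{n+1})$-filtration. As $K_{n} = K_{(n+1)-1}$, these are exactly the two families for $n+1$, closing the induction; Proposition~\ref{ep502}(3) then finishes both extensions.

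At each stage one checks the inexpensive hypotheses of Proposition~\ref{ep402}: that $\mathcal{C}$ is closed under taking subgroups (automatic for a root class) and that the chosen $X$ contains $H_{n+1} = H_{n} \cap K_{n}$ and $K_{n+1} = H_{n+1}\varphi$ — trivial for $X = G$, and immediate for $X = K_{n}$ since $H_{n+1} \leqslant K_{n}$ and $K_{n+1} = H_{n+1}\varphi \leqslant K_{n}$. There is no serious obstacle here once the two dispatched descents are set up; the only step genuinely prone to error is bookkeeping of the two towers, so I would emphasise the contrast: the choice $X = G$ freezes the base group while lowering the associated subgroups (this is what controls $\mathfrak{G}_{n-1}$), whereas the choice $X = K_{n}$ slides the base group down the chain $K_{0} \geqslant K_{1} \geqslant \cdots$ in lockstep with the subgroups (this is what controls $\mathfrak{K}_{n-1}$). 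Centrality of $H_{n}, K_{n}$ in the intermediate groups is never invoked in the descent or in the final application of Proposition~\ref{ep502}(3); it is used only once, to license the base case via Proposition~\ref{ep502}(2).
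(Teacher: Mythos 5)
Your proposal is correct and follows essentially the same route as the paper: Proposition~\ref{ep502} (statement~2) gives the base $(H,K)$-filtration, repeated application of Proposition~\ref{ep402} (statement~2, with the two choices $X=G$ and $X=K$) descends it to the $(H_{n},K_{n})$-filtrations for both towers, and Proposition~\ref{ep502} (statement~3) converts these back into the residual $\mathcal{C}$-ness of $\mathfrak{G}_{n-1}$ and $\mathfrak{K}_{n-1}$. The paper merely compresses your explicit double induction into the phrase ``repeatedly using Proposition~\ref{ep402}''.
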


\begin{proof}
Since $\mathfrak{G}$ is residually a~$\mathcal{C}$\nobreakdash-group, then the~family $\mathcal{C}_{\cap}^{*}(G, H, K, \varphi)$ is an~$(H, K)$\nobreakdash-fil\-tra\-tion by~Proposition~\ref{ep502}. Repeatedly using Proposition~\ref{ep402}, we see that $\mathcal{C}_{\cap}^{*}(G, H_{n}, K_{n}, \varphi)$ and~$\mathcal{C}_{\cap}^{*}(K_{n-1}, H_{n}, K_{n}, \varphi)$ are $(H_{n}, K_{n})$\nobreakdash-fil\-tra\-tions, and,~again by~Proposition~\ref{ep502}, $\mathfrak{G}_{n-1}$ and~$\mathfrak{K}_{n-1}$ are residually $\mathcal{C}$\nobreakdash-groups.
\end{proof}

\begin{eproposition}\label{ep602}
The~following statements hold.

\textup{1.}\hspace{1ex}Let $\pi$ be a~set of~primes. If $H_{n}$ and~$K_{n}$ are $\pi^{\prime}$\nobreakdash-iso\-lat\-ed in~$G$ for~some $n \geqslant 1$, then $H_{i}$ and~$K_{i}$ are $\pi^{\prime}$\nobreakdash-iso\-lat\-ed in~this group for~all $i \geqslant n$.

\textup{2.}\hspace{1ex}If a~number $n \geqslant 1$ and~a~subgroup $Q$ are such that $Q \leqslant H_{n} \cap K_{n}$ and~$Q\varphi = Q$, then $Q \leqslant H_{i} \cap K_{i}$ for~all $i \geqslant n$.

\textup{3.}\hspace{1ex}Let $\mathcal{C}$ be a~class of~groups closed under taking subgroups and~extensions. If $H \in \mathcal{C}^{*}(G)$ and~$K \in \mathcal{C}^{*}(G)$, then $H_{i} \in \mathcal{C}^{*}(G)$ and~$K_{i} \in \mathcal{C}^{*}(G)$ for~all $i \geqslant 1$.
\end{eproposition}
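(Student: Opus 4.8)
The plan is to prove all three statements by induction on~$i$, the~base cases being immediate from~the~hypotheses (for~Statements~1 and~2 the~base case is~$i = n$; for~Statement~3 it~is $i = 1$). In~each inductive step the~subgroup $H_{i+1} = H_i \cap K_i$ is controlled by~the~behaviour of~intersections, while $K_{i+1} = H_{i+1}\varphi$ is controlled by~transporting the~relevant property along~$\varphi$. The~structural fact I~would record first is that, since $\varphi$ maps $H = H_1$ isomorphically onto~$K = K_1$ and~$H_i \leqslant H$, the~restriction of~$\varphi$ to~$H_i$ is an~isomorphism of~$H_i$ onto~$K_i$; indeed $H_i\varphi = K_i$ for~all $i \geqslant 1$, by~the~defining relations $K_{i+1} = H_{i+1}\varphi$ together with~$K_1 = H\varphi$.

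For~Statement~1, assuming $H_i$ and~$K_i$ are $\pi^{\prime}$\nobreakdash-iso\-lat\-ed in~$G$, I~would first note that $H_{i+1} = H_i \cap K_i$ is $\pi^{\prime}$\nobreakdash-iso\-lat\-ed as~an~intersection of~$\pi^{\prime}$\nobreakdash-iso\-lat\-ed subgroups (a~fact recorded in~Section~\ref{esec01}). To~handle~$K_{i+1}$, I~would use three elementary properties of~$\pi^{\prime}$\nobreakdash-iso\-la\-tion: $(a)$~a~$\pi^{\prime}$\nobreakdash-iso\-lat\-ed subgroup of~$G$ is $\pi^{\prime}$\nobreakdash-iso\-lat\-ed in~every subgroup of~$G$ containing~it; $(b)$~an~isomorphism carries a~$\pi^{\prime}$\nobreakdash-iso\-lat\-ed subgroup of~its domain to~a~$\pi^{\prime}$\nobreakdash-iso\-lat\-ed subgroup of~its range; and~$(c)$~$\pi^{\prime}$\nobreakdash-iso\-la\-tion is transitive along a~chain of~subgroups. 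Applying these in~turn: $H_{i+1}$ is $\pi^{\prime}$\nobreakdash-iso\-lat\-ed in~$H_i$ by~$(a)$; hence $K_{i+1} = H_{i+1}\varphi$ is $\pi^{\prime}$\nobreakdash-iso\-lat\-ed in~$K_i$ by~$(b)$, using the~isomorphism $\varphi\colon H_i \to K_i$; and~finally $K_{i+1}$ is $\pi^{\prime}$\nobreakdash-iso\-lat\-ed in~$G$ by~$(c)$, since $K_i$ is $\pi^{\prime}$\nobreakdash-iso\-lat\-ed in~$G$ by~the~inductive hypothesis. This closes the~induction.

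Statement~2 is the~shortest: assuming $Q \leqslant H_i \cap K_i$, one~has $Q \leqslant H_i$ and~$Q \leqslant K_i$, so $Q \leqslant H_i \cap K_i = H_{i+1}$; applying~$\varphi$ then gives $Q = Q\varphi \leqslant H_{i+1}\varphi = K_{i+1}$, whence $Q \leqslant H_{i+1} \cap K_{i+1}$. For~Statement~3, I~would first observe that a~class closed under~subgroups and~extensions is automatically closed under~direct products of~a~finite number of~factors, because $A \times B$ is an~extension of~$A$ by~$B$. Assuming $G/H_i \in \mathcal{C}$ and~$G/K_i \in \mathcal{C}$, the~subgroup $H_{i+1} = H_i \cap K_i$ lies in~$\mathcal{C}^{*}(G)$ by~Proposition~\ref{ep301} (equivalently, since $G/(H_i \cap K_i)$ embeds into~$G/H_i \times G/K_i$). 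The~main point is~$K_{i+1}$: here I~would view $G/K_{i+1}$ as~an~extension of~$K/K_{i+1}$ by~$G/K$ and~observe that $K/K_{i+1} = H\varphi/H_{i+1}\varphi \cong H/H_{i+1}$, which is a~subgroup of~$G/H_{i+1} \in \mathcal{C}$ and~therefore lies in~$\mathcal{C}$; since $G/K \in \mathcal{C}$ as~well, closure under~extensions yields $G/K_{i+1} \in \mathcal{C}$. The~only routine verification to~carry out throughout is that every subgroup involved is normal in~$G$, which holds because each is contained in~the~central subgroups $H$ or~$K$. The~recurring---and~only nontrivial---manoeuvre in~all three parts is the~transport of~a~property of~$H_{i+1}$ to~$K_{i+1}$ through the~isomorphism $\varphi\colon H_i \to K_i$; once that identification is in~place, each individual step is a~short computation.
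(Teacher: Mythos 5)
Your proof is correct and follows essentially the same route as the paper's: induction on $i$, with $H_{i+1}$ handled via the intersection and $K_{i+1}$ by transporting the relevant property through the isomorphism $\varphi\colon H_{i} \to K_{i}$. The only differences are cosmetic --- you package the isolation argument for $K_{i+1}$ into three named lemmas where the paper does a direct element chase, and in Statement~3 you use the extension of $K/K_{i+1}$ by $G/K$ where the paper uses $K_{i}/K_{i+1}$ by $G/K_{i}$ --- but both variants are valid and rest on the same observations.
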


\begin{proof}
Let us use induction on~$i$ and~note that, for~all three statements, the~induction base is obvious.
\smallskip

1.\hspace{1ex}Suppose that $H_{i}$ and~$K_{i}$ are $\pi^{\prime}$\nobreakdash-iso\-lat\-ed in~$G$ for~some $i \geqslant n$. If an~element $g \in G$ and~a~number $q \in \pi^{\prime}$ are such that $g^{q} \in H_{i+1}$, then $g^{q} \in H_{i}$ and~$g^{q} \in K_{i}$. Since $H_{i}$ and~$K_{i}$ are $\pi^{\prime}$\nobreakdash-iso\-lat\-ed, then $g \in H_{i} \cap K_{i} = H_{i+1}$ and~therefore $H_{i+1}$ is $\pi^{\prime}$\nobreakdash-iso\-lat\-ed in~$G$. If, for~some $g \in G$ and~$q \in \pi^{\prime}$, the~inclusion $g^{q} \in K_{i+1}$ holds, then $g \in K_{i}$ because $K_{i}$ is $\pi^{\prime}$\nobreakdash-iso\-lat\-ed. Hence, the~element~$g\varphi^{-1}$ is defined and~$(g\varphi^{-1})^{q} \in H_{i+1}$. Since $H_{i+1}$ is $\pi^{\prime}$\nobreakdash-iso\-lat\-ed, it follows from~the~last inclusion that $g\varphi^{-1} \in H_{i+1}$ and~$g \in K_{i+1}$. Thus, $K_{i+1}$ is also $\pi^{\prime}$\nobreakdash-iso\-lat\-ed in~$G$.
\smallskip

2.\hspace{1ex}Let $Q \leqslant H_{i} \cap K_{i}$ for~some $i \geqslant n$. Then
$$
Q \leqslant H_{i+1} = H_{i} \cap K_{i},\quad
Q\varphi \leqslant H_{i+1}\varphi = K_{i+1},
$$
and~since $Q\varphi = Q$, then $Q \leqslant H_{i+1} \cap K_{i+1}$.
\smallskip

3.\hspace{1ex}Let $\mathcal{C}^{*}(G)$ contain $H_{i}$ and~$K_{i}$ for~some $i \geqslant 1$. Then $H_{i+1} = H_{i} \cap K_{i} \in \mathcal{C}^{*}(G)$ by~Proposition~\ref{ep301}. Since $\mathcal{C}$ is closed under taking subgroups, then $H_{i+1} \in \mathcal{C}^{*}(H_{i})$ and~it follows from~the~equalities $H_{i}\varphi = K_{i}$, $H_{i+1}\varphi = K_{i+1}$ that $K_{i+1} \in \mathcal{C}^{*}(K_{i})$. Hence, the~quotient group~$G/K_{i+1}$ is an~extension of~the~$\mathcal{C}$\nobreakdash-group~$K_{i}/K_{i+1}$ by~the~$\mathcal{C}$\nobreakdash-group~$G/K_{i}$, and~$K_{i+1} \in \mathcal{C}^{*}(G)$ because $\mathcal{C}$ is closed under taking extensions.
\end{proof}

\begin{eproposition}\label{ep603}
\textup{\cite[Proposition~5]{SokolovTumanova2016}}
Suppose that $\mathcal{C}$ is a~class of~groups consisting only of~periodic groups, $X$ is a~group, and~$Y$ is a~subgroup of~$X$. If $Y$ is $\mathcal{C}$\nobreakdash-sep\-a\-ra\-ble in~$X$, then it is $\pi(\mathcal{C})^{\prime}$\nobreakdash-iso\-lat\-ed in~this group. In~particular, if $X$ is residually a~$\mathcal{C}$\nobreakdash-group, then it has no $\pi(\mathcal{C})^{\prime}$\nobreakdash-tor\-sion.
\end{eproposition}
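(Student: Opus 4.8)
The plan is to establish the $\pi(\mathcal{C})^{\prime}$-isolation directly from the definition of $\mathcal{C}$-separability, and then to obtain the torsion statement by specializing $Y$ to the trivial subgroup. First I would fix an element $x \in X$ together with a number $q \in \pi(\mathcal{C})^{\prime}$ satisfying $x^{q} \in Y$, and argue by contradiction, assuming $x \notin Y$. The $\mathcal{C}$-separability of $Y$ then supplies a homomorphism~$\sigma$ of~$X$ onto a group $C \in \mathcal{C}$ with $x\sigma \notin Y\sigma$; note that $Y\sigma$ is a subgroup of~$C$, being the homomorphic image of a subgroup, and that $x\sigma \neq 1$ since $1 \in Y\sigma$.

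The key observation is that, as $C$ is periodic and lies in~$\mathcal{C}$, the order~$m$ of~$x\sigma$ is a~$\pi(\mathcal{C})$-number, whereas~$q$ is a~$\pi(\mathcal{C})^{\prime}$-number, so $\gcd(q, m) = 1$. Choosing integers $a, b$ with $aq + bm = 1$ and using $(x\sigma)^{m} = 1$, I would compute
\[
x\sigma = (x\sigma)^{aq + bm} = \bigl((x^{q})\sigma\bigr)^{a}.
\]
Because $x^{q} \in Y$, its image $(x^{q})\sigma$ lies in~$Y\sigma$, and hence so does $\bigl((x^{q})\sigma\bigr)^{a} = x\sigma$. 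This contradicts the choice of~$\sigma$, so in fact $x \in Y$, and therefore $Y$ is $\pi(\mathcal{C})^{\prime}$-isolated in~$X$.

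For the final assertion I would recall the observation made in the introduction that the $\mathcal{C}$-residuality of~$X$ is equivalent to the $\mathcal{C}$-separability of its trivial subgroup~$\{1\}$. Applying the isolation statement just proved with $Y = \{1\}$, one sees that if $x \in X$ has finite order equal to a~$\pi(\mathcal{C})^{\prime}$-number~$q$, then $x^{q} = 1 \in \{1\}$ forces $x \in \{1\}$, that is, $x = 1$. Thus $X$ has no non-trivial $\pi(\mathcal{C})^{\prime}$-torsion.

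The argument is short, and I do not expect a serious obstacle. The only point demanding care is the coprimality step: it relies on $C$ being periodic, so that $x\sigma$ genuinely has finite order, and on that order being composed exclusively of primes from~$\pi(\mathcal{C})$, which is precisely what makes~$q$ invertible modulo it.
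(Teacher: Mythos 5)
Your argument is correct. The paper itself gives no proof of this proposition---it is quoted from \cite[Proposition~5]{SokolovTumanova2016}---but your reasoning (lift $x^{q}\in Y$ through a separating homomorphism onto a periodic $\mathcal{C}$-group, use that the order of $x\sigma$ is a $\pi(\mathcal{C})$-number coprime to $q$ to write $x\sigma$ as a power of $(x^{q})\sigma\in Y\sigma$, then specialize to $Y=\{1\}$) is exactly the standard argument one expects behind the cited result, and every step checks out.
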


If $\pi$ is a~set of~primes, $X$ is a~group, and~$Y$ is a~subgroup of~$X$, then we denote by~$\mathcal{R}_{\pi^{\prime}}(X, Y)$ the~set of~elements of~$X$ defined as~follows: $x \in \mathcal{R}_{\pi^{\prime}}(X, Y)$ if and~only if $x^{q} \in Y$ for~some $\pi^{\prime}$\nobreakdash-num\-ber~$q$.

Obviously, $\mathcal{R}_{\pi^{\prime}}(X, Y) \subseteq \mathcal{I}_{\pi^{\prime}}(X, Y)$ and~the~equality $\mathcal{I}_{\pi^{\prime}}(X, Y) = \mathcal{R}_{\pi^{\prime}}(X, Y)$ holds if and~only if $\mathcal{R}_{\pi^{\prime}}(X, Y)$ is a~subgroup. It is easy to~see that, if $X$ is an~abelian group, then $\mathcal{R}_{\pi^{\prime}}(X, Y)$ is always a~subgroup.

\begin{eproposition}\label{ep604}
Suppose that $\mathcal{C}$ is a~class of~groups closed under taking quotient groups, $X$\kern-1pt{} is residually a~$\mathcal{C}$\nobreakdash-group, $Y$\kern-1pt{} is a~central subgroup of~$X$\kern-1pt{}.\kern-2pt{} Then the~$\pi(\mathcal{C})^{\prime}$\nobreakdash-iso\-la\-tor $\mathcal{I}_{\pi(\mathcal{C})^{\prime}}(X\kern-1pt{}, Y)$ is contained in~the~center of~$X$ and~coincides with~the~set $\mathcal{R}_{\pi(\mathcal{C})^{\prime}}(X, Y)$.
\end{eproposition}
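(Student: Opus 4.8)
The plan is to prove both assertions at once by showing that the set $R = \mathcal{R}_{\pi(\mathcal{C})^{\prime}}(X, Y)$ is a \emph{central subgroup} of~$X$. Indeed, once this is known, the remark preceding the proposition yields the equality $\mathcal{I}_{\pi(\mathcal{C})^{\prime}}(X, Y) = R$ (the equality holds precisely when $R$ is a subgroup), and then the isolator, being equal to~$R$, is contained in the center. Write $\pi = \pi(\mathcal{C})$ and recall that $R$ consists of those $x \in X$ for which $x^{q} \in Y$ for some $\pi^{\prime}$\nobreakdash-number~$q$.

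First I would establish that every $x \in R$ lies in the center of~$X$. Fix such an~$x$ with $x^{q} \in Y$, where $q$ is a~$\pi^{\prime}$\nobreakdash-number, take an arbitrary $g \in X$, and suppose for contradiction that $[x, g] \ne 1$. Since $X$ is residually a~$\mathcal{C}$\nobreakdash-group, there is a homomorphism~$\sigma$ of~$X$ onto a group $C \in \mathcal{C}$ with $[x, g]\sigma \ne 1$, so that $[x\sigma, g\sigma] \ne 1$. Because $Y$ is central in~$X$ and $\sigma$ is surjective, its image $Y\sigma$ is central in~$C$, whence $(x\sigma)^{q} = (x^{q})\sigma \in Y\sigma \leqslant Z(C)$. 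As $\mathcal{C}$ is closed under taking quotient groups, $C/Z(C) \in \mathcal{C}$, and in this group the coset $x\sigma Z(C)$ has finite order dividing the $\pi^{\prime}$\nobreakdash-number~$q$; hence this order is a~$\pi^{\prime}$\nobreakdash-number. On the other hand, the order of any finite-order element of a~$\mathcal{C}$\nobreakdash-group is a~$\pi$\nobreakdash-number, so the order of $x\sigma Z(C)$ is simultaneously a~$\pi$\nobreakdash-number and a~$\pi^{\prime}$\nobreakdash-number and therefore equals~$1$. Thus $x\sigma \in Z(C)$, so $x\sigma$ commutes with~$g\sigma$, contradicting $[x\sigma, g\sigma] \ne 1$. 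Consequently $[x, g] = 1$ for all $g \in X$, i.e. $x \in Z(X)$, and $R \subseteq Z(X)$.

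With centrality in hand, the subgroup property is routine. The set $R$ contains~$Y$ (take $q = 1$) and hence the identity; and if $x_{1}, x_{2} \in R$ with $x_{1}^{q_{1}}, x_{2}^{q_{2}} \in Y$ for $\pi^{\prime}$\nobreakdash-numbers $q_{1}, q_{2}$, then $x_{1}$ and $x_{2}$ commute (being central), so $(x_{1}x_{2}^{-1})^{q_{1}q_{2}} = (x_{1}^{q_{1}})^{q_{2}}(x_{2}^{q_{2}})^{-q_{1}} \in Y$, while $q_{1}q_{2}$ is again a~$\pi^{\prime}$\nobreakdash-number; hence $x_{1}x_{2}^{-1} \in R$. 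Therefore $R$ is a central subgroup, and the remark preceding the proposition gives $\mathcal{I}_{\pi^{\prime}}(X, Y) = R \subseteq Z(X)$, as required.

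The only genuine obstacle is the centrality step, and its essential ingredient is the passage to the quotient $C/Z(C)$, which is legitimate precisely because $\mathcal{C}$ is closed under quotient groups; this passage is what forces the order of $x\sigma Z(C)$ to be both a~$\pi$\nobreakdash-number and a~$\pi^{\prime}$\nobreakdash-number and hence trivial. Everything after that, including the subgroup property and the identification of the isolator with~$R$, follows directly from centrality and the definitions.
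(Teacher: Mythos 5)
Your proof is correct, and it rests on the same underlying mechanism as the paper's: map $X$ onto a group $C \in \mathcal{C}$, observe that the relevant image lands in the centre of $C$, and use the closure of $\mathcal{C}$ under quotients to pass to $C/Z(C) \in \mathcal{C}$. The organization, however, is different. The paper first proves that the centre $Z$ of $X$ is $\mathcal{C}$\nobreakdash-sep\-a\-ra\-ble, invokes Proposition~\ref{ep603} to conclude that $Z$ is $\pi(\mathcal{C})^{\prime}$\nobreakdash-iso\-lat\-ed (hence contains $\mathcal{I}_{\pi(\mathcal{C})^{\prime}}(X, Y)$), and then identifies the isolator with $\mathcal{R}_{\pi(\mathcal{C})^{\prime}}(X, Y)$ by reducing to the abelian group $Z$, where the remark preceding the proposition applies automatically. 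You instead prove directly that every element of $\mathcal{R}_{\pi(\mathcal{C})^{\prime}}(X, Y)$ is central --- which in effect inlines the order-counting argument hidden inside Proposition~\ref{ep603} (an element of $C/Z(C)$ whose order is simultaneously a $\pi(\mathcal{C})$\nobreakdash-num\-ber and a $\pi(\mathcal{C})^{\prime}$\nobreakdash-num\-ber is trivial) --- and then verify the subgroup property of $\mathcal{R}_{\pi(\mathcal{C})^{\prime}}(X, Y)$ by a direct computation with commuting elements. Your version is somewhat more self-contained, since it does not appeal to the cited external proposition or to the notion of $\mathcal{C}$\nobreakdash-sep\-a\-ra\-bil\-ity; the paper's version is shorter on the page because it delegates exactly that computation to Proposition~\ref{ep603}. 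Both arguments use the same hypotheses in the same essential places (residuality of $X$ to find the witnessing homomorphism, centrality of $Y$ to place the image in $Z(C)$, and closure under quotients to stay inside $\mathcal{C}$ after factoring out the centre).
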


\begin{proof}
Let $Z$ be the~center of~$X$. We show that $Z$ is $\mathcal{C}$\nobreakdash-sep\-a\-ra\-ble in~$X$.

If $x \in X \setminus Z$, then $[x, y] \ne 1$ for~some $y \in X$ and,~because $X$ is residually a~$\mathcal{C}$\nobreakdash-group, there exists a~subgroup $N \in \mathcal{C}^{*}(X)$ such that $[x, y] \notin N$. Hence, $[xN, yN] \ne N$ and~therefore $xN$ does not belong to~the~center $\mathcal{Z}(X/N)$ of~$X/N$. It is easy to~see that $ZN/N \leqslant \mathcal{Z}(X/N)$. Thus, $xN \notin ZN/N$ and~$x \notin ZN$. It remains to~note that $X/ZN \cong (X/N)/(ZN/N) \in \mathcal{C}$ because $\mathcal{C}$ is closed under taking quotient groups.

Now, it follows from~Proposition~\ref{ep603} that $Z$ is $\pi(\mathcal{C})^{\prime}$\nobreakdash-iso\-lat\-ed in~$X$, and~since $Y \leqslant Z$, then $\mathcal{I}_{\pi(\mathcal{C})^{\prime}}(X, Y) \leqslant Z$. Hence, $\mathcal{I}_{\pi(\mathcal{C})^{\prime}}(X, Y) = \mathcal{I}_{\pi(\mathcal{C})^{\prime}}(Z, Y)$, $\mathcal{R}_{\pi(\mathcal{C})^{\prime}}(X, Y) = \mathcal{R}_{\pi(\mathcal{C})^{\prime}}(Z, Y)$, and~$\mathcal{I}_{\pi(\mathcal{C})^{\prime}}(Z, Y) = \mathcal{R}_{\pi(\mathcal{C})^{\prime}}(Z, Y)$ by~the~above remark.
\end{proof}

\begin{eproposition}\label{ep605}
\textup{\cite[Proposition~3]{SokolovTumanova2020IVM}}
Suppose that $\mathcal{C}$ is a~class of~groups closed under taking quotient groups, $X$ is a~group, and~$Y$ is a~normal subgroup of~$X$. Then $Y$ is $\mathcal{C}$\nobreakdash-sep\-a\-ra\-ble in~$X$ if and~only if $X/Y$ is residually a~$\mathcal{C}$\nobreakdash-group.
\end{eproposition}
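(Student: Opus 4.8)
The plan is to prove the two implications separately, using throughout the paper's convention that homomorphisms act on the right, so that $x\sigma$ and $Y\sigma$ denote images under a map~$\sigma$. Both directions rest on the interplay between a surjection $\sigma\colon X \to C$ onto a $\mathcal{C}$\nobreakdash-group and the canonical projection $\nu\colon X \to X/Y$, and the only place where the hypotheses on~$\mathcal{C}$ are genuinely used is the forward implication.

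First I would prove that $\mathcal{C}$\nobreakdash-separability of~$Y$ forces $X/Y$ to be residually a~$\mathcal{C}$\nobreakdash-group. Given a non-trivial element of~$X/Y$, write it as~$xY$ with $x \in X \setminus Y$. The $\mathcal{C}$\nobreakdash-separability of~$Y$ supplies a homomorphism~$\sigma$ of~$X$ onto a group $C \in \mathcal{C}$ with $x\sigma \notin Y\sigma$. The key observation is that, since $Y$ is normal in~$X$ and~$\sigma$ is surjective, the image~$Y\sigma$ is a normal subgroup of~$C$; hence $C/Y\sigma$ is defined and lies in~$\mathcal{C}$ because $\mathcal{C}$ is closed under taking quotient groups. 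Composing~$\sigma$ with the projection $C \to C/Y\sigma$ yields a homomorphism of~$X$ onto a~$\mathcal{C}$\nobreakdash-group that kills~$Y$, so it factors through~$X/Y$ as a surjection $\tau\colon X/Y \to C/Y\sigma$; and $(xY)\tau = (x\sigma)(Y\sigma) \ne 1$ precisely because $x\sigma \notin Y\sigma$. This exhibits the required separating homomorphism for the chosen element of~$X/Y$.

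For the converse I would argue that if $X/Y$ is residually a~$\mathcal{C}$\nobreakdash-group then $Y$ is $\mathcal{C}$\nobreakdash-separable in~$X$. Take $x \in X \setminus Y$; then $xY$ is a non-trivial element of~$X/Y$, so there is a homomorphism~$\tau$ of~$X/Y$ onto a group $C \in \mathcal{C}$ with $(xY)\tau \ne 1$. Pre-composing with the natural projection $\nu\colon X \to X/Y$ gives a surjection $\sigma = \nu\tau$ of~$X$ onto~$C$. Because $Y\nu$ is trivial in~$X/Y$, we have $Y\sigma = 1$, whereas $x\sigma = (xY)\tau \ne 1$; thus $x\sigma \notin Y\sigma$, as required.

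I expect no serious obstacle here: the reverse implication is immediate, and the only subtlety in the forward implication is the routine verification that the image~$Y\sigma$ is normal in~$C$, so that the quotient $C/Y\sigma$ makes sense and belongs to~$\mathcal{C}$. It is precisely this step that uses both the normality of~$Y$ in~$X$ and the hypothesis that $\mathcal{C}$ is closed under taking quotient groups; the argument would break down if $Y$ were merely a $\mathcal{C}$\nobreakdash-separable subgroup without the normality assumption, since then $Y\sigma$ need not be normal and the quotient construction would be unavailable.
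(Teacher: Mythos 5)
Your proof is correct and complete: both implications are the standard argument, and you correctly identify that the forward direction is where normality of~$Y$ (so that $Y\sigma$ is normal in the surjective image) and closure of~$\mathcal{C}$ under quotients are used. Note that the paper itself does not prove this proposition but cites it from \cite[Proposition~3]{SokolovTumanova2020IVM}; your argument is the expected one for that reference, so there is nothing to reconcile.
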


\begin{proof}[\textup{\textbf{Proof of~Theorem~\ref{et03}}}]
\textit{Necessity}. Since $\mathfrak{G}$ is residually a~$\mathcal{C}$\nobreakdash-group and~$\mathcal{C}$ is closed under taking subgroups, then $E$ is residually a~$\mathcal{C}$\nobreakdash-group. By~Proposition~\ref{ep601}, the~HNN-extension $\mathfrak{G}_{n-1} = \mathrm{HNN}(G, H_{n}, K_{n}, \varphi \rangle$ is also residually a~$\mathcal{C}$\nobreakdash-group. Since $H_{n} = H_{n+1} = H_{n} \cap K_{n}$, then $H_{n} \leqslant K_{n}$, and~we get the~equality $H_{n} = K_{n}$ by~applying Proposition~\ref{ep503} to~$\mathfrak{G}_{n-1}$.{\parfillskip=0pt\par}

It follows from~Proposition~\ref{ep502} that $\mathcal{C}_{\cap}^{*}(G, H, K, \varphi)$ is an~$(H, K)$-fil\-tra\-tion. By~Proposition~\ref{ep304}, $\mathcal{C}_{\cap}^{*}(G, H, K, \varphi) \subseteq \mathcal{C}^{*}(G)$, and~therefore $\mathcal{C}^{*}(G)$ is also an~$(H, K)$\nobreakdash-fil\-tra\-tion. Hence, $H$~and~$K$~are $\mathcal{C}$\nobreakdash-sep\-a\-ra\-ble in~$G$ and~$\pi(\mathcal{C})^{\prime}$\nobreakdash-iso\-lat\-ed in~this group by~Proposition~\ref{ep603}.
\smallskip

\textit{Sufficiency}. Since
$$
H_{n+1} = H_{n} = K_{n},\quad K_{n+1} = H_{n+1}\varphi = H_{n}\varphi = K_{n},
$$
then $\mathfrak{K}_{n} = \mathrm{HNN}(K_{n}, H_{n+1}, K_{n+1}, \varphi)$ is a~split extension of~$K_{n}$ by~the~infinite cyclic group generated by~$t$. Therefore, the~mapping of~the~generators of~$\mathfrak{K}_{n}$ to~the~corresponding elements of~$\mathfrak{G}$ can be extended to~an~injective homomorphism taking $\mathfrak{K}_{n}$ onto~$E$. Hence, $\mathfrak{K}_{n}$ is residually a~$\mathcal{C}$\nobreakdash-group, and~$\mathcal{C}_{\cap}^{*}(K_{n}, H_{n+1}, K_{n+1}, \varphi)$ is a~filtration by~Proposition~\ref{ep502}. Because $H_{n+1} = K_{n} = K_{n+1}$, this family is actually an~$(H_{n+1}, K_{n+1})$-fil\-tra\-tion. By~Proposition~\ref{ep302}, it is closed under taking finite intersections and~therefore is a~strong $(H_{n+1}, K_{n+1})$-fil\-tra\-tion. It follows from~Proposition~\ref{ep305} that $\mathcal{C}_{\cap}^{*}(K_{n}^{\vphantom{*}}, H_{n+1}^{\vphantom{*}}, K_{n+1}^{\vphantom{*}}, \varphi) = \mathcal{C}_{n}^{*}(K_{n}^{\vphantom{*}}, H_{n+1}^{\vphantom{*}}, K_{n+1}^{\vphantom{*}}, \varphi)$. Hence, $\mathcal{C}_{n}^{*}(K_{n}^{\vphantom{*}}, H_{n+1}^{\vphantom{*}}, K_{n+1}^{\vphantom{*}}, \varphi)$ is also a~strong $(H_{n+1}^{\vphantom{*}}, K_{n+1}^{\vphantom{*}})$-fil\-tra\-tion.

Since $G$ is residually a~$\mathcal{C}$\nobreakdash-group and~$\mathcal{C}$ is closed under taking subgroups, then $K_{i}$ ($i \geqslant\nolinebreak 0$) are also residually $\mathcal{C}$\nobreakdash-groups. For~any $i \in \{0,\, 1,\, \ldots,\, n-1\}$, $P_{i+1}$ is central in~$K_{i}$. Hence, by~Proposition~\ref{ep604}, the~$\pi(\mathcal{C})^{\prime}$\nobreakdash-iso\-la\-tor $X_{i} = \mathcal{I}_{\pi(\mathcal{C})^{\prime}}(K_{i}, P_{i+1})$ lies in~the~center of~$K_{i}$ and~coincides with~the~set $\mathcal{R}_{\pi(\mathcal{C})^{\prime}}(K_{i}, P_{i+1})$. Therefore, $X_{i}/P_{i+1}$ is a~periodic $\pi(\mathcal{C})^{\prime}$\nobreakdash-group.

By~the~hypothesis of~the~theorem, $X_{i}$ is $\mathcal{C}$\nobreakdash-sep\-a\-ra\-ble in~$K_{i}$ for~any $i \in \{0,\, 1,\, \ldots,\, n-1\}$, and~therefore $K_{i}/X_{i}$ is residually a~$\mathcal{C}$\nobreakdash-group by~Proposition~\ref{ep605}. Again by~the~hypothesis, $K_{i}$ is $\mathcal{C}$\nobreakdash-reg\-u\-lar with~respect to~$P_{i+1}$. By~Proposition~\ref{ep602}, $H_{i+1}$ and~$K_{i+1}$ are $\pi(\mathcal{C})^{\prime}$\nobreakdash-iso\-lat\-ed in~$G$ and~therefore in~$K_{i}$. Hence, we can successively apply Proposition~\ref{ep404} to~the~HNN-tuples $(K_{i}, H_{i+1}, K_{i+1}, \varphi)$, $i = n-1,\, \ldots,\, 1,\, 0$. As~a~result, we see that $\mathcal{C}_{0}^{*}(G, H, K, \varphi)$ is a~strong $(H, K)$-fil\-tra\-tion. By~Proposition~\ref{ep304}, $\mathcal{C}_{0}^{*}(G, H, K, \varphi) \subseteq \mathcal{C}_{\cap}^{*}(G, H, K, \varphi)$. Therefore, $\mathcal{C}_{\cap}^{*}(G, H, K, \varphi)$ is an~$(H, K)$-fil\-tra\-tion, and~$\mathfrak{G}$ is residually a~$\mathcal{C}$\nobreakdash-group by~Proposition~\ref{ep502}.
\end{proof}

\begin{proof}[\textup{\textbf{Proof of~Theorem~\ref{et04}}}]
Since $G$ is residually a~$\mathcal{C}$\nobreakdash-group and~$\mathcal{C}$ is closed under taking subgroups, then $K_{n}$ is also residually a~$\mathcal{C}$\nobreakdash-group and,~by~Proposition~\ref{ep505}, $\mathcal{C}^{*}(K_{n}, H_{n+1}, K_{n+1}, \varphi)$ is an~$(H_{n+1}, K_{n+1})$-fil\-tra\-tion. By~Proposition~\ref{ep302}, this family is closed under taking finite intersections and~therefore turns~out to~be a~strong $(H_{n+1}, K_{n+1})$-fil\-tra\-tion. By~the~hypothesis of~the~theorem, $P_{i+1}$ is $\mathcal{C}$\nobreakdash-sep\-a\-ra\-ble in~$K_{i}$ for~each $i \in \{0,\, 1,\, \ldots,\, n-1\}$.\linebreak Hence, $K_{i}/P_{i+1}$ is residually a~$\mathcal{C}$\nobreakdash-group by~Proposition~\ref{ep605}. If we take the~set of~all primes as~$\pi$ and~successively apply Proposition~\ref{ep404} to~the~HNN-tuples $(K_{i}, H_{i+1}, K_{i+1}, \varphi)$ and~the~subgroups $X_{i+1} = P_{i+1}$ ($i = n-1,\, \ldots,\, 1,\, 0$), then we see that $\mathcal{C}^{*}(G, H, K, \varphi)$ is a~strong $(H, K)$-fil\-tra\-tion. Since $\mathcal{C}^{*}(G, H, K, \varphi) = \mathcal{C}_{\cap}^{*}(G, H, K, \varphi)$ by~Proposition~\ref{ep304}, then $\mathcal{C}_{\cap}^{*}(G, H, K, \varphi)$ is an~$(H, K)$-fil\-tra\-tion. Thus, $\mathfrak{G}$ is residually a~$\mathcal{C}$\nobreakdash-group by~Proposition~\ref{ep502}.
\end{proof}

\begin{proof}[\textup{\textbf{Proof of~Theorem~\ref{et01}}}]
Statement~I follows from~Theorem~\ref{et04}. Let us prove Statement~II. 

We take a~number $i \geqslant 0$ and~show that $K_{i}/P_{i+1} \in \mathcal{C}$.

If $Q \leqslant H \cap K$ and~$Q\varphi = Q$, then $Q \leqslant H_{i+1} \cap K_{i+1}$ by~Proposition~\ref{ep602} and~$Q \leqslant P_{i+1}$. Hence, $G/P_{i+1} \cong (G/Q)/(P_{i+1}/Q) \in \mathcal{C}$ and~$K_{i}/P_{i+1} \in \mathcal{C}$ because $\mathcal{C}$ is closed under taking subgroups and~quotient groups.
If $H \cap Q = 1 = K \cap Q$, then $K_{i} \cap Q = 1$ and~$K_{i}/P_{i+1} \cong K_{i}/P_{i+1}(K_{i} \cap Q) \cong K_{i}Q/P_{i+1}Q$. Since $Q \leqslant P_{i+1}Q$, then, as~above, $K_{i}Q/P_{i+1}Q \in \mathcal{C}$.

Thus, $K_{i}/P_{i+1} \in \mathcal{C}$. It follows that $P_{i+1}$ is $\mathcal{C}$\nobreakdash-sep\-a\-ra\-ble in~$K_{i}$ and,~by~Proposition~\ref{ep603}, is $\pi(\mathcal{C})^{\prime}$\nobreakdash-iso\-lat\-ed in~this group. Therefore, $P_{i+1} = \mathcal{I}_{\pi(\mathcal{C})^{\prime}}(K_{i}, P_{i+1})$. We note also that if $N \in \mathcal{C}^{*}(P_{i+1})$, then $K_{i}/N$ is an~extension of~the~$\mathcal{C}$\nobreakdash-group $P_{i+1}/N$ by~the~$\mathcal{C}$\nobreakdash-group $K_{i}/P_{i+1}$ and~$K_{i}/N \in \mathcal{C}$ because $\mathcal{C}$ is closed under taking extensions. Hence, $\mathcal{C}^{*}(P_{i+1}) \subseteq \mathcal{C}^{*}(K_{i})$ and~therefore $K_{i}$ is $\mathcal{C}$\nobreakdash-reg\-u\-lar with~respect to~$P_{i+1}$.

Thus, the~statement to~be proved follows from~Theorem~\ref{et03}. We only need to~note that $H$~and~$K$ are $\mathcal{C}$\nobreakdash-sep\-a\-ra\-ble in~$G$ by~Proposition~\ref{ep504} and~are $\pi(\mathcal{C})^{\prime}$\nobreakdash-iso\-lat\-ed in~this group by~Proposition~\ref{ep603}.
\end{proof}

\begin{proof}[\textup{\textbf{Proof of~Corollary~\ref{ec01}}}]
Because $H$ is finite, the~equalities $H_{n} = H_{n+1} = H_{n} \cap K_{n}$ hold for~some $n \geqslant 1$. Therefore, $H_{n} \leqslant K_{n}$. But~the~subgroups~$H_{n}$ and~$K_{n}$ are finite and~isomorphic, hence, $H_{n} = K_{n}$. Since $G$ is residually a~$\mathcal{C}$\nobreakdash-group, then, by~Proposition~\ref{ep301}, there exists a~subgroup $Q \in \mathcal{C}^{*}(G)$ such that $H \cap Q = 1 = K \cap Q$. Therefore, Statement~I follows from~Theorem~\ref{et01}. Let us prove Statement~II. 

Since $H \cap Q = 1$, then $H_{n}$ is embedded into~the~$\mathcal{C}$\nobreakdash-group $G/Q$ and~belongs to~$\mathcal{C}$ because this class is closed under taking subgroups. The~restriction of~$\varphi$ to~the~finite subgroup~$H_{n}$ has a~finite order~$q$. Hence, by~Proposition~\ref{ep508}, the~subgroup $E = \operatorname{sgp}\{H_{n}, t\}$ of~$\mathfrak{G}$ is residually a~$\mathcal{C}$\nobreakdash-group if and~only if $q$ is a~$\pi(\mathcal{C})$\nobreakdash-num\-ber. If $H = G = K$, then $\mathfrak{G} = E$, and~the~required statement is proved. Otherwise, the~inequalities $G \ne H$ and~$G \ne K$ hold simultaneously because $H$ and~$K$ are finite and~isomorphic. Therefore, the~statement of~the~corollary follows from~Theorem~\ref{et01}.
\end{proof}

\begin{proof}[\textup{\textbf{Proof of~Corollary~\ref{ec02}}}]
I.\hspace{1ex}The~conditions of~the~statement are necessary for~$\mathfrak{G}$ to~be residually a~$\mathcal{C}$\nobreakdash-group by~Proposition~\ref{ep506} and~because $\mathcal{C}$ is closed under taking subgroups. Their sufficiency follows from~Theorem~\ref{et01}.
\smallskip

II.\hspace{1ex}\textit{Necessity.} By~Propositions~\ref{ep506} and~\ref{ep602}, there exists a~subgroup $Q \in \mathcal{C}^{*}(G)$ such that $Q\varphi = Q$ and~$Q \leqslant H_{i} \cap K_{i}$ for~all $i \geqslant 1$. Since $\mathcal{C}$ consists of~finite groups, $Q$ has a~finite index in~$G$ and~therefore $H_{n} = H_{n+1}$ for~some~$n$. Because $\mathcal{C}$ is closed under taking subgroups, $G$ is residually a~$\mathcal{C}$\nobreakdash-group. Therefore, Conditions~2 and~3 follow from~Theorem~\ref{et01}. Since $\mathcal{C}$ is closed under taking quotient groups, Condition~1 follows from~the~relations
$$
G/H \cong (G/Q)/(H/Q),\quad G/K \cong (G/Q)/(K/Q).
$$

\textit{Sufficiency.} By~Proposition~\ref{ep602}, $H_{i} \in \mathcal{C}^{*}(G)$ and~$K_{i} \in \mathcal{C}^{*}(G)$ for~any $i \geqslant 1$. Hence, $H_{n} \in \mathcal{C}^{*}(G)$, $H_{n} \leqslant H \cap K$, and~$H_{n}\varphi = H_{n}$. Since $E$ is residually a~$\mathcal{C}$\nobreakdash-group and~$\mathcal{C}$ is closed under taking subgroups, then $H_{n}$ is also residually a~$\mathcal{C}$\nobreakdash-group. Therefore, $G$ is an~extension of~the~residually $\mathcal{C}$\nobreakdash-group~$H_{n}$ by~the~$\mathcal{C}$\nobreakdash-group~$G/H_{n}$. Such an~extension is residually a~$\mathcal{C}$\nobreakdash-group by~Proposition~\ref{ep501}. Thus, it follows from~Theorem~\ref{et01} that $\mathfrak{G}$ is residually a~$\mathcal{C}$\nobreakdash-group.
\end{proof}

\begin{eproposition}\label{ep606}
\textup{\cite[Proposition~18]{SokolovTumanova2020IVM}}
If $\mathcal{C}$ is a~root class of~groups consisting only of~periodic groups and~closed under taking quotient groups, then any $\pi(\mathcal{C})$\nobreakdash-bound\-ed solvable $\mathcal{C}$\nobreakdash-group is finite.
\end{eproposition}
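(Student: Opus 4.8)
The plan is to reduce the solvable case to the abelian case via the defining subnormal series, and then to settle the abelian case using the key observation that every $\mathcal{C}$\nobreakdash-group has finite exponent.

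First I would establish the crucial lemma that, since $\mathcal{C}$ is a~root class consisting only of~periodic groups, every $\mathcal{C}$\nobreakdash-group~$X$ has finite exponent. Suppose not. Then $X$ has elements of~arbitrarily large order, and~since a~finite group has finite exponent, $X$ must be infinite. Because $\mathcal{C}$ is closed under Cartesian products of~the~form $\prod_{y \in Y}X_{y}$ with~$X, Y \in \mathcal{C}$, the~Cartesian power $X^{X}$ lies in~$\mathcal{C}$ and~is therefore periodic. But choosing distinct indices $b_{1}, b_{2}, \ldots \in X$ together with~elements $a_{n} \in X$ of~order at~least~$n$, the~function $f\colon X \to X$ sending each $b_{n}$ to~$a_{n}$ and~every other element to~$1$ has infinite order in~$X^{X}$, contradicting periodicity. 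Hence every $\mathcal{C}$\nobreakdash-group has finite exponent. I expect this lemma to~be the~main obstacle, and~it is the~essential point at~which the~Cartesian-product axiom of~a~root class is used; note that $\pi(\mathcal{C})$\nobreakdash-boundedness alone is far too weak, as~$\bigoplus_{p}\mathbb{Z}/p\mathbb{Z}$ shows.

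Next I would treat the~abelian case: an~abelian $\pi(\mathcal{C})$\nobreakdash-bounded $\mathcal{C}$\nobreakdash-group~$A$ is finite. By~the~lemma, $A$ has finite exponent~$e$, so it splits as~the~\emph{finite} direct sum $A = \bigoplus_{p \mid e}A_{p}$ of~its $p$\nobreakdash-primary components. For~each such prime~$p$ there is an~element of~order~$p$ in~the~$\mathcal{C}$\nobreakdash-group~$A$, whence $p \in \pi(\mathcal{C})$; applying $\pi(\mathcal{C})$\nobreakdash-boundedness to~the~quotient~$A$ of~itself then shows that each~$A_{p}$ is finite. A~finite direct sum of~finite groups is finite, so~$A$ is finite.

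Finally I would assemble the~general statement. By~definition, a~$\pi(\mathcal{C})$\nobreakdash-bounded solvable group~$G$ admits a~finite subnormal series $1 = G_{0} \trianglelefteq G_{1} \trianglelefteq \cdots \trianglelefteq G_{m} = G$ whose factors $G_{i+1}/G_{i}$ are abelian and~$\pi(\mathcal{C})$\nobreakdash-bounded. If $G \in \mathcal{C}$, then each factor, being a~subquotient of~$G$, again lies in~$\mathcal{C}$, since $\mathcal{C}$ is closed under taking subgroups and~quotient groups. By~the~abelian case each factor is finite, and~as~$G$ is obtained from~finitely many finite factors by~successive extensions, $G$ itself is finite, as~required.
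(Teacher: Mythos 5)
Your proof is correct and self-contained. Note that the paper does not actually prove this proposition: it is quoted verbatim from \cite[Proposition~18]{SokolovTumanova2020IVM}, so there is no internal proof to compare against. Your route --- first showing via the Cartesian-power axiom that every periodic $\mathcal{C}$\nobreakdash-group has finite exponent, then disposing of the abelian case by decomposing into finitely many primary components (each finite by $\pi(\mathcal{C})$\nobreakdash-boundedness applied to the trivial quotient), and finally climbing the subnormal series --- is the standard argument for this fact and, as far as the cited source goes, essentially the intended one; all three steps are carried out without gaps.
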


\begin{eproposition}\label{ep607}
\textup{\cite[Propositions~1,~2,~3]{Sokolov2014}}
If $\pi$ is a~non-empty set of~primes, then the~following statements hold.

\textup{1.}\hspace{1ex}Any $\pi$\nobreakdash-bound\-ed abelian group is of~finite rank.

\textup{2.}\hspace{1ex}The~classes of~$\pi$\nobreakdash-bound\-ed abelian, $\pi$\nobreakdash-bound\-ed nilpotent, and~$\pi$\nobreakdash-bound\-ed solvable groups are closed under taking subgroups and~quotient groups.

\textup{3.}\hspace{1ex}If a~$\pi$\nobreakdash-bound\-ed solvable group is abelian, then it belongs to~the~class of~$\pi$\nobreakdash-bound\-ed abelian groups.
\end{eproposition}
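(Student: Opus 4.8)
The plan is to reduce the whole proposition to its abelian special case and, within that case, to a single extension lemma; write $X_{(p)}$ for the $p$-primary component of an abelian group $X$. For abelian $\pi$-bounded groups, closure under quotients is immediate, since a quotient of a quotient of $A$ is again a quotient of $A$. Closure under subgroups is nearly as easy: if $A$ is $\pi$-bounded abelian and $T \leqslant S \leqslant A$, then $T$ is itself a subgroup of $A$, so $A/T$ is a quotient of $A$ and $S/T$ embeds in it; hence $(S/T)_{(p)} \leqslant (A/T)_{(p)}$, which is finite for every $p \in \pi$, and therefore $S$ is $\pi$-bounded. The substantive point is closure under \emph{extensions}: given abelian groups fitting in $1 \to B \to A \to A/B \to 1$ with $B$ and $A/B$ $\pi$-bounded, I would fix a quotient $A/C$ and a prime $p \in \pi$ and consider the induced surjection $A/C \to A/(B+C)$, whose target is a quotient of $A/B$ and whose kernel $(B+C)/C \cong B/(B \cap C)$ is a quotient of $B$. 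The image of $(A/C)_{(p)}$ lies in $(A/(B+C))_{(p)}$, and its intersection with the kernel lies in $(B/(B\cap C))_{(p)}$; both are finite, so $(A/C)_{(p)}$ is finite-by-finite, hence finite, and $A$ is $\pi$-bounded.

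With this lemma the remaining statements follow formally. For Statement~2 in the nilpotent case, I would intersect the given finite central series $1 = G_{0} \leqslant \dots \leqslant G_{m} = G$ of a $\pi$-bounded nilpotent group with a subgroup $S$: the subgroups $S \cap G_{i}$ form a central series of $S$ whose factors embed in the abelian $\pi$-bounded factors $G_{i+1}/G_{i}$, hence are abelian $\pi$-bounded by the abelian subgroup case; projecting the series into a quotient works the same way using the abelian quotient case, and the solvable case is identical with ``central'' replaced by ``subnormal''. Statement~3 is then an induction along a subnormal series: an abelian group possessing a finite subnormal series with abelian $\pi$-bounded factors is obtained from those factors by finitely many abelian extensions, so repeated use of the extension lemma shows it is $\pi$-bounded abelian.

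Statement~1 is where the main difficulty lies. That $A_{(p)}$ is finite for each $p \in \pi$ is immediate (apply the definition with the quotient taken to be $A$); the issue is to bound the torsion-free rank $r_{0}(A)$. Factoring out the torsion subgroup (a quotient, hence still $\pi$-bounded, and with the same torsion-free rank) I may assume $A$ torsion-free, and I argue by contradiction from $r_{0}(A) = \infty$, fixing some $p \in \pi$. If $A/pA$ is infinite, it is an infinite elementary abelian $p$-group, already contradicting $\pi$-boundedness; so $\dim_{\mathbb{F}_{p}}(A/pA)$ is finite. Choosing finitely many elements whose images span $A/pA$ and letting $F$ be the finitely generated subgroup they generate, I get $A = F + pA$, whence $A/F$ is $p$-divisible; its quotient $W$ by its own torsion is then torsion-free, $p$-divisible, of infinite rank, and still a quotient of $A$. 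A torsion-free $p$-divisible group is a $\mathbb{Z}[1/p]$-module, so $W$ contains a direct sum $\bigoplus_{i}\mathbb{Z}[1/p]\,w_{i}$ over an infinite independent set; factoring out the integral span $\bigoplus_{i}\mathbb{Z}\,w_{i}$ exhibits inside a quotient of $A$ a copy of $\bigoplus_{i}\mathbb{Z}[1/p]/\mathbb{Z} \cong \bigoplus_{i}\mathbb{Z}_{p^{\infty}}$, a $p$-group of infinite order. This contradicts $\pi$-boundedness, so $r_{0}(A)$ is finite; combined with the finiteness of the $\pi$-primary components, this yields the asserted finiteness of rank.

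The expected obstacle is exactly this torsion-free rank bound. Finite torsion-free rank is necessary but far from sufficient for $\pi$-boundedness---$\mathbb{Z}[1/p]$ has rank one yet is not $\{p\}$-bounded, since $\mathbb{Z}[1/p]/\mathbb{Z} \cong \mathbb{Z}_{p^{\infty}}$---so the argument cannot be a bare rank count and must instead manufacture an explicit quotient with infinite $p$-primary component. The observation that a torsion-free $p$-divisible group is a $\mathbb{Z}[1/p]$-module, together with the isomorphism $\mathbb{Z}[1/p]/\mathbb{Z} \cong \mathbb{Z}_{p^{\infty}}$, is what drives that construction; everything else is bookkeeping with the series definitions and the elementary finite-by-finite estimate.
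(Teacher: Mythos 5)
The paper offers no proof of Proposition~\ref{ep607}: it is imported wholesale by the citation \cite[Propositions~1,~2,~3]{Sokolov2014}, so there is no in-paper argument to measure you against. Judged on its own, your proof is correct and well organized. The reduction of Statements~2 and~3 to a single abelian extension lemma is sound (the finite-kernel/finite-image estimate for $(A/C)_{(p)}$ under $A/C \to A/(B+C)$ works, and intersecting or projecting a central/subnormal series behaves as you say), and your treatment of Statement~1 correctly identifies and overcomes the real difficulty: finiteness of $A/pA$ alone does not bound anything, and you manufacture the needed quotient with infinite $p$\nobreakdash-primary component via $A=F+pA$, the $p$\nobreakdash-divisibility of $A/F$, and $\mathbb{Z}[1/p]/\mathbb{Z}\cong\mathbb{Z}_{p^{\infty}}$ (in fact a single independent element $w_{1}$ already yields the contradiction once $r_{0}(A)>r_{0}(F)$, so your infinite direct sum is more than is needed). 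One small caution: ``rank'' here must be read as the torsion-free rank $r_{0}$, which is exactly what you bound and is how the paper uses $\operatorname{rk}$ in Proposition~\ref{ep608} (equal finite ranks of $K_{l+1}\leqslant K_{l}$ forcing a periodic quotient); your closing clause about combining with the finiteness of the $\pi$\nobreakdash-primary components is superfluous for that reading, and a Pr\"ufer-rank version of Statement~1 would actually be false, since $\bigoplus_{i=1}^{\infty}\mathbb{Z}/q\mathbb{Z}$ with $q\notin\pi$ is $\pi$\nobreakdash-bounded of infinite Pr\"ufer rank.
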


\begin{proof}[\textup{\textbf{Proof of~Theorem~\ref{et02}}}]
Let us show that, for~any $i \in \{0,\, 1,\, \ldots,\, n-1\}$, $K_{i}$ is $\mathcal{C}$\nobreakdash-reg\-u\-lar with~respect to~$P_{i+1}$. Then the~required statement will follow from~Theorem~\ref{et03}.

Suppose that $i \in \{0,\, 1,\, \ldots,\, n-1\}$ and~$M \in \mathcal{C}^{*}(P_{i+1})$. We need to~find a~subgroup $N \in \mathcal{C}^{*}(K_{i})$ such that $N \cap P_{i+1} = M$.

Let us denote the~$\pi(\mathcal{C})^{\prime}$\nobreakdash-iso\-la\-tor $\mathcal{I}_{\pi(\mathcal{C})^{\prime}}(K_{i}, M)$ by~$\mathfrak{I}$ for~brevity. Since $G$ is residually a~$\mathcal{C}$\nobreakdash-group and~$\mathcal{C}$ is closed under taking subgroups, then $K_{i}$ is also residually a~$\mathcal{C}$\nobreakdash-group. Therefore, by~Proposition~\ref{ep604}, $\mathfrak{I}$ lies in~the~center of~$K_{i}$ and~coincides with~the~set $\mathcal{R}_{\pi(\mathcal{C})^{\prime}}(K_{i}, M)$. It follows from~the~equality $\mathfrak{I} = \mathcal{R}_{\pi(\mathcal{C})^{\prime}}(K_{i}, M)$ that, if $x \in \mathfrak{I} \cap P_{i+1}$, then $x^{q} \in M$ for~some $\pi(\mathcal{C})^{\prime}$\nobreakdash-num\-ber~$q$. But~the~quotient group~$P_{i+1}/M$ belongs to~$\mathcal{C}$ and~therefore has no $\pi(\mathcal{C})^{\prime}$\nobreakdash-tor\-sion. Hence, $xM = 1$ and~$x \in M$. Thus, $\mathfrak{I} \cap P_{i+1} \leqslant M$ and,~because the~opposite inclusion is obvious, $\mathfrak{I} \cap P_{i+1} = M$.

Since $H$ and~$K$ are $\pi(\mathcal{C})$\nobreakdash-bound\-ed, then it follows from~Propositions~\ref{ep606} and~\ref{ep607} that the~$\mathcal{C}$\nobreakdash-group~$P_{i+1}/M$ is $\pi(\mathcal{C})$\nobreakdash-bound\-ed and~therefore finite. Hence, the~subgroup
$$
P_{i+1}\mathfrak{I}/\mathfrak{I} \cong P_{i+1}/(P_{i+1} \cap \mathfrak{I}) = P_{i+1}/M
$$
is also finite. By~the~hypothesis of~the~theorem, $\mathfrak{I}$ is $\mathcal{C}$\nobreakdash-sep\-a\-ra\-ble in~$K_{i}$. Therefore, by~Proposition~\ref{ep605}, $K_{i}/\mathfrak{I}$ is residually a~$\mathcal{C}$\nobreakdash-group and,~by~Proposition~\ref{ep301}, there exists a~subgroup $N/\mathfrak{I} \in \mathcal{C}^{*}(K_{i}/\mathfrak{I})$ such that $N/\mathfrak{I} \cap P_{i+1}\mathfrak{I}/\mathfrak{I} = 1$. Then $N \in \mathcal{C}^{*}(K_{i})$ and,~as~it is easy to~see, $N \cap P_{i+1} = M$. Thus, $N$ is the~required subgroup.
\end{proof}

\begin{eproposition}\label{ep608}
Suppose that $\mathcal{C}$ is a~root class of~groups consisting only of~periodic groups and~closed under taking quotient groups, $H \ne G \ne K$, and~there exists $m \geqslant 0$ such that at~least one of~the~following conditions holds:

$(\alpha)$\hspace{1ex}$H_{m+1}$ and~$K_{m+1}$ are finitely generated;

$(\beta)$\hspace{1ex}$H_{m+1}$ and~$K_{m+1}$ are $\pi(\mathcal{C})$\nobreakdash-bound\-ed and~$\pi^{\prime}$\nobreakdash-iso\-lat\-ed in~$K_{m}$ for~some finite subset $\pi$ of~$\pi(\mathcal{C})$.

If $\mathfrak{G}$ is residually a~$\mathcal{C}$\nobreakdash-group, then $H_{n} = H_{n+1}$ for~some~$n$.
\end{eproposition}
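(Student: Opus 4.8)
The plan is to prove that the descending chain $H_{1} \geqslant H_{2} \geqslant \cdots$ stabilizes by reducing everything to a question about finite-rank abelian groups and then using residuality to force genuine stabilization. First I would record the ambient structure. Since $\mathcal{C}$ is closed under subgroups and $G \leqslant \mathfrak{G}$, the group $G$, and hence every $K_{i}$, is residually a~$\mathcal{C}$\nobreakdash-group; by Proposition~\ref{ep603} it has no $\pi(\mathcal{C})^{\prime}$\nobreakdash-torsion, so every torsion quotient met below is a~$\pi(\mathcal{C})$\nobreakdash-group. Because $K = K_{1}$ is central in~$G$ it is abelian, and therefore every $K_{i}$ with $i \geqslant 1$, together with all the subgroups $H_{i+1}$, $K_{i+1}$, $P_{i+1}$, is abelian. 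Thus the whole problem takes place among abelian groups, and $\varphi$ restricts to isomorphisms $H_{i} \cong K_{i}$ and $H_{i+1} \cong K_{i+1}$ for every~$i$.

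Next I would bound the torsion-free rank. In case~$(\alpha)$ the group $H_{m+1}$ is finitely generated abelian, and in case~$(\beta)$ it is $\pi(\mathcal{C})$\nobreakdash-bounded abelian, hence of finite torsion-free rank by Proposition~\ref{ep607}(1); in either case $r_{0}(H_{m+1}) < \infty$, writing $r_{0}$ for the torsion-free rank. Since $H_{i+1} \leqslant H_{i}$, the ranks $r_{0}(H_{m+1}) \geqslant r_{0}(H_{m+2}) \geqslant \cdots$ form a non-increasing sequence of non-negative integers and so stabilize: there is $N \geqslant m+1$ with $r_{0}(H_{i}) = r_{0}(H_{i+1})$ for all $i \geqslant N$. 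As $\varphi$ gives $r_{0}(K_{i}) = r_{0}(H_{i})$ and $H_{i+1} \leqslant K_{i}$, the equality of ranks forces both $H_{i}/H_{i+1}$ and $K_{i}/H_{i+1}$ to be torsion for $i \geqslant N$.

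The crucial point is to upgrade ``torsion quotient'' to ``finite quotient'', and here the two cases diverge; this is the main obstacle and precisely where the hypotheses of~$(\alpha)$ and~$(\beta)$ are indispensable. In case~$(\alpha)$ the quotients $H_{i}/H_{i+1}$ and $K_{i}/H_{i+1}$ are finitely generated, being sections of the finitely generated groups $H_{m+1}$, $K_{m+1}$, and torsion, hence finite. In case~$(\beta)$ I would first propagate the isolation: arguing as in Proposition~\ref{ep602}(1), $H_{i}$ and $K_{i}$ remain $\pi^{\prime}$\nobreakdash-isolated in~$K_{m}$, hence in~$K_{i-1}$, for all $i \geqslant m+1$. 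If a prime $q \in \pi^{\prime}$ divided the order of an element of $H_{i}/H_{i+1} = H_{i}/(H_{i} \cap K_{i})$, there would be $h \in H_{i} \leqslant K_{i-1}$ with $h^{q} \in K_{i}$ but $h \notin K_{i}$, contradicting the $\pi^{\prime}$\nobreakdash-isolation of~$K_{i}$ in~$K_{i-1}$; so the torsion of $H_{i}/H_{i+1}$ is supported on the \emph{finite} set $\pi \subseteq \pi(\mathcal{C})$. Being $\pi(\mathcal{C})$\nobreakdash-bounded, its $p$\nobreakdash-component is finite for each $p \in \pi$, and as there are only finitely many such primes the quotient is finite; the same holds for $K_{i}/H_{i+1}$. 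In both cases I conclude that, for $i \geqslant N$, the subgroups $H_{i+1}$ and $K_{i+1} = H_{i+1}\varphi$ have finite index in~$K_{i}$ (for $K_{i+1}$ apply the isomorphism $\varphi$ to $H_{i}/H_{i+1}$).

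Finally I would close the chain. Fix $i \geqslant N$. If $K_{i+1} = K_{i}$ then $H_{i+1} = H_{i}$ already, since $\varphi$ is injective; and if $H_{i+1} = K_{i}$ then Proposition~\ref{ep503}, applied to $\mathrm{HNN}(K_{i-1}, H_{i}, K_{i}, \varphi)$ read through $t^{-1}$, yields $H_{i} = K_{i} = H_{i+1}$. So I may assume $H_{i+1}$, $K_{i+1}$ are proper central subgroups of finite index in~$K_{i}$. The extension $\mathfrak{K}_{i} = \mathrm{HNN}(K_{i}, H_{i+1}, K_{i+1}, \varphi)$ is residually a~$\mathcal{C}$\nobreakdash-group by Proposition~\ref{ep601}, so Proposition~\ref{ep506} produces $Q \in \mathcal{C}^{*}(K_{i})$ with $Q \leqslant H_{i+1} \cap K_{i+1} = H_{i+2}$ and $Q\varphi = Q$. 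Then $Q = Q\varphi \leqslant H_{i+2}\varphi = K_{i+2}$, so $Q \leqslant H_{i+2} \cap K_{i+2}$ and hence $Q \leqslant H_{j}$ for all $j \geqslant i+2$ by Proposition~\ref{ep602}(2). Moreover $K_{i}/Q \in \mathcal{C}$ is finite: it is finitely generated periodic abelian in case~$(\alpha)$, and a $\pi(\mathcal{C})$\nobreakdash-bounded $\mathcal{C}$\nobreakdash-group, hence finite by Proposition~\ref{ep606}, in case~$(\beta)$. Therefore $[H_{i+2} : Q] \leqslant [K_{i} : Q] < \infty$, and the descending chain $H_{i+2} \geqslant H_{i+3} \geqslant \cdots$ consists of subgroups lying between $Q$ and $H_{i+2}$, of which there are only finitely many. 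Thus $H_{n} = H_{n+1}$ for some~$n$, as required.
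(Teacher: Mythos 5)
Your argument follows essentially the same route as the paper's: bound the (torsion-free) rank via Proposition~\ref{ep607}, deduce that the relevant quotients become periodic once the rank stabilizes, upgrade periodic to finite using $(\alpha)$ or the isolation-plus-boundedness argument in case~$(\beta)$, and then combine Propositions~\ref{ep601}, \ref{ep503}, \ref{ep506}, \ref{ep602} and~\ref{ep606} to force the chain to stabilize. The one loose end is the degenerate case $H_{i+1}=K_{i}$: your application of Proposition~\ref{ep503} to $\mathrm{HNN}(K_{i-1},H_{i},K_{i},\varphi)$ read through $t^{-1}$ requires $H_{i}\ne K_{i-1}$, which you do not verify and which can fail for $i\geqslant 2$; the paper sidesteps this by applying Proposition~\ref{ep503} to $\mathrm{HNN}(G,H_{i},K_{i},\varphi)$ (residually a~$\mathcal{C}$\nobreakdash-group by Proposition~\ref{ep601}), whose associated subgroups are automatically proper because $H\ne G\ne K$ --- the same substitution repairs your write-up verbatim.
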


\begin{proof}
Because $\mathcal{C}$ contains non-triv\-i\-al groups, $\pi(\mathcal{C})$ is non-empty. Hence, the~rank $\operatorname{rk}H_{m+1}$ of~$H_{m+1}$ is finite by~Proposition~\ref{ep607}, and~since $\operatorname{rk}H_{i+1} \leqslant \operatorname{rk}H_{i}$ for~any $i \geqslant 1$, then $\operatorname{rk}H_{l} = \operatorname{rk}H_{l+1}$ for~some $l \geqslant m+1$. It follows from~the~relations $H_{l} \cong K_{l}$ and~$H_{l+1} \cong K_{l+1}$ that
$$
\operatorname{rk}K_{l} = \operatorname{rk}H_{l} = \operatorname{rk}H_{l+1} = \operatorname{rk}K_{l+1}
$$
and~therefore the~quotient groups~$K_{l}/H_{l+1}$ and~$K_{l}/K_{l+1}$ are periodic. If $H_{m+1}$ and~$K_{m+1}$ are finitely generated, then $K_{l}/H_{l+1}$ and~$K_{l}/K_{l+1}$ are finite. Let us show that this is also true if $(\beta)$ holds.

Since $H_{m+1}$ and~$K_{m+1}$ are $\pi^{\prime}$\nobreakdash-iso\-lat\-ed in~$K_{m}$, then, by~Proposition~\ref{ep602} applied to the~HNN-tuple $(K_{m}, H_{m+1}, K_{m+1}, \varphi)$, $H_{l+1}$ and~$K_{l+1}$ are also $\pi^{\prime}$\nobreakdash-iso\-lat\-ed in~this group. Hence, $K_{l}/H_{l+1}$ and~$K_{l}/K_{l+1}$ are periodic $\pi$\nobreakdash-groups, and~each of~them has a~finite number of~primary components because $\pi$ is finite. By~Proposition~\ref{ep607}, these groups are $\pi(\mathcal{C})$\nobreakdash-bound\-ed, and~since $\pi \subseteq \pi(\mathcal{C})$, all their primary components are finite. Thus, $K_{l}/H_{l+1}$ and~$K_{l}/K_{l+1}$ are also finite.

By~Proposition~\ref{ep601}, the~HNN-ex\-ten\-sions
$$
\mathfrak{G}_{l} = \mathrm{HNN}(G, H_{l+1}, K_{l+1}, \varphi),\quad
\mathfrak{K}_{l} = \mathrm{HNN}(K_{l}, H_{l+1}, K_{l+1}, \varphi)
$$
are residually $\mathcal{C}$\nobreakdash-groups. If $H_{l+1} = K_{l}$ or~$K_{l+1} = K_{l}$, then $H_{l+1} \geqslant K_{l+1}$ or~$H_{l+1} \leqslant K_{l+1}$ and~it follows from~Proposition~\ref{ep503} applied to~$\mathfrak{G}_{l}$ that $H_{l+1} = K_{l+1}$. Hence, we can put $n = l+1$. Let $H_{l+1} \ne K_{l} \ne K_{l+1}$. Then, by~Proposition~\ref{ep506} applied to~$\mathfrak{K}_{l}$, there exists a~subgroup $Q \in \mathcal{C}^{*}(K_{l})$ such that $Q \leqslant H_{l+1} \cap K_{l+1}$ and~$Q\varphi = Q$. The~$\mathcal{C}$\nobreakdash-group~$K_{l}/Q$ is finite: this is obvious if $(\alpha)$ holds, and~follows from~Propositions~\ref{ep606},~\ref{ep607} if $(\beta)$ takes place. By~Proposition~\ref{ep602}, $Q \leqslant H_{i}$ for~any $i \geqslant l+1$. Thus, there exists $n \geqslant l+1$ such that $H_{n} = H_{n+1}$.
\end{proof}

\begin{eproposition}\label{ep609}
\textup{\cite[Proposition~10]{Sokolov2017}}
If $\mathcal{C}$ is a~root class of~groups consisting only of~periodic groups, then any finite solvable $\pi(\mathcal{C})$\nobreakdash-group belongs to~$\mathcal{C}$.
\end{eproposition}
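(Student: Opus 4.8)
The plan is to reduce the statement to the two closure properties that a root class provides most directly here---closure under subgroups and closure under extensions---via the standard composition-series structure of finite solvable groups. No use of closure under Cartesian products is needed.

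First I would establish that each prime in~$\pi(\mathcal{C})$ is already realized by a~cyclic group lying in~$\mathcal{C}$. If $p \in \pi(\mathcal{C})$, then by~the~definition of~$\pi(\mathcal{C})$ there exist a~group $X \in \mathcal{C}$ and~an~element $x \in X$ whose order is divisible by~$p$. Since $\mathcal{C}$ consists only of~periodic groups, this order is a~finite number~$m$, so the~power $x^{m/p}$ has order exactly~$p$ and~generates a~cyclic subgroup of~order~$p$. As~$\mathcal{C}$ is closed under taking subgroups, this yields $C_{p} \in \mathcal{C}$ for~every $p \in \pi(\mathcal{C})$.

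Next I would take an~arbitrary finite solvable $\pi(\mathcal{C})$\nobreakdash-group~$G$ and~fix a~composition series $1 = G_{0} \triangleleft G_{1} \triangleleft \dots \triangleleft G_{k} = G$. Because $G$ is finite and~solvable, each factor $G_{i+1}/G_{i}$ is simple abelian, hence cyclic of~some prime order~$p_{i}$; and~because $G$ is a~$\pi(\mathcal{C})$\nobreakdash-group, every prime dividing~$|G|$---in~particular each~$p_{i}$---belongs to~$\pi(\mathcal{C})$. By~the~previous step, every factor $G_{i+1}/G_{i} \cong C_{p_{i}}$ then lies in~$\mathcal{C}$.

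Finally I would climb the~series by~induction on~$i$, using closure under extensions. The~base case $G_{1} = G_{1}/G_{0} \cong C_{p_{0}} \in \mathcal{C}$ is immediate, and~if $G_{i} \in \mathcal{C}$ then $G_{i+1}$ is an~extension of~the~normal $\mathcal{C}$\nobreakdash-subgroup~$G_{i}$ by~the~$\mathcal{C}$\nobreakdash-quotient $G_{i+1}/G_{i}$, whence $G_{i+1} \in \mathcal{C}$. After $k$ steps we obtain $G = G_{k} \in \mathcal{C}$, as~required. I do not expect any genuine obstacle: the~argument is essentially a~two-step reduction to~the~defining closure properties. The~only point requiring care is the~first step---deducing that membership of~$p$ in~$\pi(\mathcal{C})$ forces the~concrete group~$C_{p}$ into~$\mathcal{C}$---which relies precisely on~the~hypothesis that $\mathcal{C}$ contains only periodic groups, so that orders are finite and~$x^{m/p}$ is meaningful, together with~closure under subgroups.
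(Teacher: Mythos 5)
Your proof is correct. The paper does not reproduce an argument for this statement --- it simply cites \cite[Proposition~10]{Sokolov2017} --- and your reasoning (extracting $C_{p} \in \mathcal{C}$ for each $p \in \pi(\mathcal{C})$ via closure under subgroups, then climbing a composition series with prime-order factors via closure under extensions) is the standard proof of that cited result, with all the delicate points (finiteness of element orders from periodicity, Cauchy's theorem to place each $p_{i}$ in $\pi(\mathcal{C})$) handled properly.
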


\begin{eproposition}\label{ep610}
If $\mathcal{C}$ is a~root class of~groups consisting only of~periodic groups and~$\pi(\mathcal{C})$ contains all prime numbers, then all the~subgroups of~an~arbitrary $\pi(\mathcal{C})$\nobreakdash-bound\-ed solvable group are $\mathcal{C}$\nobreakdash-sep\-a\-ra\-ble.
\end{eproposition}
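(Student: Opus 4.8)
The plan is to reduce $\mathcal{C}$\nobreakdash-separability to separability by finite quotients, and then to prove the latter by induction on the derived length. First I would observe that, for a~$\pi(\mathcal{C})$\nobreakdash-bounded solvable group~$X$, it suffices to~separate subgroups using \emph{finite} quotients: any finite quotient of~$X$ is solvable and,~since $\pi(\mathcal{C})$ contains all primes, is a~finite solvable $\pi(\mathcal{C})$\nobreakdash-group, hence lies in~$\mathcal{C}$ by~Proposition~\ref{ep609}. (Conversely, every $\mathcal{C}$\nobreakdash-quotient of~$X$ is a~$\pi(\mathcal{C})$\nobreakdash-bounded solvable $\mathcal{C}$\nobreakdash-group, so it is finite by~Proposition~\ref{ep606}; thus the~pro\nobreakdash-$\mathcal{C}$ topology coincides with~the~profinite one, and~no generality is lost.) Consequently the~goal becomes: for~every subgroup $Y \leqslant X$ and~every $x \in X \setminus Y$ there is a~finite-index normal subgroup~$N$ of~$X$ with~$x \notin YN$.

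I would argue by~induction on~the~derived length~$d$ of~$X$. In~the~base case $d \leqslant 1$ the~group~$X$ is abelian, so~$Y$ is normal and,~by~Proposition~\ref{ep605}, its $\mathcal{C}$\nobreakdash-separability is equivalent to~the~residual $\mathcal{C}$\nobreakdash-ness of~$X/Y$, which is again $\pi(\mathcal{C})$\nobreakdash-bounded abelian by~Proposition~\ref{ep607}. Hence the~base case amounts to~showing that a~$\pi(\mathcal{C})$\nobreakdash-bounded abelian group~$A$ is residually finite. This I~would deduce from~the~facts that $A$ has finite rank (Proposition~\ref{ep607}) and~is reduced: a~nontrivial divisible subgroup would produce a~copy of~$\mathbb{Z}_{p^{\infty}}$ inside~$A$ or~a~quotient~$\mathbb{Q}/\mathbb{Z}$ of~$A$, in~either case giving an~infinite primary component and~contradicting $\pi(\mathcal{C})$\nobreakdash-boundedness. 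A~nonzero torsion element is then detected by~the~projection onto~its finite primary direct summand, and~a~nonzero element of~infinite order survives in~a~finite quotient~$B/p^{k}B$ of~the~torsion-free quotient~$B$ of~$A$ (the~intersection $\bigcap_{p,k} p^{k}B$ being a~divisible, hence trivial, subgroup, while $B/p^{k}B$ is finite because $B/pB$ is).

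For~the~inductive step I~would take $A = X^{(d-1)}$, the~last nontrivial term of~the~derived series: it is a~characteristic abelian subgroup, is $\pi(\mathcal{C})$\nobreakdash-bounded by~Proposition~\ref{ep607}, and~$X/A$ is $\pi(\mathcal{C})$\nobreakdash-bounded solvable of~derived length $d-1$, so~by~induction all its subgroups are separable. Given $x \notin Y$, pass to~$X/A$: if~$xA \notin (YA)/A$, then separating $xA$ from~$(YA)/A$ there and~pulling back gives the~required~$N$. Otherwise $x = ya$ with~$y \in Y$ and~$a \in A \setminus Y$, and~it suffices to~separate $a$ from~$Y \cap A$. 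Here I~would first produce a~subgroup $N_{A} \leqslant A$ that is normal in~$X$, of~finite index in~$A$, and~satisfies $a \notin (Y \cap A)N_{A}$ (which gives $a \notin YN_{A}$, since $N_{A} \leqslant A$ forces $YN_{A} \cap A = (Y \cap A)N_{A}$), and~then pass to~$X/N_{A}$, in~which the~image of~$A$ is a~\emph{finite} normal subgroup with~$\pi(\mathcal{C})$\nobreakdash-bounded solvable quotient; this reduces the~task to~separating a~finite-order element and~is completed by~a~further application of~the~inductive hypothesis to~the~image.

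The~hard part will be the~construction of~$N_{A}$ in~this last case. It is precisely the~assertion that $Y \cap A$ is closed in~the~topology on~$A$ determined by~the~$X$\nobreakdash-invariant finite-index subgroups, i.e.~a~separability statement for~$A$ regarded as~a~$\mathbb{Z}[X/C_{X}(A)]$\nobreakdash-module rather than as~a~bare abelian group; the~plain residual finiteness from~the~base case is not enough, because the~separating subgroups must be invariant under~the~(solvable) action of~$X$. I~expect to~control this using the~finite rank and~the~finiteness of~all primary components of~$A$ furnished by~$\pi(\mathcal{C})$\nobreakdash-boundedness, together with~the~solvability of~the~acting group~$X/C_{X}(A)$, which limits the~module structure enough to~guarantee a~cofinal family of~finite $X$\nobreakdash-invariant quotients of~$A$ separating~$a$ from~$Y \cap A$.
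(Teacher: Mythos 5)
Your opening reduction is sound and matches the paper's: since $\pi(\mathcal{C})$ contains all primes, Proposition~\ref{ep609} puts every finite solvable group in~$\mathcal{C}$, while every $\mathcal{C}$\nobreakdash-quotient of a $\pi(\mathcal{C})$\nobreakdash-bounded solvable group is finite by Proposition~\ref{ep606}, so $\mathcal{C}$\nobreakdash-separability is equivalent to separability by finite quotients. At that point, however, the paper simply invokes Theorem~6 of Mal'cev's paper \cite{Malcev1958}, which asserts exactly that all subgroups of such solvable groups are finitely separable; the whole proof is three lines. You instead set out to reprove Mal'cev's theorem by induction on the derived length, and that is where the proposal breaks down.

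The gap is the step you yourself flag as ``the hard part'': producing, for $A = X^{(d-1)}$, $a \in A \setminus Y$, a finite-index subgroup $N_{A}$ of~$A$ that is \emph{normal in~$X$} and avoids $a(Y\cap A)$. This is not a technical loose end to be ``controlled'' by finite rank and solvability of $X/C_{X}(A)$ --- it is the entire content of the theorem being proved. Plain residual finiteness of~$A$ (your base case) gives finite-index subgroups of~$A$ separating $a$ from $Y \cap A$, but intersecting the finitely many $X$\nobreakdash-conjugates of such a subgroup is impossible when $A$ has infinite index, and an infinite solvable group acting on an abelian group of finite rank can act with no invariant finite-index subgroups below a prescribed one (this is precisely why subgroup separability of polycyclic and Mal'cev-type solvable groups requires a genuine module-theoretic argument over the group ring $\mathbb{Z}[X/C_{X}(A)]$, not just residual finiteness of the layers). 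Since you offer no construction, the induction does not close. There is also a secondary mismatch: after passing to $X/N_{A}$, the image of $X^{(d-1)}$ is finite but generally nontrivial, so $X/N_{A}$ may still have derived length~$d$ and the inductive hypothesis does not apply to it as stated; one would additionally need that an extension of a finite group by a subgroup-separable group in your class is again subgroup separable, which is a further unproved claim. The honest fix is to do what the paper does: cite Mal'cev's Theorem~6 for $\mathcal{F}$\nobreakdash-separability, observe that all finite quotients of~$X$ are solvable, and conclude via Proposition~\ref{ep609}.
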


\begin{proof}
Let $X$ be a~$\pi(\mathcal{C})$\nobreakdash-bound\-ed solvable group, and~let $Y$ be a~subgroup of~$X$. By~Theorem~6 from~\cite{Malcev1958}, $Y$ is $\mathcal{F}$\nobreakdash-sep\-a\-ra\-ble in~$X$, where $\mathcal{F}$ is the~class of~all finite groups. Since any homomorphic image of~$X$ is a~solvable group, then $Y$ turns~out to~be $\mathcal{F}\mathcal{S}$\nobreakdash-sep\-a\-ra\-ble in~$X$, where $\mathcal{FS}$ is the~class of~all finite solvable groups. By~Proposition~\ref{ep609}, $\mathcal{FS} \subseteq \mathcal{C}$. Hence, $Y$ is $\mathcal{C}$\nobreakdash-sep\-a\-ra\-ble.
\end{proof}

\begin{eproposition}\label{ep611}
\textup{\cite[Proposition~8]{SokolovTumanova2016}}
If $\mathcal{C}$ is a~root class of~groups consisting only of~periodic groups and~$X$ is a~$\pi(\mathcal{C})$\nobreakdash-bound\-ed nilpotent group, then every $\pi(\mathcal{C})^{\prime}$\nobreakdash-iso\-lat\-ed subgroup of~$X$ is $\mathcal{C}$\nobreakdash-sep\-a\-ra\-ble in~this group.
\end{eproposition}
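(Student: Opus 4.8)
The plan is to reduce $\mathcal{C}$-separability to separability by finite nilpotent $\pi(\mathcal{C})$-groups and then to argue by induction on the nilpotency class of $X$. Write $\pi = \pi(\mathcal{C})$. A finite nilpotent $\pi$-group is a finite solvable $\pi$-group, so Proposition~\ref{ep609} shows that every such group lies in $\mathcal{C}$; hence it suffices to prove that, for each $x \in X \setminus Y$, there is a normal subgroup $N \trianglelefteq X$ with $X/N$ a finite $\pi$-group (automatically nilpotent, being a quotient of the nilpotent group $X$) and $x \notin YN$. In other words, I would show that every $\pi^{\prime}$-isolated subgroup of $X$ is closed in the pro-$\pi$ topology.

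First I would settle the abelian base case. If $X$ is abelian then $Y$ is normal, the quotient $A = X/Y$ is abelian and $\pi$-bounded by Proposition~\ref{ep607}, and the $\pi^{\prime}$-isolation of $Y$ says precisely that $A$ has no $\pi^{\prime}$-torsion. By Proposition~\ref{ep607} the group $A$ has finite rank, and $\pi$-boundedness forces every $p$-primary component ($p \in \pi$) of every quotient of $A$ to be finite; from these two facts one checks that $A$ is residually a finite $\pi$-group (a torsion element has $\pi$-order and is detected in a finite $p$-component, while an element of infinite order survives in some quotient $A/p^{n}A$ with $p \in \pi$). This yields the required finite $\pi$-quotients separating $x$ from $Y$.

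For the inductive step I would fix a nontrivial characteristic central subgroup $C$ (for instance the last nontrivial term $\gamma_{c}(X)$ of the lower central series, where $c$ is the class), so that $C$ is abelian and $\pi$-bounded and $X/C$ has class $c-1$. Given $x \in X \setminus Y$, I would distinguish two cases. If $x \notin YC$, I would pass to $X/C$ and apply the inductive hypothesis to the $\pi^{\prime}$-isolator $\mathcal{I}_{\pi^{\prime}}(X/C,\, YC/C)$, which is $\pi^{\prime}$-isolated by construction. If $x \in YC$, I would write $x = yc$ with $y \in Y$, $c \in C$, so that $c \notin Y \cap C$; since $Y \cap C$ is $\pi^{\prime}$-isolated in the abelian $\pi$-bounded group $C$, the base case supplies a finite $\pi$-quotient of $C$ separating $c$ from $Y \cap C$, and choosing its kernel to be characteristic in $C$ (hence normal in $X$) reduces the central part of $X$ to a finite $\pi$-group, after which the induction can be reapplied.

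The hard part is the interaction between $\pi^{\prime}$-isolation and the central quotient: the product $YC$ need not be $\pi^{\prime}$-isolated in $X$, so the image of $Y$ in $X/C$ need not be $\pi^{\prime}$-isolated and one cannot directly invoke the inductive hypothesis for $YC/C$. The crux is therefore to control the isolator $\mathcal{I}_{\pi^{\prime}}(X/C,\, YC/C)$ and to verify that, when $x \notin YC$, its image still avoids this isolator, splicing the finite central $\pi$-quotient together with the quotient furnished by the induction on class. Here I expect to rely on the fact that the center of a residually-$\mathcal{C}$ group is $\pi^{\prime}$-isolated and that its isolator coincides with $\mathcal{R}_{\pi^{\prime}}$ (Proposition~\ref{ep604}), together with the finite-rank and finite-$p$-component consequences of $\pi$-boundedness from Proposition~\ref{ep607}.
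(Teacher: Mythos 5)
The paper does not prove this proposition at all: it is imported verbatim as Proposition~8 of \cite{SokolovTumanova2016} (resting in turn on \cite{Sokolov2014}), so your argument has to stand on its own. Its outer shell is fine: finite nilpotent $\pi(\mathcal{C})$-groups lie in $\mathcal{C}$ by Proposition~\ref{ep609}, so it suffices to find, for each $x \in X \setminus Y$, a normal $N$ with $X/N$ a finite $\pi(\mathcal{C})$-group and $x \notin YN$; and the abelian base case is essentially correct, though the parenthetical sketch is best replaced by the uniform argument via $A/p^{k}A$ for $p \in \pi(\mathcal{C})$ (this quotient is a bounded $p$-group, hence finite by $\pi(\mathcal{C})$-boundedness; an element of infinite order cannot lie in every $p^{k}A$, else $A/\langle a \rangle$ would have an infinite $p$-component; a torsion element has $\pi(\mathcal{C})$-order by isolation and a power of prime order $p$ escapes some $p^{k}A$ because the $p$-component of $A$ is finite).

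The genuine gap is the inductive step, and it sits exactly where you flag it: the statement you propose to ``verify'' in Case~1 is false. From $x \notin YC$ one cannot conclude $xC \notin \mathcal{I}_{\pi(\mathcal{C})^{\prime}}(X/C,\, YC/C)$: if $x^{q} \in YC$ for some $q \in \pi(\mathcal{C})^{\prime}$ while $x \notin YC$, then $xC \in \mathcal{R}_{\pi(\mathcal{C})^{\prime}}(X/C, YC/C)$ lies in that isolator, and no homomorphism separates a point of a subgroup from the subgroup, so the inductive hypothesis says nothing about $x$. This happens even when $Y$ itself is $\pi(\mathcal{C})^{\prime}$-isolated: in the integral Heisenberg group $X = \langle a, b \rangle$ with $c = [a,b]$ central, take $\pi(\mathcal{C})^{\prime} = \{2\}$, $Y = \langle a^{2}c \rangle$ (one checks $x^{2} \in Y \Rightarrow x \in Y$), $C = \gamma_{2}(X) = \langle c \rangle$; then $YC = \langle a^{2}, c \rangle$ and $x = a$ satisfies $x \notin YC$ but $x^{2} \in YC$, so $\bar{x}$ lies in the isolator of $YC/C$ in $X/C \cong \mathbb{Z}^{2}$. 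The case split must therefore be on whether $x \in \mathcal{I}_{\pi(\mathcal{C})^{\prime}}(X, YC)$, and the residual case $x \in \mathcal{I}_{\pi(\mathcal{C})^{\prime}}(X, YC) \setminus Y$ is precisely where the isolation of $Y$ has to be exploited through the nilpotent isolator calculus (for nilpotent $X$ one has $\mathcal{I}_{\pi^{\prime}}(X, Y) = \mathcal{R}_{\pi^{\prime}}(X, Y)$ together with control of isolators of products); none of this is supplied. Case~2 has a parallel defect: after factoring out the subgroup $D \leqslant C$ given by the base case (normality of $D$ in $X$ is automatic since $C$ is central), the image $YD/D$ need no longer be $\pi(\mathcal{C})^{\prime}$-isolated and the nilpotency class has not dropped, so ``the induction can be reapplied'' is not an available move as stated. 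Since the entire content of the theorem is concentrated in this interaction between isolation and central quotients, what you have is a plausible strategy outline rather than a proof.
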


\begin{proof}[\textup{\textbf{Proof of~Corollaries~\ref{ec03} and~\ref{ec04}}}]
Let us verify that the~conditions of~Theorem~\ref{et02} hold.

By~Proposition~\ref{ep607}, $H_{i}$ and~$K_{i}$ ($i \geqslant 1$) are $\pi(\mathcal{C})$\nobreakdash-bound\-ed abelian groups. Therefore, it follows from~Propositions~\ref{ep610} and~\ref{ep611} that all the~$\pi(\mathcal{C})^{\prime}$\nobreakdash-iso\-lat\-ed subgroups of~$K_{i}$~($i \geqslant\nolinebreak 0$) are $\mathcal{C}$\nobreakdash-sep\-a\-ra\-ble in~this group. In~particular, if $N \in \mathcal{C}^{*}(P_{i+1})$, then the~subgroup $\mathcal{I}_{\pi(\mathcal{C})^{\prime}}(K_{i}, N)$ is $\mathcal{C}$\nobreakdash-sep\-a\-ra\-ble in~$K_{i}$.

The~fact that $G$ is residually a~$\mathcal{C}$\nobreakdash-group follows from

--\hspace{1ex}Proposition~\ref{ep610} if $\pi(\mathcal{C})$ contains all primes and~$G$ is a~$\pi(\mathcal{C})$\nobreakdash-bound\-ed solvable group;

--\hspace{1ex}Proposition~\ref{ep611} and~the~absence of~$\pi(\mathcal{C})^{\prime}$\nobreakdash-tor\-sion in~$G$ if the~last group is $\pi(\mathcal{C})$\nobreakdash-bound\-ed nilpotent;

--\hspace{1ex}the~assumption that $\mathcal{C}$ is closed under taking subgroups if $\mathfrak{G}$ is residually a~$\mathcal{C}$\nobreakdash-group.

Finally, $H_{n} = H_{n+1}$ for~some $n \geqslant 1$: it is obvious if $H_{n} = K_{n}$, and~is guaranteed by~Proposition~\ref{ep608} if $\mathfrak{G}$ is residually a~$\mathcal{C}$\nobreakdash-group. Thus, the~necessity of~the~statements of~both corollaries follows from~Theorem~\ref{et02}. We only need to~note that if $\mathfrak{G}$ is residually a~$\mathcal{C}$\nobreakdash-group, then $G$ is also residually a~$\mathcal{C}$\nobreakdash-group and,~by~Proposition~\ref{ep603}, has no $\pi(\mathcal{C})^{\prime}$\nobreakdash-tor\-sion.

The~group~$E$ is $\pi(\mathcal{C})$\nobreakdash-bound\-ed solvable as~an~extension of~the~$\pi(\mathcal{C})$\nobreakdash-bound\-ed abelian group~$H_{n}$ by~the~infinite cyclic group~$\langle t \rangle$, which is also $\pi(\mathcal{C})$\nobreakdash-bound\-ed abelian. If $\pi(\mathcal{C})$ contains all primes, then $H$ and~$K$ are $\pi(\mathcal{C})^{\prime}$\nobreakdash-iso\-lat\-ed in~$G$ and,~by~Proposition~\ref{ep610}, $E$ is residually a~$\mathcal{C}$\nobreakdash-group. Therefore, the~sufficiency of~the~statements of~the~corollaries also follows from~Theorem~\ref{et02}.
\end{proof}

\end{document}